\documentclass[reqno]{amsart}
\usepackage{amssymb,amsmath,hyperref}
\usepackage{amsrefs, float}
\usepackage[foot]{amsaddr}
\usepackage{bbold,stackrel}
 

\newcommand{\Z}{{\textsf{\textup{Z}}}}



\newtheorem{thm}{Theorem}

\newtheorem{cor}[thm]{Corollary}
\newtheorem{defi}[thm]{Definition}
\newtheorem{rem}[thm]{Remark}
\newtheorem{nota}[thm]{Notation}

\newtheorem{princ}[thm]{Principle}

\newtheorem{ack}[thm]{Acknowledgement}

\newtheorem*{tempo*}{Template}

\newcommand\be{\begin{equation}}
\newcommand\ee{\end{equation}} 

\usepackage{amsmath,amsfonts} 

\usepackage[applemac]{inputenc}

\def\bdefi{\begin{defi}\rm}
\def\edefi{\end{defi}}
\def\bnota{\begin{nota}\rm}
\def\enota{\end{nota}}
\def\FIVE{\Pi_{1}^{1}\text{-\textup{\textsf{CA}}}_{0}}
\def\SIX{\Pi_{2}^{1}\text{-\textsf{\textup{CA}}}_{0}}
\def\SIXk{\Pi_{k}^{1}\text{-\textsf{\textup{CA}}}_{0}}
\def\SIXko{\Pi_{k+1}^{1}\text{-\textsf{\textup{CA}}}_{0}}
\def\SIXK{\Pi_{k}^{1}\text{-\textsf{\textup{CA}}}_{0}^{\omega}}

\def\ATR{\textup{\textsf{ATR}}}
\def\STR{\Sigma_{}\text{-\textup{\textsf{TR}}}}
\def\ZFC{\textup{\textsf{ZFC}}}
\def\ZF{\textup{\textsf{ZF}}}

\def\L{\textsf{\textup{L}}}

 \def\r{\mathbb{r}}

\def\RCA{\textup{\textsf{RCA}}}
\def\({\textup{(}}
\def\){\textup{)}}
\def\WO{\textup{\textsf{WO}}}
\def\c{\textup{\textsf{c}}}

\def\RCAo{\textup{\textsf{RCA}}_{0}^{\omega}}
\def\ACAo{\textup{\textsf{ACA}}_{0}^{\omega}}
\def\WKL{\textup{\textsf{WKL}}}

\def\WWKL{\textup{\textsf{WWKL}}}
\def\bye{\end{document}}
\def\N{{\mathbb  N}}
\def\Q{{\mathbb  Q}}
\def\R{{\mathbb  R}}
\def\A{{\textsf{\textup{A}}}}

\def\SS{\textup{\textsf{S}}}

\def\MUC{\textup{\textsf{MUC}}}

\def\di{\rightarrow}

\def\asa{\leftrightarrow}

\def\ACA{\textup{\textsf{ACA}}}

\def\QFAC{\textup{\textsf{QF-AC}}}
\def\PRA{\textup{\textsf{PRA}}}
\def\NIN{\textup{\textsf{NIN}}}
\def\NBI{\textup{\textsf{NBI}}}
\def\HBC{\textup{\textsf{HBC}}}

\def\CLO{\textup{\textsf{CLO}}}
\def\OLC{\textup{\textsf{OLC}}}

\def\open{\textup{\textsf{open}}}

\def\CIT{\textup{\textsf{CIT}}}
\def\CBT{\textup{\textsf{CBT}}}

\def\u{\textup{\textsf{u}}}

\def\BOOT{\textup{\textsf{BOOT}}}

\def\MOT{\textup{\textsf{MOT}}}

\def\MON{\textup{\textsf{MON}}}
\def\AS{\textup{\textsf{AS}}}

\def\IND{\textup{\textsf{IND}}}
\def\NFP{\textup{\textsf{NFP}}}

\def\HBU{\textup{\textsf{HBU}}}

\def\RANGE{\textup{\textsf{RANGE}}}
\def\range{\textup{\textsf{range}}}
\def\SEP{\textup{\textsf{SEP}}}
\def\PIT{\textup{\textsf{PIT}}}

\def\CAU{\textup{\textsf{CAU}}}

\def\WHBU{\textup{\textsf{WHBU}}}

\def\net{\textup{\textsf{net}}}
\def\mod{\textup{\textsf{mod}}}
\def\lex{\textup{\textsf{lex}}}
\def\w{\textup{\textsf{w}}}
\def\BW{\textup{\textsf{BW}}}

\def\seq{\textup{\textsf{seq}}}

\def\LIND{\textup{\textsf{LIND}}}
\def\LIN{\textup{\textsf{LIN}}}

\def\RM{\textup{\textsf{rm}}}

\def\COH{\textup{\textsf{COH}}}

\def\MCT{\textup{\textsf{MCT}}}

\def\PST{\textup{\textsf{PST}}}
\def\eps{\varepsilon}

\def\X{\textup{\textsf{X}}}

\def\ECF{\textup{\textsf{ECF}}}
\def\PECF{\textup{\textsf{PECF}}}

\usepackage{graphicx}
\usepackage{tikz}
\usetikzlibrary{matrix, shapes.misc}

\setcounter{tocdepth}{3}
\numberwithin{equation}{section}
\numberwithin{thm}{section}

\usepackage{comment}

\begin{document}
\title{Plato and the foundations of mathematics}
\author{Sam Sanders}
\address{Department of Mathematics, TU Darmstadt}
\email{sasander@me.com}
\subjclass[2010]{03B30, 03D65, 03F35}
\keywords{reverse mathematics, nets, Moore-Smith sequences, hierarchies}
\begin{abstract}
Plato is well-known in mathematics for the eponymous foundational philosophy \emph{Platonism} based on ideal objects.  
Plato's \emph{allegory of the cave} provides a powerful visual illustration of the idea that we only have access to shadows or
reflections of these ideal objects.   An inquisitive mind might then wonder what the current foundations of mathematics, like e.g.\ \emph{Reverse Mathematics} and the associated \emph{G\"odel hierarchy}, 
are reflections of.  In this paper, we identify a hierarchy in higher-order arithmetic that maps to the Big Five of Reverse Mathematics under the canonical embedding of higher-order into second-order 
arithmetic.  Conceptually pleasing, the latter mapping replaces uncountable objects by countable `codes', i.e.\ the very practise of formalising mathematics in second-order arithmetic. 
This higher-order hierarchy can be defined in Hilbert-Bernays' \emph{Grundlagen}, the spiritual ancestor of second-order arithmetic, while the associated embedding preserves equivalences.   
Also, in contrast to Kohlenbach's hierarchy based on \emph{discontinuity}, our hierarchy can be formulated in terms of (classically valid) continuity axioms from Brouwer's intuitionistic mathematics. 
Moreover, the higher-order counterpart of sequences is provided by nets, aka Moore-Smith sequences, while the gauge integral is the correct generalisation of the Riemann integral. 
For all these reasons, we baptise our higher-order hierarchy the \emph{Plato hierarchy}. 
\end{abstract}


\maketitle
\thispagestyle{empty}

\vspace{-0.2cm}
\section{Introduction}\label{schintro}
\subsection{Plato, Platonism, and the Plato hierarchy}
The Greek philosopher Plato is perhaps best known in mathematics and related fields for the eponymous philosophy of mathematics \emph{Platonism}. 
The OED entry for Platonism reads as follows.  
\begin{quote}
the theory that mathematical objects are objective, timeless entities, independent of the physical world and the symbols that represent them.
\end{quote}
Platonism postulates the existence of ideal or abstract objects, while Plato's \emph{allegory of the cave} provides a powerful illustration of the 
idea that we only have access to very limited reflections (or: shadows) of these ideal or abstract objects, as expressed by G\"odel in \cite{gcw3}*{p.\ 323}.  Taking this view seriously, we may ask 
 the following -perhaps uncomfortable- questions: \emph{what are the current foundations of mathematics reflections of? What is the nature of this reflection?}  
In this paper, we provide precise answers to these questions for a fragment of the foundations of mathematics, namely \emph{Reverse Mathematics} and the \emph{G\"odel hierarchy}. 
We hereafter assume familiarity with these italicised notions; an introduction is in Section~\ref{lolol}.  

\smallskip

In a nutshell, we identify a hierarchy in higher-order arithmetic that maps to the Big Five of Reverse Mathematics under the canonical `$\ECF$' (see Remark \ref{ECF}) embedding of higher-order into second-order 
arithmetic.  Conceptually pleasing, $\ECF$ replaces uncountable objects by countable `codes', i.e.\ the very practise of formalising mathematics in second-order arithmetic. 
Our higher-order hierarchy can be defined in Hilbert-Bernays' \emph{Grundlagen der Mathematik} (\cites{hillebilly, hillebilly2}), the spiritual ancestor of second-order arithmetic, while the associated $\ECF$ embedding preserves equivalences.   
Moreover, the higher-order counterpart of sequences is provided by nets (aka Moore-Smith sequences; see Section \ref{intronet}), while the gauge integral (\cite{dagsamIII}*{\S3}) is the correct generalisation of the Riemann integral. 
The correct notion of `open set' shall be seen to be uncountable unions of open balls (and not characteristic functions as in \cite{dagsamVII, samnetspilot}).   Finally,
our higher-order hierarchy can be formulated in terms of \emph{classically valid} continuity axioms of intuitionistic mathematics, called \emph{neighbourhood function principle}, in contrast\footnote{It should be noted that $\ECF$ converts the existence of a discontinuous function to `$0=1$', as discussed in Remark \ref{ECF}.  The (classically valid) intuitionistic axiom in question is $\NFP$ from \cite{troeleke1}.} to Kohlenbach's higher-order hierarchy based on \emph{discontinuity} from \cite{kohlenbach2}.  In this sense, our hierarchy constitutes a `return to Brouwer' and is `orthogonal' to the usual comprehension hierarchy.  
  
\smallskip

For all the above reasons, we baptise the aforementioned higher-order hierarchy the \emph{Plato hierarchy}. 
We discuss our results in more detail in Section \ref{bootstraps}, while Section \ref{lolol} provides an introduction to Reverse Mathematics and the G\"odel hierarchy. 
\subsection{Hilbert, G\"odel, and classification}\label{lolol} 
During his invited lecture at the second \emph{International Congress of Mathematicians} of 1900 in Paris, David Hilbert presented his famous list of 23 open problems (\cite{hilbertlist}) that would have a profound influence on modern mathematics.  
Hilbert's list contains a number of \emph{foundational/logical} problems.  For instance, Problem 2 pertains to the \emph{consistency} of mathematics, i.e.\ the fact that no contradiction can be proved in mathematics.  
Hilbert later elaborated on Problem 2 by formulating \emph{Hilbert's program for the foundations of mathematics} (\cite{hilbertendlich}); 
this program calls for a proof of consistency of \emph{all} of mathematics {using only methods from so-called finitistic\footnote{The system $\textsf{PRA}$ in Figure \ref{xxy} is believed to capture Hilbert's finitistic mathematics (\cite{tait1}).} mathematics}. 

\smallskip

However, G\"odel's famous \emph{incompleteness theorems} (\cite{goe1}) are generally believed to show that Hilbert's program is \emph{impossible}: G\"odel namely showed that any logical system rich enough to express arithmetic, \emph{cannot} even prove its \emph{own} consistency, let alone that of all of mathematics. 
Moreover, one can build stronger and stronger logical systems by consecutively appending the formula expressing the system's consistency (or inconsistency).  
This proliferation of logical systems has not led to chaos, but to remarkable order and surprising regularity, as follows:  as a \emph{positive} outcome of G\"odel's \emph{negative} solution to Hilbert's program, the notion of consistency gave rise to the \emph{G\"odel hierarchy} presented in Figure \ref{xxy}: a collection of logical systems \emph{linearly} ordered via increasing consistency strength.   

\smallskip

As to its import, the G\"odel hierarchy is claimed to capture \emph{all} systems that are natural or foundationally important .   
For instance, Simpson claims the following regarding the G\"odel hierarchy and the consistency strength ordering `$<$': 
\begin{quote}
{It is striking that a great many foundational theories are linearly ordered by $<$. Of course it is possible to construct pairs of artificial theories which are incomparable under $<$. 
However, this is not the case for the ``natural'' or non-artificial theories which are usually regarded as significant in the foundations of mathematics.} (\cite{sigohi})
\end{quote}
Burgess and Koellner corroborate Simpson's claims in \cite{dontfixwhatistoobroken}*{\S1.5} and \cite{peterpeter}*{\S1.1}; the former refers to the G\"odel hierarchy as the \emph{Fundamental Series}. 
Precursors to the G\"odel hierarchy may be found in the work of Wang (\cite{wangjoke}) and Bernays (\cite{theotherguy, puben}).  
Friedman (\cite{friedber}) has studied the linear nature of the G\"odel hierarchy in great detail, including many more systems than present in Figure \ref{xxy}.
The importance of the logical systems present in Figure \ref{xxy} is discussed below the latter.    
 \begin{figure}[h]
\[
\begin{array}{lll}
&\textup{\textbf{strong}} \hspace{1.5cm}& 
\left\{\begin{array}{l}

\textup{large cardinals}\\
\vdots\\
\ZFC \\
\textsf{\textup{ZC}} \\
\textup{simple type theory}
\end{array}\right.
\\
&&\\
&\textup{\textbf{medium}} & 
\left\{\begin{array}{l}
 {\Z}_{2}\equiv \cup_{k}\SIXk\\
\vdots\\
\textup{$\Pi_{2}^{1}\textsf{-CA}_{0}$}\\
\textup{$\FIVE$ }\\
\textup{$\ATR_{0}$}  \\
\textup{$\ACA_{0}$} \\
\end{array}\right.
\\
&
\\
&\begin{array}{c}\\\textup{\textbf{weak}}\\ \end{array}& 
\left\{\begin{array}{l}
\WKL_{0} \\
\textup{$\RCA_{0}$} \\
\textup{$\textsf{PRA}$} \\
\textup{bounded arithmetic} \\
\end{array}\right.
\\
\end{array}
\]
\caption{The G\"odel hierarchy (taken from \cite{sigohi}*{p.\ 111})}\label{xxy}
\begin{picture}(0,0)
\put(30,103){$\left.\begin{array}{cc} 
~\\
~\\
~\\
~\\
~\\
~\\
\end{array}\right.$}
\end{picture}
\end{figure}
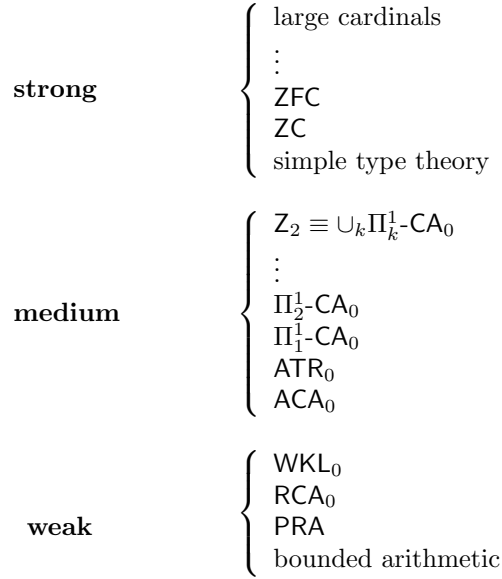~\\
We now discuss the systems in Figure \ref{xxy} and their role in mathematics and computer science.  In this light, the G\"odel hierarchy becomes a central object of study in logic to which all sub-fields contribute.
\renewcommand{\theenumi}{\roman{enumi}}
\begin{enumerate}
\item \emph{Bounded arithmetic} provides a logical framework for the study of polynomial time computation, and hence the `P versus NP' problem (\cite{buss}*{I, II}).  
\item The system $\RCA_{0}$ is the `base theory' of \emph{Reverse Mathematics} (RM hereafter; see Section \ref{prelim1}) and formalises `computable mathematics'.\label{RMKES}
\item The system $\WKL_{0}$ provides a \emph{partial} realisation of Hilbert's program (\cite{sigohi, simpsonpart}).  
The `finitistic' mathematics as in this program, is shown by Tait to be captured by the system $\textsf{PRA}$ (\cite{tait1}).
\item The system $\ATR_{0}$ is the upper limit of \emph{predicative mathematics} (\cite{sigohi,simpsonfreid}). 
\item The system $\Z_{2}$, called \emph{second-order arithmetic}, originates from the logical system $H$ used by Hilbert-Bernays in \emph{Grundlagen der Mathematik} (\cite{hillebilly,hillebilly2}).  
\item The system $\ZFC$ is \emph{Zermelo-Fraenkel set theory with the axiom of choice}, i.e.\ the standard/typical foundations of mathematics (\cite{jechh}). 
\item \emph{Large cardinal axioms} express regularities of the universe of sets and settle the truth of (certain) theorems independent of $\ZFC$ (\cite{jechh}).  
\end{enumerate}
We refer to \cites{simpson2, simpson1} for an overview of RM, and to \cite{stillebron} for an introduction.  
A brief introduction to Kohlenbach's \emph{higher-order} RM may be found in Section \ref{prelim1}.

\smallskip

Finally, the G\"odel hierarchy exhibits some remarkable \emph{robustness}: we can perform the following modifications and the hierarchy remains largely unchanged.
\begin{enumerate}
 \renewcommand{\theenumi}{\Roman{enumi}}
\item Instead of the consistency strength ordering, we can order via inclusion: Simpson claims that inclusion and consistency strength yield the same\footnote{Simpson mentions in \cite{sigohi} the caveat that e.g.\ $\PRA$ and $\WKL_{0}$ have the same first-order strength, but the latter is strictly stronger than the former.} G\"odel hierarchy as depicted in \cite{sigohi}*{Table 1} and Figure \ref{xxy}.  Some exceptional statements do fall outside of the inclusion-based G\"odel hierarchy.\label{gohi}  
\item We can replace systems with their higher-order counterparts (see e.g.\ \cite{kohlenbach2}) boasting a much richer language.  These higher-order systems generally prove the same sentences as their second-order counterpart.
\end{enumerate}
As suggested by item \eqref{gohi}, there are some examples of theorems that fall outside of the G\"odel hierarchy \emph{based on inclusion}, like \emph{special cases} of Ramsey's theorem and the axiom of determinacy from set theory (\cites{dsliceke, shoma}).
The latter axiom restricted to certain formula classes even yields a parallel hierarchy for the medium range of the G\"odel hierarchy based on inclusion.  
By the results in \cite{dagsamIII,dagsamIIIb, dagsamV, dagsamVI, dagsamVII}, basic compactness properties like the \emph{Heine-Borel theorem} for uncountable covers or \emph{Pincherle's theorem}, yield 
such parallel hierarchies \emph{in higher-order arithmetic}.  

\subsection{Plato, G\"odel, and their hierarchies}\label{bootstraps}
\subsubsection{Introduction}\label{pgintro}
We provide an overview of the results to be obtained in this paper, including the Plato hierarchy. 
The following figure provides a neat summary, while definitions may be found in Sections \ref{bootstraps1}, \ref{prelim2}, and \ref{bookstrap}.  
In this paper, we establish the hierarchy on the right-hand side of Figure \ref{kk} and associated results. 
\begin{figure}[H]
\includegraphics[width=1.05 \textwidth]{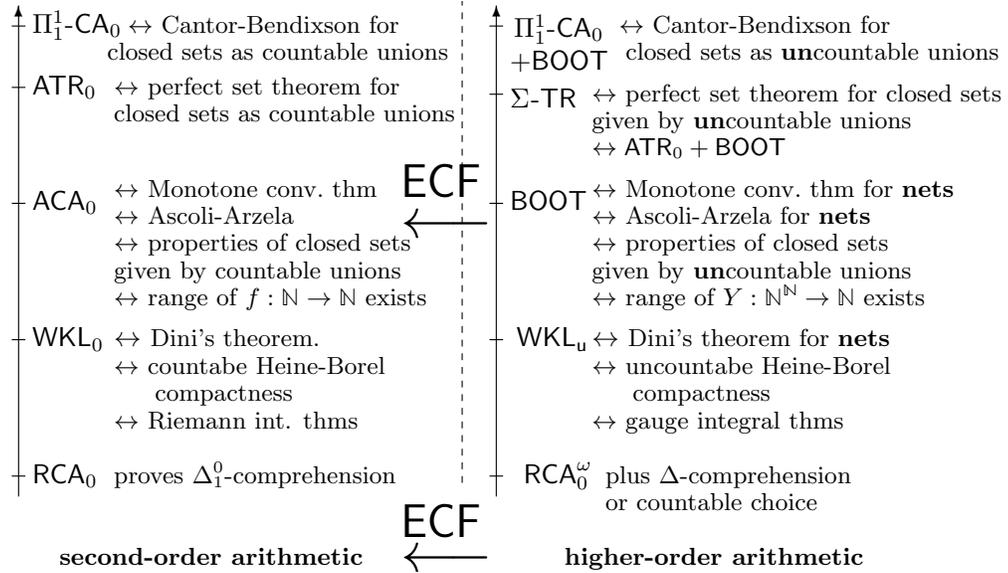}
\caption{The connection between the Plato and G\"odel hierarchies: $\ECF$ converts the right to the left hierarchy.}
\label{kk}
\end{figure}
\vspace{-0.7cm}
The systems at the same height in Figure \ref{kk} have the same first-order strength as the $\ECF$ translation converts the right-hand side into the left-hand side, taking into account the caveat in Remark \ref{unbeliever} regarding $\ECF$ (see Remark \ref{ECF} for the latter).    
In light of Figure \ref{kk}, it is no exaggeration to claim that the Big Five and the associated RM arise as special cases of higher-order 
RM via the lossy $\ECF$ translation. For this reason, the hierarchy formed by $\BOOT$ and its ilk is called the \emph{Plato hierarchy}, inspired by Plato's famous writings on ideal objects and their role in foundations of mathematics, the \emph{allegory of the cave} in particular.  

\smallskip

We note that the RM of the \emph{gauge integral} was studied in detail in \cite{dagsamIII}*{\S3}.  We briefly discuss this integral, and the associated RM-results, in Remark~\ref{woke}.
We note that nets and the gauge integral are well-known generalisations of sequences and the Riemann integral (see Section \ref{intronet} and Remark \ref{woke}).  
We also note that $\ECF$ translates the existence of discontinuous functions to `$0=1$'; since Kohlenbach's higher-order hierarchy (\cite{kohlenbach2}) makes essential use of discontinuous functions, the Plato hierarchy is seen to be markedly different.  In particular, as discussed below, the Plato hierarchy constitutes a `return to Brouwer' in a precise sense. 

\smallskip

Moreover, the axioms in the Plato hierarchy are \emph{explosive} in that combining them with comprehension axioms from Kohlenbach's hierarchy yields axioms \emph{much} stronger than the individual components.  The following remark is indispensable.  
  
\begin{rem}[The nature of $\ECF$]\label{unbeliever}\rm
We discuss the meaning of the words `$A$ is converted into $B$ by the $\ECF$-translation'.  Such statement is obviously not to be taken literally, as e.g.\ $[\BOOT]_{\ECF}$ is not verbatim $\ACA_{0}$.  
Nonetheless, $[\BOOT]_{\ECF}$ follows from $\ACA_{0}$ by noting that quantifiers over $\N^{\N}$ may be replaced by quantifiers over $\N$ in case all functionals on $\N^{\N}$ are continuous (see Theorem \ref{boef}).  Similarly, $[\HBU]_{\ECF}$ is not verbatim the Heine-Borel theorem for countable covers, but the latter does imply the former by noting that for uncountable covers represented by continuous functions, there is a trivial \emph{countable} sub-cover enumerated by $\Q$.  

\smallskip

In general, that (continuous) objects have countable representations is the very foundation of the formalisation of mathematics in $\L_{2}$, and identifying (continuous) objects and their countable representations is routinely done.  
Thus, when we say `$A$ is converted into $B$ by the $\ECF$-translation', we mean that $[A]_{\ECF}$ is about a class of continuous objects to which $B$ is immediately seen to apply, with a possible intermediate step involving representations.  
Since this kind of step forms the bedrock of (second-order) RM, it would therefore appear harmless in this context. 
\end{rem}
Taking into account the previous remark, the literature already boasts some results similar to the ones in Figure \ref{kk}.  
For instance, the RM of the Vitali covering theorem for uncountable covers, called $\WHBU$, is studied in \cite{dagsamVI}*{\S3}.  
Now, $\WHBU$ has the first-order strength of $\WWKL$ (see \cite{simpson2}*{X.1}) and the associated equivalences in measure theory fit between $\HBU/\WKL$ and $\RCAo/\RCA_{0}$ in Figure \ref{kk}.  

\smallskip

Next, it was noted above Remark \ref{unbeliever} that Kohlenbach's hierarchy from \cite{kohlenbach2} is based on \emph{discontinuity}, while the Plato hierarchy is markedly different.    
Indeed, $\BOOT$ and related principles map to quite fundamental axioms under $\ECF$, i.e.\ the replacement of higher-order objects by continuous-by-definition RM-codes.  In light of the previous, one
might expect that $\BOOT$ and related principles are somehow connected to continuity.  We shall establish that these axioms are indeed equivalent to fragments of a \emph{classically valid} continuity axiom from Brouwer's intuitionistic analysis, called \emph{neighbourhood function principle} ($\NFP$).  In particular, while higher-order comprehension does not capture the Plato hierarchy, fragments of $\NFP$ \emph{can} capture the latter.  
Thus, the Plato hierarchy is a `return to Brouwer' in the sense that we avoid discontinuous functions and work with (classically valid) axioms from intuitionistic mathematics.

\smallskip

Once the results of Figure \ref{kk} have sunk in, an obvious questions is: \emph{What is the Plato hierarchy a reflection of? What is the nature of this reflection?}
We provide a partial answer in this paper by generalising the equivalence between $\BOOT$ and the monotone convergence theorem for nets indexed by Baire space to larger index sets. 
We also provide a `translation' that reduces the new equivalence to the old one.  Thus, a more apt name perhaps would have been the \emph{Plato universe}.      

\smallskip

Finally, while $\ECF$ is clearly a `lossy' translation, results {can} also be `lifted' in the other direction in Figure \ref{kk}, i.e.\ from second-order to higher-order arithmetic: the proof of Theorem \ref{proofofconcept} establishes that the monotone convergence theorem for nets in the unit interval implies $\BOOT$ using so-called $\Delta$-comprehension.  
This proof is an almost verbatim copy of the associated second-order proof in \cite{simpson2}*{p.\ 107}, i.e.\ there is also a connection at the level of proofs. 
This is not an isolated case: many so-called recursive counterexamples give rise to reversals in RM, and these results can be lifted 
to obtain higher-order results in many cases, as studied in detail in \cite{samrecount, samFLO2}.  We caution the reader that these `lifted' proofs are not optimal, in that they generally
do not go through in the weakest possible base theory.  

\subsubsection{The inhabitants of the Plato hierarchy}\label{bootstraps1}
We discuss in detail the concepts and axioms involved with (part of) the Plato hierarchy as depicted in Figure \ref{kk}.
We shall introduce the notion of \emph{net} and the \emph{bootstrap axiom} $\BOOT$, starting from the former's historical roots.   
We also discuss our `uncountable' concept of open set to be used in the Plato hierarchy.

\smallskip

Abstraction is an integral part of mathematics, from Euclid's \emph{Elements} to the present day.  
In this spirit, E.\ H.\ Moore presented a framework called \emph{General Analysis} at the 1908 ICM in Rome (\cite{mooreICM}) that was to be a `unifying abstract theory' for various parts of analysis.  
Indeed, Moore's framework captures various limit notions in one abstract concept (\cites{moorelimit1}) and even includes a generalisation of the concept of \emph{sequence} to possibly uncountable index sets (called \emph{directed sets}), nowadays called \emph{nets} or \emph{Moore-Smith sequences}.  
These were first described in \cite{moorelimit2} and then formally introduced by Moore and Smith in \cites{moorsmidje}. 
They also established the generalisation from sequences to nets of various basic theorems due to Bolzano-Weierstrass, Dini, and Arzel\`a (\cite{moorsmidje}*{\S8-9}).
More recently, nets are central to the development of \emph{domain theory} (see \cites{gieren, gieren2,degou}), including a definition of the Scott and Lawson topologies in terms of nets.    
Moreover, sequences cannot be used in this context, as expressed in a number of places:
\begin{quote}
[\dots] clinging to ascending sequences would produce a mathematical theory that becomes rather bizarre, whence our move to directed families. (\cite{degou}*{p.\ 59})
\end{quote}
\begin{quote}
Turning to foundations, we feel that the necessity to choose chains where directed subsets are naturally available (such as in function spaces) and thus to rely on the Axiom of Choice without need, is a serious stain on this approach. (\cite{aju}*{\S2.2.4}).
\end{quote}
Thus, nets enjoy a rich history, as well as a mainstream (and essential) status in mathematics and computer science.  
Motivated by the above,  the study of nets in RM was undertaken in \cites{samcie19, samwollic19, samnetspilot}.  
We continue the RM study of nets in this paper, and the truly novel results in this paper are as follows. 
\begin{enumerate}
\item[(i)] basic convergence theorems for nets `bootstrap' themselves (or: explode) to higher levels of the hierarchy when combined with Kohlenbach's comprehension axioms from the medium range.   
\item[(ii)] basic convergence theorems for nets are equivalent to the following comprehension axiom $\BOOT$, plus potentially countable choice.  
\end{enumerate}
The axiom $\BOOT$ is defined as follows, and discussed in detail in Section \ref{bookstrap}. 
\bdefi[$\BOOT$]
$(\forall Y^{2})(\exists X^{1})(\forall n^{0})\big[ n\in X \asa (\exists f^{1})(Y(f, n)=0)    \big]. $
\edefi
Now, since uncountable index sets are first-class citizens in the theory of nets, we shall work in Kohlenbach's \emph{higher-order} RM (see Section \ref{prelim1}).  The exact formalisation of nets in higher-order RM is detailed in Definition \ref{strijker} and Section~\ref{intronet}.  
In Sections~{\ref{cauf} to \ref{powpow}}, we restrict ourselves to nets indexed by subsets of Baire space, i.e.\ part of third-order arithmetic, as such nets are already general enough to obtain our main results in Figure \ref{kk}.  
Our results for the \emph{monotone convergence theorem} $\MCT_{\net}^{C}$ for nets in Cantor space indexed by subsets of Baire space, are neatly summarised by Figure \ref{xxz}; the associated logical systems are defined in Section~\ref{prelim2}.
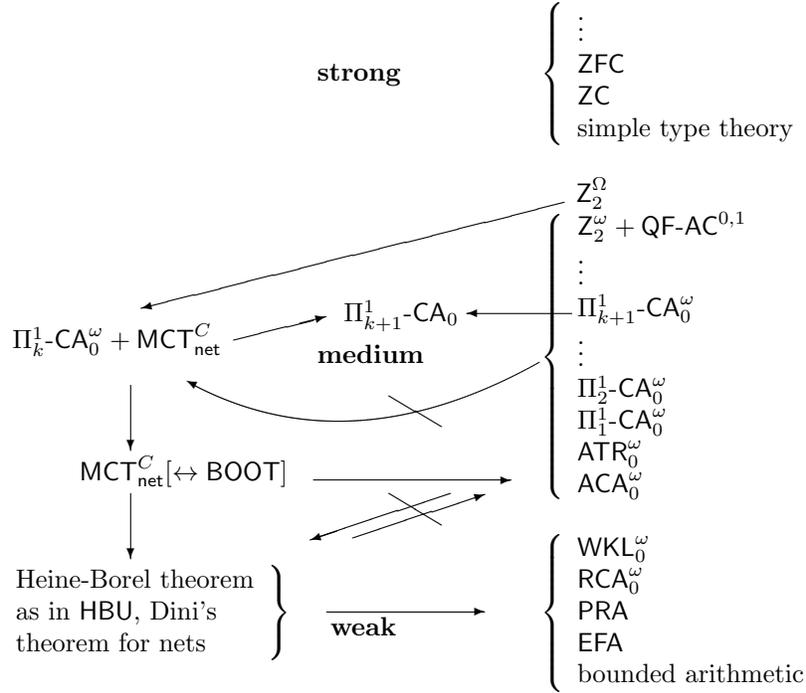
\begin{figure}[H]
\[
\begin{array}{lll}
&\textup{\textbf{strong}} \hspace{1.5cm}& 
\left\{\begin{array}{l}
\vdots\\
\ZFC \\
\textsf{\textup{ZC}} \\
\textup{simple type theory}
\end{array}\right.
\\
&&\\
  &&\quad{ {~\Z_{2}^{\Omega}}} \\
{ {\begin{array}{l}
\textup{$\SIXK +\MCT_{\net}^{C}$}\\
~\\
\end{array}}}
&\textup{\textbf{medium}} & 
\left\{\begin{array}{l}
 {\Z}_{2}^{\omega}+ \QFAC^{0,1}\\
\vdots\\
\SIXko^{\omega}\\
\vdots\\
\textup{$\Pi_{2}^{1}\textsf{-CA}_{0}^{ {\omega}}$}\\
\textup{$\FIVE^{ {\omega}}$ }\\
\textup{$\ATR_{0}^{ {\omega}}$}  \\
\textup{$\ACA_{0}^{ {\omega}}$} \\
\end{array}\right.
\\
~\\
{ {\left.\begin{array}{l}
\textup{Heine-Borel theorem}\\
\textup{as in $\HBU$, Dini's}\\
\textup{theorem for nets}\\
\end{array}\right\}}}
&\begin{array}{c}\\\textup{\textbf{weak}}\\ \end{array}& 
\left\{\begin{array}{l}
\WKL_{0}^{ {\omega}} \\
\textup{$\RCA_{0}^{ {\omega}}$} \\
\textup{$\textsf{PRA}$} \\
\textup{$\textsf{EFA}$ } \\
\textup{bounded arithmetic} \\
\end{array}\right.
\\
\end{array}
\]
\caption{The G\"odel hierarchy (based on inclusion and higher types) with a parallel branch for the medium range}\label{xxz}
\begin{picture}(250,0)
\put(16,166){ {\vector(0,-1){25}}}
\put(0, 130){$ \MCT_{\net}^{C}[\asa \BOOT]$}
\put(85,130){ {\vector(1,0){75}}}
\put(16,125){ {\vector(0,-1){25}}}
\put(100, 190){$\SIXko$}
\put(180,235){ {\vector(-4,-1){160}}}

\put(90,80){ {\vector(1,0){60}}}

\put(55,183){ {\vector(4,1){35}}}

\put(183,193){ {\vector(-1,0){40}}}

\put(43, 166){\setlength{\unitlength}{1cm}\qbezier(0,0)(2.1,-1.1)(4.6,0.3)}
\put(40,166){ {\vector(-2,1){3}}}
\put(137,150){{\line(-5,3){20}}}

\put(137,125){ {\vector(-3,-1){53}}}
\put(100,108){ {\vector(3,1){50}}}
\put(137,113){{\line(-5,3){20}}}
\end{picture}
\end{figure}~\\
Of course, Figure \ref{xxz} only provides one example and we shall obtain a number of such parallel hierarchies in Section \ref{karmichael}, based on the following theorems.
\begin{enumerate}
\item The Bolzano-Weierstrass theorem for nets (Section \ref{kawl}).
\item The existence of moduli of convergence for nets (Section \ref{cauf}). 
\item The Moore-Osgood theorem for nets (Section \ref{motkrijgen}).
\item Numerous variations including the anti-Specker property and the Arzel\`a and Ascoli-Arzel\`a theorems (Section \ref{kawl}) and Cauchy nets (Section \ref{cauf}).
\end{enumerate}
We refer to the hierarchy formed by $\SIXK+\MCT_{\net}^{C}$ for $k\geq 0$ as the \emph{bootstrap} hierarchy as the logical strength (at least $\SIXko$) is `bootstrapped' from two essential parts, 
namely $\SIXK$ and $\MCT_{\net}^{C}$ that are weak(er) in isolation.  
To obtain the aforementioned results, $\MCT_{\net}^{C}$ is shown to be equivalent to a new comprehension principle $\BOOT$, and similar results for the other convergence theorems.    

\smallskip

Next, we also study two `more general' convergence theorems, respectively for nets in function spaces and for nets involving index sets beyond Baire space.  
The former theorem `bootstraps itself', i.e.\ become stronger and stronger without the need for additional comprehension, as discussed in Section \ref{powpow}.  
The latter theorem carries us beyond second-order arithmetic, and shows that our proofs readily generalise to higher types.
Nonetheless, results associated to index sets beyond Baire space are still mapped into the lower regions of second-order arithmetic by $\ECF$, as discussed in Section \ref{moar}.
The results in the latter also provide an equivalence $\BOOT^{1}\asa \MCT_{\net}^{1}$ between two fourth-order principles; we define a lossy translation (but less lossy than $\ECF$) that converts this equivalence into $\BOOT\asa \MCT_{\net}^{C}$, i.e.\ an equivalence in third-order arithmetic.  This (partially) answers a question from the previous section, namely what the Plato hierarchy is a reflection of.  

\smallskip

After some contemplation, one observes that $\BOOT$ and $\HBU$ cannot be captured (well or at all) in terms of  the known comprehension axioms from \cites{kohlenbach2, simpson2} by Figures~\ref{kk} and~\ref{xxz}.  
However, the main question of RM dictates that we find a suitable class of set existence axioms that capture $\BOOT$ and $\HBU$.  

\smallskip

To this end, we show in Section~\ref{main3} that axioms from the Plato hierarchy are equivalent to fragments of a continuity axiom from intuitionistic analysis called \emph{special bar/Brouwer continuity} $\textsf{SBC}$ in \cite{KT} and \emph{neighbourhood function principle $\NFP$} in \cite{troeleke1}. 
Moreover, discontinuous functions are converted to `$0=1$' by $\ECF$, while the Plato hierarchy does have 
rather meaningful translations under $\ECF$.  
In this light, Kohlenbach's hierarchy from \cite{kohlenbach2} is based on \emph{discontinuity} and the Plato hierarchy `by contrast'
has a natural formulation in terms of \emph{continuity}.

\smallskip

Finally, a number of theorems in Figure \ref{kk} mentions open (and closed) sets.  
Open sets are represented in RM by countable unions of open balls and it is a natural question what the correct notion of open set in the Plato hierarchy is.  As studied in Section \ref{main2}, 
\emph{uncountable} unions of open balls are the correct notion (in contrast to open sets represented by characteristic functions as in \cites{dagsamVII, samnetspilot}), giving rise to nice equivalences and the original RM-equivalences under $\ECF$.

\smallskip

We shall study the Cantor-Bendixson theorem, the perfect set theorem, and located sets.  
We wish to point out that finding the aforementioned correct notion of open set is by no means obvious: we have previously studied (higher-order) open sets represented by characteristic functions in \cite{dagsamVII, samnetspilot}.  
Interesting results were definitely
obtained (see Remark \ref{nelta}), but the concept of open set from \cite{dagsamVII,samnetspilot} does not seem to yield nice RM-equivalences \emph{try as one might}.

\smallskip

It goes without saying that this paper constitutes a spin-off from our joint project with Dag Normann on the logical and computational properties of the uncountable. 
The interested reader may consult \cite{dagsamIII} as an introduction.  

\section{Preliminaries}\label{prelim}

We introduce \emph{Reverse Mathematics} in Section \ref{prelim1}, as well as its generalisation to \emph{higher-order arithmetic}, and the associated base theory $\RCAo$.  
We introduce some essential axioms in Section~\ref{prelim2}.  
We provide a brief introduction to nets and related concepts in Section \ref{intronet}. 
As noted in Section \ref{lolol}, we mostly study nets \emph{indexed by subsets of Baire space}, i.e.\ part of third-order arithmetic; the associated bit of set theory shall be represented in $\RCAo$ as in Definition \ref{strijker}.  

\subsection{Reverse Mathematics}\label{prelim1}
Reverse Mathematics is a program in the foundations of mathematics initiated around 1975 by Friedman (\cites{fried,fried2}) and developed extensively by Simpson (\cite{simpson2}).  
The aim of RM is to identify the minimal axioms needed to prove theorems of ordinary, i.e.\ non-set theoretical, mathematics. 

\smallskip

We refer to \cite{stillebron} for a basic introduction to RM and to \cite{simpson2, simpson1} for an overview of RM.  We expect basic familiarity with RM, but do sketch some aspects of Kohlenbach's \emph{higher-order} RM (\cite{kohlenbach2}) essential to this paper, including the base theory $\RCAo$ (Definition \ref{kase}).  
As will become clear, the latter is officially a type theory but can accommodate (enough) set theory via Definition \ref{strijker}. 

\smallskip

First of all, in contrast to `classical' RM based on \emph{second-order arithmetic} $\Z_{2}$, higher-order RM uses $\L_{\omega}$, the richer language of \emph{higher-order arithmetic}.  
Indeed, while the latter is restricted to natural numbers and sets of natural numbers, higher-order arithmetic can accommodate sets of sets of natural numbers, sets of sets of sets of natural numbers, et cetera.  
To formalise this idea, we introduce the collection of \emph{all finite types} $\mathbf{T}$, defined by the two clauses:
\begin{center}
(i) $0\in \mathbf{T}$   and   (ii)  If $\sigma, \tau\in \mathbf{T}$ then $( \sigma \di \tau) \in \mathbf{T}$,
\end{center}
where $0$ is the type of natural numbers, and $\sigma\di \tau$ is the type of mappings from objects of type $\sigma$ to objects of type $\tau$.
In this way, $1\equiv 0\di 0$ is the type of functions from numbers to numbers, and where  $n+1\equiv n\di 0$.  Viewing sets as given by characteristic functions, we note that $\Z_{2}$ only includes objects of type $0$ and $1$.    

\smallskip

Secondly, the language $\L_{\omega}$ includes variables $x^{\rho}, y^{\rho}, z^{\rho},\dots$ of any finite type $\rho\in \mathbf{T}$.  Types may be omitted when they can be inferred from context.  
The constants of $\L_{\omega}$ includes the type $0$ objects $0, 1$ and $ <_{0}, +_{0}, \times_{0},=_{0}$  which are intended to have their usual meaning as operations on $\N$.
Equality at higher types is defined in terms of `$=_{0}$' as follows: for any objects $x^{\tau}, y^{\tau}$, we have
\be\label{aparth}
[x=_{\tau}y] \equiv (\forall z_{1}^{\tau_{1}}\dots z_{k}^{\tau_{k}})[xz_{1}\dots z_{k}=_{0}yz_{1}\dots z_{k}],
\ee
if the type $\tau$ is composed as $\tau\equiv(\tau_{1}\di \dots\di \tau_{k}\di 0)$.  
Furthermore, $\L_{\omega}$ also includes the \emph{recursor constant} $\mathbf{R}_{\sigma}$ for any $\sigma\in \mathbf{T}$, which allows for iteration on type $\sigma$-objects as in the special case \eqref{special}.  Formulas and terms are defined as usual.  
One obtains the sub-language $\L_{n+2}$ by restricting the above type formation rule to produce only type $n+1$ objects (and related types of similar complexity).        
\bdefi\label{kase} 
The base theory $\RCAo$ consists of the following axioms.
\begin{enumerate}
 \renewcommand{\theenumi}{\alph{enumi}}
\item  Basic axioms expressing that $0, 1, <_{0}, +_{0}, \times_{0}$ form an ordered semi-ring with equality $=_{0}$.
\item Basic axioms defining the well-known $\Pi$ and $\Sigma$ combinators (aka $K$ and $S$ in \cite{avi2}), which allow for the definition of \emph{$\lambda$-abstraction}. 
\item The defining axiom of the recursor constant $\mathbf{R}_{0}$: For $m^{0}$ and $f^{1}$: 
\be\label{special}
\mathbf{R}_{0}(f, m, 0):= m \textup{ and } \mathbf{R}_{0}(f, m, n+1):= f(n, \mathbf{R}_{0}(f, m, n)).
\ee
\item The \emph{axiom of extensionality}: for all $\rho, \tau\in \mathbf{T}$, we have:
\be\label{EXT}\tag{$\textsf{\textup{E}}_{\rho, \tau}$}  
(\forall  x^{\rho},y^{\rho}, \varphi^{\rho\di \tau}) \big[x=_{\rho} y \di \varphi(x)=_{\tau}\varphi(y)   \big].
\ee 
\item The induction axiom for quantifier-free\footnote{To be absolutely clear, variables (of any finite type) are allowed in quantifier-free formulas of the language $\L_{\omega}$: only quantifiers are banned.} formulas of $\L_{\omega}$.
\item $\QFAC^{1,0}$: The quantifier-free Axiom of Choice as in Definition \ref{QFAC}.
\end{enumerate}
\edefi
\bdefi\label{QFAC} The axiom $\QFAC$ consists of the following for all $\sigma, \tau \in \textbf{T}$:
\be\tag{$\QFAC^{\sigma,\tau}$}
(\forall x^{\sigma})(\exists y^{\tau})A(x, y)\di (\exists Y^{\sigma\di \tau})(\forall x^{\sigma})A(x, Y(x)),
\ee
for any quantifier-free formula $A$ in the language of $\L_{\omega}$.
\edefi
We let $\IND$ be the induction axiom for all formulas in $\L_{\omega}$.  The system $\RCAo+\IND$ has the same first-order strength as Peano arithmetic.  

\smallskip

As discussed in \cite{kohlenbach2}*{\S2}, $\RCAo$ and $\RCA_{0}$ prove the same sentences `up to language' as the latter is set-based and the former function-based.  Recursion as in \eqref{special} is called \emph{primitive recursion}; the class of functionals obtained from $\mathbf{R}_{\rho}$ for all $\rho \in \mathbf{T}$ is called \emph{G\"odel's system $T$} of all (higher-order) primitive recursive functionals.  

\smallskip

We use the usual notations for natural, rational, and real numbers, and the associated functions, as introduced in \cite{kohlenbach2}*{p.\ 288-289}.  
\begin{defi}[Real numbers and related notions in $\RCAo$]\label{keepintireal}\rm~
\begin{enumerate}
 \renewcommand{\theenumi}{\alph{enumi}}
\item Natural numbers correspond to type zero objects, and we use `$n^{0}$' and `$n\in \N$' interchangeably.  Rational numbers are defined as signed quotients of natural numbers, and `$q\in \Q$' and `$<_{\Q}$' have their usual meaning.    
\item Real numbers are coded by fast-converging Cauchy sequences $q_{(\cdot)}:\N\di \Q$, i.e.\  such that $(\forall n^{0}, i^{0})(|q_{n}-q_{n+i}|<_{\Q} \frac{1}{2^{n}})$.  
We use Kohlenbach's `hat function' from \cite{kohlenbach2}*{p.\ 289} to guarantee that every $q^{1}$ defines a real number.  
\item We write `$x\in \R$' to express that $x^{1}:=(q^{1}_{(\cdot)})$ represents a real as in the previous item and write $[x](k):=q_{k}$ for the $k$-th approximation of $x$.    
\item Two reals $x, y$ represented by $q_{(\cdot)}$ and $r_{(\cdot)}$ are \emph{equal}, denoted $x=_{\R}y$, if $(\forall n^{0})(|q_{n}-r_{n}|\leq {2^{-n+1}})$. Inequality `$<_{\R}$' is defined similarly.  
We sometimes omit the subscript `$\R$' if it is clear from context.           
\item Functions $F:\R\di \R$ are represented by $\Phi^{1\di 1}$ mapping equal reals to equal reals, i.e. $(\forall x , y\in \R)(x=_{\R}y\di \Phi(x)=_{\R}\Phi(y))$.\label{EXTEN}
\item The relation `$x\leq_{\tau}y$' is defined as in \eqref{aparth} but with `$\leq_{0}$' instead of `$=_{0}$'.  Binary sequences are denoted `$f^{1}, g^{1}\leq_{1}1$', but also `$f,g\in C$' or `$f, g\in 2^{\N}$'.  Elements of Baire space are given by $f^{1}, g^{1}$, but also denoted `$f, g\in \N^{\N}$'.
\item For a binary sequence $f^{1}$, the associated real in $[0,1]$ is $\r(f):=\sum_{n=0}^{\infty}\frac{f(n)}{2^{n+1}}$.\label{detrippe}
\item Sets of type $\rho$ objects $X^{\rho\di 0}, Y^{\rho\di 0}, \dots$ are given by their characteristic functions $F^{\rho\di 0}_{X}\leq_{\rho\di 0}1$, i.e.\ we write `$x\in X$' for $ F_{X}(x)=_{0}1$. \label{koer} 
\end{enumerate}
\end{defi}
\noindent
The following special case of item \eqref{koer} is singled out, as it will be used frequently.
\bdefi[$\RCAo$]\label{strijker}
A `subset $D$ of $\N^{\N}$' is given by its characteristic function $F_{D}^{2}\leq_{2}1$, i.e.\ we write `$f\in D$' for $ F_{D}(f)=1$ for any $f\in \N^{\N}$.
A `binary relation $\preceq$ on a subset $D$ of $\N^{\N}$' is given by the associated characteristic function $G_{\preceq}^{(1\times 1)\di 0}$, i.e.\ we write `$f\preceq g$' for $G_{\preceq}(f, g)=1$ and any $f, g\in D$.
Assuming extensionality on the reals as in item \eqref{EXTEN}, we obtain characteristic functions that represent subsets of $\R$ and relations thereon.  
Using pairing functions, it is clear we can also represent sets of finite sequences (of reals), and relations thereon.  
\edefi
Next, we mention the highly useful $\ECF$-interpretation. 
\begin{rem}[The $\ECF$-interpretation]\label{ECF}\rm
The (rather) technical definition of $\ECF$ may be found in \cite{troelstra1}*{p.\ 138, \S2.6}.
Intuitively, the $\ECF$-interpretation $[A]_{\ECF}$ of a formula $A\in \L_{\omega}$ is just $A$ with all variables 
of type two and higher replaced by countable representations of continuous functionals.  Such representations are also (equivalently) called `associates' or `RM-codes' (see \cite{kohlenbach4}*{\S4}). 
The $\ECF$-interpretation connects $\RCAo$ and $\RCA_{0}$ (see \cite{kohlenbach2}*{Prop.\ 3.1}) in that if $\RCAo$ proves $A$, then $\RCA_{0}$ proves $[A]_{\ECF}$, again `up to language', as $\RCA_{0}$ is 
formulated using sets, and $[A]_{\ECF}$ is formulated using types, namely only using type zero and one objects.  
\end{rem}
In light of the widespread use of codes in RM and the common practise of identifying codes with the objects being coded, it is no exaggeration to refer to $\ECF$ as the \emph{canonical} embedding of higher-order into second-order RM.  
For completeness, we also list the following notational convention for finite sequences.  
\begin{nota}[Finite sequences]\label{skim}\rm
We assume a dedicated type for `finite sequences of objects of type $\rho$', namely $\rho^{*}$.  Since the usual coding of pairs of numbers goes through in $\RCAo$, we shall not always distinguish between $0$ and $0^{*}$. 
Similarly, we do not always distinguish between `$s^{\rho}$' and `$\langle s^{\rho}\rangle$', where the former is `the object $s$ of type $\rho$', and the latter is `the sequence of type $\rho^{*}$ with only element $s^{\rho}$'.  The empty sequence for the type $\rho^{*}$ is denoted by `$\langle \rangle_{\rho}$', usually with the typing omitted.  

\smallskip

Furthermore, we denote by `$|s|=n$' the length of the finite sequence $s^{\rho^{*}}=\langle s_{0}^{\rho},s_{1}^{\rho},\dots,s_{n-1}^{\rho}\rangle$, where $|\langle\rangle|=0$, i.e.\ the empty sequence has length zero.
For sequences $s^{\rho^{*}}, t^{\rho^{*}}$, we denote by `$s*t$' the concatenation of $s$ and $t$, i.e.\ $(s*t)(i)=s(i)$ for $i<|s|$ and $(s*t)(j)=t(|s|-j)$ for $|s|\leq j< |s|+|t|$. For a sequence $s^{\rho^{*}}$, we define $\overline{s}N:=\langle s(0), s(1), \dots,  s(N-1)\rangle $ for $N^{0}<|s|$.  
For a sequence $\alpha^{0\di \rho}$, we also write $\overline{\alpha}N=\langle \alpha(0), \alpha(1),\dots, \alpha(N-1)\rangle$ for \emph{any} $N^{0}$.  By way of shorthand, 
$(\forall q^{\rho}\in Q^{\rho^{*}})A(q)$ abbreviates $(\forall i^{0}<|Q|)A(Q(i))$, which is (equivalent to) quantifier-free if $A$ is.   
\end{nota}

\subsection{Some axioms of higher-order RM}\label{prelim2}
We introduce some functionals which constitute the counterparts of second-order arithmetic $\Z_{2}$, and some of the Big Five systems, in higher-order RM.
We use the formulation from \cite{kohlenbach2, dagsamIII}.  

\smallskip
\noindent
First of all, $\ACA_{0}$ is readily derived from:
\begin{align}\label{mu}\tag{$\mu^{2}$}
(\exists \mu^{2})(\forall f^{1})\big[ (\exists n)(f(n)=0) \di [(f(\mu(f))=0)&\wedge (\forall i<\mu(f))f(i)\ne 0 ]\\
& \wedge [ (\forall n)(f(n)\ne0)\di   \mu(f)=0]    \big], \notag
\end{align}
and $\ACA_{0}^{\omega}\equiv\RCAo+(\mu^{2})$ proves the same sentences as $\ACA_{0}$ by \cite{hunterphd}*{Theorem~2.5}.   The (unique) functional $\mu^{2}$ in $(\mu^{2})$ is also called \emph{Feferman's $\mu$} (\cite{avi2}), 
and is clearly \emph{discontinuous} at $f=_{1}11\dots$; in fact, $(\mu^{2})$ is equivalent to the existence of $F:\R\di\R$ such that $F(x)=1$ if $x>_{\R}0$, and $0$ otherwise (\cite{kohlenbach2}*{\S3}), and to 
\be\label{muk}\tag{$\exists^{2}$}
(\exists \varphi^{2}\leq_{2}1)(\forall f^{1})\big[(\exists n)(f(n)=0) \asa \varphi(f)=0    \big]. 
\ee
\noindent
Secondly, $\FIVE$ is readily derived from the following sentence:
\be\tag{$\SS^{2}$}
(\exists\SS^{2}\leq_{2}1)(\forall f^{1})\big[  (\exists g^{1})(\forall n^{0})(f(\overline{g}n)=0)\asa \SS(f)=0  \big], 
\ee
and $\FIVE^{\omega}\equiv \RCAo+(\SS^{2})$ proves the same $\Pi_{3}^{1}$-sentences as $\FIVE$ by \cite{yamayamaharehare}*{Theorem 2.2}.   The (unique) functional $\SS^{2}$ in $(\SS^{2})$ is also called \emph{the Suslin functional} (\cite{kohlenbach2}).
By definition, the Suslin functional $\SS^{2}$ can decide whether a $\Sigma_{1}^{1}$-formula as in the left-hand side of $(\SS^{2})$ is true or false.   We similarly define the functional $\SS_{k}^{2}$ which decides the truth or falsity of $\Sigma_{k}^{1}$-formulas; we also define 
the system $\SIXK$ as $\RCAo+(\SS_{k}^{2})$, where  $(\SS_{k}^{2})$ expresses that $\SS_{k}^{2}$ exists.  Note that we allow formulas with \emph{function} parameters, but \textbf{not} \emph{functionals} here.
In fact, Gandy's \emph{Superjump} (\cite{supergandy}) constitutes a way of extending $\FIVE^{\omega}$ to parameters of type two.  We identify the functionals $\exists^{2}$ and $\SS_{0}^{2}$ and the systems $\ACAo$ and $\SIXK$ for $k=0$.

\smallskip

\noindent
Thirdly, full second-order arithmetic $\Z_{2}$ is readily derived from $\cup_{k}\SIXK$, or from:
\be\tag{$\exists^{3}$}
(\exists E^{3}\leq_{3}1)(\forall Y^{2})\big[  (\exists f^{1})Y(f)=0\asa E(Y)=0  \big], 
\ee
and we therefore define $\Z_{2}^{\Omega}\equiv \RCAo+(\exists^{3})$ and $\Z_{2}^\omega\equiv \cup_{k}\SIXK$, which are conservative over $\Z_{2}$ by \cite{hunterphd}*{Cor.\ 2.6}. 
Despite this close connection, $\Z_{2}^{\omega}$ and $\Z_{2}^{\Omega}$ can behave quite differently, as discussed in e.g.\ \cite{dagsamIII}*{\S2.2}.   The functional from $(\exists^{3})$ is also called `$\exists^{3}$', and we use the same convention for other functionals.  

\smallskip

Finally, the Heine-Borel theorem states the existence of a finite sub-cover for an open cover of certain spaces. 
Now, a functional $\Psi:\R\di \R^{+}$ gives rise to the \emph{canonical cover} $\cup_{x\in I} I_{x}^{\Psi}$ for $I\equiv [0,1]$, where $I_{x}^{\Psi}$ is the open interval $(x-\Psi(x), x+\Psi(x))$.  
Hence, the uncountable cover $\cup_{x\in I} I_{x}^{\Psi}$ has a finite sub-cover by the Heine-Borel theorem; in symbols:
\be\tag{$\HBU$}
(\forall \Psi:\R\di \R^{+})(\exists  y_{1}, \dots, y_{k}\in I){(\forall x\in I)}(\exists i\leq k)(x\in I_{y_{i}}^{\Psi}).
\ee
Note that $\HBU$ is almost verbatim \emph{Cousin's lemma} (see \cite{cousin1}*{p.\ 22}), i.e.\ the Heine-Borel theorem restricted to canonical covers.  
The latter restriction does not make much of a big difference, as studied in \cite{sahotop}.
By \cite{dagsamIII, dagsamV}, $\Z_{2}^{\Omega}$ proves $\HBU$ but $\Z_{2}^{\omega}+\QFAC^{0,1}$ cannot, 
and many basic properties of the \emph{gauge integral} (\cite{zwette, mullingitover}) are equivalent to $\HBU$.  
Although strictly speaking incorrect, we sometimes use set-theoretic notation, like reference to the cover $\cup_{x\in I} I_{x}^{\Psi}$ inside $\RCAo$,  to make proofs more understandable.  
Such reference can in principle be removed in favour of formulas of higher-order arithmetic.     

\subsection{An introduction to nets}\label{intronet}
We introduce the notion of net and associated concepts.  
We first consider the following standard definition from \cite{ooskelly}*{Ch.\ 2}.
\bdefi[Nets]\label{nets}
A set $D\ne \emptyset$ with a binary relation `$\preceq$' is \emph{directed} if
\begin{enumerate}
 \renewcommand{\theenumi}{\alph{enumi}}
\item The relation $\preceq$ is transitive, i.e.\ $(\forall x, y, z\in D)([x\preceq y\wedge y\preceq z] \di x\preceq z )$.
\item For $x, y \in D$, there is $z\in D$ such that $x\preceq z\wedge y\preceq z$.\label{bulk}
\item The relation $\preceq$ is reflexive, i.e.\ $(\forall x\in D)(x \preceq x)$.  
\end{enumerate}
For such $(D, \preceq)$ and topological space $X$, any mapping $x:D\di X$ is a \emph{net} in $X$.  
We denote $\lambda d. x(d)$ as `$x_{d}$' or `$x_{d}:D\di X$' to suggest the connection to sequences.  
The directed set $(D, \preceq)$ is not always explicitly mentioned together with a net $x_{d}$.
\edefi 
Except for Section \ref{moar}, we only use directed sets that are subsets of Baire space, i.e.\ as given by Definition \ref{strijker}.   
Similarly, we only study nets $x_{d}:D\di \R$ where $D$ is a subset of Baire space.  Thus, a net $x_{d}$ in $\R$ is just a type $1\di 1$ functional with extra 
structure on its domain $D$ provided by `$\preceq$' as in Definition~\ref{strijker}, i.e.\ part of third-order arithmetic. 

\smallskip

The definitions of convergence and increasing net are of course familiar.  
\bdefi[Convergence of nets]\label{convnet}
If $x_{d}$ is a net in $X$, we say that $x_{d}$ \emph{converges} to the limit $\lim_{d} x_{d}=y\in X$ if for every neighbourhood $U$ of $y$, there is $d_{0}\in D$ such that for all $e\succeq d_{0}$, $x_{e}\in U$. 
\edefi
\bdefi[Increasing nets]
A net $x_{d}:D\di \R$ is \emph{increasing} if $a\preceq b$ implies $x_{a}\leq_{\R} x_{b} $ for all $a,b\in D$.
\edefi
\bdefi\label{clusterf}
A point $x\in X$ is a \emph{cluster point} for a net $x_{d}$ in $X$ if every neighbourhood $U$ of $x$ contains $x_{u}$ for some $u\in D$.
\edefi
The previous definition yields the following nice equivalence: a toplogical space is compact if and only if every net therein has a cluster point (\cite{zonderfilter}*{Prop.\ 3.4}).
All the below results can be formulated using cluster points \emph{only}, but such an approach does not address the question what the counterpart of `sub-sequence' for nets is. 
Indeed, an obvious next step following Definition \ref{clusterf} is to take smaller and smaller neighbourhoods around the cluster point $x$ and (somehow) say that the associated points $x_{u}$ net-converge to $x$.   
To this end, we consider the following definition, first introduced by Moore in \cite{moringpool}, and used by Kelley in \cite{ooskelly}.  
Alternative definitions involve extra requirements (see \cite{zot}*{\S7.14}), i.e.\ our definition is the weakest. 
\bdefi[Sub-nets]\label{demisti}
A \emph{sub-net} of a net $x_{d}$ with directed set $(D, \preceq_{D})$, is a net $y_{b}$ with directed set $(B, \preceq_{B})$ such that there is a function $\phi : B \di D$ such that:
\begin{enumerate}
 \renewcommand{\theenumi}{\alph{enumi}}
\item the function $\phi$ satisfies $ y_{b} = x_{\phi(b)},$
\item $(\forall d\in D)(\exists b_{0}\in B)(\forall b\succeq_{B} b_{0})(\phi(b)\succeq_{D} d)$.
\end{enumerate}
\edefi
We point out that the distinction between `$\preceq_{B}$' and `$\preceq_{D}$' is not always made in the literature (see e.g.\ \cite{zonderfilter, ooskelly}).  

\smallskip

Finally, we need to discuss the connection between nets and sequences. 
\begin{rem}[Nets and sequences]\label{memmen}\rm
First of all, $\N$ with its usual ordering yields a directed set, i.e.\ convergence results about 
nets do apply to sequences.  Of course, a \emph{sub-net} of a sequence is not necessarily a \emph{sub-sequence}, i.e.\ some care is advisable in these matters.  
Nonetheless, the Bolzano-Weierstrass theorem \emph{for nets} does for instance imply the {monotone convergence theorem} \emph{for sequences} (see \cite{samnetspilot}*{\S3.1.1}).

\smallskip

Secondly, the Bolzano-Weierstrass (or monotone convergence) theorem for \emph{countable} (or continuous on Baire space) nets can be formulated in the language of second-order arithmetic and constitutes a trivial
extension of the original.  Following Remark \ref{unbeliever}, we do not distinguish between them.   
 \end{rem}
On a historical note, Vietoris introduces the notion of \emph{oriented set} in \cite{kliet}*{p.~184}, which is exactly the notion of `directed set'.  He proceeds to prove (among others)
a version of the Bolzano-Weierstrass theorem for nets.  Vietoris also explains that these results are part of his dissertation, written in the period 1913-1919, i.e.\ during his army service for the Great War.
\section{Main results I: convergence of nets}\label{karmichael}
We introduce the axiom $\BOOT$ and related notions in Section~\ref{bookstrap}.  
In Sections~\ref{kawl}-\ref{motkrijgen}, we establish equivalences involving basic convergence theorems for nets and $\BOOT$, as laid out in Section \ref{bootstraps}.  
We point out Section \ref{liften} in which we re-obtain some of these implications by `lifting' well-known second-order results to higher-order arithmetic.  
The aforementioned results deal with nets \emph{in the unit interval} and \emph{indexed by Baire space}.  In Section \ref{wonker}, we show that interesting phenomena occur when
either of these restrictions is lifted.

\subsection{Introduction: the bootstrap hierarchy}\label{bookstrap}
The results in \cite{samwollic19, samcie19,samnetspilot} establish that basic convergence theorems for nets are extremely hard to prove, while the limits therein are similarly hard to compute.  
In this paper, we show that the first-order strength of such theorems can also `explode', i.e.\ increase dramatically when combined with certain comprehension axioms.
These results in turn give rise to the hierarchy described in Section~\ref{bootstraps}.   
To this end, we show in the next sections that various convergence theorems for nets imply, or are even equivalent to, the following higher-order comprehension axiom.  
\bdefi[$\BOOT$]
$(\forall Y^{2})(\exists X^{1})(\forall n^{0})\big[ n\in X \asa (\exists f^{1})(Y(f, n)=0)    \big]. $
\edefi
The formula in the right-hand side of $\BOOT$ is called a `$\Sigma$-formula'.
The name `$\BOOT$' derives from the word `bootstrap'.   
We refer to the hierarchy formed by $\SIXK+\BOOT$ as the \emph{bootstrap hierarchy} as the logical strength of the latter system (in casu at least $\SIXko$) is `bootstrapped' from two essential parts, namely $\SIXK$ and $\BOOT$ that are weak(er) in isolation.  
\begin{thm}\label{boef}
The system $\SIXK+\BOOT$ proves $\SIXko$.  The system $\RCAo+\BOOT$ proves the same second-order sentences as $\ACA_{0}$.  Moreover, $\RCA_{0}$ proves $\ACA_{0}\asa [\BOOT]_{\ECF}$.
\end{thm}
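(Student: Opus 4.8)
The plan is to prove the three claims in turn, in each case using the fact that $\BOOT$ applies to \emph{every} type-two functional, even those built from the comprehension functionals already present.

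For the first claim, I would bootstrap with the Suslin-type functional. Let $\varphi(n)\equiv(\forall g^{1})\psi(g,n)$ be an arbitrary $\Pi_{k+1}^{1}$-formula, so $\psi\in\Sigma_{k}^{1}$ (with arbitrary function and number parameters). Since $\SS_{k}^{2}$ decides $\Sigma_{k}^{1}$-formulas with function parameters, I would use $\lambda$-abstraction inside $\SIXK$ to define a genuine type-two functional $Y^{2}$ with $Y(g,n)=0\leftrightarrow\neg\psi(g,n)$. Applying $\BOOT$ to this $Y$ yields $X^{1}$ with $n\in X\leftrightarrow(\exists g^{1})\neg\psi(g,n)\leftrightarrow\neg\varphi(n)$; the complement of $X$, available already in $\RCAo$, is exactly $\{n:\varphi(n)\}$. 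As $\varphi$ was arbitrary this establishes $\Pi_{k+1}^{1}$-comprehension, hence $\SIXko$. The essential point is that $\BOOT$ is allowed to quantify over the $\SS_{k}^{2}$-definable $Y$, which is precisely the ``bootstrap''.

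For the second claim I would argue two inclusions. For the inclusion $\supseteq$ it suffices to derive arithmetical comprehension in $\RCAo+\BOOT$: given a $\Sigma_{1}^{0}$-formula $(\exists m)\theta_{0}(n,m)$ with $\theta_{0}$ quantifier-free (and arbitrary parameters), put $Y(f,n):=0$ if $\theta_{0}(n,f(0))$ holds and $Y(f,n):=1$ otherwise; this is a term of $\RCAo$. Since $f(0)$ ranges over all of $\N$ as $f$ ranges over Baire space, $\BOOT$ produces $X=\{n:(\exists m)\theta_{0}(n,m)\}$, giving $\Sigma_{1}^{0}$-comprehension and thus all second-order theorems of $\ACA_{0}$. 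For the conservation inclusion $\subseteq$ I would invoke the soundness of $\ECF$: if $\RCAo+\BOOT\vdash A$ then $\RCAo\vdash\BOOT\to A$, so $\RCA_{0}\vdash[\BOOT]_{\ECF}\to[A]_{\ECF}$; by the third claim $\ACA_{0}\vdash[\BOOT]_{\ECF}$, whence $\ACA_{0}\vdash[A]_{\ECF}$. For a purely second-order sentence $A$ there are no quantifiers of type two or higher to translate, so $[A]_{\ECF}$ is $A$, and we conclude $\ACA_{0}\vdash A$.

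For the third claim I would compute $[\BOOT]_{\ECF}$ directly. Under $\ECF$ the quantifier $(\forall Y^{2})$ becomes a quantifier over associates $\alpha^{1}$ of continuous functionals, and the crux is that for such $\alpha$ the matrix $(\exists f^{1})(Y_{\alpha}(f,n)=0)$ is \emph{arithmetical}, indeed $\Sigma_{1}^{0}$, in $\alpha,n$: by continuity the value $Y_{\alpha}(f,n)$ is fixed by a finite initial segment of $f$, so the formula is equivalent to the existence of a finite sequence $\sigma$ on which $\alpha$ first declares the value $0$ (every such $\sigma$ extends to a total $f$). Hence $[\BOOT]_{\ECF}$ asserts, for each associate $\alpha$, the existence of a set cut out by a $\Sigma_{1}^{0}$-condition in a function parameter, which over $\RCA_{0}$ is exactly $\Sigma_{1}^{0}$-comprehension and so equivalent to $\ACA_{0}$. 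The direction $[\BOOT]_{\ECF}\Rightarrow\ACA_{0}$ then follows by feeding in the associate of the continuous functional from the previous paragraph, and $\ACA_{0}\Rightarrow[\BOOT]_{\ECF}$ is immediate once the matrix is seen to be arithmetical. The bootstrapping computations and the equivalence $\Sigma_{1}^{0}$-comprehension $\leftrightarrow\ACA_{0}$ are routine; I expect the one genuinely delicate step to be this explicit verification that continuity collapses the function quantifier $(\exists f^{1})$ to an arithmetical number-quantifier, which rests on the associate convention and is precisely the phenomenon flagged in Remark~\ref{unbeliever}.
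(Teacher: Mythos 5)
Your proposal is correct and follows essentially the same route as the paper's proof: absorbing the $\Sigma_{k}^{1}$ matrix into $Y^{2}$ via $\SS_{k}^{2}$ and applying $\BOOT$ for the first claim, deriving $\ACA_{0}$ directly and using $\ECF$-soundness plus the third claim for conservation, and computing $[\BOOT]_{\ECF}$ via associates (where the matrix collapses to a $\Sigma_{1}^{0}$-condition) for the third. You merely spell out details the paper leaves implicit, such as the explicit continuous functional witnessing $\Sigma_{1}^{0}$-comprehension and the first-point-of-convergence reading of the associate.
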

\begin{proof}
For the first part, a $\Pi_{k+1}^{1}$-formula from $\L_{2}$ is clearly equivalent to a formula of the form $(\forall f^{1})(Y(f, n)=0) $ given $\SS_{k}^{2}$. 
For the second part, $\RCAo+\BOOT$ readily proves $\ACA_{0}$, while the $\ECF$-translation establishes that $\BOOT$ proves the same second-order sentences as $\ACA_{0}$.  
Indeed, as discussed in Remark \ref{ECF}, the $\ECF$-translation replaces the functional $Y^{2}$ in $\BOOT$ by a total associate $\alpha^{1}$, 
i.e.\ the right-hand side of $[\BOOT]_{\ECF}$ is thus $(\exists f^{1})(\exists m^{0})(\alpha(\overline{f}m, n)=1)  $.
Given $\ACA_{0}$, there is clearly a set $X$ that collects all $n$ satisfying this formula.  
\end{proof}
The previous theorem is hardly surprising given the form of $\BOOT$.  By contrast, the equivalence between $\BOOT$ and the monotone convergence theorem $\MCT_{\net}^{C}$ for \emph{nets} in Cantor space indexed by Baire space from Section \ref{kawl} is rather surprising, in our opinion.  
Moreover, the addition of \emph{moduli of convergence} for nets gives rise to an equivalence involving $\BOOT$ and countable choice in Section \ref{cauf}.  
The Moore-Osgood theorem for nets is shown to exhibit similar behaviour in Section \ref{motkrijgen}.    
By Theorem \ref{boef}, these convergence theorems give rise to the `bootstrap hierarchy' and variations.  
We note in passing that the usual `excluded middle' trick yields the cute disjunction $\ACA_{0}\asa [\BOOT\vee(\exists^{2})]$, which is converted into a tautology by $\ECF$.  

\smallskip

Following Remark \ref{unbeliever}, $\ECF$ maps equivalences like $\MCT_{\net}^{[0,1]}\asa \BOOT$, to well-known RM-equivalences, like the equivalence between arithmetical comprehension and the monotone convergence theorem \emph{for sequences} (\cite{simpson2}*{III.2}).     
We stress that the $\ECF$-translation is \emph{the} canonical embedding of higher-order into second-order arithmetic, replacing as it does higher-order objects by the codes typical of the practise of RM and second-order arithmetic.  In the other direction, Theorems \ref{proofofconcept} and \ref{nerode} show that certain second-order proofs, namely involving Specker sequences, almost verbatim translate to proofs of $\MCT_{\net}^{[0,1]}\di \BOOT$ and generalisations.  The latter proofs are however not `optimal' as they use a non-trivial extension of $\RCAo$.   

\smallskip

Finally, we study two `more complicated' convergence theorems: for nets in the function space $[0,1]\di [0,1]$ (Section \ref{powpow}) and for nets with index sets beyond Baire space (Section \ref{moar}).
Section \ref{powpow} is interesting as we obtain a convergence theorem for nets in functions spaces -still in the language of third-order arithmetic- that `bootstraps itself', i.e.\ does not need additional 
comprehension axioms (like $\SS_{k}^{2}$ or even $\exists^{2}$) to become stronger and stronger.   Section \ref{moar} shows that our proofs easily generalise to higher types, while the general case is perhaps best treated in a set-theoretic framework.  Moreover, Section \ref{moar} provides (partial) answers to the questions: \emph{What is the Plato hierarchy a reflection of? What is the nature of this reflection?}
Indeed, we provide a translation that yields the equivalence $\MCT_{\net}^{C}\asa \BOOT$ from a similar equivalence $\MCT_{\net}^{1}\asa \BOOT^{1}$ involving index sets beyond Baire space.    

\smallskip

We finish this section with some historical remarks pertaining to $\BOOT$.
\begin{rem}[Historical notes]\label{hist}\rm
First of all, the bootstrap principle $\BOOT$ is definable in Hilbert-Bernays' system $H$ from the \emph{Grundlagen der Mathematik}; see \cite{hillebilly2}*{Supplement IV}.  In particular, the functional $\nu$ from \cite{hillebilly2}*{p.\ 479} immediately\footnote{The functional $\nu$ from \cite{hillebilly2}*{p.\ 479} is such that if $(\exists f^{1})A(f)$, the function $(\nu f)A(f)$ is the lexicographically least such $f^{1}$.  The formula $A$ may contain type two parameters, as is clear from e.g.\ \cite{hillebilly2}*{p.\ 481} and other definitions.} yields the set $X$ from $\BOOT$, viewing the type two functional $Y^{2}$ as a parameter; 
the use of `unspoken higher-order parameters' is common throughout \cite{hillebilly2}*{Supplement~IV}.  Thus, the Plato and G\"odel hierarchies have the same historical roots.  

\smallskip

Secondly, Feferman's axiom \textsf{(Proj1)} from \cite{littlefef} is similar to $\BOOT$.  The former is however formulated using sets, which makes it more `explosive' than $\BOOT$ in that full $\Z_{2}$ follows when combined with $(\mu^{2})$, as noted in \cite{littlefef}*{I-12}.  The axiom \textsf{(Proj1)} only became known to us after the results in this section were finished. 
\end{rem}

\subsection{Convergence theorems for nets}\label{kawl}
We show that a number of convergence theorems for nets gives rise to $\SIXko$ in combination with $\SIXK$.  
This is done by establishing the connection between these theorems and $\BOOT$.
\subsubsection{Bolzano-Weierstrass and related theorems}\label{BWS}
In this section, we study the Bolzano-Weierstrass theorem for nets and related theorems. 
\bdefi[$\BW_{\net}^{C}$]
A net in Cantor space indexed by a subset of Baire space has a convergent sub-net.
\edefi
\begin{thm}\label{stovokor}
The system $\ACAo+\BW^{C}_{\net}$ proves $\FIVE$.
\end{thm}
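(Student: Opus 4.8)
The plan is to reduce the claim to the bootstrap axiom: I will show that $\ACAo+\BW_{\net}^{C}$ proves $\BOOT$, after which Theorem \ref{boef} does the rest, since for $k=0$ that theorem gives $\ACAo+\BOOT\vdash\FIVE^{\omega}$, and $\FIVE^{\omega}=\RCAo+(\SS^{2})$ proves the $\Pi_{1}^{1}$-comprehension scheme $\FIVE$. So the entire mathematical content lies in extracting the set $X=\{n:(\exists f^{1})(Y(f,n)=0)\}$ from a suitable net, for a fixed but arbitrary $Y^{2}$.

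The construction I would use is as follows. Let the directed set $D$ consist of (codes for) finite tuples $s=\langle f_{0},\dots,f_{k-1}\rangle$ of elements of $\N^{\N}$, which is a subset of Baire space in the sense of Definition \ref{strijker}. I order $D$ by stipulating $s\preceq t$ iff every function occurring in $s$ also occurs in $t$; this relation is clearly reflexive and transitive, and it is directed because the concatenation $s*t$ dominates both $s$ and $t$. Note that deciding $s\preceq t$ amounts to testing equalities $f_{i}=_{1}f'_{j}$, a $\Pi_{1}^{0}$-matter, so $\exists^{2}$ (available in $\ACAo$) is exactly what is needed to present $\preceq$ as a characteristic function as required; this is where the hypothesis $\ACAo$ enters the net-building step. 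On $D$ I define a net $x\colon D\to 2^{\N}$ in Cantor space by setting $x_{s}(n)=1$ iff $(\exists i<|s|)(Y(s_{i},n)=0)$, and $x_{s}(n)=0$ otherwise; this is a bounded search, hence unproblematic, and $s\preceq t$ yields $x_{s}\leq x_{t}$ pointwise, so the net is even increasing.

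Now I apply $\BW_{\net}^{C}$ to obtain a convergent sub-net $y_{b}=x_{\phi(b)}$ with limit $z\in 2^{\N}$, where convergence in Cantor space means that for each $n$ the value $y_{b}(n)$ is eventually equal to $z(n)$ along $B$. I claim $z=\chi_{X}$. For the direction $z(n)=1\Rightarrow(\exists f)(Y(f,n)=0)$, convergence gives some $b$ with $x_{\phi(b)}(n)=1$, whence some entry of $\phi(b)$ is a witness. The converse is the crucial step: if $f$ satisfies $Y(f,n)=0$, I apply the sub-net cofinality clause (b) of Definition \ref{demisti} to $d=\langle f\rangle\in D$, obtaining $b_{0}$ with $\phi(b)\succeq_{D}\langle f\rangle$ for all $b\succeq_{B}b_{0}$, i.e.\ $f$ occurs in $\phi(b)$, so $y_{b}(n)=1$ for all such $b$; combined with eventual agreement with $z(n)$ and the directedness of $B$, this forces $z(n)=1$. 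Hence $z$ is the characteristic function of $X$, so $X$ exists and $\BOOT$ follows. Chaining this with Theorem \ref{boef} yields $\ACAo+\BW_{\net}^{C}\vdash\FIVE$.

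The main obstacle is precisely the converse direction above: it is here that the passage to \emph{nets} (rather than sequences) is indispensable, since a convergent subsequence need not see any prescribed witness $\langle f\rangle$ cofinally, whereas clause (b) for sub-nets guarantees that it does. A secondary point requiring care is the faithful formalisation of the whole apparatus in $\ACAo$: coding finite tuples over $\N^{\N}$ as a genuine subset of Baire space, presenting $\preceq$ as a type-$(1\times1)\to0$ characteristic function via $\exists^{2}$, and reading off the limit $z$ as a type-$1$ object supplied by $\BW_{\net}^{C}$. All of these are routine but must be stated, as they are exactly the reasons $\ECF$ collapses this third-order argument to the familiar second-order equivalence between $\BW$ for sequences and $\ACA_{0}$.
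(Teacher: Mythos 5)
Your proposal is correct and follows essentially the same route as the paper: the directed set of finite sequences in Baire space ordered by inclusion, the net $x_{s}(n)=1$ iff some entry of $s$ witnesses $Y(\cdot,n)=0$, and the contradiction obtained by combining eventual agreement with the limit, the cofinality clause of Definition \ref{demisti}, and directedness of $B$ are exactly the paper's construction; the paper merely inlines the reduction (converting a $\Sigma_{1}^{1}$-formula to $(\exists g^{1})(Y(g,n)=0)$ via $\exists^{2}$) where you factor it through $\BOOT$ and Theorem \ref{boef}. One small reading note: for $k=0$, Theorem \ref{boef} yields the second-order system $\FIVE$ directly (the paper's `$\SIXko$' denotes $\Pi_{k+1}^{1}\text{-}\textsf{CA}_{0}$, not its higher-order variant $\FIVE^{\omega}$, and $\ACAo+\BOOT$ does not obviously give the Suslin \emph{functional}), but this weaker, correct reading is all your argument needs.
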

\begin{proof}
A $\Sigma_{1}^{1}$-formula $\varphi(n)\in \L_{2}$ is readily seen to be equivalent to a formula $(\exists g^{1})(Y(g, n)=0)$ for $Y^{2}$ defined in terms of $\exists^{2}$.
Let $D$ be the set of finite sequences in Baire space and let $\preceq_{D}$ be the inclusion ordering, i.e.\ $w\preceq_{D}v$ if $(\forall i<|w|)(\exists j<|v|)(w(i)=_{1}v(j))$.  Now define the net $f_{w}:D\di C $ as $f_{w}:=\lambda k.F(w, k)$ where $F(w, k)$ is $1$ if $(\exists i<|w|)(Y(w(i), k)=0)$, and zero otherwise. 
Using $\BW_{\net}^{C}$, let $\phi:B\di D$ be such that $\lim_{b}f_{\phi(b)}=f$.
We now establish this equivalence:
\be\label{nogisnekeer}
(\forall n^{0})\big[(\exists g^{1})(Y(g,n)=0)\asa f(n)=1\big].
\ee
For the reverse direction, note that for fixed $n_{0}$, if $Y(g, n_{0})>0$ for all $g^{1}$, then $f_{w}(n_{0})=0$ for any $w\in D$.  The definition of limit then implies $f(n_{0})=0$, i.e.\ we have established (the contraposition of) the reverse direction.  
For the forward direction in \eqref{nogisnekeer}, suppose there is some $n_{0}$ such that $(\exists g^{1})(Y(g,n_{0})=0)\wedge f(n_{0})=0$.
Now, $\lim_{b}f_{\phi(b)}=f$ implies that there is $b_{0}\in B$ such that for $b\succeq_{B} b_{0}$, we have $\overline{f_{\phi(b)}}n_{0}=\overline{f}n_{0}$, i.e.\ $f_{\phi(b)}(n_{0})=0$ for $b\succeq_{B} b_{0}$.  
Let $g_{0}^{1}$ be such that $Y(g_{0}, n_{0})=0$, and use the second item in Definition \ref{demisti} for $d=\langle g_{0}\rangle$, i.e.\ there is $b_{1}\in B$ such that $\phi(b)\succeq_{D} \langle g_{0}\rangle$ for any $b\succeq_{B} b_{1}$.
Now let $b_{2}\in B$ be such that $b_{2}\succeq_{B} b_{0}, b_{1}$ as provided by Definition \ref{nets}.  On one hand, $b_{2}\succeq_{B}b_{1}$ implies that $\phi(b_{2})\succeq_{D}\langle g_{0}\rangle$, and hence $f_{\phi(b_{2})}(n_{0})=F(\phi(b_{2}), n_{0})=1$, as $g_{0} $ is in the finite sequence $ \phi(b_{2})$ by the definition of $\preceq_{D}$.  On the other land, $b_{2}\succeq b_{0}$ implies that $f_{\phi(b_{2})}(n_{0})=f(n_{0})=0$, a contradiction.  
Hence, \eqref{nogisnekeer} follows, yielding $\{n: \varphi(n)   \}$, as required by $\FIVE$.
\end{proof}
The previous theorem is elegant, but hides an important result involving the \emph{monotone convergence theorem for nets}. 
 As to its provenance, the latter theorem can be found in e.g.\ \cite{obro}*{p.\ 103}, but is also implicit in domain theory (\cites{gieren, gieren2}).  
Indeed, the main objects of study of domain theory are \emph{dcpos}, i.e.\ directed-complete posets, and an increasing net converges to its supremum in a dcpo.
\bdefi[$\MCT_{\net}^{C}$]
Any increasing net in $C$ indexed by a subset of $\N^{\N}$ converges in $C$.
\edefi
Note that we use the \emph{lexicographic ordering} $\leq_{\lex}$ on $C$ in the previous definition, i.e.\ $f\leq_{\lex}g$ if either $f=_{1}g$ or there is $n^{0}$ such that $\overline{f}n=\overline{g}n$ and $f(n+1)<g(n+1)$.
\begin{thm}\label{bongra}
The system $\RCAo$ proves that $\MCT_{\net}^{C}\asa \BOOT$.
\end{thm}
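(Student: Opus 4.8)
The plan is to prove both implications inside $\RCAo$, exploiting the fact that on $C$ coordinatewise domination implies the lexicographic order $\leq_{\lex}$, so that a net which is pointwise increasing is automatically $\leq_{\lex}$-increasing. For the forward direction $\MCT_{\net}^{C}\di \BOOT$, I would reuse essentially the net from the proof of Theorem~\ref{stovokor}. Given $Y^{2}$, let $D$ be the set of finite sequences of elements of $\N^{\N}$ (coded inside $\N^{\N}$ as in Definition~\ref{strijker}) ordered by $\preceq_{D}$ as there, and define $f_{w}:=\lambda n.F(w,n)$ where $F(w,n)=1$ iff $(\exists i<|w|)(Y(w(i),n)=0)$ and $0$ otherwise. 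Since $F(w,n)$ is a bounded search in $Y$, this net exists in $\RCAo$ with no appeal to $\exists^{2}$. If $w\preceq_{D}v$ then $\{n:F(w,n)=1\}\subseteq\{n:F(v,n)=1\}$, so $f_{w}\leq f_{v}$ coordinatewise and hence $f_{w}\leq_{\lex}f_{v}$; thus the net is $\leq_{\lex}$-increasing. Applying $\MCT_{\net}^{C}$ yields a limit $f\in C$, and I would set $X:=\{n:f(n)=1\}$. The equivalence $(\forall n)[n\in X\asa(\exists g)(Y(g,n)=0)]$ is then verified exactly as in \eqref{nogisnekeer}: if no $g$ witnesses $n_{0}$ then $F(w,n_{0})=0$ for all $w$, so net-convergence forces $f(n_{0})=0$; and if $Y(g_{0},n_{0})=0$ while $f(n_{0})=0$, then net-convergence gives $d_{0}$ with $F_{e}(n_{0})=0$ for $e\succeq_{D}d_{0}$, whereas directedness (Definition~\ref{nets}) supplies some $e\succeq_{D}d_{0},\langle g_{0}\rangle$ with $F_{e}(n_{0})=1$, a contradiction.

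The reverse direction $\BOOT\di \MCT_{\net}^{C}$ is where the real work lies. Given a $\leq_{\lex}$-increasing net $x_{d}:D\di C$ with $D\subseteq\N^{\N}$, I would first use $\BOOT$ to form the tree of achieved initial segments. Coding binary strings $\sigma$ as numbers, let $Y(d,\sigma)=0$ iff $d\in D$ and $\overline{x(d)}|\sigma|=\sigma$; then $\BOOT$ produces $A:=\{\sigma:(\exists d)(Y(d,\sigma)=0)\}$, which is closed under initial segments. I would then define the candidate limit $y\in C$ by primitive recursion as the rightmost path through $A$, namely $y(n)=1\asa (\overline{y}n*\langle1\rangle)\in A$; a short induction shows $\overline{y}n\in A$ for every $n$.

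The main obstacle is proving $\lim_{d}x_{d}=y$, i.e.\ that for each $n$ one has $\overline{x_{e}}n=\overline{y}n$ for all sufficiently large $e$. I would establish two halves. First, $\overline{x_{e}}n\leq_{\lex}\overline{y}n$ for every $e$: since $\overline{x_{e}}n\in A$, if some length-$n$ string $\tau\in A$ satisfied $\tau>_{\lex}\overline{y}n$, then at the first bit $j$ where $\tau(j)=1>0=y(j)$ we would get $\overline{y}j*\langle1\rangle\in A$ by tree-closure, forcing $y(j)=1$, a contradiction. Second, $\overline{x_{e}}n\geq_{\lex}\overline{y}n$ eventually: since $\overline{y}n\in A$ it is achieved by some $x_{d^{*}}$, and for $e\succeq_{D}d^{*}$ the relation $x_{e}\geq_{\lex}x_{d^{*}}$ gives $\overline{x_{e}}n\geq_{\lex}\overline{y}n$. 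Combining the two halves yields $\overline{x_{e}}n=\overline{y}n$ for all $e\succeq_{D}d^{*}$, which is exactly net-convergence to $y$. I expect the delicate points to be the bookkeeping that $\leq_{\lex}$ on infinite sequences transfers correctly to the length-$n$ truncations, and checking that folding ``$d\in D$'' into $Y$ keeps the defining formula of $A$ a genuine $\Sigma$-formula so that $\BOOT$ applies; both are routine once the tree $A$ is in hand. No choice or $\exists^{2}$ is needed in either direction, consistent with the claim that $\RCAo$ suffices.
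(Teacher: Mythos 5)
Your reverse direction ($\BOOT\di\MCT_{\net}^{C}$) is correct and in fact more elementary than the paper's: by testing only the finite, decidable condition $\overline{x_{d}}|\sigma|=\sigma$ you keep the defining formula of the tree $A$ a genuine $\Sigma$-formula without invoking $(\exists^{2})$, whereas the paper's proof of this direction decides $(\exists d\in D)(x_{d}\geq_{\lex}\sigma*00\dots)$ using $\exists^{2}$ and defines the limit bit by bit. Your rightmost-path argument and the transfer of $\leq_{\lex}$ to length-$n$ truncations are sound.

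The forward direction, however, has a genuine gap, and it is exactly the point the paper flags: the directed ordering $\preceq_{D}$ on finite sequences of elements of Baire space is \emph{not} available in $\RCAo$. By Definition \ref{strijker}, a binary relation on $D\subseteq\N^{\N}$ must be given by a characteristic function $G_{\preceq}$, and $w\preceq_{D}v$ involves the clauses $w(i)=_{1}v(j)$, which are $\Pi_{1}^{0}$-conditions on elements of Baire space. Indeed, $\langle f\rangle\preceq_{D}\langle g\rangle$ holds if and only if $f=_{1}g$, so any characteristic function for $\preceq_{D}$ is discontinuous and yields $(\exists^{2})$ by \cite{kohlenbach2}*{\S3}. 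Thus, while the net mapping $w\mapsto f_{w}$ is indeed primitive recursive in $Y$ as you say, you cannot even form the directed set $(D,\preceq_{D})$ to which $\MCT_{\net}^{C}$ is to be applied; your claim that ``no choice or $\exists^{2}$ is needed in either direction'' fails here, and your argument only establishes $\MCT_{\net}^{C}\di\BOOT$ over $\RCAo+(\exists^{2})$. The paper closes this gap with a case split on the law of excluded middle $(\exists^{2})\vee\neg(\exists^{2})$: in the case $(\exists^{2})$ your construction goes through, while in the case $\neg(\exists^{2})$ all type-two functionals are continuous, so quantifiers over $D$ may be replaced by quantifiers over finite sequences $\sigma^{0^{*}}$ with $\sigma*00\dots\in D$, whence $\MCT_{\net}^{C}$ collapses to the monotone convergence theorem for sequences, $\BOOT$ collapses to (essentially) $\ACA_{0}$ as in Theorem \ref{boef}, and the equivalence follows from \cite{simpson2}*{III.2}. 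Your proof needs this case distinction (or some substitute for the undecidable ordering) to be a proof over $\RCAo$.
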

\begin{proof}
We first prove the equivalence assuming $(\exists^{2})$.
For the forward direction, fix some $Y^{2}$ and consider $f_{w}$ from the proof of the theorem.    Note that $v\preceq_{D}w \di f_{v}\leq_{\lex}f_{w}$, i.e.\ this net is indeed increasing.  
Let $f=\lim_{w}f_{w}$ be the limit provided by $\MCT_{\net}^{C}$ and verify that \eqref{nogisnekeer} also holds in this case.  In this way, we obtain the equivalence required by $\BOOT$.  
Note that $\exists^{2}$ is necessary for defining `$\preceq_{D}$'.  

\smallskip

For the reverse direction, let $x_{d}:D\di C$ be an increasing net in $C$ and consider the formula $(\exists d\in D)(x_{d}\geq_{\lex} \sigma*00\dots)$, where $\sigma^{0^{*}}$ is a finite binary sequence. 
The latter formula is equivalent to a formula of the form $(\exists g^{1})(Y(g, n)=0)$ where $Y^{2}$ is defined in terms of $\exists^{2}$ and $n$ codes a finite binary sequence.  
To define the limit $f$ required by $\MCT_{\net}^{C}$, $f(0)$ is $1$ if  $(\exists d\in D)(x_{d}\geq_{\lex} 100\dots)$ and zero otherwise.  
One then defines $f(n+1)$ in terms of $\overline{f}n$ in the same way, yielding the equivalence $\MCT_{\net}^{C}\asa \BOOT$ given $(\exists^{2})$.

\smallskip

Next, we establish the theorem assuming $\neg(\exists^{2})$, which implies that all functionals on Baire space are continuous (see \cite{kohlenbach2}*{\S3}).  
In this light, $\BOOT$ reduces to (essentially) $\ACA_{0}$ by the proof of Theorem \ref{boef}.  
Similarly, any formula involving a type one quantifier $(\exists d\in D)(\dots x_{d}\dots)$ may be equivalently replaced by $(\exists \sigma^{0^{*}})(\sigma*00\dots \in D \wedge \dots x_{\sigma*00\dots}\dots)$, which now involves a type zero quantifier (modulo coding).
Thus, $\MCT_{\net}^{C}$ also reduces to (essentially) the 
monotone convergence theorem for sequences, and the latter is equivalent to $\ACA_{0}$ by \cite{simpson2}*{III.2}. 
Hence, we have proved the theorem in both cases and the law of excluded middle $(\exists^{2})\vee \neg(\exists^{2})$ finishes the proof.  
\end{proof}
We can formulate the previous theorem in terms of classical computability theory as follows; 
let `$\leq_{T}$' be the usual Turing reducibility relation and let $J(Y)$ be the set $\{n: (\exists f^{1})(Y(f, n)=0)\}$.  
The forward direction of Theorem \ref{bongra} becomes:
\[\textstyle
\textup{for any $Y^{2}$, there is a net $x_{d}:D\di I $ such that $x=\lim_{d}x_{d}\di  J(Y)\leq_{T}x$.}
\]
Note that the net $x_{d}$ can be defined in terms of $Y^{2}$ via a term of G\"odel's $T$.
Moreover, $\ECF$ converts this statement into actual classical computability theory. 
%
\smallskip

Let $\MCT_{\seq}^{C}$ be the monotone convergence theorem for sequences in $C$, which is equivalent to $\ACA_{0}$ by \cite{simpson2}*{III.2}.  
The $\ECF$-translation converts $\MCT_{\net}^{C}\asa \BOOT$ into $\MCT_{\seq}^{C}\asa \ACA_{0}$ following Remark \ref{unbeliever}.  
Indeed, if a net $x_{d}$ is continuous in $d$, then $(\exists d\in D)(x_{d}> y)$ is equivalent to a $\Sigma_{1}^{0}$-formula 
and the `usual' interval halving proof goes through for $[\MCT_{\net}^{C}]_{\ECF}$ given $\ACA_{0}$. 
\begin{cor}\label{stovokor2}
The systems $\SIXK+\BW^{C}_{\net}$ and $\SIXK+\MCT^{C}_{\net}$ prove $\SIXko$ \($k\geq 0$\).  The system $\Z_{2}^{\Omega}$ proves $\MCT_{\net}^{C}$.  
\end{cor}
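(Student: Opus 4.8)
The plan is to read each clause of the corollary off the three preceding results, so that the whole statement reduces to bookkeeping around $\BOOT$. For the $\MCT_{\net}^{C}$ part of the first sentence I would simply chain Theorems \ref{bongra} and \ref{boef}: since $\RCAo\vdash \MCT_{\net}^{C}\asa \BOOT$, the system $\SIXK+\MCT_{\net}^{C}$ proves $\BOOT$, and Theorem \ref{boef} then upgrades $\SIXK+\BOOT$ to $\SIXko$. No further work is needed here.

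For the $\BW_{\net}^{C}$ part, the key observation is that the net construction in the proof of Theorem \ref{stovokor} never used that $Y^{2}$ comes from a $\Sigma_{1}^{1}$-formula: it used $\exists^{2}$ only to form the inclusion ordering $\preceq_{D}$, whose defining clause $w(i)=_{1}v(j)$ is $\Pi_{1}^{0}$, while the characteristic function $F$ is primitive recursive in $Y$. I would therefore re-run that argument for an \emph{arbitrary} $Y^{2}$: the net $f_{w}$ is increasing, $\BW_{\net}^{C}$ supplies a convergent sub-net with limit $f$, and \eqref{nogisnekeer} shows that $f$ is exactly the characteristic function of $\{n:(\exists g^{1})(Y(g,n)=0)\}$, which is what $\BOOT$ demands. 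Since $\SIXK$ proves $\exists^{2}$ for every $k\geq 0$ (recall $\SS_{0}^{2}\equiv \exists^{2}$, while for $k\geq 1$ the functional $\SS_{k}^{2}$ decides arithmetical formulas a fortiori), this yields $\SIXK+\BW_{\net}^{C}\vdash \BOOT$, whence $\SIXko$ by Theorem \ref{boef}.

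For the last sentence I would show directly that $\Z_{2}^{\Omega}\equiv \RCAo+(\exists^{3})$ proves $\BOOT$. Given $Y^{2}$, the functional $E^{3}$ from $(\exists^{3})$ satisfies $E(\lambda f.\,Y(f,n))=0 \asa (\exists f^{1})(Y(f,n)=0)$ for each $n$, so $\lambda$-abstraction inside $\RCAo$ produces the characteristic function of the set $X$ required by $\BOOT$. Theorem \ref{bongra} then converts $\BOOT$ back into $\MCT_{\net}^{C}$, giving $\Z_{2}^{\Omega}\vdash \MCT_{\net}^{C}$.

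The only genuine point to verify, rather than a real obstacle, is the claim that the proof of Theorem \ref{stovokor} survives the passage from $\Sigma_{1}^{1}$-definable $Y$ to arbitrary $Y^{2}$, i.e.\ that $\BW_{\net}^{C}$ delivers the full comprehension of $\BOOT$ and not merely $\FIVE$. Inspecting that proof, the equivalence \eqref{nogisnekeer} is established with no appeal whatsoever to the provenance of $Y$, so the generalisation is routine; all three of $\BW_{\net}^{C}$, $\MCT_{\net}^{C}$, and $(\exists^{3})$ thereby reduce to $\BOOT$ through the machinery already in place.
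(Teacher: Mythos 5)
Your proof is correct and is essentially the paper's own argument: the paper's one-line proof likewise reduces all three claims to $\BOOT$ (via Theorems \ref{bongra} and \ref{boef}, plus the fact that $(\exists^{3})$ trivially proves $\BOOT$), with the $\BW_{\net}^{C}$ case resting implicitly on exactly the observation you make explicit, namely that the proof of Theorem \ref{stovokor} establishes \eqref{nogisnekeer} for arbitrary $Y^{2}$ and hence yields $\BOOT$. Your filled-in details (that $\exists^{2}$ is needed only for $\preceq_{D}$, and that $\SS_{k}^{2}$ subsumes $\exists^{2}$ for all $k\geq 0$) are precisely what the paper leaves tacit.
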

\begin{proof}
By Theorem \ref{boef} and the fact that $(\exists^{3})$ trivially proves $\BOOT$.
\end{proof}
By the second part of the corollary, the power, strength, and hardness of $\MCT_{\net}^{C}$ have nothing to do with the Axiom of Choice.  
We actually study the connection between the latter and the convergence of nets in Section \ref{cauf}.  

\smallskip

Of course, there is nothing special about Cantor space in the previous results.  Let $\BW_{\net}^{[0,1]}$ and $\MCT_{\net}^{[0,1]}$ be respectively the Bolzano-Weierstrass and monotone convergence theorem for nets in the unit interval indexed by subsets of Baire space.  
\begin{cor}\label{corkorcor}
The system $\RCAo+\IND+\X$ proves $\BOOT$, for $\X$ equal to either $\BW_{\net}^{[0,1]}$ or $\MCT_{\net}^{[0,1]}$.  
\end{cor}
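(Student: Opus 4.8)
The plan is to run, for either choice of $\X$, the forward-direction argument of Theorem \ref{bongra} but with the relevant net taking values in the ternary Cantor set inside $[0,1]$, so that the single real limit still pins down the set demanded by $\BOOT$. As in Theorem \ref{bongra} I would split on $(\exists^{2})\vee\neg(\exists^{2})$. In the case $\neg(\exists^{2})$ all functionals on Baire space are continuous, so $\X$ collapses to the monotone convergence resp.\ Bolzano--Weierstrass theorem for \emph{sequences} in $[0,1]$; both are equivalent to $\ACA_{0}$ by \cite{simpson2}*{III.2}, and in this continuous regime $\BOOT$ itself reduces to $\ACA_{0}$ by the proof of Theorem \ref{boef}, so $\BOOT$ follows. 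It remains to treat $(\exists^{2})$, where $\exists^{2}$ is available both to define the ordering $\preceq_{D}$ and to perform arithmetical comprehension.

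Fix an arbitrary $Y^{2}$ and form the net $f_{w}\colon D\di C$ from the proof of Theorem \ref{stovokor}, where $D$ is the set of finite sequences from Baire space under the inclusion ordering $\preceq_{D}$ and $f_{w}(k)=1$ iff $(\exists i<|w|)(Y(w(i),k)=0)$. The feature I would exploit is that $f_{w}$ is \emph{pointwise increasing}: each coordinate $f_{w}(k)$ passes from $0$ to $1$ exactly when $w$ first contains a witness, so $\lim_{w}f_{w}(k)=1$ iff $(\exists g^{1})(Y(g,k)=0)$. Instead of encoding $f_{w}$ into $[0,1]$ by the information-losing binary map $\r$ (which collapses dyadic rationals and so cannot recover the set), I would use the ternary embedding $\kappa(h):=\sum_{k=0}^{\infty}\tfrac{2h(k)}{3^{k+1}}$, whose image is the Cantor set, which is injective, and whose digits can be read off with finite precision because distinct Cantor-set values are separated by a definite gap. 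Since $f_{w}$ is pointwise increasing, $\kappa(f_{w})$ is an increasing net in $[0,1]$.

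Applying $\X$ then yields either the limit ($\MCT_{\net}^{[0,1]}$) or the limit of a convergent sub-net along some $\phi\colon B\di D$ ($\BW_{\net}^{[0,1]}$); call it $y\in[0,1]$. Using pointwise monotonicity of $f_{w}$ together with the cofinality clause (b) of Definition \ref{demisti}, I would verify coordinatewise that the selected net satisfies $f_{\phi(b)}(k)\to\chi_{X}(k)$, where $X:=\{k:(\exists g^{1})(Y(g,k)=0)\}$, exactly as in the derivation of \eqref{nogisnekeer}; hence $y=\kappa(\chi_{X})$ is a genuine Cantor-set point. As $(\exists^{2})$ gives $\ACA_{0}$, the set $X':=\{n: \text{the $n$-th ternary digit of } y \text{ equals } 2\}$ exists, and the gap property makes this decoding correct. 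The instance of $\BOOT$ we want is then the claim $X'=X$, i.e.\ $(\forall n)[\text{$n$-th digit of } y =2 \asa (\exists g^{1})(Y(g,n)=0)]$. (I would deliberately \emph{not} route the $\BW$ case through $\BW_{\net}^{C}$ and Theorem \ref{stovokor}, since that yields $\FIVE$, whose higher-order form does not deliver $\BOOT$: the Suslin functional accepts function but not functional parameters. The direct construction above works uniformly for both $\X$.)

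The main obstacle, and the reason full induction $\IND$ is consumed, is precisely this last verification. Proving that the recovered digit of $y$ at each $n$ agrees with $\chi_{X}(n)$ requires an induction on $n$ showing that the remainder $y-\sum_{j<n}\chi_{X}(j)\tfrac{2}{3^{j+1}}$ stays in the correct Cantor-set sub-interval; but the matching predicate contains the function quantifier $(\exists g^{1})$, so it is $\Sigma^{1}_{1}$ rather than arithmetical, and the induction is covered neither by the quantifier-free induction of $\RCAo$ nor by the arithmetical induction supplied by $(\exists^{2})$. This is exactly where $\IND$ enters, and it explains why the Cantor-space version of Theorem \ref{bongra} needs no such hypothesis: net convergence in $C$ is already coordinatewise, so no real has to be decoded. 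Finally, the law of excluded middle $(\exists^{2})\vee\neg(\exists^{2})$ recombines the two cases and completes the derivation of $\BOOT$ for both $\X=\BW_{\net}^{[0,1]}$ and $\X=\MCT_{\net}^{[0,1]}$.
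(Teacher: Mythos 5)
Your argument is correct and does prove the corollary, but it is genuinely different from the paper's proof. The paper keeps the binary coding map $\r$ from Definition \ref{keepintireal} (so $x_{w}:=\r(\lambda k.F(w,k))$) and decodes the limit $x$ with the functional $\eta$; since $\r$ is not injective, the proof splits into two cases. When $x$ has a unique binary representation, the analogue \eqref{romane} of \eqref{nogisnekeer} is verified directly, with single witnesses, exactly as in Theorem \ref{stovokor}. When $x$ is dyadic the decoding breaks down, and \emph{that} is where $\IND$ enters: it supplies the finite-comprehension statement that for every $m$ there is $w_{0}$ of length $m$ with $(\forall i<m)\big[(\exists g^{1})(Y(g,i)=0)\di Y(w_{0}(i),i)=0\big]$, which, combined with the observation that in the dyadic case the truth values of $(\exists g^{1})(Y(g,n)=0)$ are eventually constant in $n$, still yields the set required by $\BOOT$. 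Your ternary embedding $\kappa$ removes the non-injectivity at the source, so the degenerate case never arises; that is a cleaner decomposition, and your refusal to route the $\BW$ case through Theorem \ref{stovokor} (which only yields $\FIVE$) is also well taken.

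Where your write-up goes astray is the diagnosis of why $\IND$ is consumed. No induction along a $\Sigma^{1}_{1}$ predicate is needed for the digit verification: for each fixed $n$, the Cantor-set points whose $n$-th ternary digit is $2$ (resp.\ $0$) lie in a finite union of pairwise disjoint closed rational intervals at level $n+1$, and the single-witness/cofinality argument of Theorem \ref{stovokor}, transported through the gap property, shows that $y$ lies in the union matching $\chi_{X}(n)$; membership in that union determines the decoded digit of $y$ at $n$ outright, independently of all other digits. Hence each instance of $n\in X'\asa(\exists g^{1})(Y(g,n)=0)$ is proved separately, with no remainder-tracking induction; your route in fact establishes the stronger claim that $\RCAo+\X$ proves $\BOOT$, mirroring Theorem \ref{bongra}, where Cantor space makes decoding trivial. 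By contrast, the paper's binary route genuinely pays for the dyadic case with $\IND$. Since the corollary grants $\IND$ anyway, your proof as written is valid; only your explanation of the role of $\IND$ is spurious.
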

\begin{proof}
It is well-known that $\exists^{2}$ defines a functional $\eta^{1\di 1}$ that converts real numbers in $[0,1]$ into binary representation, choosing a tail of zeros whenever there are two possibilities.   
Now consider the following alternative version of \eqref{nogisnekeer}: 
\be\label{romane}
(\forall n^{0})\big[(\exists g^{1})(Y(g,n)=0)\asa \eta(x)(n)=1\big], 
\ee
where $x$ is the limit provided by $\BW_{\net}^{[0,1]}$ for the sub-net of the net $x_{w}:=\r(\lambda k.F(w, k))$. 
Note that \eqref{romane} only holds in case $x$ has a unique binary representation.  
In the case of non-unique binary representation of $x$, there is $n_{0}$ such that $(\exists g^{1})(Y(g,n)=0$ has the same truth value for $n\geq n_{0}$. 
Now use $\IND$ to establish that for every $m^{0}\geq 1$, there is $w_{0}$ of length $m$ such that $(\forall i<m)\big[(\exists g^{1})(Y(f, i)=0)  \di Y(w(i), i)=0 \big]$.
Hence, the `non-unique' case has been handled too.  Finally, the net $x_{w}$ is increasing (in the sense of $\leq_{\R}$), i.e.\ $\MCT_{\net}^{[0,1]}$ also establishes the corollary.
\end{proof}
The \emph{anti-Specker property for nets}, denoted $\AS_{\net}$, is studied in \cite{samnetspilot}*{\S3.1.3}.  Now, $\AS_{\net}$ essentially expresses that if a net converges to an isolated point, it is eventually constant.   Since $\AS_{\net}$ readily implies $\MCT_{\net}^{[0,1]}$ using classical logic, the former also implies $\BOOT$ by the previous corollary.  The same holds for the Arzel\`a and Ascoli-Arzel\`a theorems for nets studied in \cite{samnetspilot}*{\S3.2.2}.  
As it turns out, the index sets used in this section, essentially consisting of finite sets ordered by inclusion, are called \emph{phalanxes} by Tukey (\cites{niettukop}), a martial term that has not caught on. 

\subsubsection{Moduli of convergence}\label{cauf}
In this section, we study the additional power provided by \emph{modulus functions} for convergence theorems pertaining to nets.  
We first discuss our motivation for this study. 

\smallskip

First of all, given an `epsilon-delta' definition, a \emph{modulus} is a functional that provides the `delta' in terms of the `epsilon' and other data.  
Bolzano already made use of moduli of continuity (see \cite{russje}), while they are implicit in RM-codes for continuous functions by \cite{kohlenbach4}*{Prop.\ 4.4}.
E.H.\ Moore also suggests using moduli in \cite{moorelimit2}*{p.\ 632} in the context of `general limits', a predecessor to nets and \cite{moorsmidje}. 
In the case of convergent sequences in the unit interval, the existence of a modulus is readily provable in $\ACA_{0}$; thus
the extra information provided by a modulus (or rate) of convergence does not change the associated RM-results for convergence theorems as in \cite{simpson2}*{III.2}.  
By contrast, we show that enriching some of the above theorems with a modulus gives rise to an equivalence involving countable choice.  

\smallskip

Secondly, we need the notion of \emph{Cauchy net} (see e.g.\ \cite{ooskelly}*{p.\ 190}), defined as follows for $\R$.
 It goes without saying that such nets are the generalisation of the notion of Cauchy sequence to directed sets.  
\bdefi[Cauchy net]\label{caucau}
 A net $x_{d}:D\di \R$ is \emph{Cauchy} if $(\forall \eps>0)(\exists d\in D)(\forall e,f\succeq_{D} d)(|x_{e}-x_{f}|<\eps)$.
\edefi
\bdefi[Cauchy modulus]\label{caucau2}
 A net $x_{d}:D\di \R$ is \emph{Cauchy with a modulus} if there is $\Phi:\R\di D$ such that $(\forall \eps>0)(\forall e,f\succeq_{D} \Phi(\eps))(|x_{e}-x_{f}|<\eps)$.
\edefi
On one hand, the convergence of Cauchy \emph{sequences} in the unit interval is equivalent to $\ACA_{0}$ by \cite{simpson2}*{III.2.2}, i.e.\ we expect the generalisation to Cauchy nets 
to exhibit similar behaviour to $\MCT_{\net}^{[0,1]}$ 
One the other hand, $\MCT_{\net}^{[0,1]}$ obviously follows from the two following facts:
\begin{enumerate}
\item An increasing net in $[0,1]$ indexed by a subset of $\N^{\N}$ is Cauchy.\label{facile}
\item A Cauchy net in $[0,1]$ indexed by a subset of $\N^{\N}$ converges.  \label{burj}
\end{enumerate}
One readily shows that item \eqref{burj} gives rise to hierarchies as in Corollary \ref{corkorcor}, while item \eqref{facile} is provable in $\RCAo+\IND$.  
Item \eqref{facile} is therefore quite weak and we shall enrich it with a Cauchy modulus, as follows.  
\bdefi[$\CAU_{\mod}$]
An increasing net in $[0,1]$ is Cauchy {with a modulus}.
\edefi
\begin{thm}\label{weirdoooo}
The system $\ACAo+\CAU_{\mod}$ proves $\FIVE$.
\end{thm}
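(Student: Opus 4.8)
The plan is to reuse the increasing net from the proof of Theorem~\ref{stovokor} (as in Corollary~\ref{corkorcor}), but to replace the appeal to a limit by the \emph{modulus} supplied by $\CAU_{\mod}$; it is precisely this modulus that pushes us past $\ACA_0$ up to $\FIVE$. Working in $\ACAo$, I would fix an arbitrary $\Sigma_1^1$-formula $\varphi(n)$ (possibly with function parameters) and, using $\exists^2$, rewrite it as $(\exists g^1)(Y(g,n)=0)$ for a suitable $Y^2$ defined from $\exists^2$. Let $D$ be the set of finite sequences of elements of Baire space ordered by inclusion $\preceq_D$ exactly as in Theorem~\ref{stovokor} (note $\exists^2$ is needed to define $\preceq_D$), which is directed with upper bounds given by concatenation. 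As in Corollary~\ref{corkorcor}, define the net $x_w := \r(\lambda k. F(w,k))$, where $F(w,k)=1$ iff $(\exists i<|w|)(Y(w(i),k)=0)$ and $0$ otherwise. Since enlarging $w$ can only turn bits of $F(w,\cdot)$ from $0$ to $1$, the net $x_w$ is increasing in $\leq_{\R}$.

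Next I would apply $\CAU_{\mod}$ to obtain $\Phi:\R\di D$ witnessing that $x_w$ is Cauchy with a modulus. The key step, to be proved for each fixed $n^0$, is that with $w_0:=\Phi(2^{-n-1})$ one has $\varphi(n)\asa F(w_0,n)=1$. The direction $F(w_0,n)=1\di \varphi(n)$ is immediate, since some $w_0(i)$ then witnesses $Y(\cdot,n)=0$. For the converse, suppose $\varphi(n)$ holds with witness $g^1$ but $F(w_0,n)=0$; then $e:=w_0*\langle g\rangle\succeq_D w_0$ satisfies $F(e,n)=1$, so, using that the net is increasing and that bit $n$ contributes $2^{-(n+1)}$ to $\r$, one gets $x_e-x_{w_0}\geq 2^{-(n+1)}=2^{-n-1}$. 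But $e,w_0\succeq_D\Phi(2^{-n-1})$, so the modulus forces $|x_e-x_{w_0}|<2^{-n-1}$, a contradiction. Hence the claimed equivalence holds.

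It then remains to collect the answers. The function $\lambda n.\Phi(2^{-n-1})$ and the functional $Y$ are available, and the predicate $F(\Phi(2^{-n-1}),n)=1$ is decidable (a bounded search) in these parameters, so $\{n: F(\Phi(2^{-n-1}),n)=1\}$ exists by arithmetical comprehension in $\ACAo$; by the key step it equals $\{n:\varphi(n)\}$. As $\varphi$ was an arbitrary $\Sigma_1^1$-formula, this yields $\Sigma_1^1$-comprehension, equivalently $\Pi_1^1$-comprehension, i.e.\ $\FIVE$. The main obstacle is the metric bookkeeping in the key step: one must choose $\eps=2^{-n-1}$ so that a single flip of bit $n$ already meets the modulus tolerance, and verify via monotonicity that such a flip genuinely changes $\r$ by at least $2^{-(n+1)}$ regardless of the other bits. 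This is exactly where the modulus---a genuine choice functional of type $\R\di D$ rather than a mere limit---supplies the unbounded-search content of $\FIVE$ that $\MCT_{\net}^{[0,1]}$ (of only $\ACA_0$-strength, via its equivalence with $\BOOT$) lacks.
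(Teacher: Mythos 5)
Your proof is correct and follows essentially the same route as the paper's: the same Specker-style net $x_{w}=\r(\lambda k.F(w,k))$ indexed by finite sequences of Baire space under inclusion, the same application of the Cauchy modulus, and the same key equivalence reducing a $\Sigma_{1}^{1}$-formula to a bounded search through the sequence given by the modulus, after which comprehension yields $\{n:\varphi(n)\}$ and hence $\FIVE$. In fact, your choice of precision $2^{-n-1}$ is slightly more careful than the printed proof, which invokes the modulus at tolerance $\frac{1}{2^{n_{0}}}$ while a single bit flip only guarantees a gap of $\frac{1}{2^{n_{0}+1}}$; your bookkeeping closes that small arithmetical slip.
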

\begin{proof}
A $\Sigma_{1}^{1}$-formula $\varphi(n)\in \L_{2}$ is readily seen to be equivalent to a formula $(\exists f^{1})(Y(f, n)=0)$ for $Y^{2}$ defined in terms of $\exists^{2}$.
Let $D$ be the set of finite sequences in Baire space and let $\preceq_{D}$ be the inclusion ordering, i.e.\ $w\preceq_{D}v$ if $(\forall i<|w|)(\exists j<|v|)(w(i)=_{1}v(j))$.  Now define the net $x_{w}:D\di \R $ as $x_{w}:=\r(\lambda k.F(w, k))$ where $F(w, k)$ is $1$ if $(\exists i<|w|)(Y(w(i), k)=0)$, and zero otherwise. 
Note that $x_{w}$ is increasing by definition.  Let $\Phi:\N\di D$ be such that $(\forall k^{0})(\forall w,v\succeq_{D} \Phi(k))(|x_{w}-x_{v}|<\frac{1}{2^{k}})$. 
We now establish this equivalence:
\be\label{nogis}
(\forall n^{0})\big[(\exists f^{1})(Y(f,n)=0)\asa (\exists g^{1}\in \Phi(n))(Y(g,n)=0)\big].
\ee
The reverse direction in \eqref{nogis} is trivial.  For the forward direction, suppose there is some $n_{0}$ such that $(\exists f^{1})(Y(f,n_{0})=0)\wedge (\forall g^{1}\in \Phi(n_{0}))(Y(g,n_{0})>0)$.
Let $f_{0}^{1}$ be such that $Y(f_{0}, n_{0})=0$, implying $F(\Phi(n_{0}), n_{0})=0$ and $F(w_{0},n_{0})=1$ for $w_{0}:=\Phi(n_{0})*\langle f_{0}\rangle$.  Hence $|x_{\Phi(n_{0})}-x_{w_{0}}|\geq \frac{1}{2^{n_{0}}}$ and $w_{0}\succeq_{D} \Phi(n_{0})$, a contradiction. 
Thus, \eqref{nogis} holds and yields the set $\{n: \varphi(n)   \}$, as required by $\FIVE$.
\end{proof}
The proof of the theorem also yields a nice splitting as follows. 
\begin{cor}\label{floopy}
The system $\RCAo$ proves $\CAU_{\mod}\asa [\BOOT+\QFAC^{0,1}]$.
\end{cor}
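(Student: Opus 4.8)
The plan is to reuse the net and Cauchy modulus constructed in the proof of Theorem~\ref{weirdoooo} for the forward direction, and to mirror the case analysis on $(\exists^{2})\vee\neg(\exists^{2})$ from the proof of Theorem~\ref{bongra} to descend from $\ACAo$ to the base theory $\RCAo$. The guiding idea is that the single biconditional \eqref{nogis} \emph{splits} into two pieces of information: the right-hand side $(\exists g^{1}\in \Phi(n))(Y(g,n)=0)$ is a finite (hence decidable) disjunction that yields a \emph{set}, i.e.\ $\BOOT$, while the finite sequence $\Phi(n)$ also \emph{contains} a witness whenever one exists, which yields a \emph{choice function}, i.e.\ $\QFAC^{0,1}$.

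For the forward direction $\CAU_{\mod}\di[\BOOT+\QFAC^{0,1}]$, assume first $(\exists^{2})$. Given $Y^{2}$, I would reuse the increasing net $x_{w}:=\r(\lambda k.F(w,k))$ with inclusion ordering $\preceq_{D}$ from the proof of Theorem~\ref{weirdoooo} (here $\exists^{2}$ is needed to define $\preceq_{D}$), apply $\CAU_{\mod}$ to obtain a Cauchy modulus $\Phi$, and derive \eqref{nogis}. For $\BOOT$: since $\Phi(n)\in D$ is a finite sequence of elements of Baire space, $(\exists g^{1}\in \Phi(n))(Y(g,n)=0)$ is a finite disjunction, so $\Delta^{0}_{1}$-comprehension yields a set $X$ with $n\in X\asa (\exists g^{1}\in\Phi(n))(Y(g,n)=0)$, and \eqref{nogis} identifies this with $(\exists f^{1})(Y(f,n)=0)$, as required. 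For $\QFAC^{0,1}$: given quantifier-free $A(n,g)$ with $(\forall n^{0})(\exists g^{1})A(n,g)$, define $Y(g,n):=0\asa A(n,g)$ by a term of G\"odel's $T$ in the parameters of $A$, run the same construction, and note that because $(\forall n)(\exists g)(Y(g,n)=0)$ holds, \eqref{nogis} gives $(\forall n)(\exists g^{1}\in\Phi(n))(Y(g,n)=0)$; a finite search returning the first element of $\Phi(n)$ satisfying $Y(g,n)=0$ then defines the required choice functional.

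For the converse $[\BOOT+\QFAC^{0,1}]\di\CAU_{\mod}$, still assuming $(\exists^{2})$, let $x_{d}:D\di[0,1]$ be an increasing net. Using $\exists^{2}$ and $\BOOT$ to compute the supremum $x=\sup_{d}x_{d}$ digit by digit (exactly as in the reverse direction of Theorem~\ref{bongra}, transported to $[0,1]$ via $\r$), I obtain the limit $x$ with $x_{e}\leq_{\R}x$ for all $e\in D$. For each $k^{0}$ the statement $(\exists d\in D)(x_{d}>_{\R}x-2^{-k})$ holds, and $\exists^{2}$ decides the $\Sigma^{0}_{1}$ real inequality, rendering the matrix quantifier-free; applying $\QFAC^{0,1}$ produces $\Phi:\N\di D$ with $x_{\Phi(k)}>x-2^{-k}$. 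Since the net is increasing, for $e,f\succeq_{D}\Phi(k)$ we have $x_{\Phi(k)}\leq x_{e},x_{f}\leq x$, hence $|x_{e}-x_{f}|\leq x-x_{\Phi(k)}<2^{-k}$; pre-composing $\Phi$ with $\eps\mapsto(\text{least }k\text{ with }2^{-k}<\eps)$ yields the Cauchy modulus of Definition~\ref{caucau2}.

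It remains to treat $\neg(\exists^{2})$ and then invoke excluded middle. Under $\neg(\exists^{2})$ all functionals on Baire space are continuous, so, just as in the final part of the proof of Theorem~\ref{bongra}, every type-one quantifier $(\exists d\in D)$ collapses to a type-zero quantifier over finite sequences: $\BOOT$ becomes arithmetical comprehension, $\QFAC^{0,1}$ reduces to $\QFAC^{0,0}$ (provable in the base theory), and $\CAU_{\mod}$ becomes ``every increasing sequence in $[0,1]$ is Cauchy with a modulus'', which is equivalent to $\ACA_{0}$ (a Specker sequence's modulus computes its limit, while $\ACA_{0}$ recovers the modulus from the limit). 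Thus both sides reduce to $\ACA_{0}$ and the biconditional holds, and $(\exists^{2})\vee\neg(\exists^{2})$ finishes the proof. I expect the main obstacle to be precisely the $\QFAC^{0,1}$ bookkeeping: since this choice schema is only available for quantifier-free matrices, one must be careful to route the choice through the finite sequence $\Phi(n)$ in the forward direction and through the $\exists^{2}$-decision of the real comparison in the converse, and to check that the $\neg(\exists^{2})$ collapse genuinely lands both sides at $\ACA_{0}$ rather than something strictly weaker.
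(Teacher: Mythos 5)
Your proposal is correct and takes essentially the same approach as the paper's proof: the forward direction reuses the net and Cauchy modulus from Theorem~\ref{weirdoooo} and exploits the decidability of the right-hand side of \eqref{nogis} to extract both $\BOOT$ and $\QFAC^{0,1}$, the converse combines $\BOOT\di\MCT_{\net}^{[0,1]}$ (adapted from Theorem~\ref{bongra}) with an application of $\QFAC^{0,1}$ to the convergence formula, using increasingness to turn the resulting functional into a Cauchy modulus, and the $\neg(\exists^{2})$ case collapses both sides to (essentially) $\ACA_{0}$. The additional bookkeeping you supply (the finite search through $\Phi(n)$, the $\exists^{2}$-decision of the real inequality before invoking $\QFAC^{0,1}$, the Specker-sequence argument) is exactly what the paper leaves implicit.
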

\begin{proof}
For the reverse implication, the proof of Theorem \ref{bongra} yields $\BOOT\di\MCT_{\net}^{[0,1]}$ with minimal adaptation.
Let $x_{d}:D\di [0,1]$ be an increasing net and let $x\in [0,1]$ be the limit provided by $\MCT_{\net}^{[0,1]}$.  Now apply $\QFAC^{0,1}$ to the formula $(\forall k^{0})(\exists d\in D)(|x_{d}-x|<\frac{1}{2^{k}})$ and note that the 
resulting functional is a Cauchy modulus since $x_{d}$ is an increasing net. 
  
\smallskip  
  
For the forward implication, we again use $(\exists^{2})\vee \neg(\exists^{2})$.
In case $\neg(\exists^{2})$, all functions on Baire space are continuous by \cite{kohlenbach2}*{\S3}.  In this case, $\QFAC^{0,1}$ is immediate from $\QFAC^{0,0}$ (included in $\RCAo$) and $\BOOT$ 
reduces to $\ACA_{0}$ as noted in the proof of Theorem \ref{bongra}.  
In case of $(\exists^{2})$, the proof of the theorem yields \eqref{nogis}; $\BOOT$ and $\QFAC^{0,1}$ are now immediate as the right-hand side of \eqref{nogis} is decidable. 
\end{proof}
The definition of a `modulus of net convergence' is now obvious following Definition~\ref{caucau2}.  
Let $\MCT_{\mod}^{[0,1]}$ and $\BW_{\mod}^{[0,1]}$ be resp.\ $\MCT_{\net}^{[0,1]}$ and $\BW_{\net}^{[0,1]}$ with the addition of a modulus of convergence. 
\begin{cor}\label{lopsided}
The system $\ACAo+\BW_{\mod}^{[0,1]}$ proves $\BOOT+\QFAC^{0,1}$. 
\end{cor}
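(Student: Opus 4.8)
The plan is to derive the conclusion in two moves: first show that $\ACAo+\BW_{\mod}^{[0,1]}$ proves $\CAU_{\mod}$, and then invoke Corollary~\ref{floopy}, which already yields $\CAU_{\mod}\leftrightarrow[\BOOT+\QFAC^{0,1}]$ over $\RCAo$. Thus all the genuine work lies in transferring a modulus for a \emph{sub-net} of an increasing net to a Cauchy modulus for the \emph{whole} net, after which the bootstrap strength and the choice principle both fall out of the earlier corollary.

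First I would fix an arbitrary increasing net $x_{d}:D\di[0,1]$ indexed by a subset of $\N^{\N}$. Applying $\BW_{\mod}^{[0,1]}$ produces a sub-net $y_{b}=x_{\phi(b)}$ with $\phi:B\di D$ as in Definition~\ref{demisti}, a limit $x\in[0,1]$, and a modulus $\Psi:\R\di B$ satisfying $(\forall \eps>0)(\forall b\succeq_{B}\Psi(\eps))(|x_{\phi(b)}-x|<\eps)$. The key preliminary step is to show that $x$ bounds the net from above, i.e.\ $x_{e}\leq x$ for all $e\in D$. Indeed, suppose $x_{e_{0}}=x+2\eta$ for some $\eta>0$; by Definition~\ref{demisti}(b) there is $b_{0}\in B$ with $\phi(b)\succeq_{D}e_{0}$ whenever $b\succeq_{B}b_{0}$, and by directedness (Definition~\ref{nets}(b)) one may pick $b^{*}\succeq_{B}b_{0},\Psi(\eta)$. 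Then $\phi(b^{*})\succeq_{D}e_{0}$ forces $x_{\phi(b^{*})}\geq x_{e_{0}}=x+2\eta$ since the net is increasing, while $b^{*}\succeq_{B}\Psi(\eta)$ forces $|x_{\phi(b^{*})}-x|<\eta$, a contradiction.

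Next I would define the Cauchy modulus by the composition $\Phi(\eps):=\phi(\Psi(\eps/2))$, a term of type $\R\di D$. For any $e,f\succeq_{D}\Phi(\eps)$, monotonicity gives $x_{e},x_{f}\geq x_{\Phi(\eps)}=x_{\phi(\Psi(\eps/2))}$, while reflexivity of $\succeq_{B}$ together with the modulus property gives $x_{\phi(\Psi(\eps/2))}>x-\eps/2$. Combined with the upper bound $x_{e},x_{f}\leq x$ from the previous paragraph, both values lie in $(x-\eps/2,x]$, whence $|x_{e}-x_{f}|<\eps/2<\eps$. Hence $\Phi$ is a Cauchy modulus for $x_{d}$, which establishes $\CAU_{\mod}$; Corollary~\ref{floopy} then delivers $\BOOT+\QFAC^{0,1}$, as required.

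I expect the main obstacle to be the bookkeeping in the upper-bound step, since that is exactly where the sub-net condition (Definition~\ref{demisti}(b)) and the directedness of $B$ must be made to interact, and one must verify that the limit of the sub-net is genuinely the supremum of the increasing net. A secondary point worth checking is that no set existence beyond what is packaged inside Corollary~\ref{floopy} is invoked: the reduction $\BW_{\mod}^{[0,1]}\di\CAU_{\mod}$ is pure net manipulation with $\Phi$ term-definable, so it goes through already in $\RCAo$, with $\exists^{2}$ (available from $\ACAo$) entering only through the appeal to Corollary~\ref{floopy}.
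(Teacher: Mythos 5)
Your proposal is correct and follows essentially the same route as the paper: the paper's proof is exactly the one-line observation that, for an increasing net, a modulus of convergence of a sub-net is also a Cauchy modulus for the original net, combined with the machinery behind Corollary~\ref{floopy}; your two middle paragraphs simply work out that observation in full detail (including the composition $\Phi(\eps):=\phi(\Psi(\eps/2))$ and the verification that the sub-net limit bounds the whole net). If anything, your argument shows the reduction $\BW_{\mod}^{[0,1]}\di\CAU_{\mod}$ already in $\RCAo$, so together with Corollary~\ref{floopy} (which is itself over $\RCAo$, not $\ACAo$) you obtain the conclusion over a slightly weaker base theory than stated.
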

\begin{proof}
Immediate by the proof of the theorem and the observation that for an increasing net, a modulus of convergence of a sub-net is also a Cauchy modulus for the (original) net.  
\end{proof}
\begin{cor}
The system $\RCAo$ proves $\MCT_{\mod}^{[0,1]}\asa [\BOOT+\QFAC^{0,1}]$.
\end{cor}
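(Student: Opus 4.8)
The plan is to reduce everything to Corollary~\ref{floopy}, which already supplies $\RCAo\vdash \CAU_{\mod}\asa[\BOOT+\QFAC^{0,1}]$. Thus it suffices to prove that $\RCAo$ establishes $\MCT_{\mod}^{[0,1]}\asa \CAU_{\mod}$, i.e.\ that over the base theory the `converges with a modulus' and the `Cauchy with a modulus' formulations of monotone convergence for increasing nets in $[0,1]$ indexed by subsets of Baire space coincide. I would isolate the engine as a single sub-lemma, valid in $\RCAo$ for \emph{any} net $x_{d}:D\di[0,1]$ (increasingness is not even needed here): $x_{d}$ converges with a modulus of convergence (in the sense obtained from Definition~\ref{caucau2}) if and only if $x_{d}$ is Cauchy with a modulus (Definition~\ref{caucau2}). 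Granting this sub-lemma, both implications of $\MCT_{\mod}^{[0,1]}\asa \CAU_{\mod}$ are immediate: $\CAU_{\mod}$ hands any increasing net a Cauchy modulus, hence a convergence modulus by the sub-lemma, which is exactly $\MCT_{\mod}^{[0,1]}$, and symmetrically for the converse.

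For the sub-lemma, the direction from a convergence modulus $\Phi$ to a Cauchy modulus is the triangle inequality: $\Psi(\eps):=\Phi(\eps/2)$ works, since $e,f\succeq_{D}\Phi(\eps/2)$ gives $|x_{e}-x_{f}|\leq |x_{e}-x|+|x-x_{f}|<\eps$. The substantive direction builds the limit from a given Cauchy modulus $\Phi$. I would set $a_{k}:=x_{\Phi(1/2^{k+1})}$, a sequence of reals definable in $\RCAo$ by composing the given functionals $\Phi$ and $x_{(\cdot)}$, with no appeal to choice. Using reflexivity and directedness of $(D,\preceq_{D})$, for any $k,l$ there is $g\succeq_{D}\Phi(1/2^{k+1}),\Phi(1/2^{l+1})$, and applying the Cauchy-modulus property at both arguments yields $|a_{k}-a_{l}|\leq |a_{k}-x_{g}|+|x_{g}-a_{l}|<1/2^{k+1}+1/2^{l+1}$. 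Hence $(a_{k})$ is a Cauchy sequence of reals with an explicit fast modulus, so its diagonal defines a real $x\in[0,1]$ in $\RCAo$. Finally, for $e\succeq_{D}\Phi(1/2^{k+1})$ one has $|x_{e}-a_{k}|<1/2^{k+1}$ directly from the Cauchy modulus (both $e$ and $\Phi(1/2^{k+1})$ dominate $\Phi(1/2^{k+1})$), whence $|x_{e}-x|\leq|x_{e}-a_{k}|+|a_{k}-x|<1/2^{k+1}+1/2^{k}<1/2^{k-1}$; thus $\eps\mapsto\Phi(\eps/4)$ is a convergence modulus, modulo the routine bookkeeping of constants needed to match Kohlenbach's fast-Cauchy convention.

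The main obstacle to watch is precisely that no choice principle sneaks into the construction of the limit: directedness is used only to assert the \emph{existence} of an upper bound $g$ while proving the arithmetical statement $|a_{k}-a_{l}|<1/2^{k+1}+1/2^{l+1}$ about the already-fixed reals $a_{k},a_{l}$, so no selection functional $k\mapsto g_{k}$ is ever formed and $\RCAo$ suffices; this is what prevents the reverse direction from covertly requiring $\QFAC^{0,1}$. I would also stress that the sub-lemma is genuinely choice-free and does not invoke the $(\exists^{2})\vee\neg(\exists^{2})$ case split of Theorem~\ref{bongra}: that case split stays confined to the proof of Corollary~\ref{floopy}, which we cite as a black box. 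Combining $\RCAo\vdash\MCT_{\mod}^{[0,1]}\asa\CAU_{\mod}$ with Corollary~\ref{floopy} then yields $\RCAo\vdash\MCT_{\mod}^{[0,1]}\asa[\BOOT+\QFAC^{0,1}]$, as required.
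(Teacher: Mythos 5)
Your proof is correct, but it is organised differently from the paper's. The paper's proof of this corollary is the one-line citation ``By Theorem \ref{bongra} and Corollary \ref{floopy}'': the reverse direction $[\BOOT+\QFAC^{0,1}]\di\MCT_{\mod}^{[0,1]}$ is obtained by first producing the \emph{limit} of the increasing net from $\BOOT$ via (the adaptation of) Theorem \ref{bongra}, and only then extracting a modulus by applying $\QFAC^{0,1}$ to $(\forall k^{0})(\exists d\in D)(|x_{d}-x|<\frac{1}{2^{k}})$, exactly as in the reverse direction of Corollary \ref{floopy}; the forward direction notes that a convergence modulus of an increasing net is a Cauchy modulus and invokes Corollary \ref{floopy}. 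You instead never touch Theorem \ref{bongra}: you prove the standalone, choice-free equivalence $\MCT_{\mod}^{[0,1]}\asa\CAU_{\mod}$ over $\RCAo$ and cite Corollary \ref{floopy} as a black box. The substantive content is your sub-lemma that for an \emph{arbitrary} net a Cauchy modulus yields a convergence modulus, because the limit is explicitly definable as the limit of the fast Cauchy sequence $a_{k}:=x_{\Phi(1/2^{k+1})}$ -- directedness is used only inside the proof of an inequality about already-fixed reals, so no selection functional and hence no fragment of choice is needed. This buys something the paper leaves implicit: once moduli are present, the completeness direction requires neither $\QFAC^{0,1}$ nor the $(\exists^{2})\vee\neg(\exists^{2})$ case split (these remain confined to Corollary \ref{floopy}), and the bridge $\MCT_{\mod}^{[0,1]}\asa\CAU_{\mod}$ is a reusable fact about nets rather than a by-product of the $\BOOT$ machinery. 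The paper's route, by contrast, is shorter on the page precisely because it recycles Theorem \ref{bongra} and the increasingness trick already set up for Corollary \ref{floopy}. Your constant bookkeeping ($\Phi(\eps/4)$ versus $\Phi(\eps/2)$) is slightly lossy but harmless.
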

\begin{proof}
By Theorem \ref{bongra} and Corollary \ref{floopy}.
\end{proof}
A similar result can now be obtained for the Arzel\`a and Ascoli-Arzel\`a theorems for nets studied in \cite{samnetspilot}*{\S3.2.2}.  
Moreover, to derive $\BW_{\net}^{[0,1]}$ from item \eqref{burj} at the beginning of this section, one requires $\COH_{\net}$, i.e.\ the statement \emph{any net in the unit interval contains a Cauchy sub-net}.  The associated property for \emph{sequences} is equivalent to $\COH$ from the RM zoo (see \cites{keuzer}).  Clearly, $\COH_{\net}$ upgraded with a modulus would also give rise to e.g.\ a version of Corollary \ref{lopsided}.
\subsubsection{Lifting second-order results}\label{liften}
We have obtained the equivalence $\MCT_{\net}^{[0,1]}\asa \BOOT$ in Section \ref{BWS}.  In this section, we show that 
the forward implication can also be obtained by `lifting' the second-order proof of $\MCT_{\seq}^{[0,1]}\di \ACA_{0}$ to higher-order arithmetic; $\MCT_{\seq}^{[0,1]}$ is the monotone convergence theorem \emph{for sequences}.  
On one hand, this result suggest that second-order and higher-order arithmetic are not as fundamentally different as often claimed (the author is guilty of some such claims). 
On the other hand, the `lifted' proofs are not optimal as they need a non-trivial extension of the base theory.  

\smallskip

First of all, the crux of numerous reversals $T\di \ACA_{0}$ is that the theorem $T$ (somehow) allows for the reduction of (certain) $\Sigma_{1}^{0}$-formulas to $\Delta_{1}^{0}$-formulas.  
Since $\Delta_{1}^{0}$-comprehension is included in $\RCA_{0}$, one then obtains $\Sigma_{1}^{0}$-comprehension or the existence of the range of arbitrary functions, and $\ACA_{0}$ follows.  
We now show that this technique elegantly extends to $\BOOT$, which in turn allows us to lift proofs from the second-order to the higher-order framework \emph{with minimal adaptation}.    

\smallskip

Secondly, $\ACA_{0}$ is equivalent 
to $\range$, i.e.\ the existence of the range of any one-to-one $f:\N\di \N$, by \cite{simpson2}*{III.1.3}; $\BOOT$ satisfies a similar equivalence involving the
existence of the range of any type two functional, as follows. 
\begin{thm}\label{rage}
The system $\RCAo$ proves that $\BOOT$ is equivalent to 
\be\label{myhunt}\tag{$\RANGE$}
(\forall G^{2})(\exists X^{1})(\forall n^{0})\big[n\in X\asa (\exists f^{1})(G(f)=n)  ].
\ee
\end{thm}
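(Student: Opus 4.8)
The plan is to establish both implications directly in $\RCAo$ via elementary coding; crucially, neither direction needs $(\exists^{2})$, since every predicate that must be decided is a comparison of natural numbers and hence quantifier-free.

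For $\BOOT \di \RANGE$, fix $G^{2}$ and define a type-two functional by $\lambda$-abstraction, namely $Y:=\lambda f, n.\, (G(f)\dminus n)+(n\dminus G(f))$. This is a genuine term of $\L_{\omega}$ with $G$ as a parameter, available by the combinator axioms of $\RCAo$, and it satisfies $Y(f,n)=0 \asa G(f)=n$ by the decidability of equality at type $0$. Applying $\BOOT$ to $Y$ yields $X^{1}$ with $n\in X \asa (\exists f^{1})(Y(f,n)=0) \asa (\exists f^{1})(G(f)=n)$, which is exactly the set required by $\RANGE$.

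For the reverse implication $\RANGE\di \BOOT$, the one idea needed is a pairing-and-shift trick. Given $Y^{2}$, decode an argument $h^{1}$ as the pair with first coordinate $h(0)$ and tail $\lambda k.h(k+1)$, and define $G^{2}$ by
\[
 G(h):= \begin{cases} h(0)+1 & \text{if } Y(\lambda k.h(k+1),\, h(0))=0,\\ 0 & \text{otherwise,} \end{cases}
\]
which again is a legitimate $\L_{\omega}$-term with parameter $Y$, obtained by case distinction on a quantifier-free condition. Let $X'$ be the range of $G$ provided by $\RANGE$, so that $m\in X' \asa (\exists h^{1})(G(h)=m)$. For each $n$ one checks that $n+1\in X'$ holds iff there is $h$ with $h(0)=n$ and $Y(\lambda k.h(k+1),n)=0$, which by taking $h$ with $h(0)=n$ and $h(k+1)=f(k)$ (and conversely $f:=\lambda k.h(k+1)$) is equivalent to $(\exists f^{1})(Y(f,n)=0)$. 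Since the garbage value $0$ of $G$ cannot equal any $n+1$, it does not contaminate this equivalence. Finally, $\Delta_{1}^{0}$-comprehension (available in $\RCAo$) produces $X:=\{n^{0}: n+1\in X'\}$, whence $n\in X \asa (\exists f^{1})(Y(f,n)=0)$, as $\BOOT$ demands.

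There is no deep obstacle here: as with Theorem \ref{boef}, the statement is essentially bookkeeping dictated by the shape of $\BOOT$ and $\RANGE$. The only points requiring care are that $Y$ and $G$ are bona fide objects of the type structure, obtained by $\lambda$-abstraction over the given higher-order parameter together with quantifier-free case distinction, and the shift by one in the reverse direction that quarantines the default output of $G$ from the genuinely coded values. Both are routine in $\RCAo$.
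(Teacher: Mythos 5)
Your proof is correct and takes essentially the same route as the paper: the forward direction is treated there as immediate (exactly your quantifier-free coding of $G(f)=n$), and the reverse direction in the paper uses the identical pairing-and-shift construction, namely $G(\langle n\rangle *g)=n+1$ if $Y(g,n)=0$ and $0$ otherwise, with the same shift by one to quarantine the default value, followed by the same trivial re-indexing to obtain the set required by $\BOOT$.
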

\begin{proof}
The forward direction is immediate.  For the reverse direction, define $G^{2}$ as follows for $n^{0}$ and $g^{1}$: put $G(\langle n\rangle *g)=n+1$ if $Y(g, n)=0$, and $0$ otherwise. 
Let $X\subseteq \N$ be as in $\RANGE$ and note that 
\[
(\forall m^{0}\geq 1 )( m\in X \asa (\exists f^{1})(G(f)=m)\asa (\exists g^{1})(Y(g, m-1)=0)  ).
\]
which is as required for $\BOOT$ after trivial modification. 
\end{proof}
It goes without saying that $[\RANGE]_{\ECF}$ is essentially $\range$, i.e.\ the existence of the range of any one-to-one $f:\N\di \N$, following Remark~\ref{unbeliever}.

\smallskip

Thirdly, our base theory plus countable choice proves the following higher-order version of $\Delta_{1}^{0}$-comprehension, by Theorem \ref{DELTA}.
\begin{align}
(\forall Y^{2}, Z^{2})\big[ (\forall n^{0})( (\exists f^{1})&(Y(f, n)=0) \asa (\forall g^{1})(Z(g, n)=0) )\tag{$\Delta$-comprehension} \\
& \di (\exists X^{1})(\forall n^{0})(n\in X\asa (\exists f^{1})(Y(f, n)=0)\big]\notag
\end{align}
Note that the $\ECF$-translation converts $\Delta$-comprehension into $\Delta_{1}^{0}$-comprehension, while $\QFAC^{0,1}$ becomes $\QFAC^{0,0}$, following Remark \ref{unbeliever}.
As shown in \cite{dagsamX}, $\Delta$-comprehension is perhaps the weakest comprehension principle that still implies that there is no bijection from $[0,1]$ to $\N$ (using the usual definition from set theory).
\begin{thm}\label{DELTA}
The system $\RCAo+\QFAC^{0,1}$ proves $\Delta$-comprehension.
\end{thm}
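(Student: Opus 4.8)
The plan is to reduce $\Delta$-comprehension to $\QFAC^{0,1}$ by fusing the two functionals $Y^{2},Z^{2}$ into a single \emph{totality} statement whose matrix is quantifier-free, so that the required set emerges from a single choice functional. Fix $Y^{2},Z^{2}$ satisfying the hypothesis of $\Delta$-comprehension, i.e.\ for every $n^{0}$ the $\Sigma$-formula $(\exists f^{1})(Y(f,n)=0)$ is equivalent to the $\Pi$-formula $(\forall g^{1})(Z(g,n)=0)$.

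First I would note that, thanks to this equivalence, for each $n$ \emph{at least one} of the disjuncts ``$Y(h,n)=0$'' or ``$Z(h,n)\ne 0$'' can be witnessed by some $h^{1}$: if the $\Sigma$-formula holds we take $h$ to be a witness $f$, and if it fails then the equivalent $\Pi$-formula also fails, producing an $h$ of the form $g$ with $Z(g,n)\ne 0$. Hence $\RCAo$ proves the totality statement
\begin{equation}\label{eqdeltatotal}
(\forall n^{0})(\exists h^{1})\big[\, Y(h,n)=0 \vee Z(h,n)\ne 0 \,\big],
\end{equation}
whose matrix is quantifier-free in $\L_{\omega}$: it only tests type-zero values of the type-two parameters $Y,Z$, and recall that type-two parameters are permitted in quantifier-free formulas, only quantifiers being banned.

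Next I would apply $\QFAC^{0,1}$ to \eqref{eqdeltatotal}, obtaining a functional $H^{0\di 1}$ with $(\forall n^{0})[\,Y(H(n),n)=0 \vee Z(H(n),n)\ne 0\,]$. The map $\lambda n.\,\big(Y(H(n),n)\stackrel{?}{=}0\big)$ is then a term of G\"odel's system $T$ in the parameters $Y,H$, so $\lambda$-abstraction inside $\RCAo$ furnishes a set $X^{1}$ with $n\in X \asa Y(H(n),n)=0$. Finally I would verify that $X$ is the set demanded by $\Delta$-comprehension. If $Y(H(n),n)=0$ then $(\exists f^{1})(Y(f,n)=0)$ holds outright; conversely, if $Y(H(n),n)\ne 0$ then the chosen disjunct in \eqref{eqdeltatotal} forces $Z(H(n),n)\ne 0$, so $(\forall g^{1})(Z(g,n)=0)$ fails, whence by the standing hypothesis $(\exists f^{1})(Y(f,n)=0)$ fails as well. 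Thus $n\in X \asa (\exists f^{1})(Y(f,n)=0)$, as required.

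The only delicate point I expect is bookkeeping rather than mathematics: confirming that the disjunctive matrix in \eqref{eqdeltatotal} genuinely lies in the quantifier-free fragment to which $\QFAC^{0,1}$ applies, and pinning down that the hypothesis of $\Delta$-comprehension is used exactly once, in the contrapositive direction that converts failure of the $\Sigma$-side into failure of the $\Pi$-side. No appeal to $(\exists^{2})$ or any discontinuous functional is needed, which is the point of formulating the principle with the auxiliary $Z$.
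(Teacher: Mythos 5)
Your proposal is correct and takes essentially the same approach as the paper: the paper applies $\QFAC^{0,1}$ to $(\forall n^{0})(\exists g^{1}, f^{1})(Z(g, n)=0\di Y(f, n)=0)$, which is precisely the implication form of your disjunctive totality statement (your single witness $h$ split into a pair $(g,f)$), and then reads off the required set from the chosen witnesses exactly as you do. The only slip is in your closing commentary: the equivalence hypothesis is in fact used twice---once contrapositively (failure of the $\Sigma$-side yields failure of the $\Pi$-side) to obtain the totality statement, and once in the opposite direction (failure of the $\Pi$-side yields failure of the $\Sigma$-side) in your final verification---but this is harmless, and the paper's proof uses both directions in the same way.
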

\begin{proof}
The antecedent of $\Delta$-comprehension implies the following
\be\label{hani}
(\forall n^{0})(\exists g^{1}, f^{1})( Z(g, n)=0\di Y(f, n)=0 ).
\ee
Applying $\QFAC^{0,1}$ to \eqref{hani} yields $\Phi^{0\di 1}$ such that 
\be\label{fok}
(\forall n^{0})\big( (\forall g^{1})(Z(g, n)=0)\di Y(\Phi(n), n)=0 \big),
\ee
and by assumption an equivalence holds in \eqref{fok}, and we are done. 
\end{proof}
The previous theorem demonstrates its importance in the following proof.  
Indeed, the very first reversal in Simpson's monograph can be found in \cite{simpson2}*{III.2.2}, which is the implication $\MCT_{\seq}^{[0,1]}\di\ACA_{0}$ via an intermediate step involving $\range$;
the (second part of the) following proof is exactly Simpson's proof of $\MCT_{\seq}^{[0,1]}\di \range$, save for the replacement of sequences by nets. 
\begin{thm}\label{proofofconcept}
The system $\RCAo+\QFAC^{0,1}$ proves $\MCT_{\net}^{[0,1]}\di \BOOT$.
\end{thm}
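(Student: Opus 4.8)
The plan is to route everything through the range principle $\RANGE$: by Theorem~\ref{rage} it suffices to derive $\RANGE$ from $\MCT_{\net}^{[0,1]}$ over $\RCAo+\QFAC^{0,1}$, and $\BOOT$ then follows. As in the proof of Theorem~\ref{bongra}, I would first split on the law of excluded middle $(\exists^{2})\vee\neg(\exists^{2})$. In the case $\neg(\exists^{2})$, all functionals on Baire space are continuous by \cite{kohlenbach2}*{\S3}, so $\RANGE$ collapses to $\range$, $\QFAC^{0,1}$ reduces to $\QFAC^{0,0}$ (already in $\RCAo$), and $\MCT_{\net}^{[0,1]}$ reduces to the second-order $\MCT_{\seq}^{[0,1]}$; the implication is then literally Simpson's reversal $\MCT_{\seq}^{[0,1]}\di \range$ from \cite{simpson2}*{III.2.2}. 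The genuinely higher-order work is the remaining case, where we assume $(\exists^{2})$.

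In the case $(\exists^{2})$, I would fix $G^{2}$ and build an increasing net whose limit encodes $\range(G)$ digit-by-digit. Let $D$ be the set of finite sequences of elements of Baire space, ordered by the inclusion ordering $\preceq_{D}$ of the earlier proofs (defining $\preceq_{D}$ uses $(\exists^{2})$ to decide equality of type-$1$ entries, exactly as noted for Theorem~\ref{bongra}). For $w\in D$, let $S_{w}$ be the finite set $\{n^{0}: (\exists i<|w|)(G(w(i))=n)\}$ and put $x_{w}:=\sum_{n\in S_{w}}4^{-n-1}$. Then $w\preceq_{D}v$ yields $S_{w}\subseteq S_{v}$ and hence $x_{w}\leq_{\R}x_{v}$, so $x_{w}$ is an increasing net in $[0,1]$. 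Applying $\MCT_{\net}^{[0,1]}$ produces a limit real $x=\sum_{n\in R}4^{-n-1}$, where $R$ is precisely $\range(G)$.

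It then remains to extract $R$ as a set, and this is the step that copies Simpson's \cite{simpson2}*{III.2.2} almost verbatim, with sequences replaced by nets. The base-$4$ encoding with digits in $\{0,1\}$ is chosen exactly so that $x$ admits a \emph{unique} base-$4$ expansion — a tail of the maximal digit $3$ can never arise — which sidesteps the non-uniqueness phenomenon that had to be handled by hand via $\IND$ in Corollary~\ref{corkorcor}. Consequently $n\in R$ is equivalent both to the $\Sigma$-formula $(\exists f^{1})(G(f)=n)$ and to the assertion that the $(n{+}1)$-th base-$4$ digit of the limit $x$ equals $1$, the latter supplying the matching universally-quantified side. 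Feeding these two equivalent descriptions into $\Delta$-comprehension — available by Theorem~\ref{DELTA} precisely because we have $\QFAC^{0,1}$ — yields the set $X$ demanded by $\RANGE$. I expect the main obstacle to be exactly this last step: $\Delta$-comprehension is the honest higher-order surrogate for the $\Delta_{1}^{0}$-comprehension that Simpson gets for free inside $\RCA_{0}$, and it is the sole place where $\QFAC^{0,1}$ is genuinely consumed; making the digit extraction clean through the gappy base-$4$ expansion is what allows the second-order argument to transfer unchanged.
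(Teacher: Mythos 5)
Your proposal is correct, and its skeleton is exactly the paper's: reduction to $\RANGE$ via Theorem \ref{rage}, the case split $(\exists^{2})\vee\neg(\exists^{2})$ with the continuous case delegated to Simpson, and a Specker net indexed by finite sequences of elements of Baire space ordered by inclusion. You differ in two places, both worth recording. First, where the paper forces additivity of the increments by admitting only sequences on which $Y$ is injective and summing $2^{-Y(w(i))}$ over indices, you sum $4^{-n-1}$ over the value set $S_{w}$. This buys a cleaner index set: directedness is trivial (concatenation), whereas under the paper's side condition two singletons $\langle g\rangle$, $\langle h\rangle$ with $g\ne_{1}h$ but $Y(g)=Y(h)$ only have a common upper bound if the ordering is read as comparing $Y$-values rather than entries. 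Second, and more substantially, your extraction step is genuinely different: the paper never inspects digits of its limit $c$; it proves the net-theoretic equivalence \eqref{kikop}, whose right-hand side is an honest $\Pi$-formula (universal over all finite sequences), and then applies $\Delta$-comprehension, which is precisely where $\QFAC^{0,1}$ enters. You instead read the set off the base-$4$ digits of $x$.

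Two caveats about your extraction. (a) The identification $x=\sum_{n\in R}4^{-n-1}$ cannot serve as a proof step: the infinite sum presupposes the very set $R$ being constructed. What you must prove is the digit equivalence itself, and it is provable from two facts needing neither $\IND$ nor choice: $x$ bounds the net above (otherwise convergence fails, by directedness), and the net enters every neighbourhood of $x$. Given $G(f_{0})=n_{0}$, choose $w$ with $x-4^{-n_{0}-2}<x_{w}\leq x$, pass to $w'=w*\langle f_{0}\rangle$, and note that the fractional part of $4^{n_{0}+1}x_{w'}$ lies in $[0,\tfrac{1}{3}]$ while the remaining error is below $\tfrac{1}{4}$, so no carry reaches the $(n_{0}+1)$-st digit; the case $(\forall f^{1})(G(f)\ne n_{0})$ is symmetric. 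So the compressed step is fine, but it is an argument, not a definition. (b) Once that equivalence is in hand, $\Delta$-comprehension is superfluous: given $(\exists^{2})$ the digit predicate is decidable, so the characteristic function of the required set already exists in $\RCAo$ by $\lambda$-abstraction. Your closing claim that this is ``the sole place where $\QFAC^{0,1}$ is genuinely consumed'' is therefore inverted: your route consumes no choice at all (it in fact proves the implication over $\RCAo$), while the paper's route does need $\QFAC^{0,1}$, since the $\Pi$-side of \eqref{kikop} is not decidable. This is also the one conceptual feature your variant loses: Theorem \ref{proofofconcept} is advertised as a verbatim lift of Simpson's Specker-sequence proof, with countable choice as the price of the lifted $\Delta_{1}^{0}$-comprehension step, and your gappy encoding quietly removes exactly that feature.
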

\begin{proof}
In case $\neg(\exists^{2})$, note that $\MCT_{\net}^{[0,1]}$ also implies $\MCT_{\seq}^{[0,1]}$ as sequences are nets with directed set $(\N, \leq_{\N})$.  By \cite{simpson2}*{III.2}, $\ACA_{0}$ is available, which readily implies $\BOOT$ for continuous $Y^{2}$, but all functions on Baire space are continuous by \cite{kohlenbach2}*{\S3}.  

\smallskip

In case $(\exists^{2})$, we shall establish $\RANGE$ and obtain $\BOOT$ by Theorem \ref{rage}.  
Now fix some $Y^{2}$ and let $(D, \preceq_{D})$ be a directed set with $D$ consisting of the finite sequences $w^{1^{*}}$ in $\N^{\N}$ such that $(\forall i, j<|w|)(Y(w(i)=Y(w(j)))\di i=j)$ and $v\preceq_{D} w $ if $ (\forall i<|v|)(\exists j<|w|)(v(i)=_{1}w(j))$. 
 Define the net $c_{w}:D\di [0,1]$ as $c_{w}:= \sum_{i=0}^{|w|-1}2^{-Y(w(i))}$.  
Clearly, $c_{w}$ is increasing and let $c$ be the limit provided by $\MCT_{\net}^{[0,1]}$.  Now consider the following equivalence:
\be\label{kikop}
(\exists f^{1})(Y(f)=k)\asa (\forall w^{1^{*}})\big( |c_{w}-c|<2^{-k}\di (\exists g\in w)(Y(g)=k)     \big), 
\ee
for which the reverse direction is trivial thanks to $\lim_{w}c_{w}=c$.  For the forward direction in \eqref{kikop}, assume the left-hand side holds for $f=f_{1}^{1}$ and fix some $w_{0}^{1^{*}}$ such that $|c-c_{w_{0}}|<\frac{1}{2^{k}}$.  
Since $c_{w}$ is increasing, we also have $|c-c_{w}|<\frac{1}{2^{k}}$ for $w\succeq_{D} w_{0}$.  
Now there must be $f_{0}$ in $w_{0}$ such that $Y(f_{0})=k$, as otherwise $w_{1}=w_{0}*\langle f_{1}\rangle$ satisfies $ w_{1}\succeq_{D}w_{0}$ but also $c_{w_{1}}>c$, which is impossible.  

\smallskip

Note that \eqref{kikop} has the right form to apply $\Delta$-comprehension (modulo some coding), and the latter provides the set required by $\RANGE$.
\end{proof}
The net $c_{w}$ from the proof should be called a \emph{Specker net}, similar to \emph{Specker sequences}, pioneered in \cite{specker}. 
In light of the previous (and \cite{samrecount, samFLO2}), proofs from classical RM can be `recycled' as proofs related to the Plato hierarchy.  
The aforementioned `reuse' comes at a cost however: the proof of $\MCT_{\net}^{[0,1]}\di \BOOT$ in Theorem~\ref{bongra} does not make use of countable choice.  
The previous is not an isolated case: many so-called recursive counterexamples give rise to reversals in RM, and these results can often be lifted 
to obtain higher-order results, as studied in \cite{samrecount, samFLO2} for a variety of topics in RM.     
We list another example of the reuse of recursive counterexamples (to even higher types) in Section~\ref{moar}.

\subsection{The Moore-Osgood theorem for nets}\label{motkrijgen}
We study the \emph{Moore-Osgood theorem} which provides a sufficient criterion for the existence of double limits.  
We show that this theorem \emph{for nets} is explosive in the same way as in the previous sections. 
Our motivation is that the above proofs can be viewed as a kind of double limit construction involving nets and sequences.

\smallskip

As to history, E.\ H.\ Moore's version of the Moore-Osgood theorem apparently goes back to 1900 (see \cite{graf}*{p.\ 100}), while Osgood's version goes back to 1907 (see \cite{osgoed}).  
As expected, Moore-Smith deal with double (net) limits in \cite{moorsmidje}*{\S7}.
We use the following version of the Moore-Osgood theorem, similar to \cite{twoapp}*{Lemma 2.3}, where $D$ is assumed to be a subset of Baire space. 
\bdefi[$\MOT$] Let $(D, \preceq_{D})$ be a directed set with $D\subseteq\N^{\N}$.  
For a sequence of nets $x_{d, n}:(D\times \N)\di [0,1]$, if $\lim_{n\di \infty}x_{d, n}=y_{d}$ for some net $y_{d}:D\di [0,1]$ and if the net $\lambda{d}.x_{d,n}$ is uniformly Cauchy, then $\lim_{d}y_{d}=z$ for $z\in [0,1]$. 
\edefi
A sequence of nets $x_{d, n}$ is \emph{uniformly Cauchy} if the $d$ claimed to exist by Definition~\ref{caucau} does not depend on the sequence parameter $n$.  This definition is equivalent to uniform convergence in $\Z_{2}^{\Omega}+\QFAC^{0,1}$.
We use uniform Cauchyness because one generally needs non-trivial comprehension and choice to obtain a \emph{sequence} of limits from the existence of the individual limits $\lim_{d}x_{d,n}$ for all $n$.
\begin{thm}
The system $\ACAo+\IND+\MOT$ proves $\FIVE$.
\end{thm}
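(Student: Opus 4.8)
The plan is to adapt the template of Theorems \ref{stovokor} and \ref{weirdoooo}. Working in $\ACAo$ we have $(\exists^{2})$, so a $\Sigma_{1}^{1}$-formula $\varphi(n)\in\L_{2}$ reduces to $(\exists g^{1})(Y(g,n)=0)$ for some $Y^{2}$ defined from $\exists^{2}$, and it suffices to produce the set $\{n:\varphi(n)\}$ (equivalently, to derive $\BOOT$ and invoke Theorem \ref{boef}). The genuinely new ingredient is to realise the increasing net used in those proofs as the inner, i.e.\ ``sequence'', limit of a double family, so that $\MOT$ supplies the outer net-limit that we could not otherwise obtain.

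Concretely, I would let $D$ be the finite sequences in $\N^{\N}$ ordered by inclusion $\preceq_{D}$ (as in Theorem \ref{weirdoooo}), put $F(w,k):=1$ iff $(\exists i<|w|)(Y(w(i),k)=0)$ and $0$ otherwise, and define the double family $x_{w,n}:=\r(\lambda k.\,G_{n}(w,k))=\sum_{k<n}F(w,k)2^{-(k+1)}$, where $G_{n}(w,k)=F(w,k)$ for $k<n$ and $G_{n}(w,k)=0$ for $k\geq n$; thus $x_{w,n}$ is the $n$-digit truncation of $y_{w}:=\r(\lambda k.F(w,k))$. The inner hypothesis of $\MOT$ is cheap: the geometric tail bound gives $|x_{w,n}-y_{w}|\leq 2^{-n}$, so $\lim_{n\to\infty}x_{w,n}=y_{w}$ with an explicit rate, provably in $\RCAo$, and $y_{w}$ is a genuine increasing net $D\di[0,1]$.

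The main step, and the expected obstacle, is the uniform-Cauchy hypothesis for the rows $\lambda w.\,x_{w,n}$. Given $\eps=2^{-m}$, I claim a single $w_{0}$ works for \emph{all} $n$: take $w_{0}$ containing, for each $k<m$ with $(\exists g)(Y(g,k)=0)$, one witness thereof. Then for $w\succeq_{D}w_{0}$ the low-order bits are frozen, $F(w,k)=F(w_{0},k)$ for $k<m$, whence for any $n$ and $e,f\succeq_{D}w_{0}$ the contributions from $k<m$ cancel while those from $m\leq k<n$ are bounded by $\sum_{k\geq m}2^{-(k+1)}=2^{-m}$, giving $|x_{e,n}-x_{f,n}|\leq 2^{-m}$ uniformly in $n$. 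The only non-routine point is the existence of $w_{0}$: it is a finite object, and $\IND$ proves by induction on $m$ that such a witness-collecting sequence exists (at stage $k$ one instantiates the existential $(\exists g)(Y(g,k)=0)$ whenever it holds). This is exactly where $\IND$ enters, and it is the heart of the matter: we can prove $w_{0}$ exists but cannot compute it, as no witness-finding functional is available from $\exists^{2}$ alone, and it is precisely this non-constructive existence that $\MOT$ converts into usable data.

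With both hypotheses verified, $\MOT$ delivers a real $z=\lim_{w}y_{w}\in[0,1]$. Uniqueness of net limits, together with the freezing argument above, identifies its digits: $z=\sum_{k}c(k)2^{-(k+1)}$ where $c(k):=1$ iff $(\exists g)(Y(g,k)=0)$. Since $z$ is now an actual object, the functional $\eta$ (definable from $\exists^{2}$) reads off its binary expansion, and collecting $\{k:\eta(z)(k)=1\}$ yields $\{n:\varphi(n)\}$, as required by $\FIVE$; the ambiguous dyadic case is dispatched by the same $\IND$ argument as in Corollary \ref{corkorcor}. Equivalently, this construction produces exactly the set demanded by $\BOOT$, so that $\FIVE$ also follows via Theorem \ref{boef}.
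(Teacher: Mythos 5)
Your proposal is correct and follows essentially the same route as the paper's proof: the same reduction of $\Sigma_{1}^{1}$-formulas via $\exists^{2}$, the same directed set of finite sequences under inclusion, the same double family $x_{w,n}$ of digit-truncations of $y_{w}=\r(\lambda k.F(w,k))$, the same $\IND$-based witness-collecting argument for uniform Cauchyness, and the same final step of reading off the binary digits of the $\MOT$-limit $z$ via $\eta$, with the dyadic-ambiguity case handled as in Corollary \ref{corkorcor}. The only differences are presentational (your two-sided cancellation bound versus the paper's one-sided estimate, and your explicit appeal to uniqueness of net limits), so there is nothing to correct.
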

\begin{proof}
A $\Sigma_{1}^{1}$-formula $\varphi(n)\in \L_{2}$ is readily seen to be equivalent to a formula $(\exists f^{1})(Y(f, n)=0)$ for $Y^{2}$ defined in terms of $\exists^{2}$.
Let $D$ be the set of finite sequences in Baire space and let $\preceq_{D}$ be the inclusion ordering, i.e.\ $w\preceq_{D}v$ if $(\forall i<|w|)(\exists j<|v|)(w(i)=_{1}v(j))$.  Now define $F(w, k)$ as $1$ if $(\exists i<|w|)(Y(w(i), k)=0)$, 
and zero otherwise, and define the sequence of nets  $x_{w,k}:=\sum_{i=0}^{k}\frac{F(w, i)}{2^{i+1}}$.  By definition, we have $\lim_{k\di \infty} x_{w, k}= y_{w}$, where $y_{w}:=\sum_{i=0}^{\infty}\frac{F(w,i)}{2^{i+1}}$.
To prove that $x_{w,k}$ is uniformly Cauchy, use $\IND$ to establish that for every $m^{0}\geq 1$, there is $w$ of length $m$ such that $(\forall i<m)\big[(\exists g^{1})(Y(g, i)=0)  \di Y(w(i), i)=0 \big]$.
For $m\geq1$ and such $w$, note that $x_{v, k}$ is below $x_{w, k}+\frac{1}{2^{m}}$ for any $ k$ and $v\succeq_{D} w$, i.e.\ uniform Cauchyness. 

\smallskip

Let $z$ be the limit provided by $\MOT$, i.e.\ $\lim_{w}y_{w}=z$.
One now readily establishes the following equivalence for $\eta$ as in the proof of Corollary \ref{corkorcor}:
\be\label{nogisnekeerkes}
(\forall n^{0})\big[(\exists g^{1})(Y(g,n)=0)\asa \eta(z)(n)=1\big].
\ee
Clearly, \eqref{nogisnekeerkes} yields $\{n: \varphi(n)   \}$, as required by $\FIVE$.
\end{proof}
Finally, one can obtain $\BOOT$ from $\MOT$ in the same way as in the previous sections, while introducing moduli would similarly yield $\QFAC^{0,1}$.
To establish $\BOOT\di \MOT$, note that $y_{d}$ is a Cauchy net due to the assumptions in $\MOT$.

\subsection{Stronger convergence theorems}\label{wonker}
We have previously studied the convergence of nets in the unit interval indexed by Baire space.  
In this section, we show that interesting phenomena occur when lifting some of these restrictions.
In particular, we study the strength of convergence of nets in function spaces indexed by Baire space (Section \ref{powpow}) and of nets in the unit interval with `larger' index sets beyond Baire space (Section \ref{moar})

\subsubsection{Convergence in function spaces}\label{powpow}
In the previous sections, we have studied a number of convergence theorems for nets that give rise to parallel hierarchies as sketched in Figure \ref{xxz}.
Of course, these theorems do not involve formula classes, but the associated hierarchies are still based on formula classes via $\SIXK$.  
In this section, we formulate $\MON$, a (third-order) convergence theorem for nets that does not need $\SIXK$ to bootstrap to the next level, but rather `bootstraps itself', i.e.\ 
$\RCAo+\MON$ can prove $\SIXK$ for any $k$, via longer and longer proofs. 
 
 \smallskip

Now, we have previously considered nets in basic spaces like $2^{\N}$ and $[0,1]$.  
While Moore-Smith in \cite{moorsmidje} limited themselves to nets in $\R$, Vietoris already studied nets in (much) more general spaces in \cite{kliet}, even in the early days of nets. 
Hence, it is a natural question how strong $\MCT_{\net}^{[0,1]}$ becomes for nets in e.g.\ function spaces.  Note that this generalisation still is part of the language of third-order arithmetic.  

\smallskip

In this section, we show that for nets in the function space $[0,1]\di [0,1]$, the associated monotone convergence theorem $\MON$ becomes extremely powerful, in that it implies $\SIXK$ for any $k$ \emph{without additional axioms}.
\bdefi[$\MON$]
Let $(D, \preceq_{D})$ be a directed set where $D\subseteq\N^{\N}$.
Any increasing net $F_{d}:D \di (I\di I)$ converges to some $H:I\di I$.
\edefi
\noindent
Recall that a net $F_{d}:D \di (I\di I)$ is \emph{increasing} if we have that:
\[
(\forall x\in I)(\forall d, e\in D)(d\preceq_{D} e\di F_{d}(x)\leq_{\R}F_{e}(x)).
\]
Due to the boundedness property of $F_{d}$, for fixed $x\in I$, the net $F_{d}(x)$ converges to some limit, 
and the limit function from $\MON$ is obtained by putting all these individual limits together.
Note that $\MON$ implies $\BOOT$ by Corollary~\ref{corkorcor}.  
However, $\MON$ is much more `explosive' than the latter by the following theorem. 
\begin{thm}\label{labelfree}
The system $\RCAo+\MON$ proves $(\SS^{2})$.
\end{thm}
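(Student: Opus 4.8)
The plan is to bootstrap in two stages: first extract $(\exists^{2})$ from $\MON$ by a monotone approximation that needs no discontinuous functional to get started, and then use $(\exists^{2})$ to run a function-space net construction whose limit \emph{is} the Suslin functional, packaged as a single function.

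First I would derive $(\exists^{2})$. Consider the functions $F_{n}:I\di I$ given by $F_{n}(x):=\min(1, n\cdot x)$, indexed by $(\N,\leq)$, which is a directed set and hence covered by $\MON$ (sequences being nets, cf.\ Remark~\ref{memmen}). Each $F_{n}$ is continuous and hence a legitimate, $\RCAo$-definable element of $I\di I$, and for fixed $x\in I$ the values increase with $n$, so $\lambda n.F_{n}$ is an increasing net in the sense of $\MON$. Applying $\MON$ yields $H_{0}:I\di I$ with $H_{0}(x)=\lim_{n}\min(1,nx)$, which equals $1$ when $x>_{\R}0$ and $0$ when $x=_{\R}0$. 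As recalled in Section~\ref{prelim2}, the existence of such a discontinuous function $\R\di\R$ is equivalent to $(\mu^{2})$, hence to $(\exists^{2})$. The key feature of this step is that the net is defined with no appeal to $(\exists^{2})$, so $\RCAo+\MON$ proves $(\exists^{2})$ outright; in particular we never land in the branch $\neg(\exists^{2})$, where all functionals are continuous and $(\SS^{2})$ would be \emph{false}.

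With $(\exists^{2})$ available I would build the main net. Using $(\exists^{2})$ and Kohlenbach's expansion functional $\eta$, fix an extensional decoding $x\mapsto f_{x}$ of reals in $I$ as elements of $\N^{\N}$, together with a coding $c:\N^{\N}\di I$ satisfying $f_{c(f)}=_{1}f$ (dyadic ambiguities resolved by the canonical tail-of-zeros choice built into $\eta$). Let $D$ be the set of finite sequences $w^{1^{*}}$ in $\N^{\N}$ ordered by inclusion $\preceq_{D}$, exactly as in the proof of Theorem~\ref{stovokor}, and define $F_{w}:D\di(I\di I)$ by $F_{w}(x):=1$ if $(\exists g\in w)(\forall m^{0})(f_{x}(\overline{g}m)=0)$ and $F_{w}(x):=0$ otherwise; here $(\exists^{2})$ evaluates the $\Pi^{0}_{1}$ condition and ensures each $F_{w}$ is a genuine extensional functional $I\di I$. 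Since enlarging $w$ can only turn a value from $0$ into $1$, the net is increasing, so $\MON$ supplies a limit $H:I\di I$.

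Finally I would read off the Suslin functional. For fixed $x$ the net $F_{w}(x)$ is $\{0,1\}$-valued and increasing, so its limit is $1$ exactly when some $g$ satisfies $(\forall m)(f_{x}(\overline{g}m)=0)$; thus $H(x)=1\asa(\exists g^{1})(\forall m^{0})(f_{x}(\overline{g}m)=0)$. Setting $\SS(f):=1\dminus H(c(f))$ then yields a functional with $\SS\leq_{2}1$ and $\SS(f)=0\asa(\exists g^{1})(\forall m^{0})(f(\overline{g}m)=0)$, which is precisely $(\SS^{2})$. I expect the main obstacle to be the bookkeeping of the second stage: arranging $x\mapsto f_{x}$ to respect $=_{\R}$ and to satisfy $f_{c(f)}=_{1}f$, and checking that $F_{w}$ is well defined on $=_{\R}$-classes, so that $H$ and hence $\SS$ are bona fide type-two objects and not merely pointwise correspondences. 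The conceptual heart, by contrast, is that the function space $I\di I$ allows the \emph{single} limit $H$ to carry the answer to the $\Sigma^{1}_{1}$ question for \emph{every} $f$ simultaneously; this is exactly the extra strength of $\MON$ over $\BOOT$, which only produces one number-indexed set at a time.
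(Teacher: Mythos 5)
Your proof is correct and follows essentially the same two-stage route as the paper's: first extract $(\exists^{2})$ from $\MON$ via an increasing sequence of continuous piecewise-linear functions whose limit is discontinuous (so that one never falls into the $\neg(\exists^{2})$ branch, where $(\SS^{2})$ is false), and then apply $\MON$ to the increasing $\{0,1\}$-valued net indexed by finite sequences of elements of Baire space ordered by inclusion, reading the Suslin functional off the limit function. The only divergence is bookkeeping: the paper restricts the Suslin input to Cantor space and transfers along the binary expansion $\r/\eta$, case-splitting on eventually constant sequences to handle dyadic ambiguity, whereas you code all of Baire space into $[0,1]$ via a coding--decoding pair $(c, x\mapsto f_{x})$ with $f_{c(f)}=_{1}f$, which is indeed definable from $(\exists^{2})$ (e.g.\ by block-coding with no dyadic points in the range) and carries the same content.
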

\begin{proof}
First of all, we prove $\MON\di (\exists^{2})$.  Let $F_{n}$ be the piecewise linear function that is zero for $x= 0$ and $1$ for $x\geq \frac{1}{2^{n}}$.
Consider the directed set $(\N, \leq)$ and the net $F_{n}$.
The latter is increasing in that $(\forall n, m\in \N)(\forall x\in [0,1])(n\leq m \di F_{n}(x)\leq F_{m}(x))$, and hence $F_{n}$ has a limit $H:I\di I$ by $\MON$.  Clearly, $H(0)=0$ and $H(x)=1$ for $x\in (0,1]$, i.e.\ $H$ is discontinuous, and \cite{kohlenbach2}*{\S3} yields $(\exists^{2})$.  

\smallskip

Secondly, note that the variable `$f$' in the definition of the Suslin funtional $(\SS^{2})$ can be restricted to Cantor space without loss of generality. 
Moreover, if $f\in C$ is eventually constant $0$ (resp.\ constant $1$), then $(\exists g^{1})(\forall n^{0})(f(\overline{g}n)=0)$ clearly holds (resp.\ does not hold).
Given $\exists^{2}$, we can decide whether $f\in C$ is eventually constant, i.e.\ we may restrict ourselves to $f\in C$ that are not eventually constant when defining the Suslin functional.   
Recall that $\exists^{2}$ defines a functional $\eta^{1\di 1}$ that converts real numbers in $[0,1]$ into binary representation, choosing a tail of zeros whenever there are two possibilities.   

\smallskip

Now, let $D$ be the set of finite sequences in Baire space and let $\preceq_{D}$ be the inclusion ordering, i.e.\ $w\preceq_{D}v$ if $(\forall i<|w|)(\exists j<|v|)(w(i)=_{1}v(j))$.  For $w^{1^{*}}\in D$, define the net $F_{w}(f)$ as $1$ if $(\exists g^{1}\in w)(\forall n^{0})(f(\overline{g}n)=0)$, and $0$ otherwise. 
Define $G_{w}:D\di (I \di I)$ as $G_{w}(x):=F_{w}(\eta(x))$.
Note that for $w\preceq_{D}v$, we have $G_{w}(x)\leq G_{w}(x)$ for all $x\in I$, i.e.\ $G_{w}$ is increasing in the sense of nets.  Let $H:I\di I$ be the limit $\lim_{w}G_{w}$ and consider:
\be\label{tatters}
(\forall f^{1}\in C)\big[ H_{0}(f)=1 \asa (\exists g^{1})(\forall n^{0})(f(\overline{g}n)=0)  \big], 
\ee
where $H_{0}(f)$ is $H(\r(f))$ if $\r(f)$ has a unique binary representation, and otherwise $0$ or $1$ depending on whether $f$ is eventually constant $0$ or eventually constant $1$.  
For any $f\in C$, \eqref{tatters} is immediate in the `otherwise' case in $H_{0}(f)$, by the above.  In the unique representation case, if $H_{0}(f)=H(\r(f))=1$ then the definition of limit implies that there is $w\in D$ such that for all $v\succeq_{D}w$, we have $G_{v}(\r(f))=F_{v}(f)=1$, which immediately yields the right-hand side of \eqref{tatters}.  Now let $g_{0}^{1}$ be such that $(\forall n^{0})(f(\overline{g_{0}}n)=0)$ in the unique representation case and suppose $H_{0}(f)=H(\r(f))=0$.  
Again by the definition of limit, there is $w\in D$ such that for all $v\succeq_{D}w$, we have $G_{v}(\r(f))=F_{v}(f)=0$.  This yields a contradiction for $v=w*\langle g_{0}\rangle$, and \eqref{tatters} follows. 
Clearly, the latter defines $(\SS^{2})$. 
\end{proof}
\begin{cor}
For any $k$, the system $\RCAo+\MON$ proves $(\SS_{k}^{2})$.  
\end{cor}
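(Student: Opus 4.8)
The plan is to argue by (external) induction on $k$, using Theorem~\ref{labelfree} as the engine of the inductive step. For the base case $k=0$ I would invoke the identification $\SS_{0}^{2}\equiv\exists^{2}$ from Section~\ref{prelim2} together with the first part of Theorem~\ref{labelfree}, which already derives $(\exists^{2})$ from $\MON$ (the case $k=1$ is then the full statement of that theorem). For the induction step I would assume as induction hypothesis that $\RCAo+\MON$ proves $(\SS_{k}^{2})$ and show that it then proves $(\SS_{k+1}^{2})$, working throughout inside $\RCAo+\MON$ where $\SS_{k}^{2}$, and hence $\exists^{2}$ and the binary-expansion functional $\eta$ of Theorem~\ref{labelfree}, are all available.

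The key observation I would exploit is that a $\Sigma_{k+1}^{1}$-matrix may be written as $(\exists g^{1})\psi(f,g)$, where $\psi$ is a $\Pi_{k}^{1}$-formula in which $g$ occurs as an additional \emph{function} parameter. Since $\SS_{k}^{2}$ decides $\Sigma_{k}^{1}$-formulas with function parameters, it decides their negations too, so I would fix a type-two functional $Y^{2}$, given by a term involving $\SS_{k}^{2}$, with $Y(g,f)=0\asa\psi(f,g)$. This reduces the outer existential quantifier of the $\Sigma_{k+1}^{1}$-formula to exactly the shape of the inner test $(\forall n^{0})(f(\overline{g}n)=0)$ used in the proof of Theorem~\ref{labelfree}.

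With $Y^{2}$ in hand I would rerun that proof almost verbatim, replacing the predicate $(\exists g^{1}\in w)(\forall n^{0})(f(\overline{g}n)=0)$ by $(\exists g^{1}\in w)(Y(g,f)=0)$: let $D$ be the finite sequences in Baire space under the inclusion ordering $\preceq_{D}$, set $F_{w}(f):=1$ iff $(\exists g^{1}\in w)(Y(g,f)=0)$ and $0$ otherwise, and put $G_{w}(x):=F_{w}(\eta(x))$. As before $G_{w}:D\di(I\di I)$ is an increasing net, so $\MON$ supplies a limit $H:I\di I$, and the analogue of~\eqref{tatters} would then yield that $H_{0}$ decides $(\exists g^{1})(Y(g,f)=0)$, that is $(\exists g^{1})\psi(f,g)$. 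The edge cases concerning eventually constant $f\in C$ and reals lacking a unique binary representation carry over unchanged from Theorem~\ref{labelfree}, since they only involve $\eta$ and the structure of $C$ and are insensitive to the inner matrix $Y$. This is precisely $(\SS_{k+1}^{2})$.

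The point to stress, rather than a genuine obstacle, is that this induction is \emph{external}: for each standard $k$ one obtains a separate derivation, and these derivations grow in length with $k$, since each invokes the Suslin functional from the previous level. This matches the earlier remark that $\MON$ `bootstraps itself' via longer and longer proofs, and explains why the argument does not collapse into a single proof of $\cup_{k}(\SS_{k}^{2})$, i.e.\ of $\Z_{2}^{\omega}$, inside $\RCAo+\MON$.
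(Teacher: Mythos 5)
Your proposal is correct and takes essentially the same approach as the paper: the paper's proof likewise absorbs the inner $\Pi_{k}^{1}$ matrix into a functional $Y^{2}$ defined from the Suslin functional of the previous level and then reruns the proof of Theorem \ref{labelfree} with the inner test `$(\forall n^{0})(f(\overline{g}n)=0)$' replaced by `$Y=0$'. The only difference is presentational: the paper writes out just the step from $\SS^{2}$ to $\SS_{2}^{2}$ and leaves the external induction (which you spell out, including the base case and the growth in proof length) implicit.
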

\begin{proof}
To obtain $(\SS^{2}_{2})$, $(\exists g^{1})(\forall h^{1})(\exists n^{0})(f(\overline{g}n, \overline{h}n)=0)$ is equivalent to the formula $(\exists g^{1})(Y(f,g )=0)$, where $Y^{2}$ is defined in terms of $\SS^{2}$.
Now repeat the proof of the theorem step with `$(\forall n^{0})(f(\overline{g}n)=0)$' replaced by `$Y(f, g)=0$'.
\end{proof}
Finally, $\MON$ is not that much more `exotic' than e.g.\ $\MCT_{\net}^{[0,1]}$ by the following. 
\begin{thm}\label{dyrk}
The system $\RCAo$ proves $[\MCT_{\net}^{[0,1]}+\QFAC^{1,1}+(\exists^{2})]\di \MON$.
\end{thm}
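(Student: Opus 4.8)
The plan is to realise $\MON$ as a uniform, choice-assisted assembly of the pointwise limits delivered by $\MCT_{\net}^{[0,1]}$, with $(\exists^2)$ handling the arithmetic of real approximations. As in the proofs of Theorems~\ref{bongra} and~\ref{proofofconcept}, I would first split on $(\exists^{2})\vee\neg(\exists^{2})$. In the case $\neg(\exists^{2})$, all functionals on Baire space are continuous by \cite{kohlenbach2}*{\S3}; the net $F_d$ and the relation $\preceq_D$ then reduce to second-order data, the quantifiers over $D$ collapse to number quantifiers (as in the proof of Theorem~\ref{bongra}), and $\MON$ reduces to the monotone convergence theorem for continuous functions, available since $\MCT_{\net}^{[0,1]}$ yields $\ACA_0$ in this case. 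So the substance lies in the case $(\exists^{2})$, where I also have $\BOOT$ at my disposal by Theorem~\ref{bongra} and Corollary~\ref{corkorcor}, together with the binary-representation functional $\eta$ and decidability of inequalities between reals.

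In the case $(\exists^{2})$, fix an increasing net $F_d:D\di(I\di I)$. For each fixed $x\in I$ the net $\lambda d.F_d(x)$ is increasing in $[0,1]$ by the pointwise increasing hypothesis, so $\MCT_{\net}^{[0,1]}$ supplies its limit $\ell_x=\sup_{d}F_d(x)$; thus $(\forall x^{1}\in I)(\exists y^{1}\in I)[\,y=\lim_d F_d(x)\,]$. To extract a single functional $H$ from this I would invoke $\QFAC^{1,1}$, packaging each limit with a witnessing modulus: for each $x$ I seek a real $y$ and a $\preceq_D$-increasing sequence $d_{(\cdot)}:\N\di D$ with $F_{d_k}(x)>_{\R} y-2^{-k}$ and $y=\lim_k F_{d_k}(x)$. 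Relative to $(\exists^{2})$ the comparison $F_{d_k}(x)>_{\R}y-2^{-k}$ and the Cauchy-with-modulus condition on $\langle F_{d_k}(x)\rangle$ involve only number quantifiers, hence are quantifier-free, so $\QFAC^{1,1}$ applies and returns $\Phi^{1\di1}$ from which I read off $H(x)$ as the first component. That $H$ respects $=_{\R}$, and is therefore a genuine function $I\di I$, would follow from uniqueness of net limits: if $x=_{\R}x'$ then $F_d(x)=_{\R}F_d(x')$ for all $d$, so both chosen values are limits of the same net and are $=_{\R}$-equal.

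The main obstacle is that the quantifier-free matrix above controls only the \emph{lower} behaviour of the selected sequence: a priori $\QFAC^{1,1}$ might return a $\preceq_D$-increasing sequence whose values converge to some $y<_{\R}\ell_x$, undershooting the supremum, since the upper-bound condition $(\forall d\in D)(F_d(x)\leq_{\R}y)$ is a genuine quantifier over the directed index $D$ and cannot be placed in a quantifier-free matrix. Reconciling this is the delicate, `explosive' heart of the argument, and is exactly where $\MCT_{\net}^{[0,1]}$ (equivalently $\BOOT$, by Theorem~\ref{bongra}) and $\QFAC^{1,1}$ must combine to do work beyond either in isolation. My intended route is to use the $\Sigma$-comprehension of $\BOOT$ to decide, per $x$, the cut $\{q\in\Q:(\exists d\in D)(F_d(x)>_{\R}q)\}$ that determines $\ell_x$ (so that the selected value is certified to equal the supremum rather than undershoot it), and only then apply $\QFAC^{1,1}$ to uniformise this decision over the uncountable index $x$. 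I expect this uniformisation step to be the hardest point: it is consistent with, and indeed forced by, the fact that $\MON$ proves $(\SS^{2})$ (Theorem~\ref{labelfree}), so the hypotheses must already encode a uniform $\Sigma_{1}^{1}$-decision, and the proof amounts to exhibiting it. Once $H$ is in hand, the convergence $F_d\to H$ is immediate from the construction.
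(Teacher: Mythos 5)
Your first two paragraphs are, in substance, the paper's entire proof of Theorem \ref{dyrk}: the paper likewise applies $\MCT_{\net}^{[0,1]}$ pointwise to get $(\forall x\in I)(\exists y\in I)(\forall k^{0})(\exists d\in D)(|F_{d}(x)-y|<2^{-k})$, uses $\QFAC^{0,1}$ to package each limit with a witnessing sequence $d_{(\cdot)}$ satisfying $(\forall k^{0})(|F_{d_{k}}(x)-y|<2^{-k})$, notes that this matrix is quantifier-free modulo $(\exists^{2})$, applies $\QFAC^{1,1}$, and then simply asserts that the resulting functional is the limit required by $\MON$. (Your preliminary case split on $(\exists^{2})\vee\neg(\exists^{2})$ is redundant, since $(\exists^{2})$ is an explicit hypothesis, but that is harmless.) Where you diverge is that you refuse to stop there: you observe, correctly, that such a matrix only certifies that the selected $y$ is a cluster value of the net $\lambda d.F_{d}(x)$ --- indeed any achieved value $F_{d_{0}}(x)$ together with the constant sequence $d_{k}=d_{0}$ satisfies it --- so nothing in the quantifier-free data forces $y$ to equal $\sup_{d}F_{d}(x)$. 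The paper's proof does not address this undershooting problem at all, so your ``main obstacle'' is in fact a critique of the argument the paper itself gives.

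The genuine gap is that your proposed repair is exactly the step you flag as ``the hardest point'' and then never carry out, and as sketched it cannot work. Applying $\BOOT$ for each fixed $x$ does yield the cut $X_{x}=\{q\in\Q:(\exists d\in D)(F_{d}(x)>_{\R}q)\}$, but assembling these into a single functional requires applying $\QFAC^{1,1}$ to a statement of the form $(\forall x\in I)(\exists X^{1})(\forall q\in\Q)\big[q\in X\asa(\exists d\in D)(F_{d}(x)>_{\R}q)\big]$, whose matrix contains an existential quantifier over $D\subseteq\N^{\N}$ and hence is not quantifier-free even in the presence of $(\exists^{2})$. This is the original obstruction relocated, from ``$y$ is an upper bound of the net'' to ``$X$ is the correct cut'': in both cases the defining condition of the object to be chosen quantifies over $D$, which is precisely what $\QFAC^{1,1}$ cannot absorb. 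Your closing appeal to Theorem \ref{labelfree} ($\MON\di(\SS^{2})$) --- that the hypotheses ``must already encode a uniform $\Sigma_{1}^{1}$-decision'' and the proof ``amounts to exhibiting it'' --- is backwards reasoning: the strength of the conclusion says nothing about whether these particular hypotheses suffice, and it is exactly this uniform, $x$-indexed (rather than $n$-indexed) decision power that neither $\BOOT$ nor a quantifier-free instance of $\QFAC^{1,1}$ visibly supplies. So the proposal is not a proof; to be fair, completing it would also require repairing the very step that the paper's own proof passes over in silence.
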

\begin{proof}
Let $F_{d}$ be as in $\MON$.  By $\MCT_{\net}^{[0,1]}$, for fixed $x\in I$, the net $F_{d}(x)$ converges to some limit $y\in I$, implying the following formula:
\[\textstyle
(\forall x\in I)(\exists y\in I)\underline{(\forall k^{0})(\exists d\in D)(|F_{d}(x)-y|<\frac{1}{2^{k}})}.
\]
Apply $\QFAC^{0,1}$ to the underlined formula to obtain
\[\textstyle
(\forall x\in I)(\exists y\in I)(\exists d_{n}^{0\di 1}){(\forall k^{0})(|F_{d_{k}}(x)-y|<\frac{1}{2^{k}})},
\]
which qualifies for $\QFAC^{1,1}$ in the presence of $(\exists^{2})$ and coding of the second existential quantifier as a type one object. 
The resulting functional is the limit as required for $\MON$.
\end{proof}
The previous proof actually provides a modulus of convergence for the limit process $\lim_{d}F_{d}=H$.  
Moreover, introducing a modulus of convergence in $\MON$, one obtains \emph{mutatis mutandis} that the enriched principle implies $\QFAC^{1,1}$, and hence an equivalence in the previous theorem.
One can also prove that $\MON$ is equivalent to the following straightforward generalisation of $\BOOT$:
\[
(\forall Y^{2})(\exists G^{2})(\forall f^{1})(G(f)=0\asa (\exists g^{1})(Y(f,g)=0)).
\]
The proof is similar to that of Theorem \ref{labelfree}, and we therefore omit it. 

\subsubsection{Index sets beyond Baire space}\label{moar}
In this section, we study the Bolzano-Weierstrass theorem for nets with index sets beyond Baire space, namely subsets of $\N^{\N}\di \N$. 
Such index sets are also studied in \cite{samnetspilot}*{Appendix A} in the context of computability theory and RM, but we stress that these results  
are only given (here and in \cite{samnetspilot}) by way of illustration: the general study of nets is perhaps best undertaken in a suitable set theoretic framework.  
That is not to say this section should be dismissed as \emph{spielerei}; our results come with conceptual motivation as follows:
\begin{enumerate}
\item Index sets beyond Baire space do occur `in the wild', namely in e.g.\ \emph{fuzzy mathematics} and \emph{gauge integration}, by Remark \ref{fuzzytop}. 
\item It is a natural question whether the above proofs generalise to higher types.  
\item In light of Corollary \ref{stovokor2}, it is a natural question whether nets with index sets beyond Baire space take us beyond second-order arithmetic.  
\item It is a natural question whether $\ECF$ maps results pertaining to index sets beyond Baire space into second-order arithmetic.  
\item Nets with index sets beyond $\N^{\N}$ provide a partial answer to a question from Section \ref{pgintro}, namely what the Plato hierarchy is a reflection of.  
\end{enumerate}
As we will see below, the answer is positive for each of these questions. 
Thus, similar to Definition \ref{strijker}, we introduce the following.  
\bdefi[$\RCAo$]\label{strijker2}
A `subset $E$ of $\N^{\N}\di \N$' is given by its characteristic function $F_{E}^{3}\leq_{3}1$, i.e.\ we write `$Y\in E$' for $ F_{E}(Y)=1$ for any $Y^{2}$.
A `binary relation $\preceq$ on the subset $E$ of $ \N^{\N}\di \N$' is given by the associated characteristic function $G_{\preceq}^{(2\times 2)\di 0}$, i.e.\ we write `$Y\preceq Z$' for $G_{\preceq}(Y, Z)=1$ and any $Y, Z\in E$.
\edefi
\bdefi[$\BW_{\net}^{1}$]
Any net in Cantor space indexed by a subset of $\N^{\N}\di \N$ has a convergent sub-net.
\edefi
\begin{thm}\label{koonfin}
The system $\Z_{2}^{\Omega}+\BW^{1}_{\net}$ proves $\Pi_{1}^{2}\textup{-\textsf{CA}}_{0}$.
\end{thm}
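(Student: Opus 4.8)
The plan is to imitate, one type higher, the proof of Theorem~\ref{stovokor}: there $\BW_{\net}^{C}$ was used to decide a $\Sigma_{1}^{1}$-formula $(\exists g^{1})(Y(g,n)=0)$, thereby yielding $\FIVE=\Pi_{1}^{1}\textup{-\textsf{CA}}_{0}$; here $\BW_{\net}^{1}$ should let us decide a formula $(\exists Y^{2})\psi(n,Y)$, where $\psi$ is an arbitrary formula of $\L_{2}$ containing $Y^{2}$ as a parameter but no quantifiers of type two or higher. By complementation this suffices for $\Pi_{1}^{2}\textup{-\textsf{CA}}_{0}$, which is comprehension for sets $\{n:(\forall Y^{2})A(n,Y)\}$ with $A$ of exactly this shape. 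The enabling fact is that $(\exists^{3})$ yields full second-order comprehension over $\RCAo$, so that for each fixed $Y^{2}$ the matrix $\psi(n,Y)$ has a characteristic functional, uniformly in $Y$; hence the net defined below is a legitimate object of $\Z_{2}^{\Omega}$.

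First I would set up the index set. Let $E$ consist of codes for the finite sequences $w=\langle Y_{0},\dots,Y_{|w|-1}\rangle$ of type-two functionals, each coded as a single element of $\N^{\N}\di\N$, and let $\preceq_{E}$ be the inclusion ordering $v\preceq_{E}w\equiv(\forall i<|v|)(\exists j<|w|)(v(i)=_{2}w(j))$; since $(\exists^{3})$ decides $=_{2}$, the relation $\preceq_{E}$ is as in Definition~\ref{strijker2}, and $(E,\preceq_{E})$ is directed by concatenation as in Definition~\ref{nets}. Define the net $c_{w}:E\di C$ by $c_{w}(n):=1$ if $(\exists i<|w|)\psi(n,w(i))$ and $c_{w}(n):=0$ otherwise, which is a genuine functional of type $2\di 1$ by the decidability just noted.

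Applying $\BW_{\net}^{1}$, let $\phi:B\di E$ witness a convergent sub-net with limit $h\in C$. I would then establish, exactly as in \eqref{nogisnekeer}, the equivalence $(\forall n^{0})\big[(\exists Y^{2})\psi(n,Y)\asa h(n)=1\big]$. For the reverse direction, if $(\forall Y^{2})\neg\psi(n_{0},Y)$ then $c_{w}(n_{0})=0$ for all $w\in E$, so convergence forces $h(n_{0})=0$. For the forward direction, suppose $(\exists Y^{2})\psi(n_{0},Y)$ yet $h(n_{0})=0$; fixing a witness $Y_{0}$ and applying the cofinality clause of Definition~\ref{demisti} to $d=\langle Y_{0}\rangle$ together with the definition of limit, one obtains $b_{2}\in B$ with simultaneously $\phi(b_{2})\succeq_{E}\langle Y_{0}\rangle$ (so $Y_{0}$ occurs in $\phi(b_{2})$, whence $c_{\phi(b_{2})}(n_{0})=1$) and $c_{\phi(b_{2})}(n_{0})=h(n_{0})=0$, a contradiction. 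Hence $h$ is the characteristic function of $\{n:(\exists Y^{2})\psi(n,Y)\}$, and its complement is the set demanded by $\Pi_{1}^{2}\textup{-\textsf{CA}}_{0}$.

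The Bolzano--Weierstrass argument transcribes essentially verbatim from Theorem~\ref{stovokor}, so the main obstacle is the bookkeeping at the higher type: one must verify that finite sequences of type-two functionals code faithfully as single type-two objects, that $=_{2}$ and the full $\L_{2}$-matrix $\psi$ are decidable uniformly in the type-two parameter, and that $\lambda w.\,c_{w}$ is thereby a legitimate net in the sense of Definition~\ref{nets}. It is precisely here that replacing $\exists^{2}$ (used in Theorem~\ref{stovokor}) by $(\exists^{3})$, i.e.\ moving the base theory from $\ACAo$ to $\Z_{2}^{\Omega}$, is indispensable, since $(\exists^{3})$ is exactly what collapses an arbitrary analytical matrix $\psi(\cdot,Y)$ to a decidable predicate uniformly in the parameter $Y$.
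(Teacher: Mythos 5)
Your proposal is correct and follows essentially the same route as the paper's own proof: the same index set of finite sequences of type-two functionals under the inclusion ordering, the same indicator net built from an $\exists^{3}$-decidable matrix, and the same sub-net/limit argument establishing the equivalence $(\forall n^{0})\big[(\exists Y^{2})\psi(n,Y)\asa h(n)=1\big]$. The only differences are cosmetic bookkeeping choices (coding finite sequences as single type-two objects rather than using the dedicated type $2^{*}$, and working with $\psi$ directly instead of first introducing a characteristic functional $Z^{3}$ via $(\exists^{3})$), which do not alter the argument.
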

\begin{proof}
A $\Sigma_{1}^{2}$-formula $\varphi(n)\in \L_{3}$ is readily seen to be equivalent to a formula $(\exists Y^{2})(Z(Y, n)=0)$ for $Z^{3}$ defined in terms of $\exists^{3}$.
Let $E$ be the set of finite sequences in $\N^{\N}\di \N$ and let $\preceq_{E}$ be the inclusion ordering, i.e.\ $w\preceq_{E}v$ if $(\forall i<|w|)(\exists j<|v|)(w(i)=_{2}v(j))$.  Define the net $f_{w}:E\di C $ as $f_{w}:=\lambda k.F(w, k)$ where $F(w, k)$ is $1$ if $(\exists i<|w|)(Z(w(i), k)=0)$, and zero otherwise. 
Using $\BW_{\net}^{1}$, let $\phi:B\di E$ and $f^{1}$ be such that $\lim_{b}f_{\phi(b)}=f$.
We now establish that
\be\label{nogisnekeer2}
(\forall n^{0})\big[(\exists Y^{2})(Z(Y,n)=0)\asa f(n)=1\big].
\ee
For the reverse direction, note that for fixed $n_{0}$, if $Z(Y, n_{0})=0$ for all $Y^{2}$, then $f_{w}(n_{0})=0$ for any $w\in E$.  The definition of limit then implies $f(n_{0})=0$, i.e.\ we have established (the contraposition of) the reverse direction.  
For the forward direction in \eqref{nogisnekeer2}, suppose there is some $n_{0}$ such that $(\exists Y^{2})(Z(Y,n_{0})=0)\wedge f(n_{0})=0$.
Now, $\lim_{b}f_{\phi(b)}=f$ implies that there is $b_{0}\in B$ such that for $b\succeq_{B} b_{0}$, we have $\overline{f_{\phi(b)}}n_{0}=\overline{f}n_{0}$, i.e.\ $f_{\phi(b)}(n_{0})=0$ for $b\succeq_{B} b_{0}$.  
Let $Y_{0}^{2}$ be such that $Z(Y_{0}, n_{0})=0$, and use the second item in Definition \ref{demisti} for $d=\langle Y_{0}\rangle$, i.e.\ there is $b_{1}\in B$ such that $\phi(b)\succeq_{E} \langle Y_{0}\rangle$ for any $b\succeq_{B} b_{1}$.
Now let $b_{2}\in B$ be such that $b_{2}\succeq_{B} b_{0}, b_{1}$ as provided by Definition \ref{nets}.  On one hand, $b_{2}\succeq_{B}b_{1}$ implies that $\phi(b_{2})\succeq_{E}\langle Y_{0}\rangle$, and hence $f_{\phi(b_{2})}(n_{0})=F(\phi(b_{2}), n_{0})=1$, as $Y_{0} $ is in the finite sequence $ \phi(b_{2})$ by the definition of $\preceq_{E}$.  On the other land, $b_{2}\succeq_{B} b_{0}$ implies that $f_{\phi(b_{2})}(n_{0})=f(n_{0})=0$, a contradiction.  
Hence the forward direction follows and so does \eqref{nogisnekeer2}, yielding the set $\{n: \varphi(n)   \}$, as required by $\Pi_{1}^{2}\textup{-\textsf{CA}}_{0}$.
\end{proof}
We now generalise Theorem \ref{bongra} to higher types.  To this end, inspired by \eqref{nogisnekeer2}, we generalise $\BOOT$ to $\N^{\N}\di \N$ as follows:  
\be\label{woot}\tag{$\BOOT^{1}$}
(\forall Z^{3})(\exists X^{1})(\forall n^{0})(n\in X\asa (\exists Y^{2})(Z(Y,n)=0)).
\ee
Similarly, let $\MCT_{\net}^{1}$ be the monotone convergence theorem based on $\BW_{\net}^{1}$.  
\begin{cor}\label{kook}
The system $\Z_{2}^{\Omega}$ proves $\BOOT^{1}\asa \MCT^{1}_{\net}$.
The $\L_{2}$-sentence $[\BOOT^{1}]_{\ECF}$ is provable in $\SIX$.    
\end{cor}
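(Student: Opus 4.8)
The plan is to prove the two claims separately: the equivalence $\BOOT^{1}\asa\MCT_{\net}^{1}$ over $\Z_{2}^{\Omega}$ is obtained by transcribing the proof of Theorem \ref{bongra} one type higher, while the second claim is a pure complexity computation for $[\BOOT^{1}]_{\ECF}$.

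For the equivalence, the first observation I would make is that $\Z_{2}^{\Omega}$ proves $(\exists^{3})$ and hence $(\exists^{2})$, so the case distinction on $\neg(\exists^{2})$ appearing in Theorem \ref{bongra} is unnecessary here: we work throughout in the ``$(\exists^{2})$ branch'', now powered by $\exists^{3}$. For the forward direction $\MCT_{\net}^{1}\di\BOOT^{1}$, I would fix $Z^{3}$ and re-use the increasing net $f_{w}\colon E\di C$ from the proof of Theorem \ref{koonfin}, where $E$ is the set of finite sequences in $\N^{\N}\di\N$ with the inclusion ordering $\preceq_{E}$ (definable using $\exists^{3}$ to decide $=_{2}$) and $F(w,k)=1$ iff $(\exists i<|w|)(Z(w(i),k)=0)$. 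This net is increasing for $\leq_{\lex}$, so $\MCT_{\net}^{1}$ yields a limit $f$, and the argument behind \eqref{nogisnekeer2} gives $(\forall n^{0})[(\exists Y^{2})(Z(Y,n)=0)\asa f(n)=1]$, delivering the set $X$ required by $\BOOT^{1}$. For the reverse direction $\BOOT^{1}\di\MCT_{\net}^{1}$, I would take an increasing net $x_{d}\colon E\di C$ with $E\subseteq\N^{\N}\di\N$ and note that, for a finite binary sequence $\sigma$, the predicate $(\exists d\in E)(x_{d}\geq_{\lex}\sigma*00\dots)$ is already literally of the form $(\exists Y^{2})(Z(Y,n)=0)$ for a $Z^{3}$ built from the characteristic functions of $E$ and $\preceq_{E}$ and $\exists^{3}$, with $n$ coding $\sigma$. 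Applying $\BOOT^{1}$ to this $Z$ and constructing the limit $f$ bit-by-bit (exactly as in the reverse direction of Theorem \ref{bongra}) then furnishes the convergence demanded by $\MCT_{\net}^{1}$.

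For the second claim, I would write out $[\BOOT^{1}]_{\ECF}$ following Remark \ref{ECF}: the type-three $Z$ becomes a total associate $\alpha^{1}$, while the quantifier $(\exists Y^{2})$ becomes a quantifier $(\exists\beta^{1})$ over total associates of continuous type-two functionals. Since $Z$ is continuous, ``$Z(Y,n)=0$'' collapses to a $\Sigma_{1}^{0}$ statement $(\exists k^{0})(\alpha(\langle n\rangle *\overline{\beta}k)=1)$ depending only on a finite initial segment of $\beta$, just as in the proof of Theorem \ref{boef}. The decisive difference from $[\BOOT]_{\ECF}$ is that being a total associate is a genuinely $\Pi_{1}^{1}$ condition on $\beta$, its totality clause being $(\forall g^{1})(\exists m^{0})(\beta(\overline{g}m)>0)$. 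Hence the right-hand side $(\exists Y^{2})(Z(Y,n)=0)$ translates into
\[
(\exists \beta^{1})\big[\,(\textup{$\beta$ is a total associate})\wedge(\exists k^{0})(\alpha(\langle n\rangle *\overline{\beta}k)=1)\,\big],
\]
whose matrix is $\Pi_{1}^{1}$ and which is therefore $\Sigma_{2}^{1}$ in $n$. Forming $X=\{n:\dots\}$ then needs only $\Sigma_{2}^{1}$-comprehension, which is available in $\SIX$ because a $\Sigma_{2}^{1}$ set is the complement of a $\Pi_{2}^{1}$ set and $\Pi_{2}^{1}$-comprehension is at hand.

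The main obstacle, and the only place where genuine care is required, is pinning down this complexity bound: I must verify that the associate-hood predicate is exactly $\Pi_{1}^{1}$ (no worse), and that the continuity of $Z$ really does reduce $Z(Y,n)=0$ to an arithmetical condition on a finite part of $\beta$, so that $(\exists\beta)$ over a $\Pi_{1}^{1}$ matrix lands in $\Sigma_{2}^{1}$ rather than higher — this is precisely what raises the translation from the $\ACA_{0}$ of $[\BOOT]_{\ECF}$ up to $\SIX$. Everything else is routine: the equivalence in the first claim is, modulo bookkeeping for type-two index sets, a verbatim copy of Theorem \ref{bongra}.
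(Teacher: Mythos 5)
Your proposal is correct and takes essentially the same route as the paper: the equivalence is the proof of Theorem \ref{bongra} lifted one type up (re-using the net from Theorem \ref{koonfin} for the direction $\MCT_{\net}^{1}\di\BOOT^{1}$ and the bit-by-bit limit construction from $\BOOT^{1}$ for the converse), exactly as the paper indicates. Your complexity count for the second claim --- associate-hood is $\Pi_{1}^{1}$, so the translated right-hand side is $\Sigma_{2}^{1}$ and the required set exists by $\Pi_{2}^{1}$-comprehension --- is precisely the content of the paper's terse ``clearly definable in $\SIX$'' applied to \eqref{Gprt}.
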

\begin{proof}
For the second part, let $\gamma^{1}$ be a total associate for $Z^{3}$ in $\BOOT^{1}$.  
The right-hand side of $[\BOOT^{1}]_{\ECF}$ is 
\be\label{Gprt}
(\exists \alpha^{1})\big((\forall \beta^{1})(\exists m^{0})(\alpha(\overline{\beta}m)>0)\wedge (\exists k^{0})(\gamma(\overline{\alpha}k,n)=1)  \big), 
\ee
and the set consisting of such $n^{0}$ is clearly definable in $\SIX$.

\smallskip

For the first part, the reverse direction follows in the same way as the proof of the theorem, i.e.\ \eqref{nogisnekeer2} also goes through for the limit provided by $\MCT_{\net}^{1}$.  The forward direction follows by the usual interval halving technique based on $\BOOT^{1}$, i.e.\ as
in the proof of Theorem \ref{bongra}.
\end{proof}
A problem with the previous results is that $(\exists^{3})$ seems needed, but $\ECF$ converts this axiom to `$0=1$', and the same for $(\exists^{2})$. 
We now introduce a `weaker' lossy translation that behaves better in this regard. 
For any $A\in \L_{\omega}$, let $[A]_{\PECF}$ be $A$ with any variable $Y^{2}$ restricted to $Y^{2}\in C$, i.e.\ we replace type two functionals by \emph{continuous} type two functionals (essentially as in $\ECF$), but do not modify higher types.  
We have the following result that suggests
that $\PECF$ converts `$\Z_{2}^{\Omega}\vdash[\BOOT^{1}\asa \MCT^{1}_{\net}]$' to `$\ACAo\vdash [\BOOT\asa \MCT_{\net}^{C}]$'.  
\begin{thm}\label{diak}
The system $\RCAo$ proves $[(\exists^{3})]_{\PECF}\asa (\exists^{2})$, while $\ACAo$ proves $[\BOOT^{1}]_{\PECF}\asa \BOOT$ and $[\MCT_{\net}^{1}]_{\PECF}\asa \MCT_{\net}^{C}$.
\end{thm}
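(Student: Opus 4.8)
The plan is to treat the three biconditionals separately, in each case exploiting the single structural fact underlying $\PECF$: a continuous type-two functional is, up to the identification of Remark \ref{unbeliever}, the same thing as a type-one associate $\alpha^{1}$, and $(\exists^{2})$ suffices to reconstruct from $\alpha$ the functional $Y_{\alpha}$ it codes (using $(\exists^{2})$ to find, for each $g$, the least $m$ with $\alpha(\overline{g}m)>0$ and decoding the output there, with a default value where no such $m$ exists). Thus restricting a type-two quantifier to continuous functionals turns it into a type-one quantifier over $\alpha^{1}$, and---crucially---it turns a type-three object applied to such functionals into a type-two object: $Z^{3}$ becomes $\lambda\alpha^{1}.Z(Y_{\alpha},\cdot)$, a type-three index set over $\N^{\N}\di\N$ becomes a subset of Baire space, and the type-$((2\times2)\di0)$ ordering becomes a type-$((1\times1)\di0)$ ordering. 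This is exactly the collapse from the primed higher-type principles to their $C$-versions.

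For the first claim I work in $\RCAo$. For $[(\exists^{3})]_{\PECF}\di(\exists^{2})$, given $g^{1}$ consider the continuous functional $Y_{g}:=\lambda f.g(f(0))$; the body of $[(\exists^{3})]_{\PECF}$ applied to $Y_{g}$ gives $E(Y_{g})=0\asa(\exists f^{1})(g(f(0))=0)\asa(\exists m^{0})(g(m)=0)$, and since $E\le_{3}1$ the functional $\lambda g.E(Y_{g})$ is of the form $\exists^{2}$. For the converse I assume $(\exists^{2})$ and must build a witness $E^{3}\le_{3}1$. The key observation is that for a \emph{continuous} $Y^{2}$ one has $(\exists f^{1})(Y(f)=0)\asa(\exists\sigma^{0^{*}})(Y(\sigma*00\dots)=0)$, since any zero of a continuous $Y$ has an eventually-zero point in its neighbourhood on which $Y$ still vanishes; the right-hand side is $\Sigma_{1}^{0}$ and hence decidable by $(\exists^{2})$, so setting $E(Y)=0$ exactly when this search succeeds (and arbitrarily on discontinuous inputs) yields the required witness.

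For $[\BOOT^{1}]_{\PECF}\asa\BOOT$ and $[\MCT_{\net}^{1}]_{\PECF}\asa\MCT_{\net}^{C}$ I work in $\ACAo$, so $(\exists^{2})$ and the reconstruction $\alpha\mapsto Y_{\alpha}$ are available throughout. For $\BOOT\di[\BOOT^{1}]_{\PECF}$, given $Z^{3}$ I form the type-two functional $\widetilde{Z}(\alpha,n):=Z(Y_{\alpha},n)$ and apply $\BOOT$ to it; the resulting set is precisely the one demanded by $[\BOOT^{1}]_{\PECF}$. For $[\BOOT^{1}]_{\PECF}\di\BOOT$, given $Y^{2}$ I define $Z^{3}$ by declaring $Z(W,n)=0$ iff $Y(\lambda k.W(\lambda i.k),n)=0$; since every $f^{1}$ is recovered as $\lambda k.Y_{\alpha}(\lambda i.k)$ for the associate $\alpha$ of $\lambda g.f(g(0))$, the set provided by $[\BOOT^{1}]_{\PECF}$ for this $Z$ is exactly the one required by $\BOOT$. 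The two directions of $[\MCT_{\net}^{1}]_{\PECF}\asa\MCT_{\net}^{C}$ are then essentially definitional: reconstruction converts a net in $C$ indexed by a subset of $\N^{\N}\di\N$ together with its ordering into a net in $C$ indexed by a subset of Baire space with the induced ordering, and conversely; one checks that monotonicity and the limit are preserved under the correspondence, so each principle yields the other.

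The main obstacle is the precise handling of the associate correspondence in the directions passing from the $C$-version \emph{up} to the $\PECF$-interpreted higher-type principle (namely $\BOOT\di[\BOOT^{1}]_{\PECF}$ and the analogous direction for $\MCT$). Here one must ensure that letting $\alpha^{1}$ range over \emph{all} codes---rather than only over the ($\Pi_{1}^{1}$-definable) set of total associates, which $\ACAo$ cannot isolate---produces neither too few nor too many indices $n$; this forces the convention that $Y_{\alpha}$ is made total by a default value and that $\PECF$ accesses continuous functionals exactly through their codes, in keeping with the identification philosophy of Remark \ref{unbeliever}. Verifying that $(\exists^{2})$-definability of $\widetilde{Z}$, of the reconstructed nets, and of the orderings is enough to make each collapse exact is the only genuinely delicate point; the surjectivity inputs ($Y_{g}=\lambda f.g(f(0))$ and $\lambda g.f(g(0))$) and the $\Sigma_{1}^{0}$-reduction of the Cousin-type existential in the first claim are the concrete lemmas that make it work.
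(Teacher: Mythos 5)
Your proposal is correct and takes essentially the same approach as the paper: the paper's (much terser) proof also rests on the associate correspondence, replacing a continuous $Y^{2}$ by a type-one code $f^{1}$ via $F(f)(g):=f\big(\overline{g}(\mu n)(f(\overline{g}n)>0)\big)-1$, so that the $\PECF$-restricted type-two quantifiers become quantifiers over Baire space and the primed principles collapse to $(\exists^{2})$, $\BOOT$, and $\MCT_{\net}^{C}$. The paper disposes of two of the three equivalences with the words `similarly modify', so your explicit witnesses ($\lambda f.g(f(0))$, $Z(W,n):=Y(\lambda k.W(\lambda i.k),n)$, the $\Sigma_{1}^{0}$ search over eventually-zero sequences) and your closing remark on non-total codes simply spell out, in the same spirit, details and a subtlety that the paper leaves implicit.
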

\begin{proof}
First of all, any $Y^{2}\in C$ has a type one associate given $\ACA_{0}$ by \cite{kohlenbach4}*{\S4}.  
Thus, $(\exists Y^{2}\in C)(Z(Y,n)=0)$ is equivalent to $(\exists f^{1})(Z(F(f), n)=0)$, where $F^{1\di 2}$ is defined as $F(f)(g):=f\big(\overline{g}(\mu n)(f(\overline{g}n)>0)\big)-1$.  
Similarly modify $[\MCT_{\net}^{1}]_{\PECF}$ and $[(\exists^{3})]_{\PECF}$ to obtain principles provable from resp.\ $\MCT_{\net}^{C}$ and $(\exists^{2})$. 
\end{proof}
The previous provides a partial answer to a question from Section \ref{pgintro}, namely what the Plato hierarchy could be a reflection of.  
Our answer is only partial as $\PECF$ does not have as nice properties as $\ECF$: the former converts trivialities like $(\exists^{3})\di (\exists^{2})$ into $(\exists^{2})\di 0=1$.  
Perhaps a refinement of $\PECF$ will be seen to have better properties.  

\smallskip

Next, Specker nets are used in the proof of Theorem \ref{proofofconcept} to establish $\MCT_{\net}^{[0,1]}\di \RANGE$.
We show that this proof also readily generalises as follows.    
\begin{thm}\label{nerode}
The system $\Z_{2}^{\Omega}+\QFAC^{0,2}+\MCT^{1}_{\net}$ proves the following:
\be\label{myhunt1}\tag{$\RANGE^{1}$}
(\forall G^{3})(\exists X^{1})(\forall n^{0})\big[n\in X\asa (\exists Y^{2})(G(Y)=n)  ].
\ee
\end{thm}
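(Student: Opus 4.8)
The plan is to lift the Specker-net argument from the proof of Theorem~\ref{proofofconcept} one type upward, systematically replacing the type-one witnesses $f,g$ by type-two witnesses $Y,Z$ and the functional $Y^{2}$ by $G^{3}$. Since $\Z_{2}^{\Omega}$ proves $(\exists^{3})$ and hence $(\exists^{2})$, I would avoid the continuity case split of Theorem~\ref{proofofconcept} altogether and argue directly in the `discontinuous' regime, using $(\exists^{2})$ for real comparisons and for coding finite sequences of type-two objects as single type-two objects. (Note that $\RANGE^{1}$ already follows trivially from $(\exists^{3})$ by $\lambda$-abstraction, since $(\exists Y^{2})(G(Y)=n)$ is decided by $\exists^{3}$; the point of the theorem, as with Theorem~\ref{proofofconcept}, is the \emph{methodological} one that the second-order Specker argument transfers with minimal adaptation.)

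Fix $G^{3}$. First I would form the directed set $(E,\preceq_{E})$ of all finite sequences $w^{2^{*}}$ of type-two objects with pairwise distinct $G$-values, i.e.\ $(\forall i,j<|w|)(G(w(i))=G(w(j))\di i=j)$, ordered by inclusion $w\preceq_{E}v\asa(\forall i<|w|)(\exists j<|v|)(w(i)=_{2}v(j))$, exactly as the index set of Theorem~\ref{koonfin}; coding such $w$ as type-two objects realises $E$ as a subset of $\N^{\N}\di\N$ in the sense of Definition~\ref{strijker2}. I would then define the \emph{Specker net} $c_{w}:E\di[0,1]$ by $c_{w}:=\sum_{i=0}^{|w|-1}2^{-G(w(i))-1}$. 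Distinctness of the $G$-values forces $c_{w}\leq 1$, while $v\preceq_{E}w$ gives $c_{v}\leq_{\R}c_{w}$, so the net is increasing; applying $\MCT_{\net}^{1}$ (or its $[0,1]$-variant, interchangeable via $\r$ and $\eta$ as in Corollary~\ref{corkorcor}) yields a limit $c=\lim_{w}c_{w}$, which is the supremum of the $c_{w}$.

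Next I would establish, for each $k^{0}$, the equivalence
\[
(\exists Y^{2})(G(Y)=k)\asa(\forall w\in E)\big(|c_{w}-c|<2^{-k-1}\di(\exists Z\in w)(G(Z)=k)\big),
\]
in exact parallel with \eqref{kikop}. The reverse direction is immediate from $\lim_{w}c_{w}=c$: some $w_{0}$ satisfies $|c_{w_{0}}-c|<2^{-k-1}$, and the right-hand side then supplies $Z\in w_{0}$ with $G(Z)=k$. For the forward direction, given $Y_{1}$ with $G(Y_{1})=k$ and any $w$ with $|c_{w}-c|<2^{-k-1}$, if no element of $w$ had $G$-value $k$ then $w*\langle Y_{1}\rangle\in E$ with $w*\langle Y_{1}\rangle\succeq_{E}w$ and $c_{w*\langle Y_{1}\rangle}=c_{w}+2^{-k-1}\leq c$, forcing $c-c_{w}\geq 2^{-k-1}$ and contradicting $|c_{w}-c|<2^{-k-1}$.

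Finally, the displayed equivalence is precisely the antecedent of the one-type-up analogue of $\Delta$-comprehension: its left-hand side has the form $(\exists Y^{2})(Z'(Y,k)=0)$ and its right-hand side the form $(\forall V^{2})(W'(V,k)=0)$ for suitable type-three $Z',W'$ definable from $G$, $c$ and $(\exists^{2})$. This $\Delta^{1}$-comprehension is obtained from $\QFAC^{0,2}$ by repeating the proof of Theorem~\ref{DELTA} with every type raised by one, and applying it produces the set $\{k:(\exists Y^{2})(G(Y)=k)\}$ required by $\RANGE^{1}$. I expect the main obstacle to be bookkeeping rather than ideas: verifying that $(E,\preceq_{E})$ is genuinely directed and that the Specker net is increasing and bounded once its index set is coded into $\N^{\N}\di\N$, and checking that the derivation of Theorem~\ref{DELTA} indeed transfers to yield $\Delta^{1}$-comprehension from $\QFAC^{0,2}$.
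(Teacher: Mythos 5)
Your main argument is correct and is essentially the paper's own proof: the paper likewise lifts the Specker-net construction of Theorem~\ref{proofofconcept} by taking $E$ to be the finite sequences of type-two objects under inclusion (with $\exists^{3}$ defining $\preceq_{E}$), forming $c_{w}:=\sum_{i<|w|}2^{-Z(w(i))}$, extracting the equivalence \eqref{hugs} from the limit supplied by $\MCT_{\net}^{1}$, and then applying $\QFAC^{0,2}$ exactly as in the proof of Theorem~\ref{DELTA}; your normalisation $2^{-G(w(i))-1}$ is in fact slightly more careful than the paper's, since distinct exponents only bound the paper's sum by $2$ rather than $1$.

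One point must be corrected, however: your parenthetical claim that $\RANGE^{1}$ ``already follows trivially from $(\exists^{3})$, since $(\exists Y^{2})(G(Y)=n)$ is decided by $\exists^{3}$'' is false. The functional $\exists^{3}$ decides formulas of the form $(\exists f^{1})(Y(f)=0)$, i.e.\ existential quantification over \emph{type-one} objects; it gives no way to decide an existential quantifier over \emph{type-two} objects such as $(\exists Y^{2})(G(Y)=n)$ --- that would require a type-four quantifier functional. Indeed, if your claim were true, $\Z_{2}^{\Omega}$ alone would prove $\RANGE^{1}$, hence (by the type-raised analogue of Theorem~\ref{rage}, or directly as in Theorem~\ref{koonfin}) $\Pi_{1}^{2}\textup{-\textsf{CA}}_{0}$, contradicting the conservativity of $\Z_{2}^{\Omega}$ over $\Z_{2}$ cited in Section~\ref{prelim2}; it would also make the hypotheses $\MCT_{\net}^{1}$ and $\QFAC^{0,2}$ redundant and collapse the whole point of Section~\ref{moar}, namely that nets indexed beyond Baire space carry one beyond second-order arithmetic. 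Since your actual proof never uses this aside, the error is harmless to the argument, but the misconception about what $\exists^{3}$ decides is worth rooting out.
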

\begin{proof}
A slight modification of the proof of Theorem \ref{proofofconcept} goes through as follows: let $E$ be the set of finite sequences in $\N^{\N}\di \N$ and let $\preceq_{E}$ be the inclusion relation, for which $\exists^{3}$ is needed (instead of $\exists^{2}$).  The Specker net $c_{w}:E\di [0,1]$ is defined in exactly the same way as in 
Theorem \ref{proofofconcept}, namely as $c_{w}:= \sum_{i=0}^{|w|-1}2^{-Z(w(i))}$, where $Z^{3}$ is given.  The associated version of \eqref{kikop} is:
\be\label{hugs}
(\exists Y^{2})(Z(Y)=k)\asa (\forall w^{2^{*}})\big( |c_{w}-c|<2^{-k}\di (\exists V\in w)(Z(V)=k)     \big), 
 \ee
 where $c=\lim_{w}c_{w}$ is provided by $\MCT_{\net}^{1}$.  Applying $\QFAC^{0,2}$ to \eqref{hugs} as in the proof of Theorem~\ref{DELTA}
yields the set $X\subset \N$ required for $\RANGE^{1}$.
\end{proof}
In light of the proofs of Theorem \ref{koonfin} and \ref{nerode}, it is now be clear that the above proofs readily generalise to higher types.  
To avoid repetition, we do not study further generalisations of convergence theorems for nets in this paper.  
We do list some nice results: let $\BW_{\net}^{\sigma}$ be the obvious generalisation of $\BW_{\net}^{1}$ to index sets of type $\sigma+1$ objects.  
A straightforward modification of Theorem \ref{koonfin} implies that $\RCAo+(\exists^{k+2})+\BW_{\net}^{k}$ proves $\Pi_{1}^{k+1}$-comprehension for $k\geq 1$. 
Hence, the general Bolzano-Weierstrass theorem for nets is extremely hard to prove.

\smallskip

Recall Corollary \ref{floopy} which implies $\CAU_{\mod}\asa \QFAC^{0,1}$ over $\Z_{2}^{\Omega}$.
Let $\CAU_{\mod}^{2}$ be the generalisation of $\CAU_{\mod}$ to index sets that are subsets of $\N^{\N}\di \N$.
\begin{cor}\label{higherandhigher2}
The system $\RCAo+(\exists^{4})$ proves $\QFAC^{0,2}\asa \CAU^{2}_{\mod}$.
\end{cor}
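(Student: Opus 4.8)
The plan is to lift the proof of Corollary \ref{floopy} (which, via the net construction of Theorem \ref{weirdoooo}, yields $\CAU_{\mod}\asa[\BOOT+\QFAC^{0,1}]$) one type level upward: I replace Baire space by $\N^{\N}\di \N$ as index space (represented as in Definition \ref{strijker2}) and $\BOOT$ by $\BOOT^{1}$ from \eqref{woot}. The key simplification is that, exactly as $(\exists^{3})$ trivially proves $\BOOT$ (used in Corollary \ref{stovokor2}), the axiom $(\exists^{4})$ trivially proves $\BOOT^{1}$: given $Z^{3}$, for each $n^{0}$ the object $\lambda Y^{2}.Z(Y,n)$ is of type $3$, so $\exists^{4}$ decides $(\exists Y^{2})(Z(Y,n)=0)$ and the set $X^{1}$ required by $\BOOT^{1}$ is obtained by $\lambda$-abstraction. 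Hence over $\RCAo+(\exists^{4})$ the conjunct $\BOOT^{1}$ is free, and the equivalence $\QFAC^{0,2}\asa \CAU^{2}_{\mod}$ is what remains once $[\BOOT^{1}+\QFAC^{0,2}]$ collapses to $\QFAC^{0,2}$ --- just as Corollary \ref{floopy} collapses to $\CAU_{\mod}\asa \QFAC^{0,1}$ over $\Z_{2}^{\Omega}$. Since $(\exists^{4})$ also yields $\exists^{3}$ and $\exists^{2}$, no excluded-middle trick on discontinuity is needed.

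For the implication $\QFAC^{0,2}\di \CAU^{2}_{\mod}$, given an increasing net $x_{d}:E\di[0,1]$ with $E\subseteq \N^{\N}\di \N$, I would first produce its limit $x\in[0,1]$ by the $\BOOT^{1}$-based interval-halving of Corollary \ref{kook} (the $[0,1]$-analogue of the proof of Theorem \ref{bongra}), using that $\BOOT^{1}$ is free from $(\exists^{4})$. Since the net is increasing, $x$ is its supremum, so $(\forall k^{0})(\exists d\in E)(|x_{d}-x|<2^{-k})$ holds, and this matrix is quantifier-free modulo $\exists^{2}$ (for $<_{\R}$) and the characteristic function $F_{E}^{3}$ of $E$. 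Applying $\QFAC^{0,2}$ yields $\Psi^{0\di 2}$ with $|x_{\Psi(k)}-x|<2^{-k}$; monotonicity (for $e,f\succeq_{E}\Psi(k)$ one has $x_{\Psi(k)}\leq x_{e},x_{f}\leq x$) turns $\Psi$ into a Cauchy modulus in the sense of Definition \ref{caucau2}, establishing $\CAU^{2}_{\mod}$.

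For the converse $\CAU^{2}_{\mod}\di \QFAC^{0,2}$, I would transcribe the net construction of Theorem \ref{weirdoooo} to index sets beyond Baire space, exactly as done for the Specker net in the proof of Theorem \ref{nerode}. Given a $\QFAC^{0,2}$-instance, assume its antecedent $(\forall n^{0})(\exists Y^{2})(Z(Y,n)=0)$ (any quantifier-free matrix is put in this form by defining $Z^{3}$ as the relevant characteristic functional). Let $E$ be the finite sequences of type-$2$ objects with $w\preceq_{E}v$ the inclusion ordering $(\forall i<|w|)(\exists j<|v|)(w(i)=_{2}v(j))$ --- here $\exists^{3}$ is needed to decide $=_{2}$, precisely as noted in the proof of Theorem \ref{nerode} --- and put $x_{w}:=\r(\lambda k.F(w,k))$ with $F(w,k)=1$ iff $(\exists i<|w|)(Z(w(i),k)=0)$. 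This net is increasing, so $\CAU^{2}_{\mod}$ furnishes a modulus $\Phi$; writing $\Phi_{n}:=\Phi(2^{-(n+1)})\in E$, the argument of \eqref{nogis} goes through verbatim to give the localisation
\[
(\forall n^{0})\big[(\exists Y^{2})(Z(Y,n)=0)\asa (\exists i<|\Phi_{n}|)(Z(\Phi_{n}(i),n)=0)\big],
\]
the nontrivial (forward) direction following because $w_{0}:=\Phi_{n}*\langle Y_{0}\rangle\succeq_{E}\Phi_{n}$ would otherwise force $|x_{w_{0}}-x_{\Phi_{n}}|\geq 2^{-(n+1)}$, contradicting the modulus. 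Under the antecedent the right-hand side holds for every $n$, so a bounded search (decidable, as it only evaluates $Z$) extracts the least witnessing index $i_{n}$ and defines the required choice functional $\Lambda^{0\di 2}$ by $\Lambda(n):=\Phi_{n}(i_{n})$, which satisfies $(\forall n^{0})(Z(\Lambda(n),n)=0)$ as demanded by $\QFAC^{0,2}$.

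The main obstacle is bookkeeping rather than conceptual: one must check that the index set $E$ of finite sequences of type-$2$ objects, its inclusion ordering, and the real comparisons all reduce to quantifier-free matrices in the presence of $\exists^{3}$ and $\exists^{2}$, so that both the application of $\QFAC^{0,2}$ (forward direction) and the closing bounded search (converse direction) are legitimate, and that the single-digit gap in $\r$ is large enough to defeat the modulus at the chosen level. Everything else is a faithful one-type-up copy of the proofs of Theorem \ref{weirdoooo} and Corollary \ref{floopy}.
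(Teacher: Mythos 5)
Your proposal is correct and is essentially the paper's own proof: the paper disposes of this corollary in one line (``Generalise the proof of Theorem \ref{weirdoooo} in the same way as Theorem \ref{koonfin}''), i.e.\ lift the finite-sequence net construction of Theorem \ref{weirdoooo}/Corollary \ref{floopy} one type up, with $(\exists^{4})$ absorbing the $\BOOT^{1}$ component so that only $\QFAC^{0,2}$ remains, which is exactly what you carry out. If anything, your choice of modulus level $\Phi(2^{-(n+1)})$ handles the digit-gap bookkeeping more carefully than the paper's own proof of Theorem \ref{weirdoooo}, which nominally claims a gap of $2^{-n_{0}}$ where the construction only yields $2^{-(n_{0}+1)}$.
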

\begin{proof}
Generalise the proof of Theorem \ref{weirdoooo} in the same way as Theorem \ref{koonfin}. 
\end{proof}
Let $\CAU^{\sigma}_{\mod}$ be the obvious generalisation of $\CAU_{\mod}^{2}$ to sets of type $\sigma+1$ objects.
One then readily proves $\QFAC^{0,k}\asa \CAU_{\net}^{k}$ over $\RCAo+(\exists^{k+2})$.

\smallskip

We finish this section with a conceptual remark on `large' index sets and their occurrence in mathematics and logic.  
\begin{rem}[Large index sets]\label{fuzzytop}\rm
First of all, Zadeh founded the field of \emph{fuzzy mathematics} in \cite{zadeh65}.  
The core notion of \emph{fuzzy set} is a mapping that assigns values in $[0,1]$, i.e.\ a `level' of membership, rather than the binary relation from usual set theory.  
The first two chapters of Kelley's \emph{General Topology} (\cite{ooskelly}) are generalised to the setting of fuzzy mathematics in \cite{pupu}.  
As an example, \cite{pupu}*{Theorem 11.1} is the fuzzy generalisation of the classical statement that a point is in the closure of a set if and only if there is a net that converges to this point.  
However, as is clear from the proof of this theorem, to accommodate fuzzy points in $X$, the net is indexed by the space $X\di [0,1]$.  

\smallskip

Secondly, the \emph{iterated limit theorem} (both the fuzzy and classical versions: \cite{pupu}*{Theorem 12.2} and \cite{ooskelly}) involves 
an index set $E_{m}$ \emph{indexed by $m\in D$}, where $D$ is an index set.  Thus, `large' index sets are found in the wild.       

\smallskip

Thirdly, by way of an exercise, the reader should generalise the well-known formulation of the Riemann integral in terms of nets (see e.g.\ \cite{ooskelly}*{p.\ 79}) to the gauge integral as studied in \cite{dagsamIII}*{\S3.3}. 
As will become clear, this generalisation involves nets indexed by $\R\di \R$-functions, and this very definition can also be found in the literature, namely \cite{leider}*{\S1.3}.

\smallskip

Fourth, the results in \cite{samnetspilot}*{\S4.3-4.5} connect continuity and open sets to nets, all in $\R$, while avoiding the Axiom of Choice.  As is clear from the proofs (esp.\ the use of the net $x_{d}:=d$), replacing $\R$ by a larger space requires the 
introduction of nets with a similarly large index set.  In particular, to show that a net-closed\footnote{A set $C$ is \emph{net-closed} if for any net in $C$ that converges to $x$, we also have $x\in C$ (\cite{ooskelly}*{p.\ 66}).} set $C$ is closed (see \cite{samnetspilot}*{Theorem 4.15} for $C\subseteq \R$), one seems to need nets with an index set the same cardinality as $C$.    
\end{rem}

\section{Main results II: open sets and Heine-Borel compactness}\label{main2}
\subsection{Introduction}\label{clonets}
In this section, we establish the results sketched in Section~\ref{bootstraps} pertaining to open sets and the axiom $\BOOT$, as well as the connection to Heine-Borel compactness.  
In particular, the latter connection is studied in Section~\ref{plato}, while we identify the `correct' notion of open set to be used in the Plato hierarchy and obtain interesting RM-results in Section~\ref{locatesect}. 
As will become clear, some of our results are straightforward generalisations of second-order equivalences, while others yield genuine surprises, like the Cantor-Bendixson theorem.  
In particular, the study of open sets in the Plato hierarchy directly inspires the higher-order counterparts of $\ATR_{0}$ and $\FIVE$, as will become clear in Section \ref{locatesect}.

\smallskip

We first discuss the intended meaning of `correct' notion of open set.
While such judgements are inherently subjective, we shall use the following two (more or less) objective criteria to 
judge whether a new notion of open set is acceptable.  
\begin{enumerate}
\item[(I)] The new notion of open set reduces to RM-codes of open sets under $\ECF$.
\item[(II)] The new notion of open set yields (lots of) equivalences that reduce to known (interesting) equivalences under $\ECF$.
\end{enumerate}
The first criterion is a basic requirement that merits no further discussion, while the second criterion is based on the so-called main theme of RM, expressed as follows:
\begin{quote}
very often, if a theorem of ordinary mathematics is proved from
the ``right'' set existence axioms, the statement of that theorem will be provably
equivalent to those axioms over some weak base system.
\end{quote}
This opinion may be found in e.g.\ \cites{browner, fried, simpson2} and many other places.  
In Section~\ref{locatesect}, we introduce a notion of open set consistent with the above items (I) and (II).  
We shall obtain a number of equivalences involving \emph{nets} rather than sequences.  
We stress that finding the `correct' generalisation of open set, namely uncountable unions as in Definition~\ref{opensset}, is non-trivial as follows.

\smallskip

Our initial motivation for the new notion of open set as in Definition \ref{opensset}, stems from \cite{dagsamVII, samnetspilot}; in the latter, open sets in $\R$ are given by (possibly discontinuous) characteristic functionals $Y:\R\di \R$ where `$x\in Y$' is short for $Y(x)>_{\R}0$.  
While this definition begets plenty of interesting results, it does not yield the expected reversals; 
Definition~\ref{opensset} is better this way in light of Theorem~\ref{yield}.
In other words, the concept of open set from \cite{dagsamVII, samnetspilot} satisfies (I) and not (II), but yields interesting results as follows.
\begin{rem}\label{nelta}\rm
First of all, nets obviate the (otherwise necessary) use of the Axiom of Choice in \cite{samnetspilot} as part of the study of open and closed sets via sequences/nets.

\smallskip

Secondly, the \emph{$\Delta$-functional} from \cite{dagsamVII}*{\S5} converts 
between two notions of open set based on characteristic functions, namely from a realiser for the usual definition of open set to a distance function for the complement.  It is proved in \cite{dagsamVII} that:
\begin{enumerate}
\item[(P1)] $\Delta$ is not computable in any type $2$ functional, but computable in any Pincherle realiser (see \cite{dagsamV}), a class weaker than $\Theta$-functionals (see \cite{dagsam, dagsamII}).
\item[(P2)] $\Delta$ is unique, genuinely type $3$, and adds no computational strength to $\exists^2$ in terms of computing functions from functions.
\end{enumerate}  
It was previously believed that functionals with the above properties would be ad hoc and could only be obtained via some complicated forcing construction. 
\end{rem}
We finish this section by noting that while our concept of open set is \emph{uncountable unions of basic opens} (see Definition \ref{opensset}), 
we could obtain all the below results working solely with countable unions of basic opens \emph{assuming the mainstream definition of `countable'}, as discussed in Remark \ref{cuntable}. 
\subsection{Open sets via uncountable unions}\label{locatesect}
\subsubsection{Open sets as uncountable unions}
In this section, we introduce a notion of open set consistent with items (I) and (II) from Section \ref{clonets}.  
In particular, we obtain some elegant equivalences involving locatedness and nets at the level of $\ACA_{0}$ (Section~\ref{blif}), and perfect sets at the level of $\ATR_{0}$ and $\FIVE$ (Section~\ref{blef}). 

\smallskip

First of all, we shall make use of the following notion of open set. 
Hereafter, `open' refers to the below definition, while `RM-open' refers to the well-known RM-definition from \cite{simpson2}*{II.5} involving countable unions of basic open balls.
\bdefi[Open sets]\label{opensset}
An open set $O$ in $\R$ is represented by a functional $\psi:\R\di \R^{2}$.  We write `$x\in O$' for $(\exists y\in \R)(x\in I_{y}^{\psi})$, where $I_{y}^{\psi}$ is the open interval $\big(\psi(y)(1), \psi(y)(1)+|\psi(y)(2)|\big)$ in case the end-points are different, and $\emptyset$ otherwise.  We write $O=\cup_{y\in \R}I_{y}^{\psi}$ to emphasise the connection to uncountable unions.  
A closed set is represented by the complement of an open set.       
\edefi
Intuitively, open sets are given by \emph{uncountable} unions $\cup_{y\in \R}I_{y}^{\psi}$, just like RM-open sets are given by countable such unions.  
Hence, our notion of open set reduces to the notion RM-open set when applying $\ECF$ or when all functions on $\R$ are continuous.  
Moreover, writing down the definition of elementhood in an RM-open set, one observes that such sets are also open (in our sense). 
Finally, closed sets are readily seen to be sequentially closed, and the same for nets instead of sequences.  

\smallskip

The following `coding principle' turns out to have nice properties.  Note that $\open$, a weaker version of $\open^{+}$, was introduced and studied in \cite{dagsamVII}.
We fix an enumeration of all basic open balls $B(q_{n}, r_{n})\subset \R$ for rational $q_{n}, r_{n}$ with $r_{n}>_{\Q}0$.
\bdefi[$\open^{+}$]
For every open set  $Z\subseteq \R$, there is $X\subset \N$ such that $(\forall n\in \N)(n\in X\asa B(q_{n}, r_{n})\subseteq Z)$.
\edefi
In the next section, we prove equivalences at the level of $\ACA_{0}$ involving $\BOOT$ and $\open^{+}$.  
Equivalences at the level of $\ATR_{0}$ and $\FIVE$ are in Section~\ref{blef}. 
\subsubsection{At the level of $\ACA_{0}$}\label{blif}
A number of theorems regarding RM-closed sets are equivalent to $\ACA_{0}$; we now generalise some of these results, based on Definition~\ref{opensset}.  
Recall that a closed set $C$ is called \emph{located} (see \cite{simpson2}*{IV.2.17} for the RM-notion) if the distance function $d(x, C):=\inf_{y\in C}d(x, y)$ exists as a continuous real-valued function.  
To be absolutely clear, `continuous' refers to the usual `epsilon-delta' definition, while `RM-continuous' refers to the RM-definition as in \cite{simpson2}*{II.6.1}.
\begin{thm}\label{yield}
The following are equivalent over $\RCAo+\QFAC^{0,1}$:
\begin{enumerate}
 \renewcommand{\theenumi}{\alph{enumi}}
\item $\open^{+}+\ACA_{0}$, \label{kopen}
\item Every non-empty closed set in $[0,1]$ is located, \label{b1asefq}
\item For every non-empty closed set $C\subseteq[0,1]$, the supremum $\sup C$ exists,\label{basef}
\item Monotone convergence theorem for nets in $[0,1]$ indexed by subsets of $\N^{\N}$,\label{mct}
\item For closed $C\subseteq[0,1]$ and $f:\R\di \R$ continuous on $C$, $\sup_{x\in C}f(x)$ exists, \label{baseflp}
\item For closed $C\subseteq[0,1]$ and $f:\R\di \R$ cont.\ on $C$, $f$ attains its maximum,\label{baseflpr}
\item $\BOOT$. \label{tropen}
\end{enumerate}
The axiom $\QFAC^{0,1}$ is only used for $\BOOT\di \open^{+}$.
\end{thm}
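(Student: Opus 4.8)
The plan is to treat $\BOOT$ (item (\ref{tropen})) as the hub and route the other six statements through it. Two of the equivalences are essentially in hand: item (\ref{mct}) is $\MCT_{\net}^{[0,1]}$, and over $\RCAo+\QFAC^{0,1}$ one has $\MCT_{\net}^{[0,1]}\asa\BOOT$ by Theorem~\ref{proofofconcept} (and Corollary~\ref{corkorcor}) for the forward direction and by the $[0,1]$-variant of Theorem~\ref{bongra} recorded in the proof of Corollary~\ref{floopy} for the converse. It then remains to establish the cycle $(\ref{tropen})\di(\ref{kopen})\di(\ref{b1asefq})\di(\ref{basef})\di(\ref{tropen})$ and to insert the two function-theoretic items: I would deduce (\ref{baseflpr}) and (\ref{baseflp}) from (\ref{kopen}) and collapse them back by the trivial observations that an attained maximum is a supremum, so $(\ref{baseflpr})\di(\ref{baseflp})$, that taking $f$ the identity gives $(\ref{baseflp})\di(\ref{basef})$, and that locatedness yields a supremum, so $(\ref{b1asefq})\di(\ref{basef})$.

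For $(\ref{tropen})\di(\ref{kopen})$, the implication $\BOOT\di\ACA_0$ is Theorem~\ref{boef}, so only $\open^{+}$ is at issue. Given an open set $O=\cup_{y}I_{y}^{\psi}$ as in Definition~\ref{opensset}, I would use $\BOOT$ to form the set $R$ of rational intervals $(p,p')$ lying strictly inside some $I_{y}^{\psi}$: the predicate $(\exists y)\big(\psi(y)(1)<p\wedge \psi(y)(1)+|\psi(y)(2)|>p'\big)$ is a $\Sigma$-formula (the strict inequalities keep it $\Sigma^{0}_{1}$, so no discontinuous functional is needed to phrase it), and $\cup_{(p,p')\in R}(p,p')=O$. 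This replaces the uncountable union by an RM-code, after which $B(q_{n},r_{n})\subseteq O$ concerns a countable cover and is decided arithmetically inside $\ACA_0$ by Heine--Borel for countable covers; $\QFAC^{0,1}$ is what converts bare existence of covering data into an enumerating sequence (a Lindel\"of-style selection), and, consistent with the statement, this is the only implication using it. Conversely, for $(\ref{kopen})\di(\ref{tropen})$ I would encode a given $Y^{2}$ by a \emph{disjoint} family of rational balls $B_{n}$, putting $I_{y}^{\psi}:=B_{n}$ when $y$ codes a pair $(f,n)$ with $Y(f,n)=0$ and $I_{y}^{\psi}:=\emptyset$ otherwise; disjointness forces $B_{n}\subseteq O\asa(\exists f)(Y(f,n)=0)$, so the set supplied by $\open^{+}$ is verbatim the set demanded by $\BOOT$.

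The forward implications into the closed-set statements are then uniform. Via $\open^{+}$ the open set $O$, and hence the closed set $C=[0,1]\setminus O$, becomes RM-coded, and the familiar $\ACA_0$-level theorems of second-order RM apply directly: nonempty RM-closed subsets of $[0,1]$ are located and possess suprema, and continuous functions on them attain their maxima. This is precisely the behaviour foretold by Remark~\ref{unbeliever}, since $\ECF$ carries each of these statements to its standard second-order companion at the level of $\ACA_0$; thus $(\ref{kopen})$ yields $(\ref{b1asefq})$, $(\ref{baseflp})$, and $(\ref{baseflpr})$.

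The genuine work is the reverse $(\ref{basef})\di(\ref{tropen})$, and I would argue by $(\exists^{2})\vee\neg(\exists^{2})$, exactly as in the proofs of Theorems~\ref{bongra} and~\ref{proofofconcept}. If $\neg(\exists^{2})$ holds then every open set of Definition~\ref{opensset} is RM-open, so $(\ref{basef})$ is the second-order assertion that nonempty closed subsets of $[0,1]$ have suprema; this gives $\ACA_0$ by the classical reversal, and $\BOOT$ collapses to $\ACA_0$ by the proof of Theorem~\ref{bongra}. If $(\exists^{2})$ holds, I would form a Specker net $c_{w}:=\sum_{i<|w|}2^{-2Y(w(i))-2}$ over the finite sequences $w$ of \emph{distinct} $Y$-values and present the closed set $C':=[0,1]\setminus\bigcup_{w}(1-c_{w},1]$. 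Since $\bigcup_{w}(1-c_{w},1]=(1-c,1]$ with $c=\sup_{w}c_{w}=\sum_{m\in\range(Y)}2^{-2m-2}$, this is a bona fide instance of Definition~\ref{opensset} whose membership condition is \emph{one-sided}, hence definable without any $\Pi^{1}_{1}$ test; applying $(\ref{basef})$ gives $\sup C'=1-c$, so $c$ is recovered, and because the isolated digits $2^{-2m-2}$ do not interfere, $\range(Y)$ is arithmetic in $c$ and is extracted by $\ACA_0$. This yields $\RANGE$, and hence $\BOOT$ by Theorem~\ref{rage}. The main obstacle throughout is exactly this presentation step: a closed set defined by a two-sided condition, such as the closure $\overline{\{c_{w}\}}$, would require deciding a universal quantifier over the type-one index $w$---a $\Pi^{1}_{1}$ predicate---to exhibit its complement as an uncountable union, which is unavailable; choosing the one-sided set $C'$, indexed by the sequences $w$ themselves, is what keeps $\psi$ definable and makes the Specker construction fit Definition~\ref{opensset}, while the digit-coding is what forces the (harmless) case split on $(\exists^{2})$.
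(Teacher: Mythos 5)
Your proposal is correct in substance: it establishes all seven equivalences over $\RCAo+\QFAC^{0,1}$, but by a genuinely different route. The paper runs the cycle (a)$\,\di\,$(b)$\,\di\,$(c)$\,\di\,$(d)$\,\di\,$(g)$\,\di\,$(a) and attaches the function-theoretic items via (a)$\,\di\,$(f)$\,\di\,$(e)$\,\di\,$(d) (with $f(x):=-x+1$ applied to the set of upper bounds of a net), so its return to $\BOOT$ always passes through the monotone convergence theorem for nets; you instead run (g)$\,\di\,$(a)$\,\di\,$(b)$\,\di\,$(c)$\,\di\,$(g), with (a)$\,\di\,$(f)$\,\di\,$(e)$\,\di\,$(c) (your (e)$\,\di\,$(c) via the identity function is shorter than the paper's (e)$\,\di\,$(d)), and attach (d) by citation. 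Two of your implications are proved quite differently, and arguably better. For $\BOOT\di\open^{+}$, the paper asks for a single $I_{y}^{\psi}$ containing $B(q,2^{-n})$, a $\Pi^{0}_{1}$ matrix, whence its case split on $(\exists^{2})$, and it then needs $\QFAC^{0,1}$ to select witnesses $y=\Phi(n,q)$ before it can write down an RM-code; your set $R$ of rational intervals lying \emph{strictly} inside some $I_{y}^{\psi}$ has a $\Sigma^{0}_{1}$ matrix, is itself the code, and the passage from $R$ to $\{n: B(q_{n},r_{n})\subseteq Z\}$ is arithmetical in $R$ via Heine--Borel for countable covers (available from $\ACA_{0}$), so this implication in fact needs neither the case split nor any choice, your own attribution of $\QFAC^{0,1}$ to this step notwithstanding. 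For the return to $\BOOT$, the paper appeals to Theorem \ref{bongra} (binary digits, whose carry problems are handled elsewhere by $\IND$ or $\Delta$-comprehension), whereas your $4$-adically spaced Specker net makes $\range(Y)$ computable from $\sup C'$, so that (c) yields $\RANGE$ and hence $\BOOT$ by Theorem \ref{rage} with no choice, no $\IND$, and no $\Delta$-comprehension; your closing remark on one-sidedness is exactly the device behind the paper's open set $\cup_{d\in D}[0,x_{d})$ in its (c)$\,\di\,$(d) step.

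Two caveats. First, in your (redundant) direct proof of (a)$\,\di\,$(g), the clause ``$I_{y}^{\psi}:=B_{n}$ when $y$ codes a pair $(f,n)$'' presupposes a functional decoding reals into elements of Baire space; such a $\psi$ is only definable (and extensional) given $(\exists^{2})$, which does not follow from (a). This needs the same split you perform in (c)$\,\di\,$(g), where under $\neg(\exists^{2})$ one notes that $\ACA_{0}$ yields $\BOOT$ outright; since your cycle already delivers (a)$\,\di\,$(g), nothing is lost. Second, the final sentence of the theorem is a claim about the proof, and yours does not satisfy it as written: you route (d)$\,\di\,$(g) through Theorem \ref{proofofconcept}, which consumes $\QFAC^{0,1}$ (Corollary \ref{corkorcor} would instead cost $\IND$), while your $\BOOT\di\open^{+}$ is choice-free, so your accounting is exactly opposite to the stated one. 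The repair is already in your toolkit: item (d) applied to your net $c_{w}$ returns $\lim_{w}c_{w}=\sup_{w}c_{w}$ directly, and the $4$-adic extraction then gives $\RANGE$, hence $\BOOT$, with no choice; in other words, your (c)$\,\di\,$(g) argument is verbatim a choice-free proof of (d)$\,\di\,$(g).
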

\begin{proof}
We first prove $\eqref{kopen}\di \eqref{b1asefq}\di \eqref{basef}\di \eqref{mct}\di \eqref{tropen}\di \eqref{kopen}$.
The implication \eqref{kopen}$\di$\eqref{b1asefq} follows from the usual second-order equivalence between $\ACA_{0}$ 
and the fact that any RM-closed set in the unit interval is located by \cite{withgusto}*{Theorem~3.8}, since $\open^{+}$ reduces open sets to RM-open sets.
Indeed, an RM-code for $Z$ as in $\open^{+}$ is given by $\cup_{n\in \N}(a_{n}, b_{n})$, where $a_{n}=b_{n}$ if $n\not\in  X$ and $(a_{n}, b_{n})=B(q_{n}, r_{n})$ otherwise. 
The implication $\eqref{b1asefq}\di \eqref{basef}$ is immediate as either $1$ is the supremum of $C$, or $1-d(1, C)$ is, where the locatedness of $C$ begets the distance function $d$.

\smallskip

For the implication \eqref{basef}$\di$\eqref{mct}, fix an increasing net $x_{d}:D\di [0,1]$.  
In case this net comes arbitrarily close to $1$, we are done.  If not, define the \emph{non-empty} closed set $C$ by putting $x\in C$ if and only if $(\forall d\in D)(x\geq_{\R} x_{d})$ for $x\in [0,1]$.
Indeed, the complement of $C$ is open in $[0,1]$, as it is given by $\cup_{d\in D}[0, x_{d})$.
Since $C$ is closed, $\sup C$ exists and the latter real is readily seen to be the limit of the net $x_{d}$.    
Note that $C$ is not \emph{exactly} as in Definition \ref{opensset}, but this does not matter: in case $\neg(\exists^{2})$, the implication \eqref{basef}$\di$\eqref{mct} reduces to a known second-order result; in case $(\exists^{2})$, we can use $\exists^{2}$ to freely convert between reals and elements of Cantor and Baire space, modifying $C$ to conform to Definition \ref{opensset}. 
The implication $\eqref{mct}\di \eqref{tropen}$ is immediate by Theorem \ref{bongra}. 

\smallskip

We now prove the `crux' implication $\BOOT\di \open^{+}$.  In case $\neg(\exists^{2})$, all functionals on $\R$ or $\N^{\N}$ are continuous by \cite{kohlenbach2}*{\S3}.
Thus, an open set $\cup_{y\in \R}I_{y}^{\psi}$ reduces to the \emph{countable} union $\cup_{q\in \Q}I_{q}^{\psi}$, yielding $\open^{+}$ in this case.     
In case $(\exists^{2})$, let $O$ be an open set given by $\psi:\R\di \R^{2}$ as in Definition \ref{opensset}.  
Now use $\BOOT$ (and $(\exists^{2})$) to define the following set $X\subset \N\times \Q$:
\be\label{flim}\textstyle
(\forall n\in \N, q\in \Q)\big( (n,q)\in X\asa (\exists y\in \R)\big(B(q, \frac{1}{2^{n}})\subset I_{y}^{\psi} \big)  \big).
\ee
Apply $\QFAC^{0, 1}$ to the forward direction in \eqref{flim} to obtain $\Phi$ such that:
\be\label{flim2}\textstyle
(\forall n\in \N, q\in \Q)\big( (n,q)\in X\di \big(B(q, \frac{1}{2^{n}})\subset I_{\Phi(n, q)}^{\psi} \big)  \big).
\ee
The following formula \eqref{keind} provides a representation of $O$ as a countable union of open balls, and of course gives rise to $\open^{+}$:
\be\label{keind}
x\in O\asa (\exists n\in \N, q\in \Q)( (n, q)\in X\wedge x\in I_{\Phi(n, q)}^{\psi}).
\ee 
For the reverse implication in \eqref{keind}, $x\in O$ follows by definition from the right-hand side of \eqref{keind}. 
For the forward implication, $x_{0}\in O$ implies $B(x_{0},\frac{1}{2^{n_{0}}} )\subset I_{y_{0}}^{\psi}$ for some $y_{0}\in \R$ and $n_{0}\in \N$ by definition.
For $n_{1}$ large enough, the rational $q_{0}:=[x_{0}](n_{1})$ is inside $B(x_{0},\frac{1}{2^{n_{0}+1}} )$.   
Hence, $(q_{0}, n_{0}+1)\in X$ by \eqref{flim} for $y=y_{0}$.  Applying \eqref{flim2} then yields $B(q_{0}, \frac{1}{2^{n_{0}+1}})\subset I_{\Phi(n_{0}+1, q_{0})}^{\psi}  $.
By assumption, we also have $x_{0}\in B(q_{0}, \frac{1}{2^{n_{0}+1}})\subset I_{\Phi(n_{0}+1, q_{0})}^{\psi}  $, and the right-hand side of \eqref{keind} follows. 

\smallskip

What remains to prove is $\eqref{kopen}\di \eqref{baseflpr}\di \eqref{baseflp}\di \eqref{mct}$.  
The implication $\eqref{kopen}\di \eqref{baseflpr}$ follows as in the first paragraph of this proof.  Indeed, $\ACA_{0}$ is equivalent to item \eqref{baseflpr} for RM-closed sets by \cite{simpson2}*{IV.2.11} and $\open^{+}$ converts closed sets into RM-closed sets. 
Clearly, $\eqref{baseflpr}\di \eqref{baseflp}$ is trivial, while $\eqref{baseflp}\di \eqref{mct}$ follows as in the second paragraph of this proof for the net $x_{d}$. 
Indeed, consider the closed set defined by $x\in C$ if and only if $(\forall d\in D)(x\geq_{\R} x_{d})$ and the function $f(x):=-x+1$.  
The real $\sup_{x\in C}f(x)$ readily provides the limit of the net $x_{d}$, and we are done. 
\end{proof}
It should be noted that $\eqref{basef}\di \eqref{mct}$ in the proof is proved based on the proof of \cite{withgusto}*{Theorem 3.8}, but with sequences replaced by nets (indexed by $\N^{\N}$). 
Moreover, in light of the previous proof, we could restrict items \eqref{baseflpr} and \eqref{baseflp} to RM-continuous functions (or other notions).
Since $\ECF$ converts $\open^{+}$ to a triviality, we do need $\ACA_{0}$ in item \eqref{kopen}.
Moreover, it seems that $\QFAC^{0,1}$ is essential in the previous theorem, but a reversal is not possible: $\Z_{2}^{\Omega}$ proves $\open^{+}$ by \cite{dagsamVII}*{Thm 3.22}.

\smallskip

Finally, a \emph{separably RM-closed} set $\overline{S}$ in a metric space is given in RM by a sequence $\lambda n.x_{n}$ and `$x\in \overline{S}$' is then $(\forall k^{0})(\exists n^{0})(d(x,x_{n})<\frac{1}{2^{k}})$, 
where $d$ is the metric of the space.  Intuitively, a separably RM-closed set is represented by a countable dense subset given by a sequence.   
We shall study this concept for \emph{sequences replaced by nets} as in Definition \ref{dirf}.  

\smallskip

What follows is not just \emph{spielerei} for the following reason:
 it is well-known that $\ZF$ cannot prove that `$\R$ is a sequential space', i.e.\ the equivalence between the definition of closed and sequentially closed set; countable choice however suffices (see \cite{heerlijkheid}*{p.\ 73}).   
On the other hand, we can avoid the Axiom of Choice by replacing sequences with nets everywhere, as shown in \cite{samnetspilot}*{\S4.4}.  
In this light, the following definition make sense.   
\bdefi\label{dirf}
A separably closed set $\overline{S}$ in $\R$ is given by a net $x_{d}:D\di \Q$ with $D\subseteq\N^{\N}$ and where $x\in \overline{S}$ is given by  $(\forall k^{0})(\exists d\in D)(|x-x_{d}|<\frac{1}{2^{k}})$.
\edefi
\begin{princ}[$\CLO$]
A separably closed set in $\R$ is closed.  
\end{princ}
Note that $\ACA_{0}$ is equivalent to the RM-version of $\CLO$ by \cite{browner2}*{Theorem 2.9}.  
\begin{thm}\label{yielddouble}
The system $\RCAo+\QFAC^{0,1}$ proves $\CLO\asa \BOOT$.
\end{thm}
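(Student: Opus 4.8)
The plan is to argue by cases on $(\exists^{2})\vee\neg(\exists^{2})$, exactly as in the proofs of Theorems~\ref{bongra} and~\ref{yield}. In the case $\neg(\exists^{2})$, all functionals on Baire space (and on $\R$) are continuous by \cite{kohlenbach2}*{\S3}; hence any separably closed set as in Definition~\ref{dirf} reduces to a separably RM-closed set given by a sequence (the index quantifier $(\exists d\in D)$ may be replaced by a number quantifier, as in the proof of Theorem~\ref{bongra}), and our notion of open set collapses to the RM-notion. Thus $\CLO$ reduces to its RM-version, which is equivalent to $\ACA_{0}$ by \cite{browner2}*{Theorem~2.9}, while $\BOOT$ reduces to $\ACA_{0}$ by the proof of Theorem~\ref{boef}. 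Since both principles are equivalent to $\ACA_{0}$ here, the equivalence $\CLO\asa\BOOT$ holds, and it remains to treat $(\exists^{2})$, where $\exists^{2}$ is available to define orderings on index sets and to convert freely between reals and elements of Baire space.

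For $\BOOT\di\CLO$ under $(\exists^{2})$, let $\overline{S}$ be given by a net $x_{d}:D\di\Q$ with $D\subseteq\N^{\N}$. First I would use $\BOOT$ to form the set $X$ of codes $\langle q,n\rangle$ (for $q\in\Q$, $n\in\N$) with $(\exists d\in D)(|q-x_{d}|<\frac{1}{2^{n}})$; this is an instance of $\BOOT$, since given $\exists^{2}$ the matrix has the form $(\exists d^{1})(Y(d,\langle q,n\rangle)=0)$ for a suitable $Y^{2}$. The arithmetical complement of $X$ collects exactly those balls $B(q,\frac{1}{2^{n}})$ disjoint from the range of the net. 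A triangle-inequality argument then shows that $x\notin\overline{S}$ if and only if some such ball contains $x$, so the countable union of the balls $B(q,\frac{1}{2^{n}})$ with $\langle q,n\rangle\notin X$ is an RM-code for $\R\setminus\overline{S}$. Since every RM-open set is open in the sense of Definition~\ref{opensset}, the set $\overline{S}$ is the complement of an open set, i.e.\ closed, which is $\CLO$.

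For $\CLO\di\BOOT$ under $(\exists^{2})$, I would instead establish $\RANGE$ and invoke Theorem~\ref{rage}. Fix $G^{2}$ and place isolated target points $a_{n}:=1-2^{-n}$ in $[0,1]$. I would build a net $x_{d}:D\di\Q$ (with $D$ the finite sequences over $\N^{\N}\times\N$ ordered by inclusion, definable using $\exists^{2}$) whose range consists of rationals $a_{G(f)}+2^{-m}\gamma_{G(f)}$ lying strictly inside the gap $(a_{G(f)},a_{G(f)+1})$, where $\gamma_{n}:=\tfrac12(a_{n+1}-a_{n})$. By design these points accumulate at $a_{n}$ from the right precisely when $n\in\range(G)$, while points sitting in other gaps remain bounded away from $a_{n}$; hence the associated separably closed set $\overline{S}$ satisfies $a_{n}\in\overline{S}\asa(\exists f^{1})(G(f)=n)$. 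Applying $\CLO$ yields $\psi:\R\di\R^{2}$ with $\overline{S}=\R\setminus\bigcup_{y}I_{y}^{\psi}$, so that $(\exists f^{1})(G(f)=n)\asa(\forall y\in\R)(a_{n}\notin I_{y}^{\psi})$. This is a $\Sigma$-versus-$\Pi$ characterisation of $\range(G)$ with functional parameters, which is exactly the shape required by $\Delta$-comprehension; since the latter is provable in $\RCAo+\QFAC^{0,1}$ by Theorem~\ref{DELTA}, we obtain the set $\range(G)$, establishing $\RANGE$ and hence $\BOOT$.

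The main obstacle is this extraction step in the forward direction: from $\psi$ one only gets the $\Pi^{1}_{1}$-style description $(\forall y)(a_{n}\notin I^{\psi}_{y})$ of $\range(G)$, and with $\exists^{2}$ alone one cannot decide such a predicate — indeed this is precisely the strength we are trying to produce. The resolution is that the construction simultaneously furnishes the $\Sigma$-side $(\exists f)(G(f)=n)$ and the $\Pi$-side above, so that the pair falls under $\Delta$-comprehension; this is where $\QFAC^{0,1}$ is genuinely used, in agreement with the observation in Theorem~\ref{yield} that $\QFAC^{0,1}$ is needed only for the implication producing $\open^{+}$ and $\BOOT$. The remaining checks — that the net accumulates at $a_{n}$ exactly on $\range(G)$, and the triangle-inequality verification that the chosen balls exhaust $\R\setminus\overline{S}$ in the reverse direction — are routine, but must be carried out with some care about the endpoint $1$ and the base point of the net.
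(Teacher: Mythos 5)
Your proposal is correct, and its two halves relate to the paper's proof differently. For the reverse direction $\BOOT\di\CLO$ your argument is essentially the paper's: the paper also splits on $(\exists^{2})\vee\neg(\exists^{2})$, dispatches $\neg(\exists^{2})$ by reduction to the known second-order result, and under $(\exists^{2})$ uses $\BOOT$ to extract countable data about $\overline{S}$ -- the paper takes the set of rationals belonging to $\overline{S}$ and then computes radii with Feferman's $\mu$, while you directly take the set of pairs $\langle q,n\rangle$ whose ball meets the range of the net; this is a cosmetic difference, and both proofs conclude by exhibiting the complement as an RM-open (hence open) set. For the forward direction $\CLO\di\BOOT$ the paper gives no construction at all: it states that the implication is ``immediate by the proof of \cite{samrecount}*{Theorem 3.19}, in light of Theorem \ref{DELTA}''. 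You instead supply a self-contained argument: a Specker-style net whose range accumulates exactly at the isolated points $a_{n}=1-2^{-n}$ with $n\in\range(G)$, so that applying $\CLO$ converts the $\Sigma$-description of $\range(G)$ into a matching $\Pi$-description, to which $\Delta$-comprehension (Theorem \ref{DELTA}) applies; $\RANGE$ and Theorem \ref{rage} then yield $\BOOT$. This is precisely the lifted-recursive-counterexample technique the paper uses elsewhere (the Specker net in Theorem \ref{proofofconcept}, and the isolated-point argument in the corollary on $\OLC$), so your route is a faithful reconstruction of the outsourced citation rather than a genuine deviation, and it invokes $\QFAC^{0,1}$ in the same place the paper does, namely via $\Delta$-comprehension in the direction toward $\BOOT$. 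One small slip in your commentary only: Theorem \ref{yield} asserts that $\QFAC^{0,1}$ is used only for $\BOOT\di\open^{+}$, i.e.\ in the direction \emph{from} $\BOOT$, not for an implication ``producing'' $\BOOT$; this does not affect the correctness of your proof.
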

\begin{proof}
The forward direction is immediate by the proof of \cite{samrecount}*{Theorem 3.19}, in light of Theorem \ref{DELTA}.
For the reverse direction, in case $\neg(\exists^{2})$, the implication reduces to the known second-order result, following Remark \ref{unbeliever}.
In case $(\exists^{2})$, let $\overline{S}$ and $x_{d}$ be as in $\CLO$.  Now use $(\exists^{2})$ and $\BOOT$ to obtain a set $X\subset \Q$ such that $(\forall q\in \Q)(q\in X\asa q\in \overline{S})$.  
By definition, for any $x\in \R$, we have $x\not\in \overline{S}\asa (\exists k\in \N)([x](k)\not \in \overline{S})$, and the latter is decidable thanks to $\exists^{2}$ and the aforementioned set $X$.
Following this observation,  for any $x\not \in\overline{S}$, we can find $k_{0}\in \N$ using Feferman's $\mu$ such that $B(x, \frac{1}{2^{k_{0}}})$ does not intersect $\overline{S}$.  
Thus, the complement of $\overline{S}$ is an open set as in Definition \ref{opensset}, and we are done. 
\end{proof}
As it happens, the converse of $\CLO$ for RM-codes is equivalent to $\FIVE$ by \cite{browner2}*{Theorem 2.18}, and we study systems at the level of the latter in Section \ref{blef}. 
We finish this section with a conceptual remark regarding the above results. 
\begin{rem}[The power of nets]\label{memmen2}\rm
As noted in Remark \ref{memmen}, nets with \emph{countable} index sets do not yield a stronger monotone convergence theorem, while uncountable index sets like $\N^{\N}$ of course do, by the above. 
Thus, `larger' index sets would seem to yield stronger versions of the monotone convergence theorem.  Moreover, the latter seems intrinsically tied to arithmetical comprehension, as $\ECF$ translates $\BOOT$ to $\ACA_{0}$.  
Both of the aforementioned suggestions are incorrect as follows: one can show that $\MCT_{\net}^{-}$, i.e.\ the monotone convergence theorem for nets in $[0,1]$ indexed by $2^{\N}$, is provable from the existence of the intuitionistic fan functional as follows:
\be\tag{$\MUC$}
(\exists \Omega^{3})(\forall Y^{2}, f, g\in 2^{\N})(\overline{f}\Omega(Y)=\overline{g}\Omega(Y)\di Y(f)=Y(g)).
\ee
Hence, $\MCT_{\net}^{-}$ has the same first-order strength as $\WKL_{0}$, as $\ECF$ converts $\MUC$ into $\WKL$ by \cite{longmann}*{p.~497}.
Moreover, the same holds for the items from Theorem~\ref{yield} for open sets represented by $\cup_{y\in [0,1]}I_{y}^{\Psi}$ and $\Psi:\R\di \Q^{2}$, and for many theorems pertaining to nets from \cite{samnetspilot}; this is a sizable contribution to \emph{Hilbert's program} as in \cite{simpson2}*{IX.3.18}.
Moreover, over $\RCAo$, we have $[\ACA_{0}+\MCT_{\net}^{-}]\asa \MCT_{\net}^{C}$, while $\MUC\di \MCT_{\net}^{-}$ is also provable using intuitionistic logic, i.e.\ convergence 
theorems for nets are not necessarily non-constructive, but can be (at least) intuitionistic.  
In conclusion, the \emph{structure} of the index set matters as much as its size, and these results should be contrasted with \cite{simpson2}*{V.5.8}.
\end{rem}

\subsubsection{At the level of $\ATR_{0}$ and $\FIVE$}\label{blef}
We study theorems pertaining to perfect sets based on our notion of open set from Definition \ref{opensset}.  
This will give rise to the counterparts of $\ATR_{0}$ and $\FIVE$ in the Plato hierarchy.  

\smallskip

First of all, the Cantor-Bendixson theorem for RM-closed sets is equivalent to $\FIVE$ by \cite{simpson2}*{VI.1.6}.  We study this theorem for closed sets as in Definition \ref{opensset}.
\begin{princ}[$\CBT$]
For any closed set $C\subseteq [0,1]$, there exist $P, S\subset C$ such that $C=P\cup S$, $P$ is perfect and closed, and $S^{0\di 1}$ is a sequence of reals. 
\end{princ}
To be absolutely clear, the countable set $S$ is given as a sequence of real numbers $S^{0\di 1}$, just like in second-order RM.  
We also study the following variation of $\CBT$ involving the `usual' definition of countable set, i.e.\ the existence of an injective function from the set to $\N$.
\begin{princ}[$\CBT'$]
For closed $C\subseteq [0,1]$, there is $P, S\subseteq C$ such that $C=P\cup S$, $P$ is perfect and closed, and $S$ is a countable set of points of $C$.
\end{princ}
\noindent
By Theorem \ref{keslich}, the exact notion of countable set in $\CBT$ does not matter. 

\smallskip

On one hand, theorems like e.g.\ item \eqref{b1asefq} from Theorem \ref{yield} only mention closed sets in the outermost universal quantifier, i.e.\ we are dealing with a straightforward
generalisation of the associated second-order theorem. 
On the other hand, the Cantor-Bendixson theorem as in $\CBT$ additionally states the existence of a (perfect) closed set, i.e.\ it is not clear whether $\CBT$ is in fact a generalisation of 
the second-order version in the absence of $\open^{+}$.
Nonetheless, we have the following splitting.  
\begin{thm}\label{keslich}
Over $\RCAo+\QFAC^{0,1}$, we have $[\ACA_{0}+\CBT]\asa [\ACA_{0}+\CBT']\asa [\FIVE+\BOOT]$.
\end{thm}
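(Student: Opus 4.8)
The plan is to prove the triple equivalence by a cycle of four implications: $[\FIVE+\BOOT]\di[\ACA_{0}+\CBT]$, $[\FIVE+\BOOT]\di[\ACA_{0}+\CBT']$, $[\ACA_{0}+\CBT]\di[\FIVE+\BOOT]$, and $[\ACA_{0}+\CBT']\di[\FIVE+\BOOT]$. Together these give $\CBT\asa\CBT'$ over $\RCAo+\QFAC^{0,1}$ by transitivity through $[\FIVE+\BOOT]$, so no direct comparison of $\CBT$ and $\CBT'$ is needed. Since $\FIVE\di\ACA_{0}$, the two downward implications only require deriving $\CBT$ and $\CBT'$ from $\FIVE+\BOOT$.

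For the downward implications I would first note that $\BOOT$ proves $\open^{+}$ (using $\QFAC^{0,1}$) by the crux implication $\eqref{tropen}\di\eqref{kopen}$ of Theorem \ref{yield}. Hence every closed set $C\subseteq[0,1]$ as in Definition \ref{opensset} becomes an \emph{RM}-closed set: its complement $Z=\cup_{y}I_{y}^{\psi}$ acquires an RM-code listing the rational balls it contains. Now apply the second-order Cantor--Bendixson theorem, provable in $\FIVE$ by \cite{simpson2}*{VI.1.6}, to obtain $C=P\cup S$ with $P$ an RM-perfect-closed set and $S$ a sequence of reals. An RM-perfect-closed set is perfect and closed in the sense of Definition \ref{opensset}, and $S$ is already a sequence, so $\CBT$ follows. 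For $\CBT'$ one thins $S$ to an injection $\N\di C$ by discarding repetitions, which $\ACA_{0}$ performs since real inequality is $\Sigma_{1}^{0}$; this yields the countable-set-with-injection form required by $\CBT'$.

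For the reversals I would establish $\FIVE$ and $\BOOT$ separately. To obtain $\FIVE$, restrict $\CBT$ (resp.\ $\CBT'$) to \emph{RM}-closed inputs, which are closed in the sense of Definition \ref{opensset}; the resulting decomposition is the second-order Cantor--Bendixson theorem, save that $P$ is given in our sense. The standard reversal of \cite{simpson2}*{VI.1.6} only reads the scattered part $S$ (a sequence) to recover the encoded $\Pi_{1}^{1}$-set, and this decoding is arithmetical, so $\ACA_{0}+\CBT\di\FIVE$ (and likewise for $\CBT'$). The genuinely higher-order step, and the main obstacle, is deriving $\BOOT$. Splitting on $(\exists^{2})\vee\neg(\exists^{2})$ as usual: in case $\neg(\exists^{2})$ all functionals are continuous, $\BOOT$ reduces to $\ACA_{0}$, and $\CBT$ reduces to its second-order version, which yields $\ACA_{0}$.

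In case $(\exists^{2})$, fix $Y^{2}$; I would build a single higher-order closed set whose scattered part encodes $X=\{n:(\exists f^{1})(Y(f,n)=0)\}$. Reserve separated witness points $w_{n}\in(0,1)$ with a single limit point $w_{\infty}$, and define the open set $Z'=\cup_{y}I_{y}^{\psi'}$ so as to (i) cover $[0,1]\setminus(\{w_{n}:n\in\N\}\cup\{w_{\infty}\})$ together with all punctured neighbourhoods of the $w_{n}$ by a \emph{countable} (hence RM-open) union, and (ii) cover $w_{n}$ itself by an interval indexed by $y$ coding a pair $\langle n,f\rangle$ (using $(\exists^{2})$ to code and decode reals) exactly when $Y(f,n)=0$. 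Then $C:=[0,1]\setminus Z'$ is closed and its uncovered points are precisely $\{w_{n}:n\notin X\}\cup\{w_{\infty}\}$, each $w_{n}$ being isolated in $C$. Applying $\CBT$ gives $C=P\cup S$; since isolated points cannot lie in the perfect set $P$, we have $w_{n}\in C\asa w_{n}\in\range(S)$ for every $n$, whence $n\in X\asa(\forall m)(S(m)\ne_{\R}w_{n})$, a condition from which $\ACA_{0}$ forms the set $X$ required by $\BOOT$. The same construction works for $\CBT'$, reading membership of $w_{n}$ off the injection rather than the sequence. The one delicate point is keeping $C$ genuinely closed while capturing the $\Sigma$-information in its scattered part, which the uncountable-union representation of $Z'$ makes possible precisely because coverage of $w_{n}$ can be \emph{triggered by the existence of a witness} $f$ without deciding that existence in advance.
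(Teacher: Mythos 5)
Your downward direction is the same as the paper's ($\BOOT+\QFAC^{0,1}$ yields $\open^{+}$ via Theorem \ref{yield}, reducing everything to the second-order Cantor-Bendixson theorem), but your derivation of $\BOOT$ is correct and genuinely different from the paper's, and in one respect cleaner. The paper (working from $\CBT'$) builds via \eqref{honker2} a closed set containing the points $n+\frac12$ \emph{unconditionally} and encodes the $\Sigma$-formula into whether $n+\frac12$ lies in the perfect part $P$; since `$x\in P$' is a $\Pi$-formula, extracting the set $X$ requires matching $\Sigma$- and $\Pi$-descriptions and invoking $\Delta$-comprehension (Theorem \ref{DELTA}). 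Your set $C=\{w_{n}:n\notin X\}\cup\{w_{\infty}\}$ instead encodes the information into membership in $C$ itself: since $C$ has no nonempty perfect subset, \emph{every} admissible decomposition forces $P=\emptyset$ and $C=\range(S)$, so $n\in X\asa(\forall m)(S(m)\ne_{\R}w_{n})$ is arithmetical in $S$ and no $\Delta$-comprehension is needed at this step. Moreover your argument runs directly from $\CBT$ (the paper routes through $\CBT'$), and your $C$ lies in $[0,1]$ as the statement of $\CBT$ demands, whereas the set defined by \eqref{honker2} is an unbounded subset of $\R$. Your closing observation -- that the uncountable union lets coverage of $w_{n}$ be triggered by a witness without deciding its existence -- is exactly the mechanism that makes any such reversal possible.

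The genuine gap is your derivation of $\FIVE$. You claim that the reversal of \cite{simpson2}*{VI.1.6}, applied to RM-closed inputs, ``only reads the scattered part $S$''. This is not reliable: $\CBT$ as stated does not require $P$ and $S$ to be disjoint, and the closed sets used in second-order encodings of $\Sigma^{1}_{1}$-facts are \emph{not} scattered -- they contain perfect pieces exactly where witnesses exist (indeed, any RM-coded closed set encoding such facts must contain a homeomorphic copy of the body of the relevant tree, which may be perfect). A legitimate decomposition may therefore place all the relevant points and their perfect neighbourhoods inside $P$ -- which here is a closed set in the sense of Definition \ref{opensset}, hence only $\Pi$-readable rather than an RM-code -- while padding $S$ with countably many further points of $P$; no arithmetical-in-$S$ formula then recovers the encoded set. (Your $\BOOT$ construction evades this precisely because scatteredness forces $P=\emptyset$; the $\FIVE$-encoding sets cannot be made scattered.) Fortunately the gap is repaired by what you have already proved: $\BOOT+(\exists^{2})$ yields $\FIVE$ outright, since given $\exists^{2}$ every $\Pi^{1}_{1}$-formula is a $\Pi$-formula and $\BOOT$ gives comprehension for these by complementation, while in the case $\neg(\exists^{2})$ the uncountable union representing $P$ collapses to a countable one, membership in $P$ becomes arithmetical, and the second-order reversal applies; alternatively, once $\BOOT$ is available, Theorem \ref{yield} supplies $\open^{+}$, so $P$ acquires an RM-code and the standard reversal goes through verbatim.
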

\begin{proof}
Recall that $\FIVE$ is equivalent to the second-order version of $\CBT$ over $\ACA_{0}$ (\cite{simpson2}*{VI.1.6}).  
Hence, the second reverse implication is immediate from $\open^{+}$ provided by Theorem \ref{yield}.
Moreover, it suffices to prove $\CBT'\di \BOOT$ for the forward implications as $\CBT\di \CBT'$.  
Since the implication reduces to the second-order result in case $\neg(\exists^{2})$, we may assume $(\exists^{2})$.
Fix $Y^{2}$ and consider the following:  the formula $(\exists f^{1})(Y(f, n)=0)$ is equivalent to $(\exists X \subset \N^{2})(Y(F(X), n)=0)$, where $F(X)(n):=(\mu m)((n,m)\in X)$.
Hence, $(\exists f^{1})(Y(f, n)=0)$ is equivalent to a formula $(\exists f\in 2^{\N})(\tilde{Y}(f, n)=0)$, where $\tilde{Y}$ is defined explicitly in terms of $Y$ and $\exists^{2}$. 
Now define the functional $Z:\R\di \R$ as:  
\be\label{honker2}
Z(x):=
\begin{cases}
\qquad\emptyset & \text{if} \begin{array}{c} n<_{\R}|x|\leq_{\R} n+1~ \wedge \\ \tilde{Y}(\eta(x)(0),n)\times \tilde{Y}(\eta(x)(1),n)=0\end{array}\\
\begin{array}{c}(n, n+\frac{1}{2})~\cup  \\
 ( n+\frac{1}{2}, n+1) \end{array} & \textup{otherwise}
\end{cases},
\ee
where $\eta(x)$ provides a pair consisting of the binary expansions of $x-\lfloor x \rfloor$; the pair consists of identical elements if there is a unique such expansion.  
Note that $\exists^{2}$ can define such functionals $Z$ and $\eta^{1\di (1\times 1)}$.
One readily converts $Z$ into an open set $O$ as in Definition \ref{opensset}.  Let $C=P\cup S$ be the complement of $O$, where $P, S$ are provided by $\CBT'$, i.e.\ $S$ is just a countable set of points. 
Then for all $n\in \N$: 
\be\label{frong2}\textstyle
(\exists f^{1})(Y(f, n)=0)\asa[ (n+\frac12)\in P], 
\ee
and note that $P$ is a closed set and hence `$x\in P$' has the form `$(\forall y\in \R)A(x, y)$' for arithmetical $A(x, y)$ by Definition \ref{opensset}.  Hence, $\BOOT$ follows from \eqref{frong2} as $\Delta$-comprehension is available by Theorem \ref{DELTA}
\end{proof}
The attentive reader has noted that the open set $O$ defined by $Z$ in the previous proof is actually a countable union of intervals \emph{in the usual sense of `countable' from mainstream mathematics}.  
We discuss this point in Remark \ref{cuntable}.
We also note that \eqref{frong2} only holds because $P$ is the \emph{largest} perfect subset of $C$, i.e.\ it would not necessarily work for other perfect subsets. 

\smallskip

Next, we formulate another variation of $\CBT$ involving a characteristic function for the countable set.
\begin{princ}[$\CBT''$]
For any closed set $C\subseteq [0,1]$, there exist $P, S\subset C$ such that $C=P\cup S$, $P$ is perfect and closed, and there is a characteristic function for the countable set $S$ of points of $C$.
\end{princ}
We have the following nice equivalence. 
\begin{cor}
Over $\RCAo+\QFAC^{0,1}$, $[\ACA_{0}+\CBT'']\asa [(\exists^{2})+\BOOT]$.
\end{cor}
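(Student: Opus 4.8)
The plan is to lean on two facts already available. First, Theorem~\ref{boef} with $k=0$ gives $\ACAo+\BOOT\vdash\FIVE$; since over $\RCAo$ the system $\ACAo$ is just $\RCAo+(\exists^{2})$, this means $(\exists^{2})+\BOOT$ proves $\FIVE$ and is therefore equivalent to $\FIVE+\BOOT$. Second, Theorem~\ref{keslich} already establishes $[\ACA_{0}+\CBT']\asa[\FIVE+\BOOT]$, and the forward half of its proof reads $\BOOT$ off the \emph{perfect} set $P$ alone, never inspecting the countable part $S$. So I expect the corollary to follow by (a) extracting $(\exists^{2})$ directly from the characteristic function that $\CBT''$ supplies, and (b) recycling the perfect-set construction of Theorem~\ref{keslich} together with the reduction $\open^{+}$ from Theorem~\ref{yield}.

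For the forward direction I would first prove $\CBT''\di(\exists^{2})$. Apply $\CBT''$ to the closed set $C_{0}:=\{0\}\cup[\tfrac12,1]$, presented as in Definition~\ref{opensset}. An isolated point lies in no perfect subset, so the only admissible decomposition has $P=[\tfrac12,1]$ and $S=\{0\}$; hence the characteristic function handed back by $\CBT''$ is that of $\{0\}$, which is discontinuous at $0$, and \cite{kohlenbach2}*{\S3} turns the existence of a discontinuous function on $\R$ into $(\exists^{2})$. With $(\exists^{2})$ in hand I would then obtain $\BOOT$ exactly as in Theorem~\ref{keslich}: feed the open set defined by $Z$ in \eqref{honker2} to $\CBT''$, and read $\BOOT$ off the equivalence \eqref{frong2} for the perfect set $P$ using $\Delta$-comprehension (Theorem~\ref{DELTA}, available from $\QFAC^{0,1}$). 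Since this step uses only $P$, the switch from an enumeration of $S$ to a characteristic function for $S$ changes nothing, and the argument transfers unchanged.

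For the reverse direction, assume $(\exists^{2})+\BOOT$. By the first paragraph this gives $\FIVE$, and by Theorem~\ref{yield} it gives $\open^{+}$. Let $C\subseteq[0,1]$ be an arbitrary closed set as in Definition~\ref{opensset}; applying $\open^{+}$ replaces it by an RM-closed set. Now $\FIVE$ proves the second-order Cantor--Bendixson theorem (\cite{simpson2}*{VI.1.6}), which produces the perfect kernel $P$ as an RM-closed set with $C\setminus P$ scattered. Since RM-closed sets are closed in the sense of Definition~\ref{opensset}, $P$ is an admissible perfect closed set. Finally, for RM-closed $C$ and $P$ the predicates `$x\in C$' and `$x\in P$' are $\Pi_{1}^{0}$, hence decided by $\exists^{2}$, so $\chi_{S}(x):=\chi_{C}(x)\cdot(1-\chi_{P}(x))$ is a genuine type-two characteristic function for $S=C\setminus P$, respecting $=_{\R}$. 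This yields $\CBT''$, while $\ACA_{0}$ is immediate from $(\exists^{2})$.

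The conceptual crux, and the only real obstacle, is recognising the collapse $(\exists^{2})+\BOOT\equiv\FIVE+\BOOT$ afforded by Theorem~\ref{boef}: it dissolves the apparent weakness of the characteristic-function formulation and lets the reverse direction quote the full second-order Cantor--Bendixson theorem. The genuinely new ingredient, and the one place where $\CBT''$ behaves differently from $\CBT$ and $\CBT'$, is the forward extraction of $(\exists^{2})$ from the discontinuity of the characteristic function of $S$; this is what licenses writing $(\exists^{2})$ explicitly on the right instead of assuming it as a separate hypothesis. The remaining care is bookkeeping: checking that the perfect kernel from \cite{simpson2}*{VI.1.6} is repackaged as a closed set in the sense of Definition~\ref{opensset} and that the constructed $\chi_{S}$ is extensional on the reals.
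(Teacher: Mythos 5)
Your load-bearing steps are correct and essentially reproduce the paper's own argument. The paper's proof is exactly: (i) $\CBT''\di(\exists^{2})$ is immediate from \cite{kohlenbach2}*{\S3}, since the characteristic function of the countable part $S$ is discontinuous (your $C_{0}=\{0\}\cup[\tfrac12,1]$ makes this explicit: one only needs $0\in S$ and $S\cap(0,\tfrac12)=\emptyset$, so even the inessential uniqueness claim about the decomposition can be dropped); (ii) $\BOOT$ follows as in the proof of Theorem \ref{keslich}, which indeed only inspects the perfect part $P$; and (iii) for the reverse direction, $[\BOOT+(\exists^{2})]\di\FIVE$ plus Theorem \ref{yield} reduces everything to the second-order Cantor--Bendixson theorem, your construction $\chi_{S}=\chi_{C}\cdot(1-\chi_{P})$ via $\exists^{2}$ being the step the paper leaves implicit.

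However, the claim you single out as ``the conceptual crux'' is false: $(\exists^{2})+\BOOT$ is \emph{not} equivalent to $\FIVE+\BOOT$. Theorem \ref{boef} gives only the implication $(\exists^{2})+\BOOT\di\FIVE$; the converse $\FIVE+\BOOT\di(\exists^{2})$ fails. Indeed, $[\FIVE]_{\ECF}$ is just $\FIVE$, $[\BOOT]_{\ECF}$ follows from $\ACA_{0}$ (Theorem \ref{boef}), and $[\QFAC^{0,1}]_{\ECF}$ is provable in $\RCA_{0}$, while $[(\exists^{2})]_{\ECF}$ is $0=1$; so if $\RCAo+\QFAC^{0,1}+\FIVE+\BOOT$ proved $(\exists^{2})$, then $\RCA_{0}+\FIVE$ would be inconsistent. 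Moreover, if your equivalence were true, the corollary would collapse into Theorem \ref{keslich}: $\CBT''$ would be equivalent to $\CBT'$ over $\ACA_{0}$, whereas the entire point of stating $\CBT''$ separately is that demanding a characteristic function for $S$ forces the existence of a discontinuous functional, which $\CBT'$ does not. Fortunately your proof never invokes the false direction -- in the forward direction you extract $(\exists^{2})$ directly from $\CBT''$, and in the reverse direction you only use $(\exists^{2})+\BOOT\di\FIVE$ -- so the argument stands once that remark is weakened from an equivalence to an implication.
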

\begin{proof}
In the light of the proof of the theorem and the fact that $[\BOOT+(\exists^{2})]\di \FIVE$, we only need to prove $\CBT''\di (\exists^{2})$, which is immediate by \cite{kohlenbach2}*{\S3}.  
\end{proof}
Next, we study the converse of $\CLO$ from the previous section, as follows.
\begin{princ}[$\OLC$]
A closed set in $\R$ is separably closed.  
\end{princ}
Note that the RM-version of $\OLC$ is equivalent to $\FIVE$ over $\RCA_{0}$ by (the proof of) \cite{browner2}*{Theorem 2.18}.
The same caveats as for $\CBT$ apply to $\OLC$, and we have the following splitting.  
\begin{cor}
Over $\RCAo+\QFAC^{0,1}$, we have $\OLC\asa [\BOOT+\FIVE]$.
\end{cor}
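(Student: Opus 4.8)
The plan is to prove both implications over $\RCAo+\QFAC^{0,1}$, following the template of Theorem \ref{keslich} and the surrounding corollaries, and using three earlier facts: that $\BOOT$ (with $\QFAC^{0,1}$) yields $\open^{+}$ and $\ACA_0$ (Theorem \ref{yield}), that $\Delta$-comprehension is available (Theorem \ref{DELTA}), and that the second-order principle RM-$\OLC$ is equivalent to $\FIVE$ over $\RCA_0$ by \cite{browner2}*{Theorem 2.18}. Both implications would be organised around the split $(\exists^2)\vee\neg(\exists^2)$. In the continuous case $\neg(\exists^2)$, following Remark \ref{unbeliever}, $\OLC$ collapses to RM-$\OLC$ and $\BOOT$ to $\ACA_0$, so the equivalence reduces to the second-order fact RM-$\OLC\asa\FIVE$ (recall $\FIVE$ proves $\ACA_0$); the genuine work is the case $(\exists^2)$, treated below.

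For the reverse implication $[\BOOT+\FIVE]\to\OLC$ in the case $(\exists^2)$, I would first invoke Theorem \ref{yield} to obtain $\open^{+}$ and $\ACA_0$ from $\BOOT$ and $\QFAC^{0,1}$. Given a closed set $C$ as in Definition \ref{opensset}, $\open^{+}$ presents its open complement as a countable union of rational balls, thereby identifying $C$ with an RM-closed set. I would then apply the second-order consequence of $\FIVE$, namely RM-$\OLC$ via \cite{browner2}*{Theorem 2.18}, to obtain a dense subset of $C$, and convert this into a net indexed by a subset of $\N^\N$ as demanded by Definition \ref{dirf}; the presence of $(\exists^2)$ lets us pass freely between reals and their approximations during this bookkeeping. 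This produces the separably closed representation witnessing $\OLC$.

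For the forward implication $\OLC\to[\BOOT+\FIVE]$ in the case $(\exists^2)$, the heart of the matter is to extract $\BOOT$; once this is done, $\BOOT$ supplies $\open^{+}$ and $\ACA_0$ (Theorem \ref{yield}), so every RM-closed set becomes a closed set in the sense of Definition \ref{opensset}, whence $\OLC$ restricted to such sets is exactly RM-$\OLC$ and \cite{browner2}*{Theorem 2.18} delivers $\FIVE$. To obtain $\BOOT$ I would adapt the Specker-net argument behind $\CLO\to\BOOT$ (Theorem \ref{yielddouble}) and $\CBT'\to\BOOT$ (Theorem \ref{keslich}): given $Y^2$, I would use $\exists^2$ to write down — via the binary-expansion functional $\eta$, and without deciding any instance of the predicate — a closed set $C$ together with a test point $t_n$ for each $n$, designed so that, after $\OLC$ presents $C$ as a separably closed set $\overline{S}$, the status of $t_n$ relative to the dense net encodes $(\exists f^{1})(Y(f,n)=0)$. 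The separably closed representation supplies membership and non-membership of $t_n$ through the net quantifiers $(\exists d\in D)$ and $(\forall d\in D)$ — which, as $D\subseteq\N^\N$, are themselves $\Sigma$- and $\Pi$-formulas of the shape governed by $\BOOT$ — while the closed representation of $C$ supplies the complementary $(\forall y\in\R)$-form. Matching these expresses $(\exists f^{1})(Y(f,n)=0)$ simultaneously as a $\Sigma$- and a $\Pi$-formula, so that $\Delta$-comprehension (Theorem \ref{DELTA}, which explains the role of $\QFAC^{0,1}$) collects the set required by $\BOOT$.

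The main obstacle, I expect, is precisely the design of this test configuration: the closed set $C$ must be explicitly presentable by a $\psi$ that never decides the $\Sigma$-predicate, while the separably closed set handed back by $\OLC$ must nevertheless locate each $t_n$ sharply enough that membership and non-membership reduce to matching $\Sigma$- and $\Pi$-formulas — the same subtle division of labour that forced the predicate onto the perfect part $P$, rather than onto $C$ itself, in Theorem \ref{keslich}. A secondary, purely bookkeeping difficulty is reconciling the second-order sequence-of-points representation of a separably closed set with the net-of-rationals representation of Definition \ref{dirf}; this is routine given $\exists^2$ but must be carried out so as to respect the exact wording of that definition. Once both cases of the split on $(\exists^2)$ are in hand, the law of excluded middle $(\exists^2)\vee\neg(\exists^2)$ glues them together, yielding $\OLC\asa[\BOOT+\FIVE]$ over $\RCAo+\QFAC^{0,1}$.
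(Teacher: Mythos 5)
Your proposal matches the paper's proof in all essentials: the reverse direction via Theorem \ref{yield} (i.e.\ $\open^{+}$ plus the second-order result of \cite{browner2} and the fact that sequences are nets), and the forward direction via the closed set $C$ from the proof of Theorem \ref{keslich}, whose test points $n+\frac{1}{2}$ have their status (isolated or not) detected through the net supplied by $\OLC$ as a $\Pi$-formula over $D\subseteq \N^{\N}$, matched against the $\Sigma$-formula $(\exists f^{1})(Y(f,n)=0)$ and fed to $\Delta$-comprehension, with the same case split on $(\exists^{2})\vee\neg(\exists^{2})$. If anything, you are more explicit than the paper on one point: its proof of the forward direction only derives $\BOOT$, leaving the extraction of $\FIVE$ (via $\open^{+}$ and RM-$\OLC$) implicit, which you spell out.
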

\begin{proof}
The reverse direction is immediate from the known second-order results, in light of Theorem \ref{yield} and \ref{yielddouble}, and the fact that sequences are nets. 
For the forward direction, consider the closed set $C$ from the proof of Theorem \ref{keslich}.  Then $\OLC$ provides a net $x_{d}$ generating a set $\overline{S}$ equalling $C$.  
Note that we have for all $n^{0}$:
\begin{align*}\textstyle
(\exists f^{1})(Y(f,n)=0)
&\textstyle\asa (\textup{$n+\frac{1}{2}$ is an isolated point of $C$})\\
&\textstyle\asa (\forall d\in D)(x_{d} \in (n, n+1)\di x_{d}=n+\frac{1}{2}),
\end{align*}
and $\Delta$-comprehension (together with $(\exists^{2})$ as usual) yields $\BOOT$.  Note that in case $\neg(\exists^{2})$, the implication reduces to the known second-order results. 
\end{proof}
We included the previous result as it gives rise to the following conceptual remark:  in spaces `larger' than $\R$, it is natural to define open sets given by uncountable unions indexed by $\N^{\N}\di \N$, while separably closed
sets are given by nets indexed by $\N^{\N}\di \N$.  The associated generalisations of $\CLO$ and $\OLC$ then imply $\BOOT^{1}$.  
To put it more bluntly, even if the reader does not share the author's sense of wonder about these results, \emph{that} the latter generalise to all finite 
types \emph{with little effort}, should at least come as a surprise.  

\smallskip

Next, we study the \emph{perfect set theorem} for closed sets as in Definition \ref{opensset}.  
This theorem for RM-codes is equivalent to $\ATR_{0}$ by \cite{simpson2}*{V.5.5 and VI.1.5}.  A subset $C$ of $\R$ is \emph{uncountable} if for every sequence of reals $\lambda n.x_{n}$, there is $y\in C$ such that $(\forall n\in \N)(x_{n}\ne_{\R}y)$; the same concept is used in RM, namely in \cite{simpson2}*{p.\ 193}. 
\begin{princ}[$\PST$]
For any closed and uncountable set $C\subseteq [0,1]$, there exist $P\subseteq C$ such that $P$ is perfect and closed.  
\end{princ}
The same caveats as for $\CBT$ apply to $\PST$, and we have the following splitting.  
\begin{thm}\label{dontlabel}
Over $\RCAo+\QFAC^{0,1}$, $[\ACA_{0}+\PST]\asa [\ATR_{0}+\BOOT]$.
\end{thm}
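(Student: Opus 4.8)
The plan is to follow the template of Theorem~\ref{keslich}, pushing everything reducible down to the second-order perfect set theorem, which is equivalent to $\ATR_{0}$ over $\ACA_{0}$ by \cite{simpson2}*{V.5.5 and VI.1.5}. For the reverse direction $[\ATR_{0}+\BOOT]\di[\ACA_{0}+\PST]$, note first that $\ATR_{0}$ proves $\ACA_{0}$. Since $\ACA_{0}+\BOOT+\QFAC^{0,1}$ proves $\open^{+}$ by Theorem~\ref{yield}, every closed set in the sense of Definition~\ref{opensset} reduces to an RM-closed set, and the notion of uncountable set used in $\PST$ coincides with the RM-notion from \cite{simpson2}*{p.\ 193}. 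Thus, given an uncountable closed $C\subseteq[0,1]$, I would pass to an RM-code for $C$, apply the RM perfect set theorem (available in $\ATR_{0}$) to obtain a perfect RM-closed $P\subseteq C$, and observe that RM-closed sets are closed in the sense of Definition~\ref{opensset}, so $P$ is exactly the object required by $\PST$.

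For the forward direction $[\ACA_{0}+\PST]\di[\ATR_{0}+\BOOT]$, it suffices to establish $\PST\di\BOOT$: once $\BOOT$ is in hand, $\ACA_{0}+\BOOT+\QFAC^{0,1}$ proves $\open^{+}$ (Theorem~\ref{yield}), so our $\PST$ reduces to the RM perfect set theorem and $\ATR_{0}$ follows by \cite{simpson2}*{V.5.5}, exactly as $\FIVE$ was obtained in Theorem~\ref{keslich}. To prove $\PST\di\BOOT$ I would invoke $(\exists^{2})\vee\neg(\exists^{2})$: in the case $\neg(\exists^{2})$ all functionals on Baire space are continuous and the implication collapses to the second-order perfect set theorem following Remark~\ref{unbeliever}, so I may assume $(\exists^{2})$. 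Fixing $Y^{2}$, I would then build, along the lines of \eqref{honker2}, a closed set $C\subseteq[0,1]$ coding $Y$ so that a marker point $p_{n}$ (the point $n+\tfrac12$ of \eqref{honker2}, suitably placed in $[0,1]$) is isolated in $C$ exactly when $(\exists f^{1})(Y(f,n)=0)$ fails. Because a perfect set contains no point isolated in $C$, any perfect $P$ supplied by $\PST$ satisfies $p_{n}\in P\di(\exists f^{1})(Y(f,n)=0)$ for free, and ``$x\in P$'' has the arithmetical form $(\forall y\in\R)A(x,y)$ by Definition~\ref{opensset}; combining this $\Pi$-description with the given $\Sigma$-description and feeding the pair to $\Delta$-comprehension (Theorem~\ref{DELTA}, available from $\QFAC^{0,1}$) would yield the set required by $\BOOT$, as in \eqref{frong2}.

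The main obstacle is the converse inclusion in this last step. In Theorem~\ref{keslich} the set $P$ is the Cantor--Bendixson \emph{kernel}, i.e.\ the largest perfect subset, so the reverse implication $(\exists f^{1})(Y(f,n)=0)\di p_{n}\in P$ holds as well; but isolating such a kernel costs $\FIVE$, whereas $\PST$ only furnishes \emph{some} perfect subset and here $\ATR_{0}<\FIVE$. A non-maximal $P$ may omit a marker $p_{n}$ for which $(\exists f^{1})(Y(f,n)=0)$ holds, so the naive reading $\{n:p_{n}\in P\}$ gives only a subset of the intended set; moreover, padding $C$ to force uncountability (so that $\PST$ applies at all) makes matters worse, since $P$ could then live entirely inside the padding and detect no marker. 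Closing this gap without a kernel is the heart of the theorem, and I expect the correct route to parallel the $\ATR_{0}$-reversal of \cite{simpson2}*{V.5.5} rather than the $\FIVE$-level argument of Theorem~\ref{keslich}: either one designs $C$ so that the relevant markers are genuinely entangled with the perfect part (compelling every perfect subset to meet each of them), or one replaces the membership test by the \emph{contrapositive} of $\PST$ --- a scattered coding set whose forced countability yields an enumerating sequence --- and reads off $\BOOT$, via its equivalent $\RANGE$ (Theorem~\ref{rage}), through $\Delta$-comprehension. In all cases, once $\BOOT$ is secured the $\ATR_{0}$-half follows mechanically through $\open^{+}$ as above.
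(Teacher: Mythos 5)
Your reverse direction, your reduction of the forward direction to $\PST\di\BOOT$, and your diagnosis of why the kernel-based argument of Theorem~\ref{keslich} cannot be reused at the $\ATR_{0}$ level are all correct and agree with the paper. Moreover, the second of your two candidate routes \emph{is} the paper's proof: one does not force markers into a perfect set, but builds a closed set having \emph{no} perfect subset and applies the \emph{contrapositive} of $\PST$. The gap is that your proposal only names this route; the construction and its verification, which are the actual content of $\PST\di\BOOT$, are left open. (Your first route is a dead end: no point belongs to \emph{every} perfect subset of a given perfect set, so no construction can compel all perfect subsets to meet each marker.)

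Concretely, the paper swaps the two cases of \eqref{honker2}: in \eqref{honker3} the `otherwise' case produces the full interval $(n,n+1)$, midpoint included, so the complement $C$ of the resulting open set consists \emph{only} of isolated points. This has two consequences your sketch does not reach. First, your worry about padding $C$ to force uncountability dissolves: no uncountability is needed, since $C$ has no perfect subset by construction, and the contraposition of $\PST$ directly yields a sequence $\lambda m.x_{m}$ enumerating all of $C$. Second, the coding is inverted relative to your sketch: it is membership of the marker $n+\frac{1}{2}$ in $C$ itself, not its isolation versus limit-point status, that tracks the $\Sigma$-formula, giving \eqref{frong3}. Finally, the step you gloss as `read off $\BOOT$ through $\Delta$-comprehension' needs an argument: the clause `$x_{m}\in C$' in the right-hand side of \eqref{frong3} is a $\Pi$-formula, so that side has the form $(\exists m^{0})(\forall f^{1})(\dots)$, which is neither $\Sigma$ nor $\Pi$. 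The paper first invokes $\QFAC^{0,1}$ to rewrite $(\exists m^{0})(\forall f^{1})$-formulas in the form $(\forall g)(\exists m^{0})(\dots)$, i.e.\ as $\Pi$-formulas given $(\exists^{2})$; only then does Theorem~\ref{DELTA} apply to the resulting $\Sigma$/$\Pi$ pair and yield the set required by $\BOOT$. No detour through $\RANGE$ (Theorem~\ref{rage}) is needed.
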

\begin{proof}
Recall that $\ATR_{0}$ is equivalent to the second-order version of $\PST$ over $\ACA_{0}$ by \cite{simpson2}*{V.5.5}.  
The reverse implication is immediate from $\open^{+}$ provided by Theorem \ref{yield}.
Moreover, it suffices to prove $\PST\di \BOOT$ for the forward implication.  
Since the implication reduces to $\ATR_{0}\di \ACA_{0}$ in case $\neg(\exists^{2})$, we may assume $(\exists^{2})$.
Fix $Y^{2}$ and define the following functional: 
\be\label{honker3}
Z(x):=
\begin{cases}
\begin{array}{c}(n, n+\frac{1}{2})~\cup  \\
 ( n+\frac{1}{2}, n+1) \end{array} & \text{if} \begin{array}{c} n<_{\R}|x|\leq_{\R} n+1 ~\wedge \\ \tilde{Y}(\eta(x)(0),n)\times \tilde{Y}(\eta(x)(1),n)=0\end{array}\\
\quad(n, n+1) & \textup{otherwise}
\end{cases}, 
\ee
One readily converts $Z$ into an open set $O$ as in Definition \ref{opensset}.  Let $C$ be the complement of $O$ and note the former only consists of isolated points, i.e.\ $C$ cannot have a perfect subset.
Hence, the contraposition of $\PST$ provides a sequence $\lambda n.x_{n}$ that includes all the elements of $C$.    
We now have, for all $n\in \N$, that 
\be\label{frong3}\textstyle
(\exists f^{1})(Y(f, n)=0)\asa (\exists m\in \N)(n+\frac{1}{2}=_{\R} x_{m} \wedge x_{m}\in C).
\ee
Given $\QFAC^{0,1}$, a formula of the form $(\exists m^{0})(\forall f^{1})(Y(f, n)=0)$ is equivalent to $(\forall g^{(0\times 0)\di 1})(\exists m^{0})(Y(\lambda m.g(n,m), n)=0)$.
Since $(\exists^{2})$ is given and since the right-hand side of \eqref{frong3} has the aforementioned form, we observe that $\Delta$-comprehension applies to the latter, and 
and $\BOOT$ follows. 
\end{proof}
By Remark \ref{memmen2}, open sets represented by $\cup_{x\in [0,1]}I_{x}^{\Psi}$ have a lot more `constructive' properties than open sets represented by $\cup_{x\in \R}I_{x}^{\Psi}$.  
In fact, one readily shows that $\MUC$ implies $\CBT$ and $\PST$ formulated using the former notion of open set indexed by the unit interval.  As noted in Remark \ref{memmen2}, 
this means that these theorems have the same first-order strength as $\WKL_{0}$.

\smallskip

Inspired by the previous, $\ATR_{0}$ and $\FIVE$ now boast higher-order counterparts. 
\bdefi[$\BOOT_{2}$]\label{Xz} For $Y^{2}$ such that $\lambda g^{1}.Y(f, g, n)$ is continuous for all $f^{1}, n^{0}$, we have $(\exists X^{1})(\forall n^{0})(n\in X\asa (\exists f^{1})(\forall g^{1})(Y(f,g,n)=0))$.
\edefi
\noindent It is straightforward to show that $\BOOT_{2}\asa [\BOOT+\FIVE]$ over $\RCAo$, which combines nicely with Theorem \ref{keslich} and similar equivalences. 
\bdefi[$\STR$]\label{Xw} For $\theta(n, g)\equiv (\exists f^{1})(Z(f, g, n)=0)$ where $\lambda g^{1}.Z(f, g, n)$ is continuous for any $f^{1}, n^{0}$, we have:
\[
(\forall X^{1})(\WO(X)\di (\exists Y^{1})H_{\theta}(X, Y)).
\]
\edefi
It is straightforward to show that the $\ECF$-translation of $\STR$ is $\ATR_{0}$, while $[\ATR_{0}+\BOOT]\asa \STR$ is immediate, which combines nicely with Theorem \ref{dontlabel}.
A related result is mentioned below Theorem \ref{PI}.

\smallskip

Moreover, let $\textsf{\textup{T}}$-$\SEP$ be the usual separation schema (see e.g.\ \cite{simpson2}*{I.11.7}) for formulas $\varphi_{i}(n)\equiv (\exists f_{i}^{1})(\forall g_{i}^{1})(Y_{i}(f_{i},g_{i}, n)=0)$.
Imitating the proof that $\ATR_{0}$ follows from $\Sigma_{1}^{1}$-separation in \cite{simpson2}*{V.5.1}, one readily obtains $\textsf{\textup{T}}$-$\SEP\di \STR$.
The crucial part is that given countable choice as in $\QFAC^{0,1}$, $(\exists Y^{1})H_{\theta}(X, Y)$ has the same form as the $\varphi_{i}$ in $\textsf{\textup{T}}$-$\SEP$.
Restricting to a continuous parameter $g_{i}$ seems essential for a reversal. 

\smallskip

Finally, with the gift of hindsight, we can now generalise Definition \ref{opensset} and Theorem~\ref{yield} \emph{to any higher type}. 
By way of an example, one can consider nets indexed by subsets of $\N^{\N}\di \N$, while the quantifier `$(\exists y\in \R)$' in the definition of open sets
is similarly `bumped up one type', namely from ranging over $\R$ to $\R\di \R$.  The associated comprehension axiom is of course $\BOOT^{1}$.  
The equivalences in the above theorems then go through over a suitable base theory.  
We leave the details to be worked out.  We finished this section with an important conceptual remark.  
\begin{rem}[A cardinality by any other name]\label{cuntable}\rm
Let us begin by recalling that if $\cup_{n\in \N}(a_{n}, b_{n})$ is a countable union of basic open balls, then so is $\cup_{f\in \N^{\N}}(a_{Y(f)}, b_{Y(f)})$ for any $Y^{2}$ and using the mainstream definition of `countable set'.
Now note that the open set $O$ defined by $Z$ in \eqref{honker2} can be expressed in the latter form, i.e.\ it is also a countable union of basic open balls.  
Thus, all the results in this section also hold for $\CBT$ restricted to open sets given by countable unions, i.e.\ the generalisation to uncountable unions is (technically) superfluous.  

\smallskip

For the above reason, countable unions from RM like $\cup_{n\in \N}(a_{n}, b_{n})$ should be referred to as `sequential' or `searchable' or a similar term that captures the fact that we are dealing with a sequence that one can search through `one by one' in a weak system. 
By contrast, the countable union $\cup_{f\in \N^{\N}}(a_{Y(f)}, b_{Y(f)})$ is not searchable in any reasonably sense.  
\emph{In conclusion}, the lack of structure of $O$ defined by \eqref{honker2} is what gives rise to the strength of $\CBT$, \emph{not} the cardinality of the index set.    
More palatable examples based on countable fields can be found in \cite{samFLO2, samrecount}.
\end{rem}
We recall that a similar situation for nets exists, as discussed in Remark \ref{memmen2}.  Moreover, defining `$w\approx_{D}v$' as $c_{w}=_{\R}c_{v}$ in the proof of Theorem \ref{proofofconcept}, we observe that   
the index set $D$ only involves countably many equivalence classes modulo $\approx_{D}$.  In this sense, the index set $D$ of $c_{w}$ is also countable.  

\subsection{Heine-Borel compactness}\label{plato}
In this section, we connect $\BOOT$ to $\HBU$ and other higher-order axioms as in Figure \ref{kk}.  

\smallskip

We first show that $\HBU$ follows from $\BOOT$, in contrast to the known comprehension axioms of third-order arithmetic provided by $\SIXK$.
\begin{thm}\label{honor}
The system $\RCAo+\IND$ or $\RCAo+\QFAC^{0,1}$ proves $\BOOT\di \HBU$ while $\Z_{2}^{\omega}+\QFAC^{0,1}$ does not prove $\BOOT$ or $\HBU$. 
\end{thm}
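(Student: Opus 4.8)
The plan is to establish the implication $\BOOT\di\HBU$ first, and then read off the two non-provability statements from it by modus tollens. For the implication I would argue by the excluded-middle dichotomy $(\exists^{2})\vee\neg(\exists^{2})$, exactly in the style of the proof of Theorem~\ref{bongra}. The unifying idea is that $\RCAo+\BOOT$ proves $\ACA_{0}$ (Theorem~\ref{boef}) and hence $\WKL_{0}$, so in both cases the Heine-Borel theorem for \emph{countable} open covers of $I=[0,1]$ is available (by \cite{simpson2}*{IV.1}); the whole strategy is therefore to reduce the uncountable canonical cover $\cup_{x\in I}I_{x}^{\Psi}$ to a countable $\RCA$-open cover and then invoke this second-order Heine-Borel theorem. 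In the easy case $\neg(\exists^{2})$, all type-two functionals are continuous by \cite{kohlenbach2}*{\S3}, so $\Psi$ is continuous and the cover reduces to the rational subcover $\cup_{q\in\Q\cap I}I_{q}^{\Psi}$ (by continuity each $x\in I$ already lies in a rational-centred interval of the cover); this countable cover has a code in $\RCAo$, and countable Heine-Borel yields $\HBU$.

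The substantive case is $(\exists^{2})$, where I would mimic the code-extraction argument of Theorem~\ref{yield}. Using $\exists^{2}$ to decide the arithmetical relation $B(q,\tfrac{1}{2^{n}})\subseteq I_{y}^{\Psi}$, I apply $\BOOT$ to the $\Sigma$-formula $(\exists y\in I)\bigl(B(q,\tfrac{1}{2^{n}})\subseteq I_{y}^{\Psi}\bigr)$ to obtain the set $X:=\{(n,q):(\exists y\in I)(B(q,\tfrac1{2^{n}})\subseteq I_{y}^{\Psi})\}$ (indices coded by pairing). Crucially, forming $X$ needs \emph{no} choice. Since each $I_{y}^{\Psi}$ is open, every point of $\cup_{x\in I}I_{x}^{\Psi}\supseteq I$ lies in some rational-centred ball contained in a single $I_{y}^{\Psi}$, so $\{B(q,\tfrac1{2^{n}}):(n,q)\in X\}$ is a countable $\RCA$-open cover of $I$. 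Countable Heine-Borel (via $\ACA_{0}$) then yields a finite subcover indexed by some $(n_{1},q_{1}),\dots,(n_{k},q_{k})\in X$.

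The main obstacle, and the only point where the hypotheses $\IND$ or $\QFAC^{0,1}$ are actually invoked, is passing from this finite subcover back to genuine interval centres $y_{1},\dots,y_{k}\in I$ witnessing $\HBU$: membership $(n_{i},q_{i})\in X$ only asserts that \emph{some} suitable $y$ exists, and selecting it is a choice. With $\QFAC^{0,1}$ one extracts all witnesses simultaneously by applying it to $(\forall (n,q)\in X)(\exists y\in I)(B(q,\tfrac1{2^{n}})\subseteq I_{y}^{\Psi})$ and composing with the finite subcover. With $\IND$ instead one builds the finite sequence $\langle y_{1},\dots,y_{k}\rangle$ by full induction on $j\le k$, appending one witness at each step; this is legitimate precisely because only \emph{finitely many} witnesses are required, whereas the induction formula $(\exists Y^{1})(\forall i\le j)(B(q_{i},\tfrac1{2^{n_{i}}})\subseteq I_{Y(i)}^{\Psi})$ is not quantifier-free and so lies outside the induction of $\RCAo$. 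Either way $I\subseteq\bigcup_{i\le k}B(q_{i},\tfrac1{2^{n_{i}}})\subseteq\bigcup_{i\le k}I_{y_{i}}^{\Psi}$, which is $\HBU$.

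For the final clause, $\Z_{2}^{\omega}+\QFAC^{0,1}$ does not prove $\HBU$ by \cite{dagsamIII,dagsamV}. If this system proved $\BOOT$, then, as it extends $\RCAo+\QFAC^{0,1}$, it would prove $\HBU$ by the implication just established---a contradiction. Hence $\Z_{2}^{\omega}+\QFAC^{0,1}$ proves neither $\BOOT$ nor $\HBU$, completing the proof.
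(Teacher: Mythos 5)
Your proof is correct, and it follows the same three-step skeleton as the paper's argument --- $\BOOT$ comprehends the set of basic neighbourhoods that are refined into some cover element, a $\WKL$-strength compactness step (available since $\BOOT\di\ACA_{0}$) yields finitely many such neighbourhoods, and $\IND$ or $\QFAC^{0,1}$ performs the final finite witness selection --- but the execution differs in ways worth noting. The paper proves $\HBU_{\c}$ on Cantor space (citing $\HBU\asa\HBU_{\c}$ from \cite{dagsamIII}): there the formula ``$G(g)\leq|\sigma|\wedge \sigma*00\dots\in[\overline{g}G(g)]$'' has a genuinely quantifier-free matrix, so $\BOOT$ applies \emph{without} $(\exists^{2})$, no excluded-middle dichotomy is needed, and the uniformity step goes through boundedness of the continuous functional $H^{2}$ on $C$ via \cite{kohlenbach4}*{\S4} rather than through the second-order countable Heine-Borel theorem. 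You instead work directly on $[0,1]$ with rational balls; since ball inclusion $B(q,\frac{1}{2^{n}})\subseteq I_{y}^{\Psi}$ is $\Pi_{1}^{0}$ rather than quantifier-free, you need $(\exists^{2})$ to put it in the form required by $\BOOT$, which forces the case split $(\exists^{2})\vee\neg(\exists^{2})$ and a separate (routine) continuity argument in the $\neg(\exists^{2})$ case --- exactly the pattern of Theorems \ref{bongra} and \ref{yield}. What your route buys is a reduction to the familiar second-order countable Heine-Borel theorem of \cite{simpson2}*{IV.1}; what the paper's route buys is a single uniform argument with no dichotomy. Finally, for the non-provability of $\BOOT$ the paper invokes Theorem \ref{boef} (the explosion $\SIXK+\BOOT\vdash\SIXko$, so a finite-axiom proof of $\BOOT$ in $\Z_{2}^{\omega}+\QFAC^{0,1}$ would contradict incompleteness), whereas you obtain it by modus tollens from the implication just established together with $\Z_{2}^{\omega}+\QFAC^{0,1}\not\vdash\HBU$; your argument is equally valid and arguably more economical, since it needs only the already-cited result from \cites{dagsamIII,dagsamV}.
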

\begin{proof}
The first negative result follows directly from Theorem \ref{boef}, while  $\Z_{2}^{\omega}+\QFAC^{0,1}\not\vdash \HBU$ has been established in \cites{dagsamIII, dagsamV}.  
For the positive result, we prove $\HBU_{\c}$, i.e.\ the Heine-Borel compactness of Cantor space, as follows
\be\tag{$\HBU_{\c}$}
(\forall G^{2})(\exists  f_{1}, \dots, f_{k} \in C ){(\forall f^{1}\in C)}(\exists i\leq k)(f\in [\overline{f_{i}}G(f_{i})]).
\ee
Note that $\HBU\asa \HBU_{\c}$ over $\RCAo$ by the proof of \cite{dagsamIII}*{Theorem 3.3}.
Fix $G^{2}$ and let $A(\sigma)$ be the following formula
\be\label{bongka}
(\exists g\in C)\big[G(g)\leq |\sigma| \wedge \sigma*00\dots \in [\overline{g}G(g)]\big],
\ee
where $\sigma^{0^{*}}$ is a finite sequence of natural numbers.  Note that the formula in \eqref{bongka} in square brackets is quantifier-free.  Thus, $\BOOT$ provides a set $X\subseteq \N$ such that $(\forall \sigma^{0^{*}})(\sigma  \in X\asa A(\sigma) )$, with minimal coding.  Now, we have $(\forall f\in C)(\exists n^{0})A(\overline{f}n)$ since we may take $g=f$ and $n=G(f)$.  Hence, we have $(\forall f\in C)(\exists n^{0})(\overline{f}n\in X)$ and applying $\QFAC^{1,0}$, there is $H^{2}$ such that $(\forall f\in C)(\overline{f}H(f)\in X)$ and $H(f)$ is the least such number.  Obviously $H^{2}$ is continuous on $C$ and hence bounded above on $C$ by \cite{kohlenbach4}*{\S4}.  Hence, there is $N_{0}^{0}$ such that $(\forall f\in C)(\exists n\leq N_{0})A(\overline{f}n)$.
Let $\sigma_1 , \dots ,\sigma_{2^{N_0 }+ 1}$ enumerate all binary sequences of length $N_0 + 1$ and define  $f_i := \sigma_i*00\dots$ for $i\leq 2^{N_{0}+1}$.
Intuitively speaking, we now apply \eqref{bongka} for $f_i$ and obtain $g_i$ for each $i\leq 2^{N_{0}+1}$.  Then $\langle g_1 , \ldots , g_{2^{N_0 + 1}}\rangle$ provides the finite sub-cover for $G$.
Formally, it is well-known that $\ZF$ proves the `finite' axiom of choice via mathematical induction (see e.g.\ \cite{tournedous}*{Ch.\ IV}).  Similarly, one readily uses $\textsf{IND}$ to prove the existence of the
aforementioned finite sequence based on \eqref{bongka}.
We can replace $\IND$ by $\QFAC^{0,1}$, which is applied to \eqref{bongka} to yield the finite sub-cover.
\end{proof}
The final part of the proof was first used in \cite{sahotop} to prove \emph{without using the Axiom of Choice} the equivalence between $\HBU$ and a version involving more general covers.  
Note that $\BOOT\di \HBU$ becomes $\ACA_{0}\di \WKL_{0}$ when applying $\ECF$.  

\smallskip

Secondly, $\WKL_{0}$ is equivalent to the separation axiom $\Sigma_{1}^{0}$-$\SEP$, i.e.\ the schema \eqref{SIG} for $\L_{2}$-formulas $\varphi_{i}\in \Sigma_{1}^{0}$, by \cite{simpson2}*{IV.4.4}.  
We consider the separation axiom $\Sigma$-$\SEP$ and note that $\HBU\di \Sigma$-$\SEP$ becomes $\WKL_{0}\di\Sigma_{^{1}}^{0}$-$\SEP$ under $\ECF$.
\bdefi[$\Sigma$-\SEP]
For $\varphi_{i}(n)\equiv (\exists f_{i}^{1})(Y_{i}(f_{i}, n)=0)$, we have 
\be\label{SIG}
(\forall n^{0})(\neg\varphi_{1}(n)\vee \neg\varphi_{2}(n))\di (\exists Z^{1})(\forall n^{0})\big[\varphi_{1}(n)\di n\in Z\wedge \varphi_{2}(n)\di n\not\in Z\big].
\ee
\edefi
\begin{thm}\label{klato}
The system $\RCAo+\IND+\QFAC^{1,1}$ proves $\HBU\di\Sigma$-$\SEP$. 
\end{thm}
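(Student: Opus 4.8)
The plan is to lift Simpson's second-order proof that $\WKL_{0}$ proves $\Sigma_{1}^{0}\textup{-}\SEP$ (\cite{simpson2}*{IV.4.4}), since under $\ECF$ the target statement $\HBU\di\Sigma$-$\SEP$ becomes exactly $\WKL_{0}\di\Sigma_{1}^{0}\textup{-}\SEP$. First I would reduce to $\HBU_{\c}$, the Heine--Borel compactness of Cantor space, using $\HBU\asa\HBU_{\c}$ over $\RCAo$ via the proof of \cite{dagsamIII}*{Theorem~3.3} (as already invoked in Theorem~\ref{honor}). Fixing $Y_{1},Y_{2}$ and the hypothesis $(\forall n^{0})(\neg\varphi_{1}(n)\vee\neg\varphi_{2}(n))$, the underlying idea is the dual (finite-intersection) form of compactness: for each $n$ the set $S_{n}:=\{Z\in 2^{\N}: (\varphi_{1}(n)\di Z(n)=1)\wedge(\varphi_{2}(n)\di Z(n)=0)\}$ is clopen, and the mutual-exclusivity hypothesis makes any finite intersection $S_{n_{1}}\cap\dots\cap S_{n_{k}}$ nonempty (it merely fixes finitely many bits consistently), a fact I would verify by $\IND$. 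Compactness of $2^{\N}$ then forces $\bigcap_{n}S_{n}\ne\emptyset$, and a member of this intersection is precisely a separating $Z$.

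As usual in this setting I would split on $(\exists^{2})\vee\neg(\exists^{2})$. In the case $\neg(\exists^{2})$ all functionals on $\N^{\N}$ are continuous by \cite{kohlenbach2}*{\S3}, so each $Y_{i}$ has an associate and $\varphi_{i}(n)$ becomes a genuine $\Sigma_{1}^{0}$-formula in that associate; $\HBU_{\c}$ then supplies the $\WKL_{0}$-strength needed, and Simpson's reversal \cite{simpson2}*{IV.4.4} applies verbatim to yield $Z$, so this case reduces to the known second-order theorem (following Remark~\ref{unbeliever}). In the case $(\exists^{2})$ I have arithmetical comprehension available, which lets me \emph{evaluate} the matrix $Y_{i}(f,n)=0$ at any given $f$ and, via Feferman's $\mu$, decide $\Sigma_{1}^{0}$-combinations of such evaluations. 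Here I would make the covering family a legitimate \emph{canonical} cover by passing to points that carry candidate witnesses: encode a point of the compact space as a pair $(Z,w)$ with $Z\in 2^{\N}$ a candidate separating set and $w\in\N^{\N}$ a candidate witness, and for such a point detect (using $(\exists^{2})$, since $w$ is now \emph{given}) whether $w$ refutes $Z$, i.e.\ whether there is $n$ with $Z(n)=0\wedge Y_{1}(w,n)=0$ or $Z(n)=1\wedge Y_{2}(w,n)=0$; the least such $n$ fixes the radius of the ball around $(Z,w)$ at the bit position $n$. Applying $\HBU_{\c}$ to the resulting functional $G^{2}$ produces a finite subcover, and combining this finite subcover with the finite-intersection property (again through $\IND$) yields the contradiction that excludes the empty-intersection alternative; finally $\QFAC^{1,1}$ extracts the actual separating \emph{function} $Z^{1}$ from the compactness output.

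The main obstacle is exactly the step of rendering the cover $\{2^{\N}\setminus S_{n}\}$ a \emph{definable} canonical cover: one cannot decide $\varphi_{i}(n)=(\exists f^{1})(Y_{i}(f,n)=0)$, and the witness quantifier ranges over the \emph{non-compact} Baire space $\N^{\N}$, so there is no bound on witnesses to exploit and no way to compactify them (this is precisely what keeps $\BOOT$ out of reach from $\HBU$). My resolution is to move the witnesses into the \emph{points} of the already-compact output space $2^{\N}$ and let $(\exists^{2})$ and $\QFAC^{1,1}$ do the work that $\WKL$ does in the second-order argument, with $\IND$ handling the finite combinatorics of the subcover. I expect the delicate point of the write-up to be checking that the witness-carrying cover, together with the finite-intersection property, genuinely forces membership in $\bigcap_{n}S_{n}$ rather than merely supplying finitely many witnesses (which would be insufficient); this is where the dual compactness formulation, rather than a direct modulus argument, is essential. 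I would finally note that the entire construction descends under $\ECF$ to Simpson's proof of $\WKL_{0}\di\Sigma_{1}^{0}\textup{-}\SEP$, confirming consistency with Figure~\ref{kk} and Theorem~\ref{boef}.
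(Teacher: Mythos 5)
Your overall skeleton --- finite separation via $\IND$, compactness over the space of candidate separating sets, reductio --- is the same as the paper's, and your $\neg(\exists^{2})$ case is fine in spirit. The genuine gap is in your $(\exists^{2})$ case, at exactly the step you flag as the main obstacle: your ``resolution'' of moving witnesses into the points does not resolve it. The witness-carrying points $(Z,w)$ with $Z\in 2^{\N}$ and $w\in \N^{\N}$ form a space homeomorphic to Baire space, which is \emph{not} compact, so $\HBU_{\c}$ simply does not apply to it, and no coding of such pairs into $2^{\N}$ repairs this. Worse, your radius functional is not even total: under the reductio hypothesis every $Z$ admits \emph{some} refuting pair $(n,w)$, but a \emph{given} $w$ need not refute $Z$, so such pairs get no ball (and assigning them a default ball, say $[\langle\rangle]$, makes the finite sub-cover trivial and kills the contradiction). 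If instead you view your balls as neighbourhoods of $Z$ alone, indexed by pairs, you no longer have a \emph{canonical} cover, and $\HBU$ is formulated only for canonical covers; passing to more general covers costs extra axioms (cf.\ $\textsf{HBT}$ and $\A_{0}$ in \cite{sahotop} and Section \ref{baketheory}), which are not available in the present theorem.

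The fix is essentially forced, and it is the paper's proof: the only way to turn $(\forall Z^{1})(\exists n^{0})\neg A(n,Z)$ into a canonical cover of $2^{\N}$ is to select the index $n$ \emph{and} the type-one witnesses hidden inside $\neg A(n,Z)$ uniformly in $Z$, i.e.\ to apply $\QFAC^{1,1}$ \emph{before} the compactness step, obtaining $G:C\di \N$ with $(\forall Z^{1})(\exists n\leq G(Z))\neg A(n,Z)$; then $\HBU_{\c}$ applied to $\cup_{Z\in C}[\overline{Z}G(Z)]$ yields a finite sub-cover with bound $k_{0}$, and a binary $\sigma_{0}$ of length $k_{0}+2$ satisfying \eqref{pbc} gives the contradiction. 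Note that in your write-up $\QFAC^{1,1}$ appears only at the end, to ``extract'' the separating set, where it does no work (a reductio needs no extraction); its essential role is precisely the one your cover construction tries to avoid. Two smaller points: no case split on $(\exists^{2})$ is needed, since the paper's argument is uniform, and your closing claim that the dual (finite-intersection) formulation is ``essential'' is backwards --- the paper's direct modulus argument is exactly what succeeds.
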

\begin{proof}
Suppose $\varphi_{i}$ is as in $\Sigma$-$\SEP$ and satisfies the antecedent of \eqref{SIG}.  
Note that using $\IND$, it is straightforward to prove that for every $m^{0}$, there is a finite binary sequence $\sigma^{0^{*}}$ such that $|\sigma|=m$ and 
\be\label{pbc}
(\forall n< m)\big[\varphi_{1}(n)\di (\sigma(n)=1)\wedge \varphi_{2}(n)\di( \sigma(n)=0)\big].
\ee
Now let $A(n, Z)$ be the formula in square brackets in \eqref{SIG} and suppose we have $(\forall Z^{1})(\exists n^{0})\neg A(n, Z)$.  
Note that $\neg A(n,Z)$ hides two existential quantifiers involving $f_{1}, f_{2}$.  Applying $\QFAC^{1,1}$, we obtain $G:C\di \N$ such that  $(\forall Z^{1})(\exists n\leq G(Z))\neg A(n, Z)$.
Apply $\HBU_{\c}$ to the canonical cover $\cup_{f\in C}[\overline{f}G(f)]$ and obtain a finite sub-cover $f_{0}, \dots, f_{k}$, i.e.\ $\cup_{i\leq k}[\overline{f_{i}}G(f_{i})]$ also covers $C$. 
Let $k_{0}$ be $\max_{i\leq k} G(f_{i})$ and consider binary $\sigma_{0}$ of length $k_{0}+2$ satisfying \eqref{pbc}.
Then $g_{0}:= \sigma_{0}*00\dots$ is in some neighbourhood of the finite sub-cover, say $g_{0}\in [\overline{f_{j}}G(f_{j})]$.  
By definition, $k_{0}\geq G(f_{j})$, i.e.\ $\overline{g_{0}}G(f_{j})=\overline{\sigma_{0}}G_(f_{j})=\overline{f_{j}}G(f_{j})$.  However, \eqref{pbc} is false for $m=G(f_{j})$ and $\sigma=\overline{f_{j}}G(f_{j})$, a contradiction. 
\end{proof}
The usual `interval halving' proof (going back to Cousin in \cite{cousin1}) establishes the reversal, also using countable choice, in the theorem.  
We have the following corollary, variations of which are published in \cites{dagsam, dagsamII, dagsamIII}, all involving different proofs.  
\begin{cor}\label{shortie}
The system $\ACAo+\IND+\QFAC^{1,1}+\HBU$ proves $\ATR_{0}$.
\end{cor}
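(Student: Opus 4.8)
The plan is to route the proof through the separation principle $\Sigma$-$\SEP$ and the classical equivalence between $\Sigma_{1}^{1}$-separation and $\ATR_{0}$. First, since our system contains $\RCAo+\IND+\QFAC^{1,1}$, Theorem~\ref{klato} immediately yields $\HBU\di \Sigma$-$\SEP$; as $\HBU$ is assumed, $\Sigma$-$\SEP$ is at our disposal. The key observation is then that $\Sigma$-$\SEP$ \emph{implies} the second-order principle of $\Sigma_{1}^{1}$-separation, once $\exists^{2}$ is available.

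To see this, note that $\ACAo\equiv \RCAo+(\mu^{2})$ provides Feferman's $\mu$, hence $\exists^{2}$. Given any two $\Sigma_{1}^{1}$-formulas $\varphi_{1}(n),\varphi_{2}(n)$ of $\L_{2}$ (possibly with set parameters) satisfying the disjointness hypothesis $(\forall n^{0})(\neg\varphi_{1}(n)\vee\neg\varphi_{2}(n))$, we may write each as $\varphi_{i}(n)\equiv (\exists f^{1})\theta_{i}(f,n)$ for arithmetical $\theta_{i}$. Using $\mu^{2}$ to decide arithmetical matrices by iterated unbounded search, we define functionals $Y_{i}^{2}$ with $Y_{i}(f,n)=0\asa \theta_{i}(f,n)$, so that $\varphi_{i}(n)\asa (\exists f^{1})(Y_{i}(f,n)=0)$, which is precisely the shape required in the premise of $\Sigma$-$\SEP$. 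Applying $\Sigma$-$\SEP$ to $Y_{1},Y_{2}$ produces a set $Z^{1}$ with $(\forall n^{0})[\varphi_{1}(n)\di n\in Z\wedge\varphi_{2}(n)\di n\not\in Z]$, i.e.\ the very separating set demanded by $\Sigma_{1}^{1}$-separation. (With $\exists^{2}$ present the functionals $Y_{i}$ are unrestricted, so the inner $(\exists f^{1})$ genuinely ranges over a $\Sigma_{1}^{1}$ predicate; this is exactly why $\Sigma$-$\SEP$ sits at the $\ATR_{0}$ level here, whereas $\ECF$ renders the $Y_{i}$ continuous and collapses $\Sigma$-$\SEP$ to $\Sigma_{1}^{0}$-$\SEP$, as noted above Theorem~\ref{klato}.)

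Finally, I would invoke the classical equivalence between $\Sigma_{1}^{1}$-separation and $\ATR_{0}$ over $\RCA_{0}$ from \cite{simpson2}*{V.5.1}; the relevant direction from $\Sigma_{1}^{1}$-separation to $\ATR_{0}$ is a purely second-order argument and therefore goes through in $\RCAo$ `up to language', since $\RCAo$ and $\RCA_{0}$ prove the same second-order sentences. This yields $\ATR_{0}$ and completes the proof.

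The main obstacle is the middle step: one must verify carefully that the $\Sigma$-formulas $(\exists f^{1})(Y_{i}(f,n)=0)$ built from $\mu^{2}$ faithfully represent the given $\Sigma_{1}^{1}$-formulas (including their set parameters), and that the type-$1$ witness $Z$ delivered by $\Sigma$-$\SEP$ is exactly an $\L_{2}$ set functioning as a $\Sigma_{1}^{1}$-separator. Both points are routine given $\exists^{2}$, but they are where the higher-order and second-order formulations must be matched up; everything else is a direct appeal to Theorem~\ref{klato} and \cite{simpson2}*{V.5.1}.
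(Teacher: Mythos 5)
Your proof is correct and is essentially the paper's own argument: the paper likewise derives $\Sigma$-$\SEP$ from $\HBU$ via Theorem~\ref{klato}, notes that $\Sigma_{1}^{1}$-separation ``immediately follows from $(\exists^{2})$ and $\Sigma$-$\SEP$'', and concludes $\ATR_{0}$ by \cite{simpson2}*{V.5.1}. Your middle step merely spells out the routine conversion of $\Sigma_{1}^{1}$-formulas into $\Sigma$-formulas using $\mu^{2}$, which the paper leaves implicit.
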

\begin{proof}
The schema \eqref{SIG} for $\L_{2}$-formulas $\varphi_{i}\in \Sigma_{1}^{1}$ is called \emph{$\Sigma_{1}^{1}$-separation} and equivalent to $\ATR_{0}$ by \cite{simpson2}*{V.5.1}. 
This separation axiom immediately follows from $(\exists^{2})$ and $\Sigma$-$\SEP$, and hence the theorem finishes the proof. 
\end{proof}
Thirdly, there is a straightforward generalisation of $\WKL$, equivalent to $\HBU$. 
\begin{rem}[Uniform theorems]\label{flurki}\rm
Dag Normann and the author study the RM and computability theory of \emph{uniform theorems} in \cite{dagsamV}.  
A theorem is \emph{uniform} if the objects claimed to exist by the theorem depend on few of its parameters.  
For instance, the contraposition of $\WKL_{0}$, aka the \emph{fan theorem}, expresses that a binary tree with no paths must be finite.  
It is readily seen that the latter is equivalent to the following sentence with the underlined quantifiers swapped: 
\be\tag{$\WKL_{\u}$}
(\forall G^{2})\underline{(\exists m^{0})(\forall T\leq_{1}1)}\big[(\forall \alpha \in C)(\overline{\alpha}G(\alpha)\not\in T)\di (\forall \beta \in C)( \overline{\beta}m\not\in T )     \big].
\ee
Note that $\WKL_{\u}$ expresses that a binary tree $T$ is finite if it has no paths, \emph{and} the upper bound $m$ only depends on a realiser $G$ of `$T$ has no paths'.  
For this reason, $\WKL_{\u}$ is called \emph{uniform} weak K\"onig's lemma.  
It is easy to show that $\WKL_{\u}\asa \HBU$ by adapting the proof of \cite{dagsam}*{Theorem 4.6}.  It goes without saying that most theorems from the RM of $\WKL_{0}$ have uniform versions that are equivalent to $\HBU$.
For instance, uniform versions of the \emph{Pincherle, Heine, and Fej\`er theorems} are studied in \cite{dagsamV}.  Moreover, as documented in \cite{dagsamV}*{Appendix A}, many proofs from the literature actually establish
the uniform version of the theorem at hand, including the first proof of Heine's theorem in Stillwell's introduction to RM (\cite{stillebron}).  
Finally, the original \emph{K\"onig's lemma} (see e.g.\ \cite{simpson2}*{III.7}) can be given a similar `uniform' treatment, something worthy of future study.   
\end{rem}
Fourth, $\WKL$ is equivalent to the Cantor intersection theorem for RM codes of closed sets, even constructively (see e.g.\ \cite{ishi1}).
As a further litmus test for our notion of closed set, we show in Theorem \ref{sosimple} that the Cantor intersection theorem for closed sets is equivalent to $\HBU$. 
Note that $\ECF$ yields the original equivalence as $\QFAC^{1,1}$ is translated to a triviality.   
\bdefi[$\CIT$]
Let $C_{n}$ be a sequence of closed sets such that $\emptyset \ne C_{n+1}\subseteq C_{n}\subseteq [0,1]$.  Then $\cap_{n\in \N}C_{n}\ne \emptyset$. 
\edefi
Note that the contraposition of $\CIT$ is a version of the Heine-Borel theorem for countable covers consisting of open sets.
\begin{thm}\label{sosimple}
The system $\RCAo+\QFAC^{1,1}$ proves $\HBU\asa \CIT$.
\end{thm}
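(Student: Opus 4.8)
The plan is to prove both directions of the equivalence $\HBU \asa \CIT$ over $\RCAo+\QFAC^{1,1}$, exploiting the fact (established in Theorem~\ref{honor}) that $\HBU$ is interderivable with $\HBU_{\c}$, the Heine-Borel compactness of Cantor space, and that $\HBU$ in turn yields $\BOOT$-style comprehension principles and $\Sigma$-$\SEP$ (Theorem~\ref{klato}). The cleaner direction should be $\CIT \di \HBU$, since the contraposition of $\CIT$ is, as the text already observes, a version of the Heine-Borel theorem for \emph{countable} covers by open sets; the work there is to show that a canonical cover $\cup_{x\in I}I_{x}^{\Psi}$ can be reduced to a nested sequence of closed sets whose emptiness intersection encodes the existence of a finite sub-cover.

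For the forward direction $\HBU \di \CIT$, I would argue by contraposition on the nested sequence. First I would reformulate: suppose $C_n$ is a decreasing sequence of non-empty closed sets with $\cap_{n}C_n = \emptyset$. Each $C_n$ is the complement of an open set $O_n = \cup_{y\in\R}I_y^{\psi_n}$ in the sense of Definition~\ref{opensset}. The hypothesis $\cap_n C_n=\emptyset$ means $\cup_n O_n = [0,1]$, so the $I_y^{\psi_n}$ (ranging over all $n$ and all $y$) form an uncountable open cover of $[0,1]$. The goal is to extract, via $\HBU$ (or rather via the reduction to a canonical cover and then $\HBU$), a finite subcover, and then use the \emph{nesting} $C_{n+1}\subseteq C_n$ to conclude that finitely many $O_n$ already cover $[0,1]$, whence some single $C_{N}$ is empty, contradicting $C_N\ne\emptyset$. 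The technical step here is converting the given open-set data into a genuine canonical cover functional $\Psi:\R\di\R^+$ so that $\HBU$ applies; this requires $(\exists^2)$-style machinery to pass from the balls $I_y^{\psi_n}$ to a radius function, and I would handle the degenerate case $\neg(\exists^2)$ separately (where everything is continuous and the statement reduces to the known second-order equivalence between $\WKL$ and the Cantor intersection theorem, following Remark~\ref{unbeliever}).

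For the reverse direction $\CIT\di \HBU$, I would start from $G^2$ and the canonical cover $\cup_{f\in C}[\overline{f}G(f)]$ of Cantor space, aiming to produce a finite sub-cover so as to establish $\HBU_{\c}$, which gives $\HBU$ by Theorem~\ref{honor}. The idea is to define, for each $m$, the closed set $C_m$ consisting of those $f\in C$ (identified with reals via $\r$) that are \emph{not} covered by any neighbourhood $[\overline{g}G(g)]$ with $|g|\le m$ (or more precisely, with the relevant stage-$m$ approximation). These $C_m$ form a decreasing nested sequence, and if every finite stage failed to cover $C$, each $C_m$ would be non-empty; then $\CIT$ would supply a point in $\cap_m C_m$, i.e.\ a point covered by no neighbourhood at all, contradicting $(\forall f\in C)(\exists g)(f\in[\overline{g}G(g)])$. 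Hence some $C_m$ is empty, giving a bound on the cover, from which $\QFAC^{1,1}$ extracts an explicit finite sub-cover exactly as in the final part of the proof of Theorem~\ref{honor}.

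The main obstacle will be the bookkeeping around the notion of closed set from Definition~\ref{opensset}: I must ensure that the sets $C_m$ I build from the cover data are genuinely closed \emph{in the sense of that definition} (complements of functional-represented open sets), not merely closed in some naive sense, so that $\CIT$ applies to them; dually, in the forward direction I must ensure the $O_n$ really assemble into a canonical cover that $\HBU$ can consume. I expect both of these to go through using $(\exists^2)$ to convert freely between reals, elements of Cantor space, and their binary expansions (via the functional $\eta$), together with the observation that the nesting condition $C_{n+1}\subseteq C_n$ is precisely what turns an infinite cover into a finite-stage one. The role of $\QFAC^{1,1}$ is confined, as in Theorem~\ref{klato} and~\ref{honor}, to converting the abstract existence of a covering bound into an actual finite list of basic neighbourhoods, and I would invoke it only at that final extraction step.
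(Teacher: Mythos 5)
Your forward direction ($\HBU\di\CIT$) is essentially the paper's own argument: contrapose, use $\QFAC^{1,1}$ to select for each $x\in[0,1]$ an index $n$ and a witness $y$ with $x\in I_{y}^{\Psi_{n}}$, feed the resulting cover to $\HBU$, and use nesting to empty out a single $C_{N}$. (One imprecision there: the step that genuinely needs $\QFAC^{1,1}$ is this selection of pairs $(n,y)$, since $(\exists^{2})$ cannot produce witnesses for an existential quantifier over $\R$; your closing claim that $\QFAC^{1,1}$ is only invoked ``at the final extraction step'' misplaces the one use of choice that this direction cannot do without.)

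The reverse direction ($\CIT\di\HBU$) has a genuine gap, and it is exactly the ``bookkeeping'' obstacle you flag but do not resolve: your sets $C_{m}$ are \emph{not} closed in the sense of Definition \ref{opensset}, so $\CIT$ does not apply to them. The basic neighbourhoods $[\overline{g}G(g)]$ of Cantor space, transported to $[0,1]$ via $\r$, are \emph{closed} dyadic intervals $[\r(\sigma*00\dots),\r(\sigma*11\dots)]$; for fixed $m$ there are only finitely many strings of length $\leq m$, so your stage-$m$ covered region $O_{m}$ is a finite union of closed intervals. Such a set is not representable as a union of open intervals $\cup_{y}I_{y}^{\psi}$, hence its complement $C_{m}$ is open rather than closed in the required sense, and the hypothesis of $\CIT$ fails. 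Attempting to repair this by passing to interiors or by fattening the dyadic intervals runs into the binary-representation problem at dyadic endpoints: the point $x_{0}\in\cap_{m}C_{m}$ produced by $\CIT$ could be a dyadic rational that every covering neighbourhood reaches only as an endpoint, so the intended contradiction with $(\forall f\in C)(f\in[\overline{f}G(f)])$ evaporates. The paper sidesteps all of this by never leaving $[0,1]$: it works with the canonical cover $\cup_{y\in I}I_{y}^{\Psi}$ itself (after reducing to $\Psi:I\di\Q^{+}$ via $\QFAC^{1,0}$) and stratifies by \emph{interval length}, setting $x\in O_{n}\asa(\exists y\in\R)(x\in I_{y}^{\Psi}\wedge|I_{y}^{\Psi}|>2^{-n})$. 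These $O_{n}$ are honest unions of open intervals, so the $C_{n}$ are closed as required, and the common point $x_{0}$ given by $\CIT$ immediately contradicts $x_{0}\in I_{x_{0}}^{\Psi}$, whose length exceeds $2^{-n_{0}}$ for some $n_{0}$. If you want to keep your depth-stratification idea (which does have the virtue that ``some $C_{m}$ empty'' yields a finite sub-cover trivially, there being only finitely many strings of length $\leq m$), you must first rebuild it so that the covered regions are open sets of reals in the sense of Definition \ref{opensset}; as written, the appeal to $\CIT$ is illegitimate.
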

\begin{proof}
In case $\neg(\exists^{2})$, the usual second-order proofs go through.  Indeed, all functions on $\R$ are continuous by \cite{kohlenbach2}*{\S3} and $\HBU$ reduces to the Heine-Borel theorem 
for the unit interval and \emph{countable covers}, which is just $\WKL$ by \cite{simpson2}*{IV.1}.  Similarly, closed sets become RM-closed sets.
We shall now prove the equivalence assuming $(\exists^{2})$, and the law of excluded middle finishes the proof. 

\smallskip

For the reverse direction, fix $\Psi:I\di \R^{+}$ and apply $\QFAC^{1,0}$ to the formula $(\forall x\in I)(\exists n^{0})( |I_{x}^{\Psi} |>\frac{1}{2^{n+1}} )$, we obtain $\Phi:I\di \Q^{^{+}}$ such that 
a finite sub-cover of $\cup_{x\in I}I_{x}^{\Phi}$ is also a finite sub-cover of $\cup_{x\in I}I_{x}^{\Psi}$.  In other words, we may restrict $\HBU$ to functionals $I\di \Q^{^{+}}$.
Now suppose $\HBU$ is false for $\Psi:I\di \Q^{+}$ and define the open set $O_{n}$ as follows using $\exists^{2}$: $x\in O_{n}$ if and only if 
\[\textstyle
(\exists y\in \R)(x\in I\wedge x\in I_{y}^{\Psi}\wedge |I_{y}^{\Psi}|>_{\R} \frac{1}{2^{n}}).
\]  
One readily obtains a definition of $O_{n}$ as in Definition \ref{opensset}.
Note that $O_{n}\subseteq O_{n+1}$ and define the closed set $C_{n}$ as the complement of $O_{n}$.  
By our assumption $\neg\HBU$, $C_{n}\ne \emptyset$ for any $n$.  Applying $\CIT$, there is $x_{0}\in \cap_{n\in \N} C_{n}$.
However, since $x_{0}\in I_{x_{0}}^{\Psi}$, we have $x_{0}\in O_{n_{0}}$ in case $|I_{x_{0}}^{\Psi}|\geq \frac{1}{2^{n_{0}}}$, a contradiction.  Hence $\HBU$ must hold for $\Psi$, in this case, and the reverse direction is done.

\smallskip

For the forward direction, let $C_{n}$ be as in $\CIT$, i.e.\ $C_{n}$ is the complement of $O_{n}=\cup_{y\in \R}I_{y}^{\Psi_{n}}$ for some sequence $\Psi_{n}:(\R\times \N)\di \R$.  
Now suppose $(\forall x\in I)(\exists n^{0}){(x\not \in C_{n})}$, i.e.\ $(\forall x\in I)(\exists n^{0}, y\in \R)(x\in I_{y}^{\Psi_{n}})$.
We may apply $\QFAC^{1,1}$ to obtain $\Phi$ such that $(\forall x\in I)(x \in I^{\Psi_{\Phi(x)(1)}}_{\Phi(x)(2)}) $.
Thus, $\cup_{x\in I}I^{\Psi_{\Phi(x)(1)}}_{\Phi(x)(2)}$ covers the unit interval and apply $\HBU$ to find a finite sub-cover, 
i.e.\ $y_{0}, \dots y_{k}\in I$ such that $[0,1]\subset\cup_{i\leq k}I^{\Psi_{\Phi(y_{i})(1)}}_{\Phi(y_{i})(2)}$.  
However, this implies $[0,1]\subset\cup_{i\leq k_{0}}O_{i}$ for $k_{0}:=\max_{i\leq k}\Phi(y_{i})(1)$ and $C_{k_{0}+1}$ must be empty, a contradiction.  Hence, $\HBU\di \CIT$ follows, and we are done. 
\end{proof}
The use of the axiom of choice in Theorems \ref{klato} and \ref{sosimple} is somewhat unsatisfactory.  
This shall be remedied in Section \ref{main3}.    

\smallskip

Sixth, we recall some results from \cite{samnetspilot, dagsamIII, dagsamVI} that complete Figure \ref{kk}.
\begin{rem}[Gauge integral]\label{woke}\rm
The gauge integral is a generalisation of the Lebesgue and (improper) Riemann integral (\cite{zwette}); it was introduced by Denjoy (in a different from) around 1912 
and studied by Lusin, Perron, Henstock, and Kurzweil.  The latter two pioneered the modern formulation of the gauge integral as the Riemann integral with the 
constant `delta' from the usual `epsilon-delta' definition replaced by a \emph{function}.  The gauge integral boasts the most general version of the \emph{fundamental theorem of calculus} and is `maximally' closed under improper integrals as in \emph{Hake's theorem} (see \cite{bartle, bartleol}).  The gauge integral also provides a unique and direct formalisation of Feyman's path integral (see \cite{mullingitover, pouly,nopouly,nopouly2,nopouly3}).  Many basic properties of the gauge integral, including the aforementioned theorems, are equivalent to $\HBU$, as shown in \cite{dagsamIII, dagsamIIIb}.
Applying $\ECF$ to these results, one obtains equivalences between $\WKL$ and theorems pertaining to the Riemann integral, as gauge integrals are just Riemann integrals if all functions are continuous.   
\end{rem}
\begin{rem}[Dini's theorem]\rm
Dini's theorem is equivalent to $\WKL$, as shown in \cite{diniberg, diniberg2}.  
Dini's theorem \emph{for nets} is verbatim the same theorem except for the replacement of `sequence' by `net'.  
Dini's theorem for nets is equivalent to $\HBU$, as shown in \cite{samnetspilot}*{\S3.2.1}. 
\end{rem}

Finally, $\ECF$ maps the Plato hierarchy from Figure \ref{kk} to the G\"odel hierarchy.  
Now, $\ECF$ replaces higher-order objects by RM codes, continuous by definition.  
For this reason, the existence of \emph{discontinuous} functions as in $(\exists^{2})$ is mapped to $0=1$ by $\ECF$.  
By contrast, $\ECF$ interprets $\BOOT$ and $\HBU$ as quite meaningful theorems.  
For this reason, it seemed obvious to us that $\BOOT$ and $\HBU$ should be equivalent to certain 
continuity axioms.   We explore this idea in the next section.   

\section{Main results III: continuity and neighourhoods}\label{main3}
\subsection{Introduction}
In this section, we provide a formulation of the Plato hierarchy based on \emph{continuity}.  
In particular, we show that $\BOOT$, $\HBU$, and related principles are equivalent to fragments of a certain continuity axiom schema stemming from intuitionistic analysis, called \emph{special bar/Brouwer continuity} $\textsf{SBC}$ in \cite{KT} and \emph{neighbourhood function principle} $\NFP$ in \cite{troeleke1}.  
The latter is classically true and connects axioms central to Brouwer's intuitionistic mathematics (see \cites{KT, troeleke1, keuzet}).  

\smallskip

Our results should be contrasted with Kohlenbach's approach from \cite{kohlenbach2} based on \emph{discontinuous} functions like $\exists^{2}$ and the Suslin functional. 
Indeed, $\ECF$ converts the existence of a discontinuous function like $\exists^{2}$ to $0=1$, while $\BOOT$ is converted to $\ACA_{0}$; in other words, it is to almost expected 
that $\BOOT$ has a formulation in terms of continuity.  
In this light, Kohlenbach approach yields a \emph{discontinuity} hierarchy, while the Plato hierarchy is a \emph{continuity} hierarchy and can be said to be a `return to Brouwer' in the aforementioned sense.  

\smallskip

A conceptual motivation for the study of $\NFP$ is provided by the very aim of RM itself, namely to find the minimal (set existence) axioms needed to prove theorems of ordinary mathematics.  Now, Heine-Borel compactness (and related principles like the Lindel\"of property) cannot be captured (well or at all) by higher-order comprehension.  
Indeed, one of the main results in \cite{dagsamIII, dagsamV} is that $\Z_{2}^{\omega}+\QFAC^{0,1}$ cannot prove $\HBU$ (and related principles like the Lindel\"of lemma), while $\Z_{2}^{\Omega}$ of course can;
the first-order strength of these systems is however massive compared to $\HBU$, i.e.\ anything remotely related to an equivalence is off the table.
By contrast, the continuity schema $\NFP$ will be seen to yield elegant equivalences.  

\smallskip

Finally, as part of this study, we suggest new axioms to be added to the base theory of higher-order RM, as discussed in Section \ref{nintro}.
One advantage is that these new axioms readily equip continuous functionals on Baire space with RM-codes, a topic studied by Kohlenbach in \cite{kohlenbach4}*{\S4}.
It should be noted that the base theory $\RCA_{0}$ contains weak comprehension axioms, i.e.\ it is only natural that the RM-study of $\NFP$ also requires weak fragments of the latter in the base theory. 
\subsection{New axioms and some motivation}\label{nintro}
We introduce the new axioms $\textsf{A}_{i}$ in Section~\ref{twaxioms}; we show in Section \ref{XxX} that these axioms yield many elegant equivalences, e.g.\ involving $\NFP$.  
In particular, these new axioms obviate the use of the Axiom of Choice in some of our above proofs.  
An overview of the arguments for the extension of $\RCAo$ with these axioms is found in Section \ref{baketheory}. 
\subsubsection{The new axioms $\A_{i}$}\label{twaxioms}
The development of RM starts with the definition of a good base theory.  So far, we have mostly used Kohlenbach's $\RCAo$ plus countable choice.  
Nonetheless, Theorems \ref{klato} and \ref{sosimple} seem to need more choice, namely $\QFAC^{1,1}$, and it is a natural question whether these results also go through in $\ZF$, or even 
a suitable weak extension of $\RCAo$ not involving (countable) choice.  

\smallskip

In this section, we formulate such a weak extension and show that it yields numerous elegant equivalences involving fragments of $\NFP$, including the promised `choice-free' improvement of Theorems \ref{klato} and \ref{sosimple}.   Other arguments in favour of our new axioms $\A_{i}$ are in Section~\ref{baketheory}.
We first introduce the axiom schema $\NFP$, which intuitively speaking expresses that if there \emph{could} be a continuous choice functional (the antecedent of \eqref{durgo}), then there is one given by an associate (the consequent of \eqref{durgo}).  
\bdefi[$\NFP$]
For any $A(\sigma^{0^{*}})$ in $\L_{\omega}$, we have
\be\label{durgo}
(\forall f^{1})(\exists n^{0})A(\overline{f}n)\di (\exists \gamma\in K_{0})(\forall f^{1})A(\overline{f}\gamma(f)), 
\ee
where `$\gamma\in K_{0}$' means that $\gamma^{1}$ is a total associate on Baire space.  
\edefi
\noindent 
The schema $\NFP$ was used in \cite{dagsamIII} to obtain the Lindel\"of lemma inside $\Z_{2}^{\Omega}+\QFAC^{0,1}$; 
it was also proved in \cites{samnetspilot} that $\NFP\di \MCT_{\net}^{[0,1]}\di \HBU$ over $\RCAo$.

\smallskip

Intuitively speaking, our new axioms $\textsf{A}_{i}$ shall be a generalisation of $\QFAC^{1,0}$ to the following formula classes.  
\bdefi[$C_{i}$-formulas]
\begin{itemize}
\item A $C_{0}$-formula $A$ has the following form: $A(n)\equiv(\exists f\in 2^{\N})(Y(f, n^{0})=0)$.  
\item A $C_{1}$-formula $A$ has the following form: $A(n)\equiv(\forall f\in 2^{\N})(Y(f, n^{0})=0)$.
\item A $C_{2}$-formula $A$ has the following form: \\ $A(n)\equiv(\exists f\in 2^{\N})(Y(f, n^{0})=0)\vee (\forall g\in 2^{\N})(Z(g, n^{0})=0)$.  
\end{itemize}
\edefi
\noindent
Note that $C_{i}$-formulas can have parameters besides the number variable.   
Our new axioms $\A_{i}$ are defined as the following fragments of $\NFP$.  Note that the choice functional in $\A_{i}$ need not be continuous, in contrast to $\NFP$.
\bdefi[$\textsf{A}_{i}$]
For any $C_{i}$-formula $A(\sigma^{0^{*}})$, we have
\[
(\forall f^{1})(\exists n^{0})A(\overline{f}n)\di (\exists \Phi^{2})(\forall f^{1})A(\overline{f}\Phi(f))
\]
\edefi
Besides its fruitful consequences listed below, there are good conceptual motivations for the previous axioms, as discussed in the next section.
One `trivial' argument is that (second-order) RM gauges the strength of theorems in terms of set existence axioms; to this end, the base theory $\RCA_{0}$ contains a 
weak set existence axiom.  Thus, if we are to develop RM based on $\NFP$, it stands to reason that our base theory should include \emph{some} fragment of $\NFP$.  
\subsubsection{Motivation for the $\A_{i}$ axioms}\label{baketheory}
We discuss some of the arguments in favour of a base theory that includes the new axioms $\A_{i}$.

\smallskip

First of all, the equivalence $[\BOOT]_{\ECF}\asa \ACA_{0}$ clearly suggests that one existential quantifier over $\N^{\N}$ in $\BOOT$ gives rise to a numerical quantifier under $\ECF$.  Hence, one existential quantifier over $2^{\N}$ should amount to (almost) the same as quantifier-free under $\ECF$, as also suggested by Theorem \ref{coref}.  
In this light, the axioms $\A_{i}$ yield an inconsequential extension of $\QFAC^{1,0}$, included in $\RCAo$.  

\smallskip

Secondly, $\HBU$ is formulated with a rather `effective' kind of cover, namely where each $x\in I$ is covered by $I_{x}^{\Psi}$ for $\Psi:I\di \R^{+}$, which is exactly the definition used by Cousin and Lindel\"of (\cites{cousin1,blindeloef}).  
A generalisation of $\HBU$ to (more) general covers, is studied in \cite{sahotop} as follows: the principle $\textsf{HBT}$ deals with covers in which only $(\forall x\in I)(\exists y\in I)(x\in I^{\psi}_{y})$ is assumed for $\psi:I\di \R$, i.e.\ $I_{x}^{\psi}$ can be empty.  One can prove $\HBU\asa \textsf{HBT}$ over $\RCAo+\IND+\A_{0}$ by \cite{sahotop}*{\S3.5}.  
Similar results hold for the Lindel\"of lemma and other basic topological theorems, i.e.\ $\A_{0}$ seems essential for an elegant development of the RM of topology. 

\smallskip

Thirdly, $\A_{1}$ readily implies the following `coding principle': any $Y^{2}$ that is continuous on $2^{\N}$, has a continuous modulus of continuity on $2^{\N}$, and hence an RM-code by \cite{kohlenbach4}*{Prop.\ 4.4}.  Indeed, consider $(\exists^{2})\vee \neg(\exists^{2})$ and note that in the former case, \cite{kohlenbach4}*{Prop.\ 4.4 and 4.7} provides the required modulus (and RM code).  
In the latter case, apply $\A_{1}$ to \eqref{shorty}, 
where the underlined formula is a $C_{1}$-formula:
\be\label{shorty}
(\forall f\in 2^{\N})(\exists N^{0})\underline{(\forall g\in 2^{\N})(\overline{f}N=\overline{g}N\di Y(\overline{f}N*00\dots)=Y(g))},
\ee
and note that the resulting $\Phi^{2}$ is continuous by \cite{kohlenbach2}*{\S3}.  
The study of the aforementioned coding principle in \cite{kohlenbach4}*{\S4} suggests that the RM of $\WKL$ does not change upon the 
replacement of continuous functions by RM-codes; we show in \cite{samex} that the RM of \emph{Tietze's extension theorem} and \emph{Ekeland's variational principle} (which involves $\WKL_{0}$) does greatly depend upon coding.

\smallskip

Fourth, Pincherle's theorem states that a locally bounded functional on $2^{\N}$ is bounded; consider the following version, called $\PIT_{o}'$ in \cite{dagsamV}: 
\[
(\forall F^{2}) \big[ {(\forall f \in C)(\exists n^{0})(\forall g\in C)\big[ g\in [\overline{f}n] \di F(g)\leq n    \big]  } \di (\exists m^{0})(\forall h \in C)(F(h)\leq m)  \big].
\]
It seems that the only way to prove $\HBU\di \PIT_{o}'$ is to apply $\A_{1}$ to the antecedent and apply $\HBU$ to the canonical cover associated to the resulting $\Phi^{2}$.
In general, moduli are an important part of constructive and computational approaches to mathematics, and $\A_{1}$ conveniently always seems to provide those.  

\smallskip

Fifth, recall $\Delta$-comprehension from Section \ref{liften}, which plays an important role in lifting proofs from second- to higher-order arithmetic.
Indeed, the recursive counterexample involving Specker sequences can be lifted to higher-order arithmetic by Theorem \ref{proofofconcept}, \emph{assuming $\Delta$-comprehension}; the latter 
plus $\WKL$ is also equivalent to the separation of ranges of non-overlapping type two functionals (see \cite{samrecount, samFLO2}). 
Theorem \ref{tirlydiddy} shows that $\RCAo+\IND+\A_{0}$ proves $\Delta$-comprehension. 

\smallskip

Sixth, Kohlenbach studies generalisations of $\WKL$ to certain formula classes in \cite{kohlenbach4}*{\S3}. 
Since $[\HBU]_{\ECF}$ is just $\WKL$, it is a natural question whether there is a generalisation of $\WKL$ that is equivalent
to $\HBU$.  By Corollary \ref{secondofmany}, $\A_{0}$ suffices to obtain an elegant such equivalence.  


\subsection{Some consequences of the $\A_{i}$ axioms}\label{XxX}
We use the new axioms $\A_{i}$ to obtain some elegant equivalences involving $\BOOT$, $\HBU$, and related principles on one hand, and fragments of $\NFP$ on the other hand.  

\smallskip

First of all, we introduce the new formula class `$\Sigma{\vee}\Pi$'.  
Now, the formula class `$\Sigma_{1}^{0}{\wedge}\Pi_{1}^{0}$' is used in RM (see \cite{simpson2}*{VI.5}) to study fragments of the axiom of determinacy from set theory.  
The formula class `$\Pi_{1}^{0}\vee \Sigma_{1}^{0}$' is mentioned in the title of \cite{araikarai}.
A formula of the form `$(\exists f^{1})(Y(f, n)=0)$' as in $\BOOT$ is called a `$\Sigma$-formula', while its negation is called a `$\Pi$-formula'.  
The formula class `$\Sigma{\vee}\Pi$' consists of disjunctions `$S\vee P$' with $S\in \Sigma$ and $P\in\Pi $.

\smallskip

Now let  $\Sigma{\vee} \Pi$-$\NFP$ be $\NFP$ restricted to $\Sigma{\vee}\Pi$-formulas and let $\NFP_{0}$ be $\NFP$ with `$(\exists \gamma\in K_{0})A(\overline{f}\gamma(f))$' replaced by `$(\exists \Phi^{2})A(\overline{f}\Phi(f))$', and the same for fragments.  The following theorem should be contrasted with the fact that comprehension 
does not capture $\HBU$ or $\BOOT$ well\footnote{The system $\Z_{2}^{\omega}+\QFAC^{0,1}$ cannot prove $\BOOT$ or $\HBU$, while $\Z_{2}^{\Omega}$ can (\cites{dagsamIII, dagsamV}).  However, the latter has the first-order strength of $\Z_{2}$, while $\RCAo+\BOOT$ is conservative over $\ACA_{0}$.} at all.   
\begin{thm}\label{PI}
$\RCAo+\IND$ proves $\Sigma{\vee} \Pi$-$\NFP\asa \BOOT\asa[\Sigma\vee \Pi$-$\NFP_{0}+\HBU]$. 
\end{thm}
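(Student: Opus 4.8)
The plan is to prove the cycle $\Sigma{\vee}\Pi$-$\NFP\asa \BOOT\asa[\Sigma{\vee}\Pi$-$\NFP_{0}+\HBU]$ by splitting off the trivial parts and isolating three substantive implications. By Theorem \ref{boef} we have $\BOOT\di\ACA_{0}$ (as a second-order scheme), by Theorem \ref{honor} we have $\BOOT\di\HBU$, and $\Sigma{\vee}\Pi$-$\NFP\di\Sigma{\vee}\Pi$-$\NFP_{0}$ holds at once, since a continuous associate $\gamma\in K_{0}$ is in particular a functional $\Phi^{2}$. Hence $\BOOT\di[\Sigma{\vee}\Pi$-$\NFP_{0}+\HBU]$ will follow once $\BOOT\di\Sigma{\vee}\Pi$-$\NFP$ is in hand, and it remains to establish (a) $\BOOT\di\Sigma{\vee}\Pi$-$\NFP$, (b) $\Sigma{\vee}\Pi$-$\NFP\di\BOOT$, and (c) $[\Sigma{\vee}\Pi$-$\NFP_{0}+\HBU]\di\BOOT$. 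Throughout I would lean on the case distinction $(\exists^{2})\vee\neg(\exists^{2})$ employed in Theorem \ref{bongra}: when $\neg(\exists^{2})$ holds all functionals are continuous, every $\Sigma$- resp.\ $\Pi$-formula collapses to a $\Sigma_{1}^{0}$- resp.\ $\Pi_{1}^{0}$-formula, $\BOOT$ reduces to $\ACA_{0}$, and $\HBU$ reduces to $\WKL$, so each implication becomes a purely second-order statement.

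For (a), fix a $\Sigma{\vee}\Pi$-formula $A(\sigma)\equiv(\exists g)(Y_{1}(g,\sigma)=0)\vee(\forall g)(Y_{2}(g,\sigma)\ne0)$ satisfying the antecedent $(\forall f^{1})(\exists n^{0})A(\overline{f}n)$. In case $(\exists^{2})$, applying $\BOOT$ to $Y_{1}$ and to $Y_{2}$ yields $X_{1},X_{2}\subseteq\N$ with $m\in X_{i}\asa(\exists g)(Y_{i}(g,m)=0)$, whence, identifying each finite sequence with its code, $A(\sigma)\asa[\sigma\in X_{1}\vee\sigma\notin X_{2}]$ is decidable. I can then set $\gamma(f):=\mu n.\,A(\overline{f}n)$, which is total by the antecedent; crucially, $\gamma(f)$ is decided by $\overline{f}\gamma(f)$ alone, so $\gamma$ is continuous and codes a total associate in $K_{0}$ with $(\forall f)A(\overline{f}\gamma(f))$, exactly as $\Sigma{\vee}\Pi$-$\NFP$ demands. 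In case $\neg(\exists^{2})$ the formula $A$ is arithmetical, so $\ACA_{0}$ (available from $\BOOT$) provides the set $W=\{\sigma:A(\sigma)\}$, and the same search $\gamma(f):=\mu n.(\overline{f}n\in W)$ gives the associate. The only delicate routine point is checking that $\gamma$ is a bona fide neighbourhood function.

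For (b), the cleanest route is to recycle the known implication $\NFP\di\MCT_{\net}^{[0,1]}$ from \cite{samnetspilot} together with $\MCT_{\net}^{[0,1]}\di\BOOT$ from Corollary \ref{corkorcor} (which is where $\IND$ enters). Here I would inspect the proof of $\NFP\di\MCT_{\net}^{[0,1]}$ and verify that the only instances of $\NFP$ it invokes are applied to disjunctions of the form $(\exists d\in D)(x_{d}>r)\vee(\forall d\in D)(x_{d}\leq r)$ for the net $x_{d}:D\di[0,1]$ with $D\subseteq\N^{\N}$; unfolding these shows they are precisely $\Sigma{\vee}\Pi$-formulas, so the fragment $\Sigma{\vee}\Pi$-$\NFP$ already suffices. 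If the cited proof should use a wider class, I would instead build a $\Sigma{\vee}\Pi$-instance directly from a given $Y^{2}$ whose continuous associate reduces the Baire-quantifier $(\exists f)(Y(f,n)=0)$ to a numerical search, mirroring the extraction of $X$ in the proof of Theorem \ref{rage}.

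The main obstacle is (c), where $\Sigma{\vee}\Pi$-$\NFP_{0}$ supplies only a possibly \emph{discontinuous} choice functional $\Phi^{2}$, so $\HBU$ must compensate for the lost continuity. In the $\neg(\exists^{2})$ case all functionals are continuous, $\NFP_{0}$ therefore coincides with $\NFP$, and (c) reduces to the second-order lemma that $\Sigma_{1}^{0}{\vee}\Pi_{1}^{0}$-$\NFP$ proves $\ACA_{0}$, which I would verify separately. In the $(\exists^{2})$ case I would engineer the $\Sigma{\vee}\Pi$-instance $A$ so that its outer variable effectively ranges over $2^{\N}$, apply $\NFP_{0}$ to obtain $\Phi^{2}$, and then apply $\HBU_{\c}$ to the canonical cover $\{[\overline{f}\Phi(f)]:f\in 2^{\N}\}$: the resulting finite subcover $f_{1},\dots,f_{k}$ delivers a single uniform bound $N:=\max_{i}\Phi(f_{i})$ with $(\forall f\in 2^{\N})(\exists j\leq N)A(\overline{f}j)$, which is exactly the uniformity that continuity of the associate provided in (b). The hard step, and the one I expect to resist, is designing $A$ so that this one uniform bound, fed back through $(\exists^{2})$ and $\ACA_{0}$, reconstructs the \emph{entire} infinite set $X=\{n:(\exists f)(Y(f,n)=0)\}$ rather than a finite fragment of it; I would aim to arrange $A$ so that the bounded information reduces membership in $X$ to an arithmetical predicate uniformly in $n$, after which $\ACA_{0}$ collects $X$ and $\BOOT$ follows via Theorem \ref{rage}.
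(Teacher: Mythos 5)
Your decomposition into the three substantive implications (a) $\BOOT\di\Sigma{\vee}\Pi$-$\NFP$, (b) $\Sigma{\vee}\Pi$-$\NFP\di\BOOT$, and (c) $[\Sigma{\vee}\Pi$-$\NFP_{0}+\HBU]\di\BOOT$ is exactly the paper's, and your handling of the trivial parts (citing Theorems \ref{boef} and \ref{honor}, converting an associate into a functional via $\QFAC^{1,0}$) is fine. Your (a) is also essentially the paper's argument: $\BOOT$ renders $\Sigma{\vee}\Pi$-formulas decidable, after which a least-witness search yields a continuous choice functional and hence an RM-code; the paper phrases this via $\QFAC^{1,0}$ plus Kohlenbach's coding results rather than your explicit $\mu$-search and case split on $(\exists^{2})$, but the content is the same.

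The genuine gap is in (b) and (c), which are the heart of the theorem and which you respectively delegate and concede. For (b) you propose to inspect the proof of $\NFP\di\MCT_{\net}^{[0,1]}$ in the literature and hope that only $\Sigma{\vee}\Pi$-instances occur; that is an unverified contingency, and your fallback (``mirroring Theorem \ref{rage}'') is too vague to check. For (c) you explicitly admit you cannot design the instance $A$ so that one uniform bound recovers the infinite set $X$. The missing idea, used identically in both directions by the paper, is that the bound is not supposed to reconstruct $X$ at all: one argues by \emph{contradiction}. Suppose $\BOOT$ fails for $Y_{0}^{2}$. Since whether ``$X$ codes the wrong truth value of $(\exists h^{1})(Y_{0}(h,n)=0)$ at $n$'' depends only on $\overline{X}(n+1)$, the failure of $\BOOT$ is \emph{literally} an antecedent $(\forall f^{1})(\exists n^{0})B(\overline{f}n)$ for the $\Sigma{\vee}\Pi$-formula $B(\sigma)$ given (after recoding $\sigma$ to a binary string) by $[\sigma(|\sigma|-1)=1\wedge(\forall g^{1})(Y_{0}(g,|\sigma|-1)>0)]\vee[\sigma(|\sigma|-1)=0\wedge(\exists h^{1})(Y_{0}(h,|\sigma|-1)=0)]$. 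In (b), $\Sigma{\vee}\Pi$-$\NFP$ then yields an RM-code, which the paper bounds on Cantor space using $\WKL_{0}$ (Simpson IV.2); in (c), $\Sigma{\vee}\Pi$-$\NFP_{0}$ yields a functional $\Phi^{2}$ and $\HBU_{\c}$ applied to the canonical cover $\cup_{f\in 2^{\N}}[\overline{f}\Phi(f)]$ supplies the bound instead---this is precisely the division of labour expressed by the theorem's statement, and it is the only place where the two directions differ. Either way one obtains $N^{0}$ such that \emph{every} $f\in2^{\N}$ has a disagreement below $N$; but $\IND$ proves ``finite comprehension'', i.e.\ there is a binary $\sigma$ with $\sigma(m)=1\asa(\exists h^{1})(Y_{0}(h,m)=0)$ for all $m\leq N$, and then $\sigma*00\dots$ has no disagreement below $N$, a contradiction. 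This clash between the uniform bound and finite comprehension is where $\IND$ does its real work; in your proposal $\IND$ never appears outside the citation of Corollary \ref{corkorcor}, which is a reliable sign that the key mechanism is absent.
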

\begin{proof}
A proof of $\BOOT\di \Sigma{\vee} \Pi$-$\NFP$ is as follows: $\BOOT$ replaces $\Sigma{\vee} \Pi$-formulas by equivalent quantifier-free ones.  
Then $\QFAC^{1,0}$ yields a (continuous) functional $G^{2}$ such that $G(f)$ is the least $n$ as in  $(\forall f^{1})(\exists n^{0})A(\overline{f}n)$.  
An RM-code for $G^{2}$ is then found as in \cite{kohlenbach4}*{\S4} using $\ACA_{0}$.  

\smallskip

To prove the first forward implication, $\IND$ implies that for any $n^{0}$, there is a finite binary sequence $\sigma$ such that 
\be\label{largene}
(\forall m\leq n)(\sigma(m)=1\asa (\exists f^{1})(Y(f, m)=0)), 
\ee
i.e.\ a kind of `finite comprehension' principle.  Suppose $\BOOT$ is false for $Y_{0}^{2}$, i.e.\
\[
(\forall X \subset \N)(\exists n\in \N)\big[(n\in X \wedge (\forall g^{1})(Y_{0}(g, n)>0)  ) \vee (n\not\in X \wedge (\exists h^{1})(Y_{0}(h, n)=0))  \big].  
\]
Observe that the content of $X$ beyond the number $n$ is irrelevant for the previous formula in square brackets.  Now define the $ \Sigma{\vee} \Pi$-formula $A(\sigma)$ as follows: 
\begin{gather*}
\sigma(|\sigma|-1)=1\wedge (\forall g^{1})(Y_{0}(g, |\sigma|-1)>0) \\\
\vee\\
 \sigma(|\sigma|-1)=0\wedge (\exists h^{1})(Y_{0}(h, |\sigma|-1)=0) .
\end{gather*}
Let $B(\sigma)$ be $A(\tilde{\sigma})$, where $\tilde{\sigma}(n)=1$ if $\sigma(n)>0$, and zero otherwise, for $n<|\sigma|$.  By assumption, we have $(\forall f^{1})(\exists n^{0})B(\overline{f}n)$.  
Apply $\Sigma{\vee} \Pi$-$\NFP$ and obtain an upper bound for the resulting RM-code on Cantor space (using $\WKL_{0}$ by \cite{simpson2}*{IV.2}).  However, this upper bound contradicts \eqref{largene} for large enough $n$.

\smallskip

To prove the second forward implication, proceed as in the previous part of the proof:  apply $\Sigma{\vee} \Pi$-$\NFP_{0}$ to $(\forall f^{1})(\exists n^{0})B(\overline{f}n)$ and let $\Phi$ be the resulting functional. 
Obtain a finite sub-cover for the associated canonical cover $\cup_{f\in 2^{\N}}[\overline{f}\Phi(f)]$ using $\HBU$.  
This provides an upper-bound that contradicts \eqref{largene} for large enough $n$.
\end{proof}
The theorem is not an isolated case:  inspired by \cite{simpson2}*{V.5.2}, $\STR$ is equivalent to comprehension 
for $\Sigma\wedge\Pi$-formulas $\varphi(i, X)$ with continuous second-order parameter as in $\STR$ and satisfying $(\forall i\in \N)(\exists \textup{ at most one } X\subseteq \N)\varphi(i, X)$.
The related statement for trees in \cite{simpson2}*{V.5.2} can also be generalised, similar to $C_{i}$-$\WKL$ below. 

\smallskip

Secondly, we obtain an equivalence result for the Lindel\"of lemma for $\N^{\N}$ and $\NFP$ restricted to $\Sigma$-formulas.  
Note that the Lindel\"of lemma is studied in detail in \cite{dagsamIII, dagsamV}, including a version that implies countable choice as in $\QFAC^{0,1}$. 
The final part of $\LIND(\N^{\N})$ indeed invites the application of the latter.  
\bdefi[$\LIND(\N^{\N})$] 
For every $G^{2}$, there is a sequence $\sigma_{n}^{0\di 0^{*}}$ covering $\N^\N$ such that $(\forall n \in\N)(\exists f \in \N^{\N})(\sigma_{n}=_{0^{*}}\overline{f}G(f))$.  
\edefi
Let $\Sigma$-$\NFP$ be $\NFP$ restricted to $\Sigma$-formulas.  The following theorem should be contrasted with the fact that comprehension 
does not capture $\LIND(\N^{\N})$ well\footnote{The system $\Z_{2}^{\omega}+\QFAC^{0,1}$ cannot prove $\LIND(\N^{\N})$, while $\Z_{2}^{\Omega}+\QFAC^{0,1}$ can.  However, the latter has the first-order strength of $\Z_{2}$, while $\RCAo+\LIND(\N^{\N})$ is conservative over $\RCA_{0}$.}.  
\begin{thm}\label{hulix}
The system $\RCAo+\A_{0}$ proves  $\LIN(\N^{\N})\asa \Sigma\textup{-}\NFP$.
\end{thm}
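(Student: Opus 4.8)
The plan is to prove both directions over $\RCAo+\A_{0}$, using the dichotomy $(\exists^{2})\vee\neg(\exists^{2})$ that recurs throughout this paper. In the case $\neg(\exists^{2})$ all functionals on $\N^{\N}$ are continuous by \cite{kohlenbach2}*{\S3}, every $\Sigma$-formula collapses to an arithmetical ($\Sigma_{1}^{0}$) formula, and both $\LIND(\N^{\N})$ and $\Sigma\textup{-}\NFP$ reduce to statements about neighbourhood functions that are either provable or known second-order facts; following Remark \ref{unbeliever} I would only sketch this routine case. The substantive work lies in the case $(\exists^{2})$, where Feferman's $\mu$ supplies unbounded numerical search.

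For the reverse implication $\Sigma\textup{-}\NFP\di\LIND(\N^{\N})$, I would fix $G^{2}$ and consider the $\Sigma$-formula $A(\tau^{0^{*}})\equiv(\exists g^{1})(\tau=_{0^{*}}\overline{g}G(g))$, whose matrix is quantifier-free in $G$. Taking $g=f$ shows $A(\overline{f}G(f))$, so $A$ defines a bar and the antecedent of $\Sigma\textup{-}\NFP$ holds. Applying $\Sigma\textup{-}\NFP$ yields a total associate $\gamma\in K_{0}$ with $(\forall f^{1})A(\overline{f}\gamma(f))$. The key observation is that an associate is a type-$1$ object whose ``front'' (the minimal $\sigma$ with $\gamma(\sigma)>0$) is a countable, enumerable set; dovetailing over this front, and over the countably many extensions of each front element $\sigma$ to length $\gamma(\sigma)\dminus 1$, enumerates exactly the sequences of the form $\overline{f}\gamma(f)$. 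Each such sequence satisfies $A$, hence equals $\overline{g}G(g)$ for some $g$, and the associated cylinders cover $\N^{\N}$ because $\overline{f}\gamma(f)$ is an initial segment of $f$. This produces the sequence $\sigma_{n}$ required by $\LIND(\N^{\N})$; note this direction uses neither $\A_{0}$ nor $(\exists^{2})$ in an essential way.

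For the forward implication $\LIND(\N^{\N})\di\Sigma\textup{-}\NFP$, suppose $A(\sigma)\equiv(\exists g^{1})(Y(g,\sigma)=0)$ defines a bar, i.e.\ $(\forall f^{1})(\exists n^{0})A(\overline{f}n)$; coalescing $(\exists n)(\exists g)$ into a single existential over $\N^{\N}$, this reads $(\forall f^{1})(\exists g^{1})R(f,g)$ with $R$ quantifier-free in $Y$. I would first use $\A_{0}$ together with $\exists^{2}$ to obtain a selector on the open set of \emph{witnessing pairs} that returns, for $\langle f,g\rangle$, the least prefix-length $n$ with $Y(g,\overline{f}n)=0$; feeding the canonical cover of the product space $\N^{\N}\times\N^{\N}$ determined by this selector to $\LIND(\N^{\N})$ produces a countable family of finite sequences. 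I would then project this family onto its $f$-coordinates to obtain a countable set $\{\tau_{m}\}$ of bar-points of $A$, check that $\{\tau_{m}\}$ is again a bar, and finally assemble the neighbourhood function by letting $\gamma(f)$ equal $|\tau_{m_{0}}|$ for the least $m_{0}$ with $\tau_{m_{0}}$ an initial segment of $f$. That $\gamma$ is (the RM-code of) a total associate in $K_{0}$ then follows from $\A_{0}$ and \cite{kohlenbach4}*{\S4}, exactly as in the conceptual motivation for the $\A_{i}$ axioms.

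The hard part will be the projection step. Because $Y$ is a genuine type-$2$ functional, membership of $\langle f,g\rangle$ in the witnessing set is sensitive to the \emph{tail} of $g$, so the $\LIND(\N^{\N})$-subcover of the product space need not cover a given $f$ through an element arising from a witnessing pair; a naive projection can leave $f$ barred only by ``spurious'' cover elements. This is precisely where $\A_{0}$ is indispensable: it furnishes a (possibly discontinuous) choice on the witnessing pairs that converts the tail-sensitive Baire existential into a usable selection, allowing the subcover to be thinned to a genuine countable sub-bar of $A$ before the continuity/countability packaging into an associate is read off. Once this sub-bar is secured, the construction of $\gamma$ and the verification of the associate conditions are routine, and the law of excluded middle $(\exists^{2})\vee\neg(\exists^{2})$ completes the argument.
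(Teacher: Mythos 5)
Your reverse direction ($\Sigma\textup{-}\NFP\di\LIND(\N^{\N})$) is correct and is essentially the paper's argument: apply $\Sigma\textup{-}\NFP$ to the $\Sigma$-formula $(\exists g^{1})(\sigma=_{0^{*}}\overline{g}G(g))$ and then enumerate, from the type-one associate $\gamma$, the countably many sequences of the form $\overline{f}\gamma(f)$; the paper does exactly this (via the finite sequences $\sigma$ with $\gamma(\sigma*00\dots)\geq|\sigma|-1$), and you are right that this direction needs neither $\A_{0}$ nor $(\exists^{2})$.

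The forward direction, however, has a genuine gap, located exactly at the step you yourself flag as ``the hard part''. You want to apply $\LIND(\N^{\N})$ to a canonical cover of $\N^{\N}\times\N^{\N}$ determined by a selector defined on \emph{witnessing pairs}; but for a non-witnessing pair $\langle f,g\rangle$ there is no least $n$ with $Y(g,\overline{f}n)=0$, so this selector is partial, while $\LIND(\N^{\N})$ accepts only a total $G^{2}$. Any totalization inserts spurious cover elements, and after projecting to the $f$-coordinates the family $\{\tau_{m}\}$ need not consist of bar-points of $A$ --- precisely the failure you describe. Your proposed repair, that $\A_{0}$ ``furnishes a choice on the witnessing pairs\dots allowing the subcover to be thinned to a genuine countable sub-bar'', is not an argument: $\A_{0}$ is a modulus-existence principle for $C_{0}$-bars, i.e.\ of the shape $(\forall f^{1})(\exists n^{0})A(\overline{f}n)\di(\exists\Phi^{2})(\forall f^{1})A(\overline{f}\Phi(f))$; it says nothing about thinning an already-given countable cover, and you indicate no mechanism for extracting a sub-bar from $\{\tau_{m}\}$. (Also, finding the least $n$ on a genuine witnessing pair needs only $\exists^{2}$, not $\A_{0}$, so your use of $\A_{0}$ at that point is misplaced as well.)

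The fix is to reverse the order of operations, which is what the paper does. In the case $\neg(\exists^{2})$ both principles are outright provable since all functionals on Baire space are continuous. In the case $(\exists^{2})$, first convert the Baire-space existential $(\exists g^{1})$ into a Cantor-space existential $(\exists g\in 2^{\N})$ by graph-coding with $\exists^{2}$ (as in the proof of Theorem \ref{keslich}); note that without this step $\A_{0}$, being stated for $C_{0}$-formulas, does not even apply to your formula. Then apply $\A_{0}$ \emph{before} any covering argument, directly to the bar $(\forall f^{1})(\exists n^{0})(\exists g\in 2^{\N})(Y(g,\overline{f}n)=0)$, obtaining a total $\Phi^{2}$ with $(\exists g\in 2^{\N})(Y(g,\overline{f}\Phi(f))=0)$ for \emph{every} $f$. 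The canonical cover $\cup_{f\in\N^{\N}}[\overline{f}\Phi(f)]$ then consists entirely of bar-points by construction, so the countable subcover $\sigma_{n}$ produced by $\LIND(\N^{\N})$ is automatically a countable sub-bar; no product space, projection, or thinning is needed. From there your packaging of $\gamma$ (least index $m$ with $\sigma_{m}$ an initial segment of $f$, obtained via $\QFAC^{1,0}$, which is continuous and hence has an associate by \cite{kohlenbach4}*{\S4}) goes through verbatim.
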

\begin{proof}
For the reverse implication, consider $A(\sigma)$ defined as follows:
\be\label{bongka3}
(\exists g^{1})\big[G(g)=_{0} |\sigma|-1 \wedge \sigma=_{0^{*}} \overline{g}G(g)\big],
\ee
We have $(\forall f\in \N^{\N})(\exists n^{0})A(\overline{f}n)$ since we may take $g=f$ and $n=G(f)$.
Apply $\Sigma\textup{-}\NFP$ to obtain $(\forall f\in \N^{\N})A(\overline{f}\gamma(f))$ for some $\gamma\in K_{0}$.
The sequence required by $\LIND(\N^{\N})$ is given by $\overline{\sigma}\gamma(\sigma*00\dots)$ for all $\sigma^{0^{*}}$ such that $\gamma(\sigma*00\dots)\geq |\sigma|-1$, which can be formed in $\RCAo$

\smallskip

For the forward direction, in case $\neg(\exists^{2})$, $\Sigma$-$\NFP$ and $\LIND(\N^{\N})$ are outright provable as all functions on Baire space are continuous by \cite{kohlenbach2}*{\S3}.  
In case $(\exists^{2})$, we may replace quantification over $2^{\N}$ by quantification over $\N^{\N}$ as in the proof of Theorem~\ref{keslich}.  
Hence, the antecedent of $\Sigma$-$\NFP$ reduces to $(\forall f^{1})(\exists n^{0}, g\in 2^{\N})(Y(g, \overline{f}n)=0)$.
Now apply $\A_{0}$ to obtain $\Phi^{2}$ such that $(\forall f^{1})(\exists g\in 2^{\N})(Y(g, \overline{f}\Phi(f))=0)$.
Let $\sigma_{n}^{0\di 0^{*}}$ be the sequence obtained from applying $\LIND(\N^{\N})$ to $\cup_{f\in \N^{\N}}[\overline{f}\Phi(f)]$.
Then apply $\QFAC^{1,0}$ to $(\forall f^{1})(\exists n^{0})(f\in [\sigma_{n}]  )$ and obtain (continuous by definition) $\Psi^{2}$ which produces the least such $n^{0}$.  
Finally define $Z^{2}$ as follows: $Z(f):=|\sigma_{\Psi(f)}|$ and note that by \cite{kohlenbach4}*{\S4}, this continuous function has an associate $\gamma\in K_{0}$.  
The latter is as required by $\Sigma$-$\NFP$, and we are done. 
\end{proof}
Let $\Sigma$-$\NFP_{\upharpoonright C}$ be $\Sigma$-$\NFP$ with all quantifiers over $\N^{\N}$ restricted to $2^{\N}$.  
One then proves the following corollary in the same way 
(also with $\IND$ replaced by $\QFAC^{0,1}$).  
\begin{cor}\label{durft}
The system $\RCAo+\IND+\A_{0}$ proves  $\HBU\asa [\Sigma\textup{-}\NFP_{\upharpoonright C}+\WKL]$.
\end{cor}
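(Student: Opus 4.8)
The plan is to mirror the proof of Theorem \ref{hulix}, replacing Baire space by Cantor space throughout; the \emph{countable} cover that $\LIND(\N^{\N})$ produced there is now upgraded to a \emph{finite} subcover by compactness, and the price of this upgrade is exactly the extra axiom $\WKL$ occurring in the bracket. Accordingly I split the corollary into the forward direction $\HBU\di[\Sigma\textup{-}\NFP_{\upharpoonright C}+\WKL]$ and the reverse direction $[\Sigma\textup{-}\NFP_{\upharpoonright C}+\WKL]\di\HBU$, each proved in close analogy with the corresponding half of Theorem \ref{hulix}.

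For $\HBU\di\WKL$ I would use the dichotomy $(\exists^{2})\vee\neg(\exists^{2})$, exactly as in the proof of Theorem \ref{sosimple}. Under $\neg(\exists^{2})$ all functionals are continuous by \cite{kohlenbach2}*{\S3}, so $\HBU$ collapses to the Heine--Borel theorem for \emph{countable} covers of the unit interval, which is $\WKL$ by \cite{simpson2}*{IV.1}; under $(\exists^{2})$ one has $\ACA_{0}$, and hence $\WKL$, for free. For $\HBU\di\Sigma\textup{-}\NFP_{\upharpoonright C}$ I again argue by the same dichotomy. Under $\neg(\exists^{2})$ the choice functional asserted by the antecedent is continuous and therefore carries an associate, yielding the principle outright. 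Under $(\exists^{2})$ (which supplies $\ACA_{0}$), I apply $\A_{0}$ to the antecedent $(\forall f\in C)(\exists n^{0})(\exists g\in C)(Y(g,\overline{f}n)=0)$ to obtain $\Phi^{2}$ with $(\forall f\in C)(\exists g\in C)(Y(g,\overline{f}\Phi(f))=0)$, then feed the canonical cover $\cup_{f\in C}[\overline{f}\Phi(f)]$ to $\HBU_{\c}$ (recall $\HBU\asa\HBU_{\c}$ by the proof of \cite{dagsamIII}*{Theorem 3.3}) to bound $\Phi$ on $C$; the resulting bounded, continuous-on-$C$ modulus then has an associate $\gamma\in K_{0}$ by \cite{kohlenbach4}*{\S4}, which is what $\Sigma\textup{-}\NFP_{\upharpoonright C}$ demands.

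For the reverse direction I would imitate the reverse half of Theorem \ref{hulix} together with the closing argument of Theorem \ref{honor}. Fixing $G^{2}$, take the $\Sigma$-formula $A(\sigma)\equiv(\exists g\in C)\big[G(g)=|\sigma|-1\wedge\sigma=\overline{g}G(g)\big]$, for which $(\forall f\in C)(\exists n^{0})A(\overline{f}n)$ holds via the witnesses $g=f$, $n=G(f)$. Applying $\Sigma\textup{-}\NFP_{\upharpoonright C}$ produces $\gamma\in K_{0}$ with $(\forall f\in C)A(\overline{f}\gamma(f))$. Since $\gamma$ is a total associate, $\WKL$ (equivalently, the fan theorem) bounds the use of $\gamma$ on $C$; one then assembles the finitely many neighbourhoods $[\overline{f_{i}}G(f_{i})]$ into the finite subcover required by $\HBU_{\c}$ by means of $\IND$, exactly as in the final paragraph of the proof of Theorem \ref{honor}. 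As the remark preceding the corollary indicates, $\IND$ may be traded for $\QFAC^{0,1}$ in this last assembly step.

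The main obstacle I anticipate is the bookkeeping around $\A_{0}$, whose antecedent ranges over all of $\N^{\N}$ whereas our formulas live over $C$: in the $(\exists^{2})$ case one should first use $\exists^{2}$ to convert quantifiers over $C$ into quantifiers over $\N^{\N}$ (freely identifying reals with elements of $C$ and of $\N^{\N}$, as in the proof of Theorem \ref{keslich}), apply $\A_{0}$ there, and transport the resulting $\Phi$ back to $C$. The one genuinely new ingredient relative to Theorem \ref{hulix} is guaranteeing that the associate $\gamma\in K_{0}$ has \emph{bounded} use on $C$ from $\WKL$ alone; this is precisely the point at which the compactness of Cantor space---packaged as the fan theorem inside $\WKL$---supplies the finiteness that $\LIND(\N^{\N})$ was not required to provide in the Baire-space setting.
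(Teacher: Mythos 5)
Your proposal is correct and is essentially the paper's own argument: the paper proves this corollary simply by remarking that it goes through ``in the same way'' as Theorem \ref{hulix}, which is precisely your plan --- $\HBU_{\c}$ replaces $\LIND(\N^{\N})$ in the forward direction (via $\A_{0}$ and the $(\exists^{2})\vee\neg(\exists^{2})$ split), $\WKL$ is what bounds the associate on Cantor space in the reverse direction (exactly as in the proof of Theorem \ref{PI}), and $\IND$ (or $\QFAC^{0,1}$) assembles the finite subcover as in the final part of Theorem \ref{honor}. One phrasing caveat: $\HBU_{\c}$ does not literally ``bound $\Phi$ on $C$'' (a finite subcover of the canonical cover says nothing about the values of $\Phi$ elsewhere); rather, as in the proof of Theorem \ref{hulix}, the finitely many neighbourhoods are used to define a \emph{new} bounded, continuous-on-$C$ modulus, and it is this new functional whose associate witnesses $\Sigma\textup{-}\NFP_{\upharpoonright C}$.
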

Thirdly, let $\BOOT_{\w}$ be $\BOOT$ with the quantifier over $\N^{\N}$ restricted to $2^{\N}$.  We have 
the following nice splitting for $\BOOT$, while the same result for $\HBU$ does not seem to follow without additional axioms; this was the initial motivation for $\A_{1}$, which yields         
an equivalence in the final part by Corollary \ref{firstofmany}.
\begin{thm}\label{hoerlepiep}
The system $\RCAo$ proves $[\ACA_{0}+\BOOT_{\w}]\asa \BOOT$; $\RCAo+\IND$ and $\RCAo+\QFAC^{0,1}$ both prove $[\WKL+\BOOT_{\w}]\di \HBU$.
\end{thm}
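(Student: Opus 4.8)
The plan is to handle the two assertions in turn, in each case reducing to an argument already carried out in the paper. For the equivalence $[\ACA_{0}+\BOOT_{\w}]\asa\BOOT$, the reverse implication is cheap: $\BOOT\di\ACA_{0}$ is Theorem~\ref{boef}, and $\BOOT\di\BOOT_{\w}$ holds already in $\RCAo$ upon applying $\BOOT$ to the functional $Y'(g,n):=Y\big(\lambda k.\min(g(k),1),n\big)$, since $\lambda k.\min(g(k),1)$ ranges exactly over $2^{\N}$ as $g$ ranges over $\N^{\N}$, so that $(\exists g^{1})(Y'(g,n)=0)\asa(\exists f\in 2^{\N})(Y(f,n)=0)$. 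For the forward implication I would split on $(\exists^{2})\vee\neg(\exists^{2})$, as in the proofs of Theorems~\ref{bongra} and~\ref{keslich}. In case $\neg(\exists^{2})$ all functionals on Baire space are continuous, so $(\exists f^{1})(Y(f,n)=0)$ reduces to a $\Sigma^{0}_{1}$-condition and $\BOOT$ follows from the assumed scheme $\ACA_{0}$, exactly as in the proof of Theorem~\ref{boef}. In case $(\exists^{2})$ I would use the reduction from the proof of Theorem~\ref{keslich}: coding each $f^{1}$ by the characteristic function $h\in 2^{\N}$ of its graph, $\exists^{2}$ defines a functional $\tilde Y$ with $(\exists h\in 2^{\N})(\tilde Y(h,n)=0)\asa(\exists f^{1})(Y(f,n)=0)$, and $\BOOT_{\w}$ applied to $\tilde Y$ yields the set required by $\BOOT$.

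For the implication $[\WKL+\BOOT_{\w}]\di\HBU$, the key observation is that the proof of $\BOOT\di\HBU$ in Theorem~\ref{honor} invokes $\BOOT$ only through a formula whose witnessing quantifier already ranges over Cantor space. Reducing $\HBU$ to $\HBU_{\c}$ as there and fixing $G^{2}$, the relevant formula is
\[
A(\sigma)\equiv (\exists g\in C)\big[G(g)\leq|\sigma|\wedge \sigma*00\dots\in[\overline{g}G(g)]\big],
\]
whose matrix is quantifier-free in $G$; thus $A(\sigma)$ has the shape $(\exists g\in 2^{\N})(Y(g,\sigma)=0)$ and $\BOOT_{\w}$, not full $\BOOT$, already produces the set $X$ with $\sigma\in X\asa A(\sigma)$. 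Since $(\forall f\in C)(\exists n^{0})(\overline{f}n\in X)$ (take $g=f$ and $n=G(f)$), the binary tree of finite sequences none of whose initial segments lie in $X$ has no path.

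The hypothesis $\WKL$ now plays exactly the role that $\BOOT\di\ACA_{0}\di\WKL$ played for free in Theorem~\ref{honor}: by $\WKL$ in the guise of the fan theorem that tree is finite, so there is $N_{0}$ with $(\forall f\in C)(\exists n\leq N_{0})(\overline{f}n\in X)$, hence $(\forall f\in C)(\exists n\leq N_{0})A(\overline{f}n)$. From this bound the remaining steps of the proof of Theorem~\ref{honor} apply verbatim: enumerating the binary sequences of length $N_{0}+1$ and applying $A$ to each yields witnesses covering $C$, and either $\IND$ or $\QFAC^{0,1}$ assembles them into the finite sub-cover demanded by $\HBU_{\c}$, which gives the two stated base theories.

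The step I expect to be the main obstacle is the boundedness argument, namely confirming that passing from $(\forall f\in C)(\exists n^{0})(\overline{f}n\in X)$ to a uniform numerical bound $N_{0}$ uses nothing beyond $\WKL$. The cleanest route, which I would adopt, bypasses the continuous selector $H(f):=(\mu n)(\overline{f}n\in X)$ used in Theorem~\ref{honor} and instead argues directly that the binary tree of sequences avoiding $X$ has no path and is therefore finite by the fan theorem; this keeps the whole derivation inside $\RCAo+\WKL$ save for the single, well-isolated appeal to $\IND$ or $\QFAC^{0,1}$ at the very end.
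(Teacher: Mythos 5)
Your proposal is correct and follows the paper's own proof almost step for step: the same treatment of the first equivalence (reverse direction immediate; forward direction by the case split $(\exists^{2})\vee\neg(\exists^{2})$, with the graph-coding $F(X)(k):=(\mu m)((k,m)\in X)$ exactly as the paper does, borrowing from the proof of Theorem \ref{keslich}), and for the second implication the same reduction to $\HBU_{\c}$, the same formula $A(\sigma)$ from \eqref{bongka2}, the same observation that its witnessing quantifier ranges over $C$ so that $\BOOT_{\w}$ suffices, and the same final extraction of the sub-cover via $\IND$ or $\QFAC^{0,1}$. The one place you deviate is the bound-extraction step: the paper applies $\QFAC^{1,0}$ to obtain the least-witness selector $H^{2}$, notes $H$ is continuous on $C$, and cites Kohlenbach (\cite{kohlenbach4}*{\S4}) for its boundedness, whereas you form the tree of binary sequences with no initial segment in $X$ and apply $\WKL$ in fan-theorem form directly. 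The two are equivalent uses of $\WKL$ at the same conceptual juncture, but your version is somewhat more self-contained: it avoids the external appeal to the boundedness of continuous functionals (whose proof itself rests on $\WKL$) and makes the role of the hypothesis $\WKL$ fully explicit inside $\RCAo$, which is a small but genuine gain in transparency.
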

\begin{proof}
The first reverse implication is immediate.  The first forward implication is immediate in case $\neg(\exists^{2})$, as $\BOOT$ reduces to $\ACA_{0}$.
In case $(\exists^{2})$, $(\exists f^{1})(Y(f,n)=0)$ can be equivalently written as $(\exists X\subset \N^{2})(Y(F(X), n)=0)$ where $F(X)(k):=\lambda k.(\mu m^{0})((k,m)\in X)$.  Clearly, $\BOOT_{\w}$ applies to this equivalent formula. 

\smallskip

For the final implication, we prove $\HBU_{\c}$.  
Fix $G^{2}$ and let $A(\sigma)$ be the following:
\be\label{bongka2}
(\exists g\in C)\big[G(g)\leq |\sigma| \wedge \sigma*00\dots \in [\overline{g}G(g)]\big],
\ee
where $\sigma^{0^{*}}$ is a finite sequence of natural numbers.  Note that the formula in \eqref{bongka2} in square brackets is quantifier-free.  Thus, $\BOOT_{\w}$ provides a set $X\subseteq \N$ such that $(\forall \sigma^{0^{*}})(\sigma  \in X\asa A(\sigma) )$, with minimal coding.  Now, we have $(\forall f\in C)(\exists n^{0})A(\overline{f}n)$ since we may take $g=f$ and $n=G(f)$.  Hence, we have $(\forall f\in C)(\exists n^{0})(\overline{f}n\in X)$ and applying $\QFAC^{1,0}$, there is $H^{2}$ such that $(\forall f\in C)(\overline{f}H(f)\in X)$ and $H(f)$ is the least such number.  Obviously $H^{2}$ is continuous on $C$ and hence bounded above on $C$ by \cite{kohlenbach4}*{\S4}.  Hence, there is $N_{0}^{0}$ such that $(\forall f\in C)(\exists n\leq N_{0})A(\overline{f}n)$.
Now obtain the finite sub-cover as in the proof of Theorem \ref{honor}.
\end{proof}
One readily adapts the final part of the proof to $[\WWKL+\BOOT_{\w}]\di \WHBU$, where the latter captures (the essence of) the Vitali covering theorem for uncountable covers, as studied at length in \cite{dagsamVI}.

\smallskip

Clearly, $\BOOT_{\w}$ readily generalises to more general formulas only involving quantifiers over Cantor space.   
The following theorem implies that such formulas can `almost' be treated as quantifier-free.
\begin{thm}\label{coref}
The system $\WKL_{0}$ proves $[\BOOT_{\w}]_{\ECF}$ and $[\A_{2}]_{\ECF}$.
\end{thm}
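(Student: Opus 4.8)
The plan is to exploit exactly the phenomenon stressed throughout the paper (Remark~\ref{unbeliever}): the $\ECF$-translation replaces every type-two functional by a total associate, so after translation both $[\BOOT_{\w}]_{\ECF}$ and $[\A_{2}]_{\ECF}$ are second-order sentences whose only potentially expensive ingredients are quantifiers over $2^{\N}$ standing in front of \emph{continuous} matrices. The key point is that over Cantor space continuity together with compactness collapses such quantifiers to bounded numerical ones; this is precisely the fan theorem, which is available in $\WKL_{0}$ (it is equivalent to $\WKL$ over $\RCA_{0}$ by \cite{simpson2}*{IV.1}). Thus the whole argument reduces to: (i) turning the relevant $2^{\N}$-quantifiers into bounded quantifiers \emph{uniformly}, and (ii) a routine $\RCA_{0}$-construction.

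First I would isolate an auxiliary fact, call it \emph{bounded determination}. Let $\alpha^{1}$ be a total associate coding some continuous $Y^{2}$; for fixed $n^{0}$ the set $\{\sigma\in 2^{<\N}:\alpha(\sigma,n)>0\}$ is a bar on $2^{\N}$ by totality of $\alpha$, so the fan theorem yields $N$ with $(\forall \tau\in 2^{N})(\exists m\leq N)(\alpha(\overline{\tau}m,n)>0)$. Hence the value $Y(f,n)$ is determined by $\overline{f}N$ for all $f\in 2^{\N}$, and both $(\exists f\in 2^{\N})(Y(f,n)=0)$ and $(\forall f\in 2^{\N})(Y(f,n)=0)$ become decidable, being equivalent to a search over the finitely many $\tau\in 2^{N}$. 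To use this uniformly I would put $N_{n}:=\mu N.\big[(\forall \tau\in 2^{N})(\exists m\leq N)(\alpha(\overline{\tau}m,n)>0)\big]$; the matrix is quantifier-free, the fan theorem guarantees the search halts, and so $\lambda n.N_{n}$ is a total function definable in $\RCA_{0}$.

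For $[\BOOT_{\w}]_{\ECF}$, after $\ECF$ the right-hand side is $(\exists f\in 2^{\N})(Y(f,n)=0)$ with $Y$ coded by a total associate (cf.\ the proof of Theorem~\ref{boef}). Using $\langle N_{n}\rangle$, this is equivalent to the quantifier-free statement that $Y(\tau\ast 00\dots,n)=0$ for some $\tau\in 2^{N_{n}}$, so the set $X$ required by $\BOOT_{\w}$ is obtained by $\Delta^{0}_{1}$-comprehension, available already in $\RCA_{0}\subseteq\WKL_{0}$. For $[\A_{2}]_{\ECF}$ the same device applies to each disjunct of the $C_{2}$-matrix: the $\Sigma$-disjunct $(\exists f\in 2^{\N})(Y(f,\sigma)=0)$ and the $\Pi$-disjunct $(\forall g\in 2^{\N})(Z(g,\sigma)=0)$ each reduce, via their own fan-theorem bounds, to decidable conditions, so $[A]_{\ECF}(\sigma)$ becomes a single quantifier-free predicate $R(\sigma)$ uniformly in $\sigma$. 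The premise of $\A_{2}$ then reads $(\forall f^{1})(\exists n^{0})R(\overline{f}n)$, i.e.\ $R$ is a bar on $\N^{\N}$, and the least-witness selector $\Phi(f):=\mu n.R(\overline{f}n)$ is continuous with total associate $\gamma(\rho):=\big(\mu n\leq |\rho|.R(\overline{\rho}n)\big)+1$ when such $n$ exists and $0$ otherwise; this $\gamma$ witnesses the conclusion. Equivalently, once $R$ is quantifier-free the implication is just an instance of $\QFAC^{1,0}$, which is part of $\RCA_{0}$.

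I expect the main obstacle to be the \emph{uniformity} in step (i): the fan theorem only yields, for each parameter, the \emph{existence} of a bound, whereas forming $X$ and the selector $\gamma$ needs the bounds assembled into a genuine sequence, and needs $\gamma$ to be a \emph{total} associate. Both are handled by the observation that the minimisation defining $N_{\sigma}$ operates on a quantifier-free matrix and is guaranteed to terminate by the fan theorem, so $\RCA_{0}$ legitimately forms the sequence, and that totality of $\gamma$ is exactly the hypothesis that $R$ is a bar. A secondary technical point is the precise associate convention for `$Y(f,n)=0$' (first-positive-value versus the simplified $(\exists m)(\alpha(\overline{f}m,n)=1)$ used in Theorem~\ref{boef}); this is immaterial here, since bounded determination reduces either reading to a finite search over $2^{N_{n}}$.
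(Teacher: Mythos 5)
Your proof is correct, but it takes a genuinely different route from the paper's. The paper's proof is three lines and entirely ``soft'': it notes that $\RCAo+\MUC$ (the intuitionistic fan functional of Remark \ref{memmen2}) readily proves $\BOOT_{\w}$ and the $\A$-axiom in question, and then invokes two facts about the interpretation: $\ECF$ preserves provability, and $\ECF$ converts $\MUC$ into $\WKL$ by \cite{longmann}*{p.~497}. Hence $\WKL_{0}$ proves the translated sentences, without ever inspecting what those sentences look like. (Incidentally, the paper's proof writes $\A_{1}$ where the statement requires $\A_{2}$; your argument proves exactly what is stated.) You instead argue directly in second-order arithmetic: totality of the associates makes the relevant sets of neighbourhoods decidable bars on $2^{\N}$, the fan theorem (available in $\WKL_{0}$) supplies uniform bounds, provably terminating minimisation over a $\Delta^{0}_{1}$ matrix assembles those bounds into a sequence inside $\RCA_{0}$, and $\Delta^{0}_{1}$-comprehension plus $\QFAC^{1,0}$ then yield $[\BOOT_{\w}]_{\ECF}$ and the explicit associate demanded by $[\A_{2}]_{\ECF}$. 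In effect, your argument is the $\ECF$-unfolding of the paper's $\MUC$-proof: longer, but self-contained and with the uniformity made explicit, whereas the paper's version buys brevity at the price of leaning on the interpretation theorem for $\ECF$ and the computation of $[\MUC]_{\ECF}$.

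One caveat on your closing parenthetical: the literal ``simplified'' reading $(\exists f^{1})(\exists m^{0})(\alpha(\overline{f}m,n)=1)$ from Theorem \ref{boef} is \emph{not} rendered harmless by bounded determination. Unless associates are required to be monotone (first positive value persists along extensions), a branch may take a positive value different from $1$ below your bound $N_{n}$ and take the value $1$ only above it; in fact, comprehension for that literal formula, over all total associates, yields full $\Sigma^{0}_{1}$-comprehension and is therefore not $\WKL_{0}$-provable. This does not damage your theorem, since the genuine $\ECF$-translation computes values by the first-positive-value convention, which is precisely the reading your bounded-determination argument treats correctly; but the claim that the two readings are interchangeable should be deleted.
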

\begin{proof}
Let $\MUC$ be the intuitionistic fan functional from \cite{kohlenbach2}*{\S3} as defined in Remark \ref{memmen2}.
The system $\RCAo+\MUC$ readily proves $\BOOT_{\w}$ and $\A_{1}$.  
By \cite{longmann}*{p.~497}, $\ECF$ converts $\MUC$ into $\WKL$.
\end{proof}
As promised, $\A_{2}$ yields an equivalence in the final part of Theorem \ref{hoerlepiep}.
\begin{cor}\label{firstofmany}
$\RCAo+\IND$ proves $[\WKL+\BOOT_{\w}]\asa [\HBU+\A_{2}]$.
\end{cor}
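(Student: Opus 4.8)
The plan is to prove the two implications of the equivalence separately, noting that the forward direction is almost immediate from earlier results, while the reverse direction -- in particular obtaining $\BOOT_{\w}$ from $\HBU+\A_{2}$ -- carries the real content.

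For the forward implication $[\WKL+\BOOT_{\w}]\di[\HBU+\A_{2}]$, the conjunct $\HBU$ is exactly the final implication of Theorem \ref{hoerlepiep}. For $\A_{2}$ I would argue that $\BOOT_{\w}$ alone suffices over $\RCAo$: given a $C_{2}$-formula $A(\sigma)\equiv (\exists h\in 2^{\N})(Y(h,\sigma)=0)\vee (\forall g\in 2^{\N})(Z(g,\sigma)=0)$, apply $\BOOT_{\w}$ to $Y$ and to the functional $W$ given by `$W(g,\sigma)=0$ iff $Z(g,\sigma)\neq 0$' to obtain sets $X_{1},X_{2}\subseteq \N$ with $\sigma\in X_{1}\asa (\exists h\in 2^{\N})(Y(h,\sigma)=0)$ and $\sigma\in X_{2}\asa \neg(\forall g\in 2^{\N})(Z(g,\sigma)=0)$. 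Then $A(\sigma)$ is equivalent to the quantifier-free $[\sigma\in X_{1}\vee \sigma\notin X_{2}]$, so the antecedent of $\A_{2}$ becomes $(\forall f^{1})(\exists n^{0})$ of a quantifier-free matrix, and $\QFAC^{1,0}$ (present in $\RCAo$) yields the functional $\Phi^{2}$ required by $\A_{2}$. No use of $\WKL$ or $\IND$ is needed here.

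For the reverse implication $[\HBU+\A_{2}]\di[\WKL+\BOOT_{\w}]$, I would first observe that $\A_{2}\di \A_{0}$, since a $C_{0}$-formula is the special case of a $C_{2}$-formula whose $\forall$-disjunct is made identically false; Corollary \ref{durft} then delivers $\WKL$ from $\HBU+\A_{0}$ over $\RCAo+\IND$. The remaining task, $[\HBU+\A_{2}]\di \BOOT_{\w}$, I would handle by adapting the second forward implication in the proof of Theorem \ref{PI}, replacing throughout the quantifiers over $\N^{\N}$ by quantifiers over $2^{\N}$ and the principle $\Sigma{\vee}\Pi$-$\NFP_{0}$ by $\A_{2}$. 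Concretely, assume $\BOOT_{\w}$ fails for some $Y_{0}^{2}$ and, writing $P(m)\equiv (\exists h\in 2^{\N})(Y_{0}(h,m)=0)$, build the $C_{2}$-formula $A(\sigma)$ that holds exactly when the last bit of $\sigma$ disagrees with the truth value of $P(|\sigma|-1)$ -- the $\exists$-disjunct encoding $\sigma(|\sigma|-1)=0\wedge P(|\sigma|-1)$ and the $\forall$-disjunct encoding $\sigma(|\sigma|-1)=1\wedge \neg P(|\sigma|-1)$. Set $B(\sigma):=A(\tilde\sigma)$ with $\tilde\sigma$ the binarisation of $\sigma$, which is again a $C_{2}$-formula. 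The failure of $\BOOT_{\w}$, applied to the binary function $\tilde f$, gives $(\forall f^{1})(\exists n^{0})B(\overline{f}n)$, so $\A_{2}$ produces $\Phi^{2}$ with $(\forall f^{1})B(\overline{f}\Phi(f))$. Passing to $\HBU_{\c}$ (equivalent to $\HBU$ by the proof of Theorem \ref{honor}) on the canonical cover $\cup_{f\in 2^{\N}}[\overline{f}\Phi(f)]$ yields a finite subcover $f_{1},\dots,f_{k}$; letting $N_{0}:=\max_{i}\Phi(f_{i})$ and choosing, via the finite-comprehension consequence of $\IND$ as in \eqref{largene}, a binary $\sigma_{0}$ of length $>N_{0}$ with $\sigma_{0}(m)$ equal to the truth value of $P(m)$ for all $m\leq N_{0}$, the point $g_{0}:=\sigma_{0}*00\dots$ lands in some $[\overline{f_{j}}\Phi(f_{j})]$ with $\Phi(f_{j})\leq N_{0}$. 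Then $B(\overline{f_{j}}\Phi(f_{j}))=A(\overline{\sigma_{0}}\Phi(f_{j}))$ forces $\sigma_{0}(\Phi(f_{j})-1)$ to disagree with $P(\Phi(f_{j})-1)$, contradicting the choice of $\sigma_{0}$.

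The main obstacle is this last derivation of $\BOOT_{\w}$: one must get the $C_{2}$-encoding exactly right -- so that $A$ genuinely lies in the formula class handled by $\A_{2}$, including the binarisation needed because the outer quantifier in $\A_{2}$ ranges over all of $\N^{\N}$ -- and one must verify that the antecedent $(\forall f^{1})(\exists n^{0})B(\overline{f}n)$ is a genuine \emph{consequence} of the assumed failure of $\BOOT_{\w}$ rather than an unconditional truth. It is precisely the non-existence of the set $\{m:P(m)\}$ that rules out the single `bad' branch $\tilde f=\chi_{P}$ on which the antecedent would otherwise fail. Everything else -- the reduction $\A_{2}\di\A_{0}$, the finite-subcover assembly, and the finite-comprehension step -- is routine once the templates of Theorems \ref{honor} and \ref{PI} and Corollary \ref{durft} are in hand.
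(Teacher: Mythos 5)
Your proof is correct and takes essentially the same route as the paper's: for the forward direction you derive $\A_{2}$ from $\BOOT_{\w}$-comprehension for $C_{2}$-formulas plus $\QFAC^{1,0}$ and quote Theorem \ref{hoerlepiep} for $\HBU$, while your reverse direction is exactly the paper's argument, namely applying $\A_{2}$ to the binarised $C_{2}$-formula expressing the failure of $\BOOT_{\w}$, extracting a finite sub-cover via $\HBU_{\c}$, and contradicting the $\IND$-provable finite comprehension as in \eqref{largene}. The only cosmetic difference is that you route $\WKL$ through $\A_{2}\di\A_{0}$ and Corollary \ref{durft}, whereas the paper leaves the (immediate) implication $\HBU\di\WKL$ implicit.
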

\begin{proof}
We first prove the reverse implication.  To this end, suppose $\neg\BOOT_{\w}$, i.e.\ there is $Y_{0}^{2}$ such that for all $X\subseteq\N$, there is $n\in \N$ such that
\be\label{castiel}
\big[ [n\in X\wedge (\forall f\in C)(Y_{0}(f,n)\ne 0)]\vee  [ (\exists g\in C)(Y_{0}(g,n)= 0)\wedge n\not\in X]  \big].
\ee
Let $A(\overline{X}n)$ be the formula in \eqref{castiel} (modulo minimal modification).  
Clearly, $A$ is a $C_{2}$-formula and $\A_{2}$ yields $\Phi^{2}$ such that $ (\forall X\subseteq\N)A(\overline{X}\Phi(X))$.
Apply $\HBU_{\c}$ for the canonical cover associated to $\Phi$.  The resulting finite sub-cover provides an upper bound $k_{0}$ such that $ (\forall X\subseteq\N)(\exists n\leq k_{0})A(\overline{X}n)$.
However, $\IND$ proves `finite comprehension' for any $C_{2}$-formula, and $\HBU\di \BOOT_{\w}$ follows.
For the forward implication, note that for any $C_{2}$-formula $A(\sigma^{0^{*}})$, there is $X\subset\N$ such that $\sigma \in X\asa A(\sigma)$ by $\BOOT_{\w}$.  Now apply $\QFAC^{1,0}$ to $(\forall f^{1})(\exists n^{0})A(\overline{f}n)$.
\end{proof} 
The previous splitting provides a nice motivation for $\A_{2}$, but there are other arguments in favour of the latter:  
we now use this axiom to show that $\HBU$ is equivalent to weak K\"onig's lemma \emph{generalised} to binary trees where elementhood in the tree is given by a $C_{1}$-formula.  
As to prior art, Kohlenbach studies similar generalisations of weak K\"onig's lemma in \cite{kohlenbach4}*{\S3}.  
\bdefi[$C_{i}$-tree]
We say that a $C_{i}$-formula $A(\sigma^{0^{*}})$,  is (or: represents) a `\emph{$C_{i}$-tree} $T$' if the formula $\sigma^{0^{*}} \in T\equiv \neg A(\sigma)$ satisfies the usual tree property, i.e.\ $\sigma \in T\di \tau \in T$ for any initial segment $\tau $ of $\sigma$.  A $C_{i}$-tree $T$ is \emph{infinite} if $(\forall n^{0})(\exists \sigma^{0^{*}})(|\sigma|=n\wedge \sigma \in T)$ 
and $f^{1}$ is \emph{a path in a $C_{i}$-tree} $T$ is $(\forall n^{0})(\overline{f}n\in T)$.
\edefi
\bdefi[$C_{i}$-$\WKL$]
Any infinite binary $C_{i}$-tree has a path.  
\edefi
The $\ECF$-translation of $C_{i}$-$\WKL$ is $\WKL$ by the following and Remark \ref{unbeliever}.
\begin{thm}
For $i=0,1,2$, $\RCA_{0}$ proves $\WKL\asa [C_{i}\text{-}\WKL]_{\ECF}$.   
\end{thm}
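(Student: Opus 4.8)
The plan is to prove both implications over $\RCA_{0}$, the crux being that, under $\ECF$, every $C_{i}$-tree is (provably, once $\WKL$ is available to handle the universal clauses) a $\Pi_{1}^{0}$-tree, for which $\WKL$ readily produces a path; the converse amounts to a trivial encoding of ordinary trees as $C_{i}$-trees.

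For $\WKL\di[C_{i}\textup{-}\WKL]_{\ECF}$, fix a total associate $\alpha$ (and, for $i=2$, a second one $\beta$) coding the continuous functional(s) occurring in the $C_{i}$-formula $A(\sigma)$; then `$Y(f,\sigma)=0$' depends only on a finite initial segment of $f$. Hence a clause $(\exists f\in 2^{\N})(Y(f,\sigma)=0)$ is, provably in $\RCA_{0}$, equivalent to the $\Sigma_{1}^{0}$-statement that some finite binary string makes $\alpha$ decide the value $0$. For a universal clause $(\forall f\in 2^{\N})(Y(f,\sigma)=0)$ I would invoke compactness: the binary tree of strings on which $\alpha$ has not yet decided has no path by totality of $\alpha$, so $\WKL$ makes it finite, yielding a uniform level $N$ by which every branch of $2^{\N}$ is decided; thus the universal clause is equivalent to the $\Sigma_{1}^{0}$-statement that $(\exists N)$ all $\tau\in 2^{N}$ are decided with value $0$ by level $N$. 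Consequently each $C_{i}$-formula $A(\sigma)$ is, over $\RCA_{0}+\WKL$, equivalent to a $\Sigma_{1}^{0}$-formula, so the tree $\sigma\in T\equiv\neg A(\sigma)$ is a $\Pi_{1}^{0}$-tree, say $\sigma\in T\equiv(\forall m)\theta(\sigma,m)$ with $\theta\in\Delta_{1}^{0}$. I would then pass to the $\Delta_{1}^{0}$-tree
\[
S:=\{\sigma\in 2^{<\N}: (\forall\tau\preceq\sigma)(\forall m\le|\sigma|)\theta(\tau,m)\},
\]
which exists as a set by $\Delta_{1}^{0}$-comprehension. A direct computation shows $[S]=[T]$, and that $S$ is infinite iff $T$ is (the tree property of $T$ gives that every node of $T$ already lies in $S$). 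Hence if the $C_{i}$-tree is infinite, so is $S$, and $\WKL$ applied to the \emph{set} $S$ yields a path, which is a path through $T$; this establishes $[C_{i}\textup{-}\WKL]_{\ECF}$.

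For the converse $[C_{i}\textup{-}\WKL]_{\ECF}\di\WKL$, I would encode an arbitrary infinite binary tree $T$, given by its characteristic function in $\RCA_{0}$, as a $C_{i}$-tree: take the functional(s) in $A(\sigma)$ to be \emph{constant in the function argument}, deciding immediately whether $\sigma\notin T$ (and, for $i=2$, let the $\Pi$-disjunct be identically false). The corresponding associate $\alpha$ is definable in $\RCA_{0}$, makes the $C_{i}$-tree literally equal to $T$, and inherits its infiniteness; instantiating the $\L_{2}$-sentence $[C_{i}\textup{-}\WKL]_{\ECF}$ at $\alpha$ then returns a path through $T$, which is exactly $\WKL$.

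The main obstacle is the compactness step for the universal clauses (cases $i=1,2$): one must verify carefully, in $\RCA_{0}+\WKL$, that a total associate restricted to $2^{\N}$ admits a uniform modulus of decision, so that `$\forall f\in 2^{\N}$' collapses to a bounded quantifier and the $C_{i}$-tree becomes genuinely $\Pi_{1}^{0}$; this is where $\WKL$ is essential and where the exact associate bookkeeping must be pinned down. The remaining ingredients—that $\WKL$ yields paths through infinite $\Pi_{1}^{0}$-trees via the $\Delta_{1}^{0}$-approximation $S$, and the constant-functional encoding for the converse—are routine, so the whole statement follows, as claimed, together with Remark \ref{unbeliever}.
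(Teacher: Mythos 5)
Your proof is correct, and every step checks out: the associate analysis makes the existential clauses $\Sigma^{0}_{1}$ outright in $\RCA_{0}$; the undecided-tree compactness argument (correctly charged to $\WKL$) turns the universal clauses into $\Sigma^{0}_{1}$ statements as well; the passage from the resulting $\Pi^{0}_{1}$-tree $T$ to the $\Delta^{0}_{1}$-tree $S$ with $T\subseteq S$ and $[S]=[T]$ is the standard second-order manoeuvre; and the constant-functional encoding disposes of the converse. The paper's own proof uses different machinery: it simply cites a known result from the literature for $\WKL\asa[\MUC]_{\ECF}$, where $\MUC$ is the intuitionistic fan functional, and then observes that the ($\ECF$-translated) fan functional reduces the quantifiers over $2^{\N}$ to finite searches, so that elementhood in $C_{2}$-trees becomes \emph{decidable} and $[C_{i}\text{-}\WKL]_{\ECF}$ collapses to ordinary $\WKL$ (alternatively, $\MUC$ yields comprehension for $C_{2}$-formulas, whose $\ECF$-translation is $\WKL$); the direction $[C_{i}\text{-}\WKL]_{\ECF}\di\WKL$ is left implicit. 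In substance both arguments rest on the same fact---that $\WKL$ is exactly Cantor-space compactness for codes, i.e.\ the uniform modulus of decision for total associates---but your route reproves this fact directly instead of importing it via $\MUC$, settles for $\Sigma^{0}_{1}$ (rather than decidable) complements of trees and compensates with the $\Pi^{0}_{1}$-to-$\Delta^{0}_{1}$ approximation, and makes the trivial converse explicit. What the paper's version buys is brevity and thematic continuity, since $\MUC$ reappears in Theorem \ref{coref} and Remark \ref{memmen2}; what yours buys is self-containedness within classical RM techniques and a precise accounting of where $\WKL$ is genuinely needed, namely only for the universal clauses in the cases $i=1,2$, the case $i=0$ requiring it solely to produce the path.
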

\begin{proof}
By \cite{longmann}*{p.\ 497}, $\WKL$ is equivalent to the $\ECF$-translation of the intuitionistic fan functional as in $\MUC$.  
The latter reduces finding $f\in 2^{\N}$ satisfying $(Y(f, \sigma)>_{0}0)$ to a finite search, i.e.\ elementhood in $C_{2}$-trees is decidable, reducing it to usual $\WKL$.
Alternatively, $\MUC$ readily implies comprehension for $C_{2}$-formulas, and the $\ECF$-interpretation of the former is just $\WKL$.
\end{proof}
At the risk of pedantry (and repetition by Remark \ref{unbeliever}), identifying continuous functions and their codes is second nature in RM.  
In this light, there is no difference between $C_{1}$-trees (under $\ECF$) and `normal' trees in second-order arithmetic (assuming $\WKL$), i.e.\ $[C_{1}\text{-}\WKL]_{\ECF}$ \emph{is} just $\WKL$ if 
we are identifying continuous functions and their codes; our identification however goes in the `reverse' direction. 
\begin{thm}\label{the100}
The systems $\RCAo+\QFAC^{0,1}$ and $\RCAo+\IND$ both prove the implication $C_{1}$-$\WKL\di \HBU$.
\end{thm}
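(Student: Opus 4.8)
The plan is to run the proof of Theorem \ref{honor} almost verbatim, replacing the one step where $\BOOT$ produced the ``covering'' set $X$ by an application of $C_{1}$-$\WKL$ to a compactness tree. First I would reduce $\HBU$ to its Cantor-space form $\HBU_{\c}$, which is available over $\RCAo$ by the proof of \cite{dagsamIII}*{Theorem~3.3} (exactly as invoked in Theorem \ref{honor}), and then split on $(\exists^{2})\vee\neg(\exists^{2})$, as is customary in this paper. In the case $\neg(\exists^{2})$ all functionals on Baire space are continuous by \cite{kohlenbach2}*{\S3}, so, following Remark \ref{unbeliever}, $\HBU_{\c}$ reduces to the Heine--Borel theorem for countable covers, i.e.\ to $\WKL$ (\cite{simpson2}*{IV.1}); since a decidable tree is in particular a tree whose elementhood is given by a $C_{1}$-formula (take a $Y^{2}$ ignoring its Cantor argument), $C_{1}$-$\WKL$ proves $\WKL$, and this case is finished.

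For the main case $(\exists^{2})$, I would fix $G^{2}$ and take the covering formula $A(\sigma)$ from the proof of Theorem \ref{honor}, namely $(\exists g\in C)\big[G(g)\leq|\sigma|\wedge \sigma*00\dots\in[\overline{g}G(g)]\big]$, which expresses that the cylinder $[\sigma]$ is contained in a single ``large'' canonical ball. I then consider the binary tree $T$ defined by $\sigma\in T\equiv\neg A(\sigma)$, the tree of cylinders not covered by a single large ball. Two facts drive the argument. First, $T$ has the tree property: if $\tau$ is an initial segment of $\sigma$ and $[\tau]$ lies in $[\overline{g}G(g)]$ with $G(g)\leq|\tau|$, then $[\sigma]\subseteq[\tau]$ lies in the same ball, so $A(\tau)\di A(\sigma)$ and hence $\neg A(\sigma)\di\neg A(\tau)$. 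Second, after expanding the quantifier-free matrix, elementhood $\neg A(\sigma)$ is the $C_{1}$-formula $(\forall g\in 2^{\N})\big[\neg(G(g)\leq|\sigma|\wedge \overline{g}G(g)=\overline{\sigma}G(g))\big]$, with $G$ as a permitted parameter, so that $T$ is a binary tree whose elementhood is given by a $C_{1}$-formula and $C_{1}$-$\WKL$ applies.

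The crucial observation is that $T$ has no path: were $f\in C$ a path, then $\overline{f}n\in T$ for all $n$, yet for any $n\geq G(f)$ the witness $g=f$ establishes $A(\overline{f}n)$, since $G(f)\leq n$ forces $\overline{(\overline{f}n)}G(f)=\overline{f}G(f)$ and hence $(\overline{f}n)*00\dots\in[\overline{f}G(f)]$, a contradiction. Applying $C_{1}$-$\WKL$ in contrapositive form (a $C_{1}$-tree with no path is not infinite, and by the tree property an empty level makes it finite) yields $N_{0}$ with $(\forall\sigma\in 2^{N_{0}})A(\sigma)$, i.e.\ every depth-$N_{0}$ cylinder sits inside a single large ball. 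From here I would extract the finite sub-cover precisely as in Theorem \ref{honor}: for each of the $2^{N_{0}}$ sequences $\sigma$ of length $N_{0}$, formula $A(\sigma)$ furnishes a centre $g_{\sigma}$ with $[\sigma]\subseteq[\overline{g_{\sigma}}G(g_{\sigma})]$, and since every $f\in C$ lies in $[\overline{f}N_{0}]$, the finitely many $g_{\sigma}$ cover $C$. The uniform choice of the $g_{\sigma}$ is supplied either by $\IND$ (as the ``finite axiom of choice'', via mathematical induction) or by $\QFAC^{0,1}$, which is exactly what accounts for the two base theories in the statement.

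The hard part will be the bookkeeping around logical complexity rather than any new idea: I must check carefully that ``$[\sigma]$ is not covered by a single large ball'' is genuinely a $C_{1}$-formula (universal over $2^{\N}$ with quantifier-free matrix in the parameter $G$), so that the tree $T$ is admissible for $C_{1}$-$\WKL$, and that the step from ``no path'' to ``finite'' is the legitimate contrapositive of $C_{1}$-$\WKL$ together with the tree property. In the case $\neg(\exists^{2})$ I must likewise ensure the reduction to second-order $\WKL$ is faithful in the sense of Remark \ref{unbeliever}. Everything else is a transcription of the already-established arguments for Theorems \ref{honor} and \ref{hoerlepiep}.
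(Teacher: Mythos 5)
Your proposal is correct and follows essentially the same route as the paper's proof: the same covering formula $A(\sigma)$ from \eqref{bongka2}, the same $C_{1}$-tree $T$ given by $\neg A(\sigma)$, the same no-path argument via the witness $g=f$, the same contrapositive application of $C_{1}$-$\WKL$ to get a uniform depth $N_{0}$, and the same extraction of the finite sub-cover via $\IND$ or $\QFAC^{0,1}$ as in Theorem \ref{honor}. The only difference is your case split on $(\exists^{2})\vee\neg(\exists^{2})$, which is superfluous: your main-case argument never actually uses $(\exists^{2})$, so it already handles both cases, exactly as the paper's proof does without any split.
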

\begin{proof}
Fix $G^{2}$ and define the formula $A(\sigma)$ as \eqref{bongka2}.
Clearly, $ A(\sigma)\di  A(\tau)$ for finite binary sequences $\sigma, \tau$ where $\sigma$ is an initial segment of $\tau$.  
In this light, the formula $  \neg A(\sigma)$ defines a $C_{1}$-tree $T$.  
Note that $(\forall f\in 2^{\N})(\exists n^{0})A(\overline{f}n)$ by considering $g=f$ and $n=G(f)$.
Apply $C_{1}$-$\WKL$ to $(\forall f\in 2^{\N})(\exists n^{0})(\overline{f}n\not\in T)$ to conclude $(\forall f\in 2^{\N})(\exists n^{0}\leq n_{0})(\overline{f}n\not\in T)$ for some $n_{0}$.  
Hence, $\HBU$ for $G^{2}$ follows as in the proof of Theorem \ref{hoerlepiep}.
\end{proof}
\begin{cor}\label{secondofmany}
The system $\RCAo+\IND+\A_{0}$ proves $\HBU\asa C_{1}\text{-}\WKL$.
\end{cor}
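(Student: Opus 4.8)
The plan is to prove the two directions separately, reusing an earlier result for one of them. The implication $C_{1}$-$\WKL\di\HBU$ is exactly Theorem \ref{the100}, which needs only $\RCAo+\IND$; hence all the real work is in the converse $\HBU\di C_{1}$-$\WKL$, which I would carry out in $\RCAo+\A_{0}+\HBU$ (the $\IND$ appearing in the statement is then inherited from Theorem \ref{the100}). The guiding idea is the standard fan-theorem template: rather than constructing a path by hand, I will refute ``$T$ has no path'' and then read off the path by classical logic.

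Here are the key steps in order. First, I would present an infinite binary $C_{1}$-tree $T$ exactly as in the construction in the proof of Theorem \ref{the100}, so that \emph{non-membership} is a $C_{0}$-formula, namely $\sigma\notin T\equiv A(\sigma)$ with $A(\sigma)\equiv(\exists g\in 2^{\N})(Y(g,\sigma)=0)$, and with $A$ monotone under extension (which is precisely the tree property). Second, assume $T$ has no path, i.e.\ $(\forall f\in 2^{\N})(\exists n^{0})(\overline{f}n\notin T)$, and note the matrix $\overline{f}n\notin T$ is the $C_{0}$-formula $A(\overline{f}n)$; after folding a harmless ``$\overline{f}n$ is non-binary'' disjunct into $A$ (which keeps it a $C_{0}$-formula) the antecedent holds for all $f^{1}$ in Baire space, so $\A_{0}$ applies and yields $\Phi^{2}$ with $(\forall f\in 2^{\N})(\overline{f}\Phi(f)\notin T)$. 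Third, feed $\Phi$ to $\HBU_{\c}$ (available from $\HBU$ over $\RCAo$, cf.\ the proof of Theorem \ref{honor}) to obtain $f_{1},\dots,f_{k}\in 2^{\N}$ covering $2^{\N}$, put $N_{0}:=\max_{i\le k}\Phi(f_{i})$, and observe that for any $\sigma$ of length $N_{0}+1$ the point $\sigma*00\dots$ lies in some $[\overline{f_{i}}\Phi(f_{i})]$, so an initial segment of $\sigma$ of length $\le N_{0}$ is outside $T$; by the tree property $\sigma\notin T$. Thus no node of length $N_{0}+1$ lies in $T$, contradicting infinitude. Finally, having refuted ``no path'', classical logic delivers $(\exists f\in 2^{\N})(\forall n^{0})(\overline{f}n\in T)$, i.e.\ the required path, which is $C_{1}$-$\WKL$.

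The main obstacle — and really the whole point of the corollary — is getting by with $\A_{0}$ rather than $\A_{1}$ or the far stronger $\BOOT_{\w}$ of Corollary \ref{firstofmany}. This hinges on reading the $C_{1}$-tree so that it is non-membership that is existential (a $C_{0}$-formula), which makes the ``no path'' hypothesis have exactly the shape $(\forall f)(\exists n)A(\overline{f}n)$ with $A\in C_{0}$ demanded by $\A_{0}$; the dual reading would force $\A_{1}$. Two bookkeeping points need care: (i) the quantifier $\forall f^{1}$ in $\A_{0}$ ranges over all of $\N^{\N}$, so I must arrange that $A$ holds trivially on non-binary stems (the disjunct above) before invoking $\A_{0}$; and (ii) the argument must avoid any appeal to $(\exists^{2})$ or $\BOOT_{\w}$, so that $\A_{0}+\HBU$ alone suffices — in particular I do not decide membership in $T$ (that would need $\BOOT_{\w}$, hence $\A_{2}$ by Corollary \ref{firstofmany}), but only use the one-sided choice functional $\Phi$ together with compactness. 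Note that no case split on $(\exists^{2})$ is required here, since $\A_{0}$ and $\HBU_{\c}$ are applied uniformly.
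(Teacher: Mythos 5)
Your proposal is correct and follows essentially the same route as the paper: the paper also gets $C_{1}\text{-}\WKL\di\HBU$ from Theorem \ref{the100} and proves the converse by applying $\A_{0}$ to the no-path statement $(\forall f\in C)(\exists n^{0})(\overline{f}n\not\in T)$ (non-membership being a $C_{0}$-formula) and then applying $\HBU$ to the canonical cover of the resulting $\Phi^{2}$, the finite sub-cover yielding the uniform bound that contradicts infinitude. Your extra bookkeeping (folding a non-binary disjunct into $A$ so that $\A_{0}$'s Baire-space quantifier is satisfied) is a detail the paper leaves implicit but does not change the argument.
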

\begin{proof}
Apply $\A_{0}$ to $(\forall f\in C)(\exists n^{0})(\overline{f}n\not\in T)$ and apply $\HBU$ to the (canonical cover for the) resulting functional $\Phi^{2}$.
The resulting finite sub-cover readily provides the bound required by $C_{1}$-$\WKL$.
\end{proof}
The contraposition of $C_{1}$-$\WKL$ can be interpreted as a version of the Heine-Borel theorem for countable covers of closed sets as in the previous section.  
We establish the following where $C_{1}$-$\NFP$ is $\NFP$ restricted to $C_{1}$-formulas.  
\begin{thm}
The system $\RCAo+\IND$ proves the following: 
\begin{gather}\label{nojumps}
[\WKL+\BOOT_{\w}]\asa [\HBU+\A_{2}]\asa C_{2}\text{-}\WKL\asa [\WKL+C_{1}\textup{-}\NFP].
\end{gather}
\end{thm}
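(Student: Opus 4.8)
The statement is a four-fold equivalence, and the plan is to show that each of the four systems is interderivable with the central system $[\WKL+\BOOT_{\w}]$; the first equivalence $[\WKL+\BOOT_{\w}]\asa[\HBU+\A_{2}]$ is already Corollary~\ref{firstofmany}, so it remains to fit $C_{2}\text{-}\WKL$ and $[\WKL+C_{1}\textup{-}\NFP]$ into the circle. The unifying engine throughout is the one used in Corollary~\ref{firstofmany}: from an appropriate compactness hypothesis one extracts a \emph{single} uniform bound over the compact index space $2^{\N}$, and then the ``finite comprehension'' consequence of $\IND$ (exactly as in \eqref{largene} and \eqref{pbc}) manufactures a candidate set contradicting $\neg\BOOT_{\w}$. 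As in the earlier proofs I would run everything under the case distinction $(\exists^{2})\vee\neg(\exists^{2})$.

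First I would close the sub-circle $[\HBU+\A_{2}]\asa C_{2}\text{-}\WKL\asa[\WKL+\BOOT_{\w}]$. For $[\HBU+\A_{2}]\di C_{2}\text{-}\WKL$: given an infinite binary $C_{2}$-tree $T$, assume it has no path, so $(\forall f\in 2^{\N})(\exists n^{0})A(\overline{f}n)$ where the $C_{2}$-formula $A(\sigma)$ is ``$\sigma\notin T$''; apply $\A_{2}$ to obtain $\Phi^{2}$ with $(\forall f)A(\overline{f}\Phi(f))$ and feed the canonical cover $\cup_{f\in 2^{\N}}[\overline{f}\Phi(f)]$ to $\HBU_{\c}$, whose finite sub-cover yields a uniform bound contradicting infinitude of $T$. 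For the reverse $C_{2}\text{-}\WKL\di[\WKL+\BOOT_{\w}]$: ordinary ($C_{0}$, hence $C_{2}$) trees give $C_{2}\text{-}\WKL\di\WKL$, while $\BOOT_{\w}$ is obtained by repeating the proof of Corollary~\ref{firstofmany} with $C_{2}\text{-}\WKL$ in place of $[\HBU+\A_{2}]$: assuming $\neg\BOOT_{\w}$ for $Y_{0}$, the XOR ``bad formula'' \eqref{castiel} is monotonised to $(\exists m\le|\sigma|)A(\overline{\sigma}m)$ (which stays in the class $C_{2}$ once the bounded $\exists$ over a $\Pi$-disjunct is flipped to a $\Pi$-form by the bounded choice that $\IND$ provides); this defines a no-path binary $C_{2}$-tree, $C_{2}\text{-}\WKL$ makes it finite, and the resulting bound together with finite comprehension produces the offending $X$. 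This establishes $[\WKL+\BOOT_{\w}]\asa[\HBU+\A_{2}]\asa C_{2}\text{-}\WKL$.

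For $C_{2}\text{-}\WKL\asa[\WKL+C_{1}\textup{-}\NFP]$ I would use the case split. In case $\neg(\exists^{2})$ every type-two functional is continuous, so $\WKL$ proves $[\BOOT_{\w}]_{\ECF}=\BOOT_{\w}$ by Theorem~\ref{coref}, $C_{2}$-trees collapse to ordinary trees, and the $\NFP$-fragments are outright provable (as noted for $\Sigma\text{-}\NFP$ in the proof of Theorem~\ref{hulix}); hence all four principles follow from $\WKL$ and the equivalence is immediate. In case $(\exists^{2})$, the direction $C_{2}\text{-}\WKL\di C_{1}\textup{-}\NFP$ is smooth: $\BOOT_{\w}$ (already available) forms the set $\{\sigma:\neg A(\sigma)\}$, making a $C_{1}$-formula $A$ decidable, so the least-witness selector for $(\forall f)(\exists n)A(\overline{f}n)$ is continuous and acquires an associate $\gamma\in K_{0}$ by \cite{kohlenbach4}*{\S4}. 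The converse $[\WKL+C_{1}\textup{-}\NFP]\di\HBU$ is also available: applying $C_{1}\textup{-}\NFP$ to the no-path instance of the $C_{1}$-tree from \eqref{bongka2} and restricting the resulting associate to the compact space $2^{\N}$, where $\WKL$ bounds it, reproduces the ``no-path $\di$ finite'' content, whence $\HBU$ by Theorem~\ref{the100}.

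The main obstacle is the remaining implication in case $(\exists^{2})$, namely boosting $[\WKL+C_{1}\textup{-}\NFP]$ (equivalently $\HBU$ plus continuity) up to the genuine comprehension strength $\BOOT_{\w}$ or $\A_{2}$. The difficulty is structural: $C_{1}\textup{-}\NFP$ only provides \emph{continuous} selectors for $\Pi$-over-Cantor ($C_{1}$) bars, whereas $\BOOT_{\w}$ must decide the genuinely $\Sigma$-over-Cantor predicate $S(n)\equiv(\exists g\in 2^{\N})(Y(g,n)=0)$ for a discontinuous $Y$, something no finite-information selector can do pointwise. The bridge I would use is precisely the device of Theorem~\ref{PI}, where $\Sigma\vee\Pi\text{-}\NFP$ already produced the stronger $\BOOT$: assuming $\neg\BOOT_{\w}$, set up a bar over \emph{Baire} space whose triggering records the bit-by-bit inconsistency of an arbitrary candidate $X$ with $S$, and use the $\exists^{2}$-coding of $\N^{\N}$ by $2^{\N}$ (as in the proof of Theorem~\ref{keslich}) so that the existential disjunct is absorbed into the universal Baire quantifier of $\NFP$ and the bar becomes expressible as a $C_{1}$-formula; then $C_{1}\textup{-}\NFP$ furnishes an associate, $\WKL$ bounds it on $2^{\N}$, and the bound contradicts the finite-comprehension instance \eqref{largene}. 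Checking that this absorption genuinely stays inside the class $C_{1}$ rather than slipping up to $C_{2}$ is the delicate heart of the argument, and is the step I expect to demand the most care.
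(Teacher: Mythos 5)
Your proposal tracks the paper's proof closely on every edge of the cycle that the paper actually proves: the first equivalence via Corollary \ref{firstofmany}, the derivation of $C_{1}$-$\NFP$ from $[\WKL+\BOOT_{\w}]$ via the continuity of the least-witness selector (the paper gets the associate "for free" from the $\BOOT_{\w}$-comprehension set rather than citing \cite{kohlenbach4}*{\S4}, a cosmetic difference), and $[\WKL+C_{1}\textup{-}\NFP]\di\HBU$ via the proof of Theorem \ref{hoerlepiep}/\ref{the100}. The one place you genuinely diverge is $C_{2}$-$\WKL\di\BOOT_{\w}$: you argue contrapositively (monotonise \eqref{castiel}, check closure of $C_{2}$ under bounded existentials, conclude the bad tree is finite, contradict finite comprehension), whereas the paper argues positively: it forms the tree of correct guesses $A(\sigma)\equiv(\forall i<|\sigma|)\big[\sigma(i)=0\asa(\exists f\in 2^{\N})(Y(f,i)=0)\big]$, notes it is an infinite $C_{2}$-tree by $\IND$, and observes that a path through it \emph{is} the characteristic function of the comprehension set. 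Both routes work; the paper's is shorter and avoids the prenexing/finite-choice verification you flag.

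The genuine gap is exactly the one you flag yourself, and your proposed bridge for it would fail. The recoding used in the proof of Theorem \ref{keslich} replaces an existential quantifier over $\N^{\N}$ by an existential quantifier over $2^{\N}$: it changes the \emph{domain} of a quantifier, never its \emph{type}, so the existential disjunct of the bad bar cannot be "absorbed into the universal Baire quantifier" of $\NFP$. That bar is irreducibly $\Sigma\vee\Pi$ over Cantor space: a branch wrongly asserting "$n\in X$" can only be refuted by a $\Pi$-fact, a branch wrongly asserting "$n\not\in X$" only by a $\Sigma$-fact, and a $C_{1}$ matrix can express only one of the two error types; were every such disjunction $C_{1}$-expressible, then $C_{2}$-$\WKL$, $\A_{2}$ and $\BOOT_{\w}$ would all collapse to their $C_{1}$-counterparts, destroying the very calibration \eqref{nojumps} asserts. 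The method of Theorem \ref{PI} needs the $\NFP$-fragment for the full class $\Sigma\vee\Pi$ (here: $C_{2}$), not $C_{1}$. You should also know that the paper's own proof never attempts this return edge: for the last equivalence it establishes only $[\WKL+\BOOT_{\w}]\di C_{1}\textup{-}\NFP$ and $[\WKL+C_{1}\textup{-}\NFP]\di\HBU$, so the cycle closes only if one grants that $\HBU$ together with $\WKL$ recovers $\A_{2}$ or $\BOOT_{\w}$ — precisely the point at issue, and one the paper's other remarks (that adding $\A_{i}$ to $\HBU$ is what creates the "explosion") suggest is not automatic. So your instinct that something substantive is missing here is sound, but the fix is not a $C_{1}$-absorption trick; it would require either strengthening the fourth system to the $C_{2}$-fragment of $\NFP$ or an idea appearing in neither your proposal nor the paper's proof.
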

\begin{proof}
The first equivalence is given by Corolary \ref{firstofmany}.
For the second equivalence, we only need to prove $C_{2}$-$\WKL\di \BOOT_{\w}$.  Now assume $(\exists^{2})$ and define:
\[
A(\sigma^{0^{*}}) \equiv (\forall i<|\sigma|)\big[ \sigma(i)=0 \asa (\exists f\in 2^{\N})(Y(f, i)=0)   \big].
\]
With minor modification (using $\exists^{2}$), this formula yields a binary $C_{2}$-tree called $T$.  Using $\IND$ to establish `finite comprehension' for $C_{2}$-formulas, the $C_{2}$-tree $T$ is infinite.
A path through $T$ then immediately yields the required instance of $\BOOT_{\w}$.  In case $\neg(\exists^{2})$, one uses (the proof of) \cite{kohlenbach4}*{Prop.\ 4.10} to replace all (continuous)
functionals by RM-codes on $2^{\N}$.  The equivalence between $\WKL$ and $[\MUC]_{\ECF}$ from \cite{longmann}*{p.~497} and the implication $\MUC\di \BOOT_{\w}$ then finish this part.

\smallskip

The implication $[\WKL+\BOOT_{\w}]\di C_{1}\textup{-}\NFP$ follows from the final part of the proof of Corollary~\ref{firstofmany} by noting that the function $\Phi^{2}$ from $\QFAC^{1,0}$ can be taken to be continuous in this case. 
The associated associate has a trivial definition (thanks to $\BOOT_{\w}$).
Finally, the implication $[\WKL+C_{1}\textup{-}\NFP]\di \HBU$ readily follows as in the proof of Theorem \ref{hoerlepiep} by noting that \eqref{bongka2} is a $C_{1}$-formula.
\end{proof}
As to similar results, we could obtain equivalences involving $\MCT_{\net}^{-}$ from Remark \ref{memmen2}.
Moreover, $\ACA_{0}$ is equivalent to \emph{K\"onig's lemma} (see \cite{simpson2}*{III.7}), and one can also obtain an equivalent between the latter for $\Sigma{\vee}\Pi$-formulas and $\BOOT$. 
It goes without saying that certain implications from \eqref{nojumps} can also be obtained for $\WWKL$ and $\WHBU$ (see \cite{dagsamVI} and \cite{simpson2}*{X.1}).

\smallskip

Next, we prove that $\Delta$-comprehension indeed follows from $\A_{0}$.
\begin{thm}\label{tirlydiddy}
The system $\RCAo+\IND+\A_{0}$ proves $\Delta$-comprehension. 
\end{thm}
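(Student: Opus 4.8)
The plan is to follow the proof of Theorem \ref{DELTA} as closely as possible, replacing its single appeal to $\QFAC^{0,1}$ by an application of $\A_0$, and to use $\IND$ (in the guise of the finite comprehension principle \eqref{largene}) to compensate for the absent countable choice. As usual I would split on $(\exists^{2})\vee\neg(\exists^{2})$. In the case $\neg(\exists^{2})$ all functionals on Baire space are continuous by \cite{kohlenbach2}*{\S3}; then for each $n$ the formula $(\exists f^{1})(Y(f,n)=0)$ is (equivalent to) a $\Sigma_{1}^{0}$-formula and $(\forall g^{1})(Z(g,n)=0)$ a $\Pi_{1}^{0}$-formula, so the antecedent of $\Delta$-comprehension exhibits the relevant predicate as $\Delta_{1}^{0}$ and the set $X$ is provided by the $\Delta_{1}^{0}$-comprehension already available in $\RCAo$. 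Hence the real content lies in the case $(\exists^{2})$, where $\ACAo$ is at hand (and therefore $\ACA_{0}$, $\WKL$ and quantifier-free comprehension).

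In the case $(\exists^{2})$ I would first normalise as in the proof of Theorem \ref{keslich}: using $\exists^{2}$ to replace quantifiers over $\N^{\N}$ by quantifiers over $2^{\N}$, the $\Delta$-hypothesis becomes the statement that for each $n$ \emph{exactly one} of $(\exists h\in 2^{\N})(\tilde{Y}(h,n)=0)$ and $(\exists h\in 2^{\N})(\hat{Z}(h,n)=0)$ holds, where $\tilde Y,\hat Z$ are defined from $Y,Z$ and $\exists^{2}$, the first coding ``$n\in X$'' and the second ``$n\notin X$''. Now let $\mathrm{wr}(\sigma)$ be the $C_{0}$-formula asserting that the last bit of $\sigma$ misclassifies the number $n:=|\sigma|-1$ together with a $2^{\N}$-witness of this misclassification (so $\mathrm{wr}(\sigma)$ abbreviates $(\exists h\in 2^{\N})[(\sigma(n)=1\wedge \hat Z(h,n)=0)\vee(\sigma(n)=0\wedge \tilde Y(h,n)=0)]$). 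If $\Delta$-comprehension failed, then no binary $f$ would classify every $n$ correctly, so $(\forall f^{1})(\exists m^{0})\mathrm{wr}(\overline f m)$ would be a genuine bar over Baire space. The plan is to apply $\A_{0}$ to this $C_{0}$-bar to obtain a functional $\Phi^{2}$ locating, along each $f$, a place where $f$ errs; on the other hand the finite comprehension principle \eqref{largene}, available from $\IND$, supplies for every length $m$ a \emph{correct} initial segment $\sigma_{m}\in 2^{m}$ on which $\mathrm{wr}$ never fires. Comparing $\Phi$ against the $\sigma_{m}$ is then meant to yield a contradiction and thereby the set $X$, after which $n\in X\asa(\exists f^{1})(Y(f,n)=0)$ follows exactly as in the passage \eqref{fok} of Theorem \ref{DELTA}. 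This mirrors the way $\A_{0}$ replaces $\QFAC^{0,1}$ in the forward direction of Theorem \ref{hulix} and in Corollary \ref{firstofmany}.

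The step I expect to be the main obstacle is precisely turning the bar-modulus $\Phi$ into the \emph{total} object $X$ without appealing to compactness. In Corollary \ref{firstofmany} the analogous argument could invoke $\HBU_{\c}$ to convert $\Phi$ into a uniform numerical bound, but here $\HBU$ is unavailable—$\Delta$-comprehension is strictly weaker than $\BOOT$ and $\HBU$ by Theorems \ref{boef} and \ref{honor}—and since $\Phi$ need not be continuous, the fan theorem inside $\ACA_{0}$ does not apply to its canonical cover. The resolution must exploit the \emph{exclusivity} of the $\Delta$-condition: because for each $n$ exactly one of the two $C_{0}$-alternatives holds, the correct approximations $\sigma_{m}$ from \eqref{largene} are unique in content and cohere, so they determine a single characteristic function; the crux is to show that $\A_{0}$, together with $\QFAC^{1,0}$ (present in $\RCAo$) and $\IND$, suffices to realise this common value as an actual set, thus recovering the effect of $\QFAC^{0,1}$ from Theorem \ref{DELTA} while avoiding the strength of $\HBU$.
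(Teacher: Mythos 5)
Your setup (the case split on $(\exists^{2})$, the reduction to quantifiers over $2^{\N}$, the misclassification bar $\mathrm{wr}(\sigma)$, and the role of finite comprehension \eqref{largene}) matches the paper's proof, and you have correctly diagnosed where the difficulty lies; but the proposal stops exactly at that point, and the alternative you gesture at -- realising the coherent correct segments $\sigma_{m}$ as an actual set -- does not work as stated. Turning the (unique) correct $\sigma_{m}\in 2^{m}$ into a single object $m\mapsto\sigma_{m}$, or into the set $\{n:\sigma_{n+1}(n)=1\}$, is itself an instance of choice/comprehension for $C_{0}$-formulas, i.e.\ essentially the very principle being proved; $\RCAo+\IND$ has no means of extracting $\sigma_{m}$ uniformly from the statement ``there exists a correct $\sigma$ of length $m$''. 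So there is a genuine gap at the crux of your argument.

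The missing idea is a minimality trick that forces \emph{continuity} of the $\A_{0}$-modulus, which is how the paper closes the argument. Instead of applying $\A_{0}$ to the raw bar $\mathrm{wr}(\sigma)$, apply it to $B(\sigma)\equiv \mathrm{wr}(\sigma)\wedge(\forall i<|\sigma|-1)\neg \mathrm{wr}(\overline{\sigma}i)$, i.e.\ ``the last bit of $\sigma$ is the \emph{first} misclassification''. By the $\Delta$-exclusivity, $\neg\mathrm{wr}$ (``correctly classified, with witness'') is again existential over $2^{\N}$, so $B$ is still a $C_{0}$-formula (merging the finitely many witnesses uses $\IND$), and $(\forall X\subseteq\N)(\exists n)\mathrm{wr}(\overline{X}n)$ yields $(\forall X\subseteq\N)(\exists n)B(\overline{X}n)$ by taking least witnesses. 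Since $B$ holds at exactly one initial segment of each $X$, the functional $\Phi^{2}$ produced by $\A_{0}$ is automatically continuous -- its value is determined by $\overline{X}\Phi(X)$, so it is its own modulus of continuity. A continuous functional on Cantor space is bounded using only $\WKL$ (available from $(\exists^{2})$ via $\ACA_{0}$, by \cite{kohlenbach4}*{\S4}), not $\HBU$; this gives $N_{0}$ such that every $X$ misclassifies some $n\leq N_{0}$, contradicting the correct segment of length $N_{0}+1$ supplied by \eqref{largene}. Your worry that the fan theorem ``does not apply'' is correct for the modulus of the raw bar, but the minimal-bar reformulation removes exactly that obstacle, and no appeal to $\HBU$ or to countable choice is needed.
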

\begin{proof}
In case $\neg(\exists^{2})$, all functions on $\N^{\N}$ are continuous by \cite{kohlenbach2}*{Prop.\ 3.4}, and $\Delta$-comprehension reduces to $\Delta_{1}^{0}$-comprehension.
In case $(\exists^{2})$, we may replace in $\Delta$-comprehension the quantifiers over $\N^{\N}$ by quantifiers over $C$ as in the proof of Theorem \ref{keslich}.  
Now suppose there are $Y_{0}^{2}, Z_{0}^{2}$ satisfying the antecedent of $\Delta$-comprehension such that for all $X\subseteq\N$, there is $n\in \N$ such that
\be\label{castiel2}
\big[ [n\in X\wedge (\forall f\in C)(Y_{0}(f,n)\ne 0)]\vee  [ (\exists g\in C)(Y_{0}(g,n)= 0)\wedge n\not\in X]  \big], 
\ee
and denote by $A(\overline{X}n)$ the formula \eqref{castiel2} (modulo the usual modification).  At first glance, $A(\sigma^{0^{*}})$ is a $C_{2}$-formula, but since $Y_{0}, Z_{0}$ satisfy the 
antecedent of $\Delta$-comprehension, \eqref{castiel2} is in fact in $C_{0}$ and so is the formula $B(\sigma)\equiv [A(\sigma)\wedge (\forall i<|\sigma|-1)\neg A(\overline{\sigma}i)]$, which is readily proved using $\IND$.
Again using $\IND$, $(\forall X\subseteq \N)(\exists n^{0})A(\overline{X}n)$ implies $(\forall X\subseteq \N)(\exists n^{0})B(\overline{X}n)$, and applying $\A_{0}$ yields a \emph{continuous} $\Phi^{2}$, by the definition of $B$. 
Now, $\Phi^{2}$ has an upper bound on $C$ by $\WKL$, and this yields a contradiction as $\IND$ proves `finite comprehension' \eqref{largene}.  
\end{proof}
\noindent
We note in passing that $\HBU$ deals with uncountable covers, while $\WKL$ (up to coding as in \cite{simpson2}*{IV.1}) deals with countable covers, \emph{and never the twain shall meet}: the logical hardness of 
the former is dwarfed by the latter (see \cite{dagsamIII, dagsamV}).  Despite this huge difference, a slight generalisation of the scope of $\A_{2}$, namely closure under $\wedge, \neg, \di$, yields an axiom
that establishes an equivalence between $\WKL$ and $\HBU$, based on the previous proof. 

\smallskip

Finally, as promised, we obtain improved versions of Theorems \ref{klato} and \ref{sosimple}.
\begin{thm}\label{sosimple2}
The system $\RCAo+\A_{0}$ proves $\HBU\asa \CIT$.
\end{thm}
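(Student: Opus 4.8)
The plan is to follow the structure of the proof of Theorem~\ref{sosimple}, invoking the law of excluded middle $(\exists^{2})\vee\neg(\exists^{2})$, and to replace the single use of $\QFAC^{1,1}$ (which occurred in the direction $\HBU\di\CIT$) by an application of $\A_{0}$. In the case $\neg(\exists^{2})$, all functions on $\R$ and $\N^{\N}$ are continuous by \cite{kohlenbach2}*{\S3}, so open sets as in Definition~\ref{opensset} reduce to RM-open sets, closed sets to RM-closed sets, $\HBU$ to $\WKL$ (by \cite{simpson2}*{IV.1}), and $\CIT$ to the Cantor intersection theorem for RM-codes; the equivalence then follows from the known second-order result. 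The reverse direction $\CIT\di\HBU$ under $(\exists^{2})$ is unchanged from Theorem~\ref{sosimple}, as it only invokes $\QFAC^{1,0}$, already available in $\RCAo$. Thus the work lies entirely in proving $\HBU\di\CIT$ under $(\exists^{2})$ using $\A_{0}$ in place of $\QFAC^{1,1}$.

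For that forward direction, assume $(\exists^{2})$ and let $C_{n}$ be a decreasing sequence of non-empty closed subsets of $[0,1]$, with $C_{n}$ the complement of $O_{n}=\cup_{y\in\R}I_{y}^{\Psi_{n}}$ for a sequence $\Psi_{n}:(\R\times\N)\di\R$, and suppose towards a contradiction that $\cap_{n}C_{n}=\emptyset$, i.e.\ $(\forall x\in I)(\exists n^{0})(x\in O_{n})$. Using $\exists^{2}$ together with the functionals $\r$ and $\eta$, I represent reals in $[0,1]$ by elements of $2^{\N}$ and, via the graph-coding of Cauchy sequences as binary sequences (as in the proof of Theorem~\ref{keslich}), I rewrite each membership clause $(\exists y\in\R)(\cdots)$ as an existential over $2^{\N}$. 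The point is to record the covering by a finite approximation: define
\begin{gather*}
A(\sigma^{0^{*}})\equiv (\exists n\leq|\sigma|)(\exists h\in 2^{\N})\big[\, \r(\sigma*00\dots)\in I_{y(h)}^{\Psi_{n}} \text{ with margin }>2^{-|\sigma|}\,\big],
\end{gather*}
where $y(h)$ is the real decoded from $h$ and ``margin $>2^{-|\sigma|}$'' abbreviates the ($\exists^{2}$-decidable) statement that $\r(\sigma*00\dots)$ lies in $I_{y(h)}^{\Psi_{n}}$ at distance exceeding $2^{-|\sigma|}$ from both endpoints; absorbing the bounded numerical quantifier makes $A(\sigma)$ a genuine $C_{0}$-formula with parameters $\Psi$ and $\exists^{2}$, exactly as in Theorems~\ref{the100} and~\ref{tirlydiddy}. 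Since each $O_{n}$ is open and $O_{n}\subseteq O_{n+1}$, for every $f\in 2^{\N}$ there is $m$ with $A(\overline{f}m)$: choose $n_{0}$ and $y$ with $\r(f)\in I_{y}^{\Psi_{n_{0}}}$ at some positive margin $\delta$, then take $m\geq n_{0}$ large enough that $2^{-m}<\delta/2$ and $|\r(\overline{f}m*00\dots)-\r(f)|<\delta/2$. Hence $(\forall f\in 2^{\N})(\exists m^{0})A(\overline{f}m)$, and $\A_{0}$ yields $\Phi^{2}$ with $(\forall f\in 2^{\N})A(\overline{f}\Phi(f))$.

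It remains to extract the contradiction. Apply $\HBU_{\c}$ (available as $\HBU\asa\HBU_{\c}$ over $\RCAo$ by \cite{dagsamIII}*{Theorem~3.3}) to the canonical cover $\cup_{f\in 2^{\N}}[\overline{f}\Phi(f)]$, obtaining a finite subcover indexed by $f_{1},\dots,f_{k}$; let $n_{i}\leq\Phi(f_{i})$ and $y_{i}$ be the witnesses from $A(\overline{f_{i}}\Phi(f_{i}))$ and set $k_{0}:=\max_{i\leq k}n_{i}$. The margin bookkeeping now pays off: every real in the basic neighbourhood $[\overline{f_{i}}\Phi(f_{i})]$ lies within $2^{-\Phi(f_{i})}$ of $\r(\overline{f_{i}}\Phi(f_{i})*00\dots)$, so the margin condition forces the whole neighbourhood into $I_{y_{i}}^{\Psi_{n_{i}}}\subseteq O_{n_{i}}\subseteq O_{k_{0}}$. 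As $f_{1},\dots,f_{k}$ cover $2^{\N}$ and $\eta$ represents every real of $[0,1]$ by some $f\in 2^{\N}$, we conclude $[0,1]\subseteq O_{k_{0}}$, whence $C_{k_{0}}=\emptyset$, contradicting $C_{k_{0}}\neq\emptyset$. This establishes $\CIT$, and combining the two cases finishes the proof.

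The main obstacle is precisely the replacement of $\QFAC^{1,1}$: folding the existential $(\exists y\in\R)$ over interval-indices into the single Cantor-space existential permitted by a $C_{0}$-formula, and simultaneously arranging the ``open margin'' so that the finite subcover of $2^{\N}$ delivered by $\A_{0}$ and $\HBU_{\c}$ genuinely yields $[0,1]\subseteq O_{k_{0}}$ at a concrete level $k_{0}$ rather than a mere pointwise cover. Everything else is routine given $\exists^{2}$; I expect the only delicate check to be that the matrix of $A(\sigma)$ is quantifier-free relative to $\Psi$ and $\exists^{2}$, so that $A$ is indeed of class $C_{0}$.
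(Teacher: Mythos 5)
Your proof is correct and follows essentially the same route as the paper: a case split on $(\exists^{2})\vee\neg(\exists^{2})$, reuse of the reverse direction (which only needs $\QFAC^{1,0}$), and replacement of $\QFAC^{1,1}$ by $\A_{0}$ in the forward direction after rewriting the $\Sigma$-membership condition as a $C_{0}$-formula about initial segments (via graph-coding and $\exists^{2}$). Your explicit margin bookkeeping is a careful elaboration of what the paper compresses into its final remark -- namely that $(\forall x\in I)(\exists n^{0})(x\not\in C_{n})$ yields $(\forall x\in I)(\exists m^{0})([x](m)\not\in C_{m})$ because the complements are open and nested -- so the two arguments coincide in substance, with yours spelling out why the finite sub-cover from $\HBU_{\c}$ actually forces $[0,1]\subseteq O_{k_{0}}$.
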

\begin{proof}
The equivalence amounts to the associated second-order result (see \cite{ishi1}) in case $\neg(\exists^{2})$.
In case $(\exists^{2})$, we may replace the use of $\QFAC^{1,1}$ in the proof of Theorem \ref{sosimple} by $\A_{0}$, 
as $\Sigma$-formulas can now be written as $C_{0}$-formulas as in the first part of the proof of Theorem \ref{hoerlepiep}.  Note that since $C_{n}$ is a sequence of closed $\Pi$-sets, 
the formula $(\forall x\in I)(\exists n^{0}){(x\not \in C_{n})}$ implies $(\forall x\in I)(\exists m^{0}){([x](m)\not \in C_{m})}$ as the complement is open and $C_{n+1}\subseteq C_{n}$. 
\end{proof}
\begin{cor}\label{klato2}
The system $\RCAo+\IND+\A_{0}$ proves $\HBU\di\Sigma$-$\SEP$. 
\end{cor}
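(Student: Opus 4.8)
The plan is to run the proof of Theorem \ref{klato} essentially verbatim, but to replace its single appeal to $\QFAC^{1,1}$ by an application of $\A_{0}$; this is precisely the way Theorem \ref{sosimple2} improves Theorem \ref{sosimple}. As usual I would split via the instance $(\exists^{2})\vee\neg(\exists^{2})$ of excluded middle and treat the two cases separately.

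In case $\neg(\exists^{2})$, all functionals on $\N^{\N}$ are continuous by \cite{kohlenbach2}*{\S3}, so each $\Sigma$-formula $\varphi_{i}(n)\equiv(\exists f_{i}^{1})(Y_{i}(f_{i},n)=0)$ is equivalent to a $\Sigma_{1}^{0}$-formula and $\Sigma$-$\SEP$ thereby reduces to $\Sigma_{1}^{0}$-$\SEP$. Moreover $\HBU$ reduces to Heine--Borel compactness for countable covers, i.e.\ to $\WKL$, exactly as in the first paragraph of the proof of Theorem \ref{sosimple}. Since $\WKL_{0}$ proves $\Sigma_{1}^{0}$-$\SEP$ by \cite{simpson2}*{IV.4.4}, the implication holds in this case.

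In case $(\exists^{2})$, I would follow the proof of Theorem \ref{klato}. Assume $\varphi_{1},\varphi_{2}$ satisfy the antecedent of \eqref{SIG} but that no separating set exists, i.e.\ $(\forall Z^{1})(\exists n^{0})\neg A(n,Z)$, where $A(n,Z)$ is the matrix of \eqref{SIG}; unwinding, $\neg A(n,Z)\equiv[\varphi_{1}(n)\wedge n\notin Z]\vee[\varphi_{2}(n)\wedge n\in Z]$. Using $(\exists^{2})$, each $\varphi_{i}(n)$ is rewritten as a $C_{0}$-formula $(\exists X\in 2^{\N})(\tilde{Y}_{i}(X,n)=0)$ exactly as in the first part of the proof of Theorem \ref{hoerlepiep}, with $\tilde{Y}_{i}$ defined from $Y_{i}$ and Feferman's $\mu$. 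Interpreting a binary $\sigma$ as an initial segment of the set $Z$, one then packages $\neg A(|\sigma|-1,\sigma)$ into a single $C_{0}$-formula $\tilde{A}(\sigma)$, the two Cantor-space witnesses being combined by interleaving. The assumption now reads $(\forall Z)(\exists m^{0})\tilde{A}(\overline{Z}m)$, and applying $\A_{0}$ yields a (not necessarily continuous) $\Phi^{2}$ with $(\forall Z)\tilde{A}(\overline{Z}\Phi(Z))$.

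The final step is then the combinatorial core of Theorem \ref{klato}, untouched: apply $\HBU_{\c}$ (available since $\HBU\asa\HBU_{\c}$ over $\RCAo$ by \cite{dagsamIII}*{Theorem 3.3}, and $\HBU_{\c}$ applies to an arbitrary $\Phi^{2}$) to the canonical cover $\cup_{f\in C}[\overline{f}\Phi(f)]$ to obtain a finite sub-cover $f_{1},\dots,f_{k}$. With $k_{0}:=\max_{i\leq k}\Phi(f_{i})$, any $Z\in C$ lies in some $[\overline{f_{i}}\Phi(f_{i})]$, whence $\tilde{A}(\overline{Z}\Phi(f_{i}))$ and hence $\neg A(\Phi(f_{i})-1,Z)$ with $\Phi(f_{i})-1\leq k_{0}$, so $(\forall Z\in C)(\exists n\leq k_{0})\neg A(n,Z)$. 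Feeding the sequence $Z_{0}:=\sigma_{0}*00\dots$ with $\sigma_{0}$ of length $k_{0}+2$ produced by $\IND$ via \eqref{pbc} into this line reaches the same contradiction as in Theorem \ref{klato}, so $\Sigma$-$\SEP$ follows. The only real obstacle is the recasting of the Baire-space $\Sigma$-formulas as Cantor-space $C_{0}$-formulas, which is what forces the detour through $(\exists^{2})$ and Theorem \ref{hoerlepiep}; once $\neg A(n,Z)$ is a $C_{0}$-formula, $\A_{0}$ delivers exactly the bounding functional that $\QFAC^{1,1}$ previously supplied.
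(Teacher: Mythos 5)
Your proposal is correct and takes essentially the same route as the paper: its one-line proof likewise splits on $(\exists^{2})\vee\neg(\exists^{2})$ (deferring to the proof of Theorem \ref{sosimple2}), rewrites the $\Sigma$-formulas as $C_{0}$-formulas via the trick from Theorem \ref{hoerlepiep}, and then replaces the single use of $\QFAC^{1,1}$ in the proof of Theorem \ref{klato} by the new axiom, exactly as you do. (Note that the paper's proof text says $\A_{1}$ where both its statement and your argument use $\A_{0}$; your choice is the consistent one, since after the $(\exists^{2})$-rewriting the formula $\neg A(n,Z)$ is a $C_{0}$-formula.)
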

\begin{proof}
As in the proof of the theorem, the use of $\QFAC^{1,1}$ in the proof of Theorem~\ref{klato} is replaced by $\A_{1}$.
\end{proof}
In light of the results obtained in this section, as well as the attendant discussion, a base theory for higher-order RM as in the Plato hierarchy should include at least $\A_{0}$ in addition to $\QFAC^{1,0}$ to be found in $\RCAo$.
\emph{However}, $\A_{2}$ also makes $\HBU$ `much more explosive': while $\ACAo+\HBU$ seems to prove no second-order theorem beyond $\ATR_{0}$ (see \cite{dagsamVI, dagsamIII}), 
adding $\A_{1}$ immediately results in $\FIVE$ by \eqref{nojumps}.  This is however unproblematic as the Plato hierarchy is intended to yield the G\"odel hierarchy under $\ECF$, i.e.\ 
no fragment of the  former hierarchy implies the existence of discontinuous functions, as $\ECF$ translates such fragments to `$0=1$'. 

\begin{ack}\rm
We thank Dag Normann, Adrian Mathias, Thomas Streicher, Pat Muldowney, and Anil Nerode for their valuable advice.
Our research was supported by the John Templeton Foundation via the grant \emph{a new dawn of intuitionism} with ID 60842 and by the \emph{Deutsche Forschungsgemeinschaft} via the DFG grant SA3418/1-1.
Opinions expressed in this paper do not necessarily reflect those of the John Templeton Foundation.    
The results in Section~\ref{main2} were completed during the stimulating BIRS workshop (19w5111) on Reverse Mathematics at CMO, Oaxaca, Mexico in Sept.\ 2019.  
We express our gratitude towards the aforementioned institutions.  Results in Appendix \ref{AAA} are due to myself and Dag Normann. 
\end{ack}
\appendix
\section{Other reflections of the Big Five}\label{AAA}
After the completion of this paper and \cite{dagsamVII, dagsamX}, it was noticed that the results in the latter two papers
also give rise to `reflections' similar to (but different from) Figure \ref{kk}.  We sketch these results in this section for completeness.

\smallskip

For this section, we stress that the concept of `open set' used in \cite{dagsamVII} is different from the one in this paper.  
Open sets are namely represented in \cite{dagsamVII} via \emph{characteristic functions}, yielding Figure \ref{OFG} as below. 
We first discuss some definitions as follows. 
\begin{enumerate}
\item Open sets in $\R$ are represented in \cite{dagsamVII} by $Y:\R\di \R$ where `$x\in Y$' is $|Y(x)|>_{\R}0$ and satisfies $(\forall x\in Y)(\exists r>_{\R} 0)(B(x, r)\subset Y)$.  Closed sets are the complement of open sets. \label{zelf}
\item $\HBC$ expresses that countable open covers of closed sets (as in item \eqref{zelf}) in the unit interval have finite sub-covers.
\item $\CLO$ expresses that a closed set (as in item \eqref{zelf}) in the unit interval is located.
\item $\CLO_{\RM}$ expresses that an RM-closed set in the unit interval is located.
\item $\open$ expresses that an open set (as in item \eqref{zelf}) has an RM-code, and is equivalent to the Urysohn lemma for closed sets (as in item \eqref{zelf}).    
\end{enumerate}
We chose item \eqref{zelf} as the definition of open set in \cite{dagsamVII} as it reduces to the usual RM-definition under $\ECF$ and sequential compactness behaves as for the RM-definition; elementhood in such open sets is also $\Sigma_{1}^{0}$ \emph{with parameters}. 
With the above in place, the following picture emerges from \cite{dagsamVII}.
We note that $\ECF$ converts the equivalences on the right to those on the left.  
\\
\vspace{3.2cm}
\begin{figure}[h]
\setlength{\unitlength}{5pt}
\begin{picture}(60,0)

\put(0,-15){\vector(0,1){36}}

\put(-0.5,-13.5){{\line(1,0){1}}}
\put(1,-14){{$\RCA_{0}$}}

\put(-0.5,-3.5){{\line(1,0){1}}}
\put(1,-4){{$\WKL_{0}$}}

\put(-0.5,6.5){{\line(1,0){1}}}
\put(1,6){{$\textup{\CLO}_{\RM}$}}

\put(-0.5,15){{\line(1,0){1}}}
\put(1,14.5){{$\textup{\ATR}_{0}$}}

\put(-0.5,19.5){{\line(1,0){1}}}
\put(1,19){$\FIVE$}

\put(34.5,19.5){{\line(1,0){1}}}
\put(35,19){$\begin{array}{c}\FIVE  +\open \end{array} $}

\put(7,-14){\small proves {$\Delta_{1}^{0}$-comprehension}}

\put(7, -4){\small $\asa$ countable Heine-Borel}
\put(7, -6){\small $\asa$ a continuous function  }
\put(9, -8){\small  on $2^{\N}$ is bounded}

\put(7, 6){\small $\asa\ACA_{0}$ }

\put(7,14.5){\small $\asa$ perfect set theorem for}
\put(7,12.5){\small closed sets as countable unions}

\put(8,19){\small $\asa $ Cantor-Bendixson for}
\put(6.5,17){\small closed sets as countable unions}

\put(35,-15){\vector(0,1){36}}


\put(34.5,-13.5){{\line(1,0){1}}}
\put(37,-14){{$\RCA_{0}^{\omega}$}}

\put(34.5,-3.5){{\line(1,0){1}}}
\put(36.5,-4){{$\WKL_{0}$}}
\put(34.5,6.5){{\line(1,0){1}}}
\put(36,6){{${\CLO}$}}
%
\put(36,13.5){{$\ATR_{0}+\open$}}
\put(34.5,14.5){{\line(1,0){1}}}
%
\put(35, -15){{\multiput(-2.5,1)(0,1){36}{\line(0,1){0.5}}}}

\put(3,-20){\small \textbf{second-order arithmetic}}

\put(40,-20){\small \textbf{higher-order arithmetic}}

\put(43,-14){\small plus $\QFAC^{0,1}$}

\put(42,-4){$\asa\HBC$ }
\put(42, -6){\small $\asa$ Pincherle's theorem $\PIT_{o}$}

\put(42,6){\small $\asa[\ACA_{0}+\open]$ }


\put(48,13.5){\small $\asa$ perfect set theorem for}
\put(49,11.5){\small closed sets as in item \eqref{zelf}}

\put(49.5,19){\small $\asa $ Cantor-Bendixson for}
\put(50,17){\small closed sets as in item \eqref{zelf}}

\put(28,-20.5){\small {\Huge $ \longleftarrow$}}
\put(28,-18){\small {\huge $\ECF$}}
\put(28,4){\small {\Huge $ \longleftarrow$}}
\put(28,7){\small {\huge $\ECF$}}
\end{picture}
\vspace{3.5cm}
\caption{Another higher-order hierarchy mapping to the Big Five}
\label{OFG}
\end{figure}
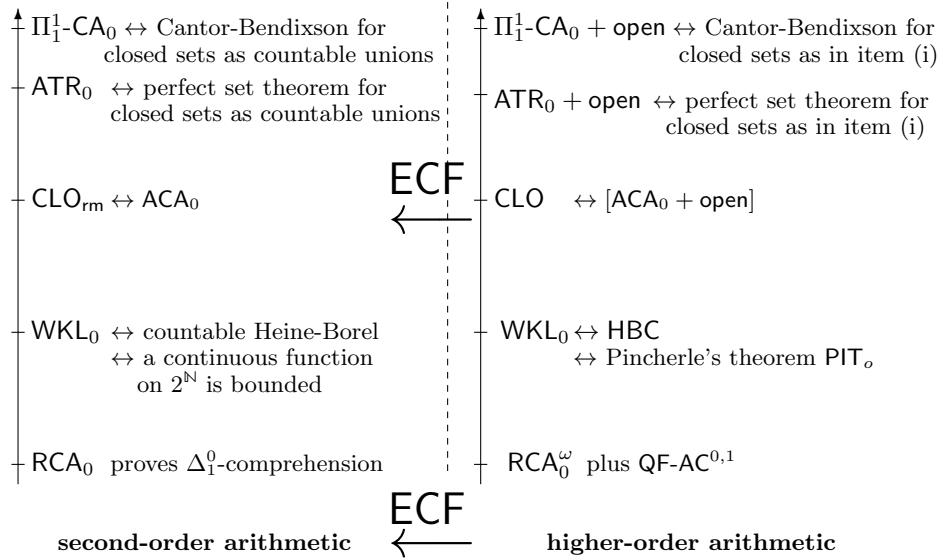~\\
Clearly, Figure \ref{OFG} is not as well-developed as Figure \ref{kk}, 
but then the motivation underlying \cite{dagsamVI} was never to obtain
a hierarchy that yields the Big Five and equivalences under $\ECF$.

\begin{bibdiv}
\begin{biblist}
\bibselect{allkeida}

\end{biblist}
\end{bibdiv}

\bye